\newcommand{\CC}{\mathbb{C}}
\newcommand{\EE}{\mathbb{E}}
\newcommand{\FF}{\mathbb{F}}
\newcommand{\GG}{\mathbb{G}}
\newcommand{\NN}{\mathbb{N}}
\newcommand{\PP}{\mathbb{P}}
\newcommand{\ZZ}{\mathbb{Z}}
\newcommand{\cB}{\mathcal{B}}
\newcommand{\cC}{\mathcal{C}}
\newcommand{\cG}{\mathcal{G}}
\newcommand{\cH}{\mathcal{H}}
\newcommand{\cK}{\mathcal{K}}
\newcommand{\cM}{\mathcal{M}}
\newcommand{\cN}{\mathcal{N}}
\newcommand{\cO}{\mathcal{O}}
\newcommand{\cP}{\mathcal{P}}
\newcommand{\cU}{\mathcal{U}}
\newcommand{\cX}{\mathcal{X}}
\newcommand{\cZ}{\mathcal{Z}}
\newcommand{\fa}{\mathfrak{a}}
\newcommand{\fb}{\mathfrak{b}}
\newcommand{\fC}{\mathfrak{C}}
\newcommand{\fe}{\mathfrak{e}}
\newcommand{\ff}{\mathfrak{f}}
\newcommand{\fg}{\mathfrak{g}}
\newcommand{\fgl}{\mathfrak{gl}}
\newcommand{\fh}{\mathfrak{h}}
\newcommand{\fl}{\mathfrak{l}}
\newcommand{\fL}{\mathfrak{L}}
\newcommand{\fK}{\mathfrak{K}}
\newcommand{\fn}{\mathfrak{n}}
\newcommand{\fpgl}{\mathfrak{pgl}}
\newcommand{\fr}{\mathfrak{r}}
\newcommand{\fs}{\mathfrak{s}}
\newcommand{\fsl}{\mathfrak{sl}}
\newcommand{\fso}{\mathfrak{so}}
\newcommand{\fsp}{\mathfrak{sp}}
\newcommand{\ft}{\mathfrak{t}}
\newcommand{\fU}{\mathfrak{U}}
\newcommand{\fu}{\mathfrak{u}}
\newcommand{\fv}{\mathfrak{v}}
\newcommand{\dact}{\boldsymbol{.}}
\newcommand{\lra}{\longrightarrow}
\newcommand{\msrke}{\mathsf{rk}_p}
\DeclareMathOperator{\Ab}{Ab}
\DeclareMathOperator{\Ad}{Ad}
\DeclareMathOperator{\ad}{ad}
\DeclareMathOperator{\add}{\mathsf{add}}
\DeclareMathOperator{\ann}{ann}
\DeclareMathOperator{\Aut}{Aut}
\DeclareMathOperator{\Bor}{Bor}
\DeclareMathOperator{\Char}{char}
\DeclareMathOperator{\CJT}{\mathsf{CJT}}
\DeclareMathOperator{\CR}{\mathsf{CR}}
\DeclareMathOperator{\Der}{Der}
\DeclareMathOperator{\EIP}{\mathsf{EIP}}
\DeclareMathOperator{\End}{End}
\DeclareMathOperator{\Exp}{Exp}
\DeclareMathOperator{\Ext}{Ext}
\DeclareMathOperator{\GL}{GL}
\DeclareMathOperator{\Gr}{Gr}
\DeclareMathOperator{\Ht}{Ht}
\DeclareMathOperator{\Hom}{Hom}
\DeclareMathOperator{\Irr}{Irr}
\DeclareMathOperator{\im}{im}
\DeclareMathOperator{\id}{id}
\DeclareMathOperator{\Jt}{Jt}
\DeclareMathOperator{\Lie}{Lie}
\DeclareMathOperator{\modd}{mod}
\DeclareMathOperator{\msdeg}{\mathsf{deg}}
\DeclareMathOperator{\msd}{\mathsf{d}}
\DeclareMathOperator{\msim}{\mathsf{im}}
\DeclareMathOperator{\mspl}{\mathsf{pl}}
\DeclareMathOperator{\msrk}{\mathsf{rk}}
\DeclareMathOperator{\msrkm}{\mathsf{rk}_{\rm min}}
\DeclareMathOperator{\Sw}{\mathsf{Sw}}
\DeclareMathOperator{\Nor}{Nor}
\DeclareMathOperator{\ord}{ord}
\DeclareMathOperator{\PGL}{PGL}
\DeclareMathOperator{\Rad}{Rad}
\DeclareMathOperator{\res}{res}
\DeclareMathOperator{\rk}{rk}
\DeclareMathOperator{\SL}{SL}
\DeclareMathOperator{\Soc}{Soc}
\DeclareMathOperator{\Sp}{Sp}
\DeclareMathOperator{\Top}{Top}
\DeclareMathOperator{\tr}{tr}
\DeclareMathOperator{\wt}{wt}
\DeclareMathOperator{\Proj}{Proj}
\numberwithin{equation}{section}
\newtheorem{Theorem}{Theorem}[section]
\newtheorem{Lemma}[Theorem]{Lemma}
\theoremstyle{Theorem}
\newtheorem{Thm}{Theorem}[subsection]
\newtheorem{Lem}[Thm]{Lemma}
\newtheorem{Prop}[Thm]{Proposition}
\newtheorem{Cor}[Thm]{Corollary}
\newtheorem*{thm*}{Theorem A}
\newtheorem*{thm**}{Theorem B}
\theoremstyle{remark}
\newtheorem*{Remark}{Remark}
\newtheorem*{Remarks}{Remarks}
\newtheorem*{Definition}{Definition}
\newtheorem*{Example}{Example}
\newtheorem*{Examples}{Examples}
\numberwithin{equation}{section}
\begin{document}

\title{Varieties of Elementary Abelian Lie Algebras and Degrees of Modules}

\author{Hao Chang \lowercase{and} Rolf Farnsteiner}

\address[Hao Chang]{School of Mathematics and Statistics, Central China Normal University, 430079 Wuhan, People's Republic of China}
\email{chang@mail.ccnu.edu.cn}
\address[Rolf Farnsteiner]{Mathematisches Seminar, Christian-Albrechts-Universit\"at zu Kiel, Ludewig-Meyn-Str. 4, 24098 Kiel, Germany}
\email{rolf@math.uni-kiel.de}

\date{\today}

\makeatletter
\makeatother


\begin{abstract} Let $(\fg,[p])$ be a restricted Lie algebra over an algebraically closed field $k$ of characteristic $p\!\ge \!3$. Motivated by the behavior of geometric invariants of the so-called
$(\fg,[p])$-modules of constant $j$-rank ($j \in \{1,\ldots,p\!-\!1\}$), we study the projective variety $\EE(2,\fg)$ of two-dimensional elementary abelian subalgebras. If $p\!\ge\!5$, then the topological space
$\EE(2,\fg/C(\fg))$, associated to the factor algebra of $\fg$ by its center $C(\fg)$, is shown to be connected. We give applications concerning categories of $(\fg,[p])$-modules of constant $j$-rank
and certain invariants, called $j$-degrees. \end{abstract}

\dedicatory{Dedicated to Jens Carsten Jantzen on the occasion of his 70th birthday}

\maketitle

\section*{Introduction}
Following Quillen's seminal papers \cite{Qu1,Qu2} on the cohomology rings of finite groups, elementary abelian groups have played a prominent role in modular representation theory. The idea of
detecting important properties by restriction to such subgroups has been generalized to other algebraic structures, such as finite group schemes. In this article, we will pursue this approach in the context
of infinitesimal group schemes of height $1$. These groups and their representations are well-known to correspond to restricted Lie algebras and their modules, cf.\ \cite[(II,\S7, ${\rm n}^{\rm o}$4)]{DG}.

Let $(\fg,[p])$ be a restricted Lie algebra over an algebraically closed field $k$ of characteristic $p\!>\!0$. By definition, $\fg$ is a Lie algebra over $k$, which is equipped with a map
$[p] : \fg \lra \fg \ ; \ x \mapsto x^{[p]}$ that has the formal properties of a $p$-th power map. We say that $\fg$ is {\it elementary abelian}, provided $\fg$ is abelian and $[p]\!=\!0$. Such Lie algebras first
appeared in Hochschild's work on restricted Lie algebra cohomology \cite{Ho}, where they were referred to as ``strongly abelian''. In a recent article \cite{CFP13}, they were called elementary Lie algebras.
Our choice of terminology derives from the fact that restricted enveloping algebras of elementary abelian Lie algebras are isomorphic (as associative algebras) to group algebras of $p$-elementary abelian
groups. The, up to isomorphism, unique elementary abelian Lie algebra of dimension $r$ will be denoted $\fe_r$.

One-dimensional elementary abelian subalgebras can be construed as elements of the projectivized nullcone $\PP(V(\fg))$, which is associated to the closed conical variety
\[ V(\fg) := \{x \in \fg \ ; \ x^{[p]}=0\}.\]
In \cite{Ca} Carlson proved a fundamental result, which implies in particular that the variety $\PP(V(\fg))$ is connected.

The varieties $\EE(r,\fg)$ of elementary abelian subalgebras of dimension $r$, which were introduced in \cite{CFP13}, are natural generalizations of $\PP(V(\fg))$. Our motivation for studying the
particular case where $r\!=\!2$ derives from the observation that certain geometric invariants of $(\fg,[p])$-modules are determined by their restrictions to $2$-dimensional elementary abelian subalgebras,
cf.\ \cite{Fa17}. Basic examples, such as particular one-dimensional central extensions of $\fsl(2)$, show that Carlson's result does not hold for $\EE(2,\fg)$. Using techniques based on sandwich elements
and exponentials of restricted Lie algebras, we establish in Section \ref{S:Co} a result, which implies the following:

\bigskip

\begin{thm*} Suppose that $p\!\ge\!5$. If $(\fg,[p])$ is a restricted Lie algebra with center $C(\fg)$, then the variety $\EE(2,\fg/C(\fg))$ is connected. \end{thm*}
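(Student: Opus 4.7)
The plan is to combine Carlson's connectedness theorem for the projectivised nullcone with a sweeping argument via exponentials. Set $\bar{\fg} := \fg/C(\fg)$ and introduce the incidence variety
$$I := \{(L,E) \in \PP(V(\bar{\fg})) \times \EE(2,\bar{\fg}) \colon L \subset E\}.$$
Since each $E \in \EE(2,\bar{\fg})$ is elementary abelian, every line in $E$ lies in $V(\bar{\fg})$, so the second projection $\pi_2 \colon I \to \EE(2,\bar{\fg})$ is a $\PP^1$-bundle. Hence $I$ is connected if and only if $\EE(2,\bar{\fg})$ is (the empty case being vacuous). Carlson's theorem \cite{Ca} tells us that $\PP(V(\bar{\fg}))$ is connected, so it remains to transport connectedness along the first projection $\pi_1\colon I \to \PP(V(\bar{\fg}))$, whose fibre over $[x]$ is $\PP\bigl((C_{\bar{\fg}}(x) \cap V(\bar{\fg}))/kx\bigr)$.

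The principal tool is the action of exponentials. For any $x \in V(\bar{\fg})$ the identity $(\ad x)^p = \ad(x^{[p]}) = 0$ makes
$$\exp(\ad x) := \sum_{i=0}^{p-1} \frac{1}{i!}(\ad x)^i \in \Aut(\bar{\fg})$$
a well-defined Lie algebra automorphism, and the subgroup $G \leq \Aut(\bar{\fg})$ generated by all such exponentials is connected and acts compatibly on $\EE(2,\bar{\fg})$, on $\PP(V(\bar{\fg}))$, and on $I$. The strategy is then to show that any two elements of $I$ can be joined by a chain alternating $G$-orbit segments (connected because $G$ is), $\pi_2$-fibres (projective lines), and $\pi_1$-fibres at those $[x]$ where the fibre is non-empty and connected. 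Carlson's theorem is then leveraged by noting that every $[x] \in \PP(V(\bar{\fg}))$ can be linked, within the connected base, to some $[x']$ where the $\pi_1$-fibre is non-empty and tractable.

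The main obstacle is precisely the analysis of $\pi_1$-fibres, and this is where both the hypothesis $p \geq 5$ and the passage to $\bar{\fg}$ become essential. The central extensions of $\fsl(2)$ cited in the introduction show that without quotienting by $C(\fg)$ a central $p$-nilpotent element can manufacture isolated components; killing the centre kills this obstruction. For fibre-level connectedness, one looks for sandwich elements $y \in C_{\bar{\fg}}(x) \cap V(\bar{\fg})$, that is elements with $(\ad y)^2 = 0$, so that the polynomial one-parameter family $1 + t\,\ad y$ of automorphisms yields an $\A^1$-worth of deformations inside the fibre; the restriction $p \geq 5$ enters both in invoking Premet-type results that guarantee the existence of such sandwich elements in $V(\bar{\fg})$ and in the Jacobson / BCH identities, whose denominators require $p > 3$. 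I expect that the general result established in Section~\ref{S:Co}, of which the stated theorem is a consequence, packages exactly these two inputs into a uniform connectedness criterion for the fibres of $\pi_1$.
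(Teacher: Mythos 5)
Your incidence-variety strategy does not survive contact with the actual difficulties of the problem, and the paper's proof takes a fundamentally different route. The map $\pi_1 \colon I \to \PP(V(\bar\fg))$ is generally neither surjective nor has connected fibres, so Carlson's theorem about the base cannot simply be ``transported'' upward. Already for $\bar\fg = \fsl(2)$ with $p>3$ the image of $\pi_1$ is \emph{empty} while $\PP(V(\bar\fg))$ is not; for larger $\bar\fg$ the image is a proper, not-obviously-connected subset, and the locus $C_{\bar\fg}(x)\cap V(\bar\fg)$ over which you want to projectivize is a conical variety, not a linear space, so your description of the $\pi_1$-fibre is not even correct. Your fallback — ``join any two elements by a chain alternating $G$-orbit segments, $\pi_2$-fibres, and $\pi_1$-fibres'' — restates the desired conclusion rather than proving it; you never specify how to cross between components of $\pi_1(I)$, which is precisely the hard step. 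Also, the exponential $1 + t\,\ad y$ for a sandwich element $y$ moves \emph{across} $\pi_1$-fibres (changing $[x]$ to $[x+t[y,x]]$), not within one fibre, so even your local deformation mechanism is misidentified.

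Your intuition about sandwich elements and exponentials is aimed in the right general direction, but you omit the structural dichotomy that drives the actual proof: one must first introduce the \emph{admissibility} condition ($\cN_3(\fg)\subseteq V(\fg)$ and $\exp(\cN_3(\fg))\subseteq\Aut_p(\fg)$) — verified for $\fg/C(\fg)$ in Lemma~\ref{SL1} — and then case-split on $\Sw(\fg)$. When $\Sw(\fg)=\{0\}$ there are \emph{no} sandwich elements to exponentiate; here Premet's theorem (Theorem~\ref{ACL1}) says $\fg$ is almost classical semisimple, and one reduces to the algebraic Lie algebra case handled by Borel subgroups and Bruhat decomposition (Corollary~\ref{AL5}). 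When $\Sw(\fg)$ is a one-dimensional ideal $kc$, the proof is delicate: it invokes Premet's existence of one-dimensional inner ideals (Theorem~\ref{ACL0}), passes to a Jordan--Chevalley--Seligman decomposition of a preimage, and either constructs a direct-sum decomposition of $\fg$ on which Carlson's theorem is applied, or exhibits an explicit $\fe\in\EE(2,\fg)$ with $\fe\subseteq\cN_3(\fg)$ to which Corollary~\ref{Bc2} applies. When $\Sw(\fg)$ is bigger, Proposition~\ref{SL2} uses a non-central sandwich $c$ to produce a filtration $\fg\supseteq\fg_{(0)}\supseteq\fg_{(1)}$ and sweeps components toward $\EE(2,\fg_{(0)})$ via $\exp(\alpha c)$ — a ``contraction to the centralizer'' argument that is much more concrete than chain-joining. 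Your reference to Premet-type results ``guaranteeing the existence of sandwich elements'' inverts their actual use: Premet's work classifies precisely the Lie algebras that \emph{lack} sandwich elements, and that classification is what makes the $\Sw(\fg)=0$ case tractable. None of this case analysis or the Borel fixed-point criterion (Lemma~\ref{Bc1}) that underlies Corollaries~\ref{Bc2} and~\ref{Bc3} appears in your sketch.
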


\bigskip
\noindent
In fact, in case $\fg$ is algebraic, that is, if $\fg=\Lie(G)$ is the Lie algebra of an algebraic group $G$, the space $\EE(2,\fg)$ is connected whenever $p\!\ge\!3$.

Results on the topology of $\EE(r,\fg)$ sometimes provide insight into the structure of certain full subcategories of the module category $\modd U_0(\fg)$ of the restricted enveloping algebra $U_0(\fg)$
of $\fg$: According to \cite[(4.13)]{CFP13}, the category of modules of constant $(r,j)$-radical rank is closed under direct summands, whenever $\EE(r,\fg)$ is connected.

Our interest in the structure of $\EE(2,\fg)$ is motivated by properties of the so-called {\it modules of constant $j$-rank}, cf.\ \cite{FP10}. Every such module $M$ gives rise to a morphism $\PP(V(\fg)) \lra
\Gr_d(M)$ with values in a suitable Grassmannian, which in turn defines a function
\[ \msdeg^j_M : \EE(2,\fg) \lra \NN_0,\]
cf.\ \cite{Fa17}. If $\EE(2,\fg) \ne \emptyset$, the $j$-degree functions $\msdeg^j_M$ provide invariants of $M$ enabling us to distinguish modules of the same Jordan type.  As we show in Section
\ref{S:LCF}, the  function $\msdeg^j_M$ is locally constant. In conjunction with Theorem A this implies:

\bigskip

\begin{thm**} Suppose that $p\ge 5$. Let $(\fg,[p])$ be a restricted Lie algebra such that $\dim V(C(\fg))\!\ne\!1$. If $M$ is a module of constant $j$-rank, then $\msdeg^j_M$ is constant. \end{thm**}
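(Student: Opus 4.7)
The plan is to combine the local constancy of $\msdeg^j_M$ on $\EE(2,\fg)$, established in Section \ref{S:LCF}, with Theorem A. Since a locally constant function is constant on each irreducible subvariety, it suffices to produce, for any two points of $\EE(2,\fg)$, a chain of irreducible subvarieties joining them. The hypothesis $d := \dim V(C(\fg)) \neq 1$ splits into the cases $d \geq 2$ and $d = 0$, which I would treat separately.

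If $d \geq 2$, then $Z := \Gr_2(V(C(\fg)))$ is an irreducible subvariety of $\EE(2,\fg)$, on which $\msdeg^j_M$ is constant of value $n_0$ by local constancy. The key step is to connect any $\fe \in \EE(2,\fg) \setminus Z$ to some point of $Z$ via an irreducible family in $\EE(2,\fg)$. Writing $\fe = kx \oplus ky$ with $x \notin V(C(\fg))$, I would pick $c \in V(C(\fg)) \setminus \fe$ and form the $\A^1$-family
\[
t \longmapsto \fe_t := kx \oplus k\bigl(tc + (1-t)y\bigr).
\]
Commutativity, $[p]$-nilpotency of both generators, and linear independence are all immediate from the centrality of $c$, the conditions $x^{[p]} = y^{[p]} = c^{[p]} = 0$, and $c \notin \fe$; the family connects $\fe_0 = \fe$ to $\fe_1 = kx + kc$. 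A second application of the same construction, replacing $x$ by a further central $p$-nilpotent $c' \in V(C(\fg)) \setminus (kx + kc)$ (available since $\dim V(C(\fg)) \geq 2$), lands inside $Z$ and yields $\msdeg^j_M(\fe) = n_0$.

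If $d = 0$, set $\bar\fg := \fg / C(\fg)$ and let $\pi : \fg \to \bar\fg$ denote the canonical projection. Every $\fe \in \EE(2,\fg)$ satisfies $\fe \cap C(\fg) \subseteq V(C(\fg)) = 0$, so $\pi|_\fe$ is injective and $\pi$ induces a morphism $\pi_* : \EE(2,\fg) \to \EE(2,\bar\fg)$. This morphism is injective (two $[p]$-zero lifts of a common $\bar\fe$ would differ by an element of $V(C(\fg)) = 0$) and closed (being a morphism between projective varieties), hence a homeomorphism onto its closed image. The plan is to show that this image is also open in the connected space $\EE(2,\bar\fg)$; together with closedness and Theorem A, this forces $\pi_*$ to be a homeomorphism onto $\EE(2,\bar\fg)$, so $\EE(2,\fg)$ is connected and $\msdeg^j_M$ is constant.

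The main obstacle is the openness step in the case $d = 0$. Because $[p] : C(\fg) \to C(\fg)$ is bijective, every $\bar\fe \in \EE(2,\bar\fg)$ admits unique $[p]$-zero lifts of its basis, reducing the obstruction to lift to $\EE(2,\fg)$ to the vanishing of the residual commutator $[x,y] \in C(\fg)$ for those unique lifts --- an a priori closed condition on $\bar\fe$ rather than an open one. Establishing openness requires a genuinely non-trivial deformation argument, for which I would adapt the exponential and sandwich-element machinery underpinning the proof of Theorem A. The exclusion of the case $d = 1$ is essential, as illustrated by the one-dimensional central extension of $\fsl(2)$ mentioned in the introduction, which exhibits precisely the disconnection phenomenon that Theorem B would otherwise preclude.
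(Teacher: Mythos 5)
Your argument for the case $\dim V(C(\fg))\ge 2$ is correct in substance; it is an explicit $\A^1$-chain version of what the paper achieves by applying Theorem~\ref{CBC2}(1) to the $p$-ideal $kx\oplus ky\subseteq V(C(\fg))$ via Corollary~\ref{Bc3} and Lemma~\ref{CBC1} (note, as you implicitly use, that $V(C(\fg))$ is a linear subspace because $[p]$ is $p$-semilinear on the abelian $C(\fg)$).

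The case $\dim V(C(\fg))=0$ contains a genuine gap that the route you sketch cannot close. The feature you identify en route --- that vanishing of the residual commutator $[x,y]\in C(\fg)$ on the unique $[p]$-zero lifts of a basis of $\bar\fe\in\EE(2,\bar\fg)$ is a \emph{closed} condition --- is not an obstacle to be circumvented but the heart of the matter: $\pi_\ast(\EE(2,\fg))$ is typically a \emph{proper} closed subset of $\EE(2,\bar\fg)$ and is not open. Concretely, take $\fg=ka\oplus kb\oplus kc\oplus kd\oplus kh$ with $[a,b]=[c,d]=h$, all other brackets zero, $h^{[p]}=h$, and $a^{[p]}=b^{[p]}=c^{[p]}=d^{[p]}=0$. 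Then $C(\fg)=kh$ is a torus, $V(\fg)=ka\oplus kb\oplus kc\oplus kd$, $\bar\fg\cong\fe_4$, and the bracket induces a nondegenerate alternating form on $V(\fg)$ with values in $kh$; so $\EE(2,\fg)$ is precisely the variety of isotropic $2$-planes in $V(\fg)$, a proper closed hypersurface inside $\Gr_2(V(\fg))\cong\EE(2,\bar\fg)$. Hence $\pi_\ast$ is not surjective and its image is not open, and no amount of exponential or sandwich-element machinery will change this. (In this example $\EE(2,\fg)$ happens to be connected anyway, but your proposed method would not show it.)

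The paper's resolution is precisely to \emph{avoid} proving that $\EE(2,\fg)$ itself is connected. It introduces the strictly larger closed subset
\[
\CC(2,\fg):=\{\fv\in\Gr_2(\fg)^{V(\fg)}\ ;\ [\fv,\fv]\subseteq C(\fg)\}\supseteq\EE(2,\fg),
\]
and shows that $\pi_\ast$ restricts to a homeomorphism from $\CC(2,\fg)$ onto $\EE(2,\bar\fg)$: surjectivity now holds because for $\bar\fe\in\EE(2,\bar\fg)$ and $\ff:=\pi^{-1}(\bar\fe)$ the subalgebra $\ff$ is abelian with $\ff^{[p]}\subseteq C(\fg)$, so $\ff=V(\ff)\oplus C(\fg)$ and $V(\ff)\in\CC(2,\fg)$ is a lift, with no vanishing-commutator constraint to worry about. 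By Theorem~A, i.e.\ Corollary~\ref{CRL2}, $\EE(2,\bar\fg)$ is connected, and therefore so is $\CC(2,\fg)$. The ingredient you appear to have overlooked is that Theorem~\ref{LCF3} and Lemma~\ref{CBC1}(3) are deliberately formulated for \emph{arbitrary} closed connected $\cX$ with $\EE(2,\fg)\subseteq\cX\subseteq\Gr_2(\fg)^{V(\fg)}$, not merely for $\cX=\EE(2,\fg)$: connectedness of such an intermediate $\cX$ already forces $\msdeg_\varphi$ (for $\varphi=\mspl_M\circ\msim^j_M$) to be constant on $\cX$, a fortiori constant on $\EE(2,\fg)$. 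That extra generality, with $\cX=\CC(2,\fg)$, is exactly what makes the $\dim V(C(\fg))=0$ case go through; you should not expect, and do not need, connectedness of $\EE(2,\fg)$ itself.
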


\bigskip
\noindent
The condition on the nullcone of $C(\fg)$ is essential for the validity of the foregoing Theorem: Using basic facts from Auslander-Reiten theory, we provide an example of a module $M$ of a
four-dimensional restricted Lie algebra with a one-dimensional unipotent center, whose degree functions $\msdeg^j_M$ are not constant.

Following the work of Carlson-Friedlander-Pevtsova \cite{CFP08}, modules of constant Jordan type and related classes of modules have received considerable attention (cf.\ \cite{Be} and the references
therein). In the context of restricted Lie algebras, these modules are given by conditions on nilpotent operators associated to non-zero elements of the nullcone. Given $j \in \{1,\ldots,p\!-\!1\}$, we denote
by $\EIP^j(\fg) \subseteq \CR^j(\fg)$ the full subcategories of $\modd U_0(\fg)$ consisting of modules with the equal $j$-image property and having constant $j$-rank, respectively. A module belonging
to $\CJT(\fg):=\bigcap_{j=1}^{p-1}\CR^j(\fg)$ is said to have constant Jordan type. In view of \cite{Be,Bi}, the category $\CJT(\fe)_{\le 3}$ of those modules of constant Jordan type whose blocks
have size $\le\!3$ has wild representation type whenever $\fe$ is elementary abelian of $p$-rank $\ge 2$. Our abovementioned results imply that for other classes of Lie algebras, the defining conditions of
these subcategories may be much more restrictive: For a reductive Lie algebra $\fg$ of large rank (and hence of wild representation type), the subcategory consisting of those $M \in \CR^{p-1}(\fg)$ such
that $M|_\fe \in \EIP^{p-1}(\fe)$ for some $\fe \in \EE(2,\fg)$ turns out to be semisimple (see Proposition \ref{Jt2}).

\emph{Throughout this paper, $k$ denotes an algebraically closed field of characteristic $\Char(k)=p\!\ge\!3$. All vector spaces are assumed to be finite-dimensional over $k$}.

\bigskip
\noindent
\textbf{Acknowledgments.} Parts of this article were written during a two-month visit of the second named author to the Collaborative Research Center 701 of the University of
Bielefeld. He would like to take this opportunity and thank Henning Krause and his research team for their hospitality.

H.C.\ is partially supported by NSFC (No.\ 11801204).

\bigskip

\section{Exponentials and Sandwich Elements}
Let $\fg$ be a finite-dimensional Lie algebra over $k$. An element $c\in\fg$ is called an {\it absolute zero-divisor} if $(\ad c)^2=0$ (see \cite{Kos}). In more recent terminology, $c$ is sometimes
referred to as a {\it sandwich element}. Since Lie algebras containing non-zero sandwich elements have a degenerate Killing form, Kostrikin referred to them as {\it algebras with strong degeneration}.
We let
\[ \Sw(\fg):=\{c\in\fg \ ; \ (\ad c)^2=0\}\]
be the Zariski-closed conical subset of sandwich elements of $\fg$. For $i \in \{2,\ldots,p\}$, we also consider the closed conical variety
\[\cN_i(\fg) := \{x \in \fg \ ; \ (\ad x)^i = 0\}.\]
In this section we collect structural properties of $\fg$ that are determined by conditions on $\Sw(\fg)$.

\bigskip
\subsection{Exponential maps}
The map
\[ \exp : \cN_p(\fg) \lra \GL(\fg) \ \ ; \ \ x \mapsto \sum_{i=0}^{p-1} \frac{(\ad x)^i}{i!}\]
is readily seen to be a morphism of conical affine varieties. Being a union of lines through the origin, any closed conical subset $X \subseteq \cN_p(\fg)$ is connected. It follows that
$\exp(X) \subseteq \GL(\fg)$ also enjoys this property.

We denote by $\Aut(\fg)$ the group of automorphisms of $\fg$. Being a closed subgroup of $\GL(\fg)$, $\Aut(\fg)$ is an affine algebraic group. As $p \ge 3$,
\[ \cN_{\frac{p+1}{2}}(\fg) \subseteq \cN_p(\fg)\]
is a closed, conical subset of $\fg$ that contains $\Sw(\fg)$.

For ease of reference we record the following well-known result:

\bigskip

\begin{Lem} \label{Em1} The following statements hold:
\begin{enumerate}
\item If $x \in \cN_{\frac{p+1}{2}}(\fg)$, then $\exp(x) \in \Aut(\fg)$.
\item If $x,y \in \cN_{\frac{p+1}{2}}(\fg)$ are such that $[x,y]=0$, then $\exp(x\!+\!y)=\exp(x)\circ\exp(y)$.  \hfill $\square$  \end{enumerate}\end{Lem}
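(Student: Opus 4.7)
The plan is to prove both assertions by explicit manipulation of the truncated exponential, exploiting the fact that $\ad x$ acts as a derivation of the bracket and that the hypothesis $\cN_{(p+1)/2}(\fg)$ is exactly what is needed to keep all relevant index sums below $p$.

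For (1), I would first observe that $\exp(x)\in\GL(\fg)$: writing $\exp(x)=\id_\fg+N$ with $N=\sum_{i=1}^{(p-1)/2}\frac{(\ad x)^i}{i!}$ a polynomial in the nilpotent operator $\ad x$, the endomorphism $N$ is itself nilpotent, so $\exp(x)$ is invertible. To see that $\exp(x)$ respects the bracket, expand
\[ [\exp(x)y,\exp(x)z]=\sum_{0\le i,j\le (p-1)/2}\frac{1}{i!\,j!}\bigl[(\ad x)^iy,\,(\ad x)^jz\bigr], \]
where the index bounds come from $(\ad x)^{(p+1)/2}=0$. Reorganising by $n=i+j\in\{0,\ldots,p-1\}$, the inner sum for a fixed $n$ equals
\[ \frac{1}{n!}\sum_{i+j=n}\binom{n}{i}\bigl[(\ad x)^iy,\,(\ad x)^jz\bigr]=\frac{1}{n!}(\ad x)^n[y,z], \]
by the Leibniz formula for the derivation $\ad x$; extending the inner sum to all $i,j\ge0$ with $i+j=n$ is harmless since the additional terms have $i\ge(p+1)/2$ or $j\ge(p+1)/2$ and therefore vanish. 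Summation over $n$ then yields $\exp(x)[y,z]$.

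For (2), the commutativity $[x,y]=0$ implies $[\ad x,\ad y]=0$, since $\ad$ is a Lie homomorphism. Consequently the binomial theorem applies:
\[ \exp(x+y)=\sum_{n=0}^{p-1}\frac{(\ad x+\ad y)^n}{n!}=\sum_{n=0}^{p-1}\sum_{i+j=n}\frac{(\ad x)^i(\ad y)^j}{i!\,j!}. \]
Comparing with $\exp(x)\circ\exp(y)=\sum_{i,j=0}^{p-1}\frac{(\ad x)^i(\ad y)^j}{i!\,j!}$, the two expressions coincide, for every nonzero summand satisfies $i,j\le(p-1)/2$, hence $i+j\le p-1$, so the truncation at $n=p-1$ omits no nonvanishing contribution.

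The principal technical point is the reindexing in (1): one must verify that every nonzero summand of the double sum appears in exactly one Leibniz expansion and that the binomial coefficients $\binom{n}{i}$ are invertible modulo $p$ for all $n$ in the relevant range. The choice of the bound $\tfrac{p+1}{2}$ is what makes this work, since $\tfrac{p-1}{2}+\tfrac{p-1}{2}=p-1<p$ keeps $n$ strictly below $p$ throughout, so the characteristic-$p$ computation faithfully mirrors the characteristic-zero one.
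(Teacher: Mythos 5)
The paper states Lemma \ref{Em1} without proof, marking it as ``well-known'', so there is no reference argument to compare against; your task was effectively to reconstruct the standard proof, and you have done so correctly. The structure is the expected one: the invertibility of $\exp(x)=\id_\fg+N$ with $N$ nilpotent; the Leibniz expansion $(\ad x)^n[y,z]=\sum_{i+j=n}\binom{n}{i}[(\ad x)^iy,(\ad x)^jz]$ to reorganise the double sum in (1); and the ordinary binomial expansion of $(\ad x+\ad y)^n$ together with the commutation $[\ad x,\ad y]=0$ in (2). You correctly identify that the bound $\frac{p+1}{2}$ is precisely what guarantees that all exponents $n=i+j$ occurring in a nonzero term satisfy $n\le p-1$, so the truncation of $\exp$ at degree $p-1$ omits nothing and, crucially, no division by $p$ occurs. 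One small imprecision in your closing remark: what must be invertible is $n!$ (equivalently the factorials $i!\,j!$) for $n\le p-1$, since the reindexing relies on $\frac{1}{i!\,j!}=\frac{\binom{n}{i}}{n!}$ holding in $k$; the binomial coefficients $\binom{n}{i}$ do not need to be invertible for the identity to be valid, although they happen to be nonzero modulo $p$ in this range by Lucas' theorem. This does not affect the correctness of the argument.
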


\bigskip
\noindent
Note that the subsets $\Sw(\fg) \subseteq \cN_{\frac{p+1}{2}}(\fg) \subseteq \cN_p(\fg)$ are stable with respect to the canonical action of $\Aut(\fg)$ on $\fg$. Moreover, we have
\[ \varphi\circ \exp(x) = \exp(\varphi(x))\circ \varphi\]
for all $x \in \cN_p(\fg)$ and $\varphi \in \Aut(\fg)$. Thus, if $X \subseteq \cN_{\frac{p+1}{2}}(\fg)$ is an $\Aut(\fg)$-stable subset, then the subgroup of $\Aut(\fg)$ that is generated by
$\exp(X)$ is normal in $\Aut(\fg)$.

We denote by $C(\fg)$ the {\it center} of $\fg$ and call $\fg$ {\it centerless} in case $C(\fg)=(0)$. We say that $S \subseteq \fg$ is a {\it Lie subset}, provided $[s,t] \in S$ for all $s,t \in S$.

The automorphism group of a restricted Lie algebra $(\fg,[p])$ will be denoted $\Aut_p(\fg)$. As it coincides with the stabilizer of the $p$-map, $\Aut_p(\fg)$ is a closed subgroup of
$\Aut(\fg) \subseteq \GL(\fg)$. We let $G_\fg:=\Aut_p(\fg)^{\circ}$ be the identity component of $\Aut_p(\fg)$.

If $(\fg,[p])$ is centerless, then $\cN_p(\fg) = V(\fg)$ is the nullcone of $\fg$.

\bigskip

\begin{Lem}\label{Em2} Let $\fg$ be a Lie algebra.
\begin{enumerate}
\item If $X \subseteq \fg$ is a conical closed $\Aut(\fg)$-stable subset, then $[c,x] \in X$ for all $c \in \Sw(\fg)$ and $x \in X$.
\item $\Sw(\fg)$ is a Lie subset of $\fg$.
\item If $(\fg,[p])$ is restricted and centerless, then $\Sw(\fg) \subseteq V(\fg)$ is a Lie subset and $\exp(\Sw(\fg))\subseteq G_\fg$.
\item If $(\fg,[p])$ is restricted and $\fn \unlhd \fg$ is an elementary abelian $p$-ideal, then $\exp(\fn) \subseteq G_\fg$ is an abelian, connected subgroup. \end{enumerate}\end{Lem}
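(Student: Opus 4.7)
The plan is to prove (1) by exponentiating sandwich elements to transport along $\Aut(\fg)$-orbits, and then to derive (2), (3), and (4) by specializing $X$ appropriately (with (4) essentially reducing to (3)).

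For (1), each $c \in \Sw(\fg)$ lies in $\cN_{(p+1)/2}(\fg)$, so Lemma \ref{Em1}(1) yields $\exp(\lambda c) = \id + \lambda \ad c \in \Aut(\fg)$ for all $\lambda \in k$. Applied to $x \in X$, the $\Aut(\fg)$-stability gives $x + \lambda[c,x] \in X$. Scaling by $\lambda^{-1}$ via the conical property gives $\lambda^{-1} x + [c,x] \in X$ for every $\lambda \in k^\times$. The morphism $k \to \fg$, $\mu \mapsto \mu x + [c,x]$, has image in $X$ over the dense open $k^\times$, so by closedness of $X$ its image lies entirely in $X$; evaluating at $\mu = 0$ gives $[c,x] \in X$.

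For (2), it suffices to apply (1) with $X = \Sw(\fg)$ itself, which is closed, conical, and $\Aut(\fg)$-stable (as noted in the text).

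Part (3) is where I expect the main obstacle. The inclusion $\Sw(\fg) \subseteq V(\fg)$ is immediate: $\ad(c^{[p]}) = (\ad c)^p = 0$ forces $c^{[p]} \in C(\fg) = (0)$. The Lie-subset property is inherited from (2). The delicate step is to prove that $\exp(c)$ preserves the $p$-map, not merely the Lie bracket. I would pass to the restricted enveloping algebra $U_0(\fg)$: since $c^p = c^{[p]} = 0$ there, the element $u := \sum_{i=0}^{p-1} c^i/i!$ is a unit of $U_0(\fg)$ with inverse $\sum_{i=0}^{p-1}(-c)^i/i!$. Inner conjugation by $u$ is an algebra automorphism of $U_0(\fg)$ whose restriction to $\fg \subseteq U_0(\fg)$ coincides with $\exp(c)$, via the standard identity $u x u^{-1} = \exp(c)(x)$ for nilpotent $c$. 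Because algebra automorphisms preserve $p$-th powers and $y^{[p]} = y^p$ in $U_0(\fg)$ for $y \in \fg$, we obtain $\exp(c)(y^{[p]}) = u y^p u^{-1} = (u y u^{-1})^p = \exp(c)(y)^{[p]}$, so $\exp(c) \in \Aut_p(\fg)$. Finally, $\Sw(\fg)$ is conical, hence connected, so $\exp(\Sw(\fg))$ is a connected subset of $\Aut_p(\fg)$ containing $\id$, and therefore contained in $G_\fg$.

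For (4), any $x \in \fn$ satisfies $(\ad x)(\fg) \subseteq \fn$ (ideal) and $(\ad x)(\fn) = 0$ (abelian), hence $(\ad x)^2 = 0$; combined with $x^{[p]} = 0$, the enveloping-algebra argument from (3) gives $\exp(x) \in G_\fg$. For $x, y \in \fn$, commutativity $[x,y] = 0$ together with Lemma \ref{Em1}(2) yields $\exp(x) \circ \exp(y) = \exp(x+y) = \exp(y) \circ \exp(x)$, so $\exp(\fn)$ is a commutative subgroup and $\exp \colon \fn \to G_\fg$ is a morphism of algebraic groups. Since $\fn$ is an irreducible variety, its image $\exp(\fn)$ is connected.
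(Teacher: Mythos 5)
Parts (1) and (2) of your argument match the paper's proof. The critical step in parts (3) and (4), however, has a genuine gap. You assert that conjugation by the unit $u := \sum_{i=0}^{p-1}c^i/i!$ in $U_0(\fg)$ restricts on $\fg$ to the map $\exp(c)=\id+\ad c$. This identity is \emph{not} a consequence of $c$ being nilpotent with $c^p=0$; it requires the much stronger hypothesis that $c^n=0$ with $2n-2<p$, so that the product of the two truncated exponentials contains no ``wrap-around'' monomials $c^ixc^j$ of total $c$-degree $\ge p$. For a nonzero sandwich element $c$ with $c^{[p]}=0$, the monomials $c,c^2,\ldots,c^{p-1}$ are linearly independent in $U_0(\fg)$ by PBW, so $c^p=0$ is the only available relation and the wrap-around terms do survive. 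Concretely, for $p=3$ a direct expansion using $(\ad c)^2(x)=0$ (equivalently $c^2x+cxc+xc^2=0$ in $U_0(\fg)$) gives $u\,x\,u^{-1}=x+[c,x]+cxc^2$, and $cxc^2=[c,x]\,c^2$ is a nonzero element of PBW degree $3$ whenever $[c,x]\notin kc$. Thus conjugation by $u$ does not even map $\fg$ into $\fg$, and the inference that it defines a $p$-map-preserving Lie algebra automorphism collapses; the same difficulty carries over to your treatment of (4), which invokes this step.

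The paper circumvents the issue in two different ways. For (3) it observes that on a \emph{centerless} Lie algebra the restricted structure is unique (two compatible $p$-maps differ by a $p$-semilinear map into $C(\fg)=(0)$), so every Lie algebra automorphism automatically preserves the $p$-map, i.e.\ $\Aut(\fg)=\Aut_p(\fg)$; combined with the connectedness of $\exp(\Sw(\fg))$ this gives the containment in $G_\fg$. For (4), where centerlessness is not assumed, the paper verifies $\exp(x)\in\Aut_p(\fg)$ by a direct application of Jacobson's formula: using that $\fn$ is an abelian ideal one computes $(a+b)^{[p]}=a^{[p]}+(\ad a)^{p-1}(b)$ for $a\in\fg$, $b\in\fn$, and then $\exp(x)(a)^{[p]}=\exp(x)(a^{[p]})$ follows. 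You would need to replace the $U_0(\fg)$-conjugation step with one of these arguments.
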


\begin{proof} (1) In view of Lemma \ref{Em1}, we have $\exp(c) \in \Aut(\fg)$ for all $c \in \Sw(\fg)$. Since the set $X$ is stable under $\Aut(\fg)$, it follows that
$\exp(c)(X)\subseteq X$ for all $c \in \Sw(\fg)$. Consequently, the map
\[ f_{(x,c)} : k \lra \fg \ \  ; \ \ \alpha\mapsto\alpha x\!+\![c,x] \ \ \ \ \ \ \ ((x,c) \in X\!\times\!\Sw(\fg))\]
is a morphism such that $f_{(x,c)}(k\!\smallsetminus\!\{0\})\subseteq X$. Hence
\[f_{(x,c)}(k)=f_{(x,c)}(\overline{k\!\smallsetminus\!\{0\}})\subseteq\overline{f_{(x,c)}(k\!\smallsetminus\!\{0\})}\subseteq\overline{X}=X,\]
so that $[c,x]=f_{(x,c)}(0)\in X$.

(2) Since $\Sw(\fg)$ is a closed conical $\Aut(\fg)$-stable subset of $\fg$, this is a direct consequence of (1).

(3) As $\fg$ is centerless, it follows that $\Aut(\fg)= \Aut_p(\fg)$. In addition, $\exp(\Sw(\fg))$ is connected and contains $\id_\fg$; hence it is contained in the connected component $G_\fg$
of $\id_\fg$.

(4) Since $\fn$ is elementary abelian, we have $\fn \subseteq \Sw(\fg)$. As $\fn$ is an abelian ideal, direct computation shows that the map $\exp$ restricts to a morphism
\[ \exp : \fn \lra \Aut(\fg)\]
of algebraic groups. In particular, the group $\exp(\fn)$ is abelian and connected.

Let $a \in \fg$ and $b \in \fn$ and suppose that $T$ is an indeterminate over $k$. Observing $[b,a] \in \fn$ in conjunction with $\fn$ being abelian, we obtain the
following identities in the Lie algebra $\fg\!\otimes_k\!k[T]$:
\begin{eqnarray*}
\ad(a\otimes T\!+\!b\otimes 1)^{p-1}(a\otimes 1) & = & \ad(a\otimes T\!+\!b\otimes 1)^{p-2}([b,a]\otimes 1) = (\ad a)^{p-2}([b,a])\otimes T^{p-2}\\
                                                                              & = & -(\ad a)^{p-1}(b)\otimes T^{p-2}.
\end{eqnarray*}
In view of Jacobson's formula \cite[(II.1)]{SF}, this implies
\[ (a\!+\!b)^{[p]} = a^{[p]}\!+\!\!(\ad a)^{p-1}(b).\]
Let $x \in \fn$ and $a \in \fg$. Then we have $[x,a] \in \fn$ and the foregoing formula yields
\begin{eqnarray*}
\exp(x)(a)^{[p]} & = & (a\!+\![x,a])^{[p]} = a^{[p]}\!+\!(\ad a)^{p-1}([x,a]) = a^{[p]}\!-\!(\ad a)^p(x) = a^{[p]}\!-\![a^{[p]},x] \\
& = & a^{[p]}\!+\![x,a^{[p]}] = \exp(x)(a^{[p]}).
\end{eqnarray*}
Consequently, $\exp(x) \in \Aut_p(\fg)$. We conclude that $\exp(\fn) \subseteq G_\fg$.  \end{proof}

\bigskip

\begin{Remark} {\it Let $\fg:=\Lie(G)$ be the Lie algebra of a reductive algebraic group $G$. Then we have $\Sw(\fg)\!=\!C(\fg)$}.

\begin{proof} According to Lemma \ref{Em2}, $\Sw(\fg)$ is a conical $G$-stable Lie subset of $\fg$. The Engel-Jacobson Theorem \cite[(I.3.1)]{SF} now shows that the linear span $\fn:= \langle \Sw(\fg)
\rangle$ is a $p$-ideal of $\fg$ that acts strictly triangulably on $\fg$. In particular, $\fn$ is nilpotent.

Since $G$ is reductive, the factor group $G':= G/C(G)^\circ$ of $G$ by the connected component of the center $C(G)$ is semisimple and $C(G)^\circ$ is a torus. We put $\fg':= \Lie(G')$ and consider the
canonical projection $\pi : G \lra G'$. The differential $\msd(\pi) : \fg \lra \fg'$ is surjective (cf.\ \cite[(5.2.3),(3.2.21)]{Sp}), while $\ker\msd(\pi)=\Lie(C(G))$ is a torus. In view of \cite[(10.2)]{Hu67}, the
$p$-ideal $\msd(\pi)(\fn)$ of $\fg'$ is a torus, so that $\fn$, being an extension of tori, is also a torus. Hence $\fn \subseteq C(\fg)$, whence $\Sw(\fg)=C(\fg)$. \end{proof} \end{Remark}

\bigskip

\begin{Definition} Let $\fg$ be a Lie algebra, $L \subseteq \cN_{\frac{p+1}{2}}(\fg)$ be a conical Lie subset. We denote by $\Exp(L) \subseteq \Aut(\fg)$ the smallest closed subgroup of $\Aut(\fg)$
containing $\exp(L)$. \end{Definition}

\bigskip

\begin{Lem}\label{Em3} Let $(\fg,[p])$ be a restricted Lie algebra, $L \subseteq \cN_{\frac{p+1}{2}}(\fg)$ be a conical Lie subset such that $\exp(L) \subseteq \Aut_p(\fg)$. Then the following
statements hold:
\begin{enumerate}
\item $\Exp(L)$ is a closed, connected, unipotent subgroup of $G_\fg$.
\item We have $\ad(\langle L \rangle) \subseteq \Lie(\Exp(L))$.
\item If $L$ is $G_\fg$-stable, then $\Exp(L)$ is normal in $G_\fg$. \end{enumerate} \end{Lem}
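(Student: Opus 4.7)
My plan is to address (2) and (3) first, which are essentially formal, and then turn to (1), whose unipotency assertion is the real content.

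For (2), for each $\ell \in L$ the morphism $\mu_\ell \colon \A^1 \to \Exp(L)$, $t \mapsto \exp(t\ell)$, is a curve through $\id_\fg$; differentiating at $t = 0$ yields $\ad \ell \in \Lie(\Exp(L))$. Since $\Lie(\Exp(L))$ is a linear subspace of $\fgl(\fg)$, this gives $\ad \langle L \rangle \subseteq \Lie(\Exp(L))$.

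For (3), I would invoke the conjugation identity $\sigma \circ \exp(x) \circ \sigma^{-1} = \exp(\sigma(x))$ (recorded in the discussion following Lemma \ref{Em1}), valid for $\sigma \in \Aut(\fg)$ and $x \in \cN_p(\fg)$. If $L$ is $G_\fg$-stable, then for every $\sigma \in G_\fg$ and $\ell \in L$ we have $\sigma(\ell) \in L$, whence $\sigma \exp(L) \sigma^{-1} = \exp(\sigma L) = \exp(L)$. Conjugation by $\sigma$ therefore preserves $\exp(L)$, and hence preserves the smallest closed subgroup containing it, namely $\Exp(L)$.

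For (1), closedness holds by definition. The containment $\Exp(L) \subseteq \Aut_p(\fg)$ follows from the hypothesis $\exp(L) \subseteq \Aut_p(\fg)$ together with the closedness of $\Aut_p(\fg) \subseteq \Aut(\fg)$. For connectedness, the conicality of $L$ gives $L = \bigcup_{\ell \in L} k\ell$, so $\exp(L) = \bigcup_{\ell \in L} \exp(k\ell)$ is a union of irreducible subvarieties of $\Aut(\fg)$, each passing through $\id_\fg$. The standard result that the subgroup of an algebraic group generated by such a family is closed and connected then yields the connectedness of $\Exp(L)$, and consequently $\Exp(L) \subseteq \Aut_p(\fg)^\circ = G_\fg$.

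The main obstacle is unipotency. Each generator $\exp(\ell)$ is individually unipotent, since $\exp(\ell) - \id_\fg$ is a polynomial in the nilpotent endomorphism $\ad \ell$ with zero constant term; however, unipotency of a closed connected subgroup of $\GL(\fg)$ is not automatic from unipotency of a generating set. The plan is to exhibit a descending flag of $\fg$, stable under every $\exp(\ell)$ and on which each $\exp(\ell)$ induces the identity on successive quotients; $\Exp(L)$ is then conjugate into the unipotent stabilizer of the flag. A natural candidate is $\fg^{(0)} = \fg$ and $\fg^{(i+1)} = \langle [\ell, v] : \ell \in L, \ v \in \fg^{(i)} \rangle$, on which the trivial action on quotients is immediate from the Taylor expansion of $\exp(\ell)$. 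The crucial step is to verify termination $\fg^{(N)} = 0$ for some $N$, which is where one must combine $L \subseteq \cN_{\frac{p+1}{2}}(\fg)$ with the Lie-subset property, essentially via an Engel-type argument applied to the subalgebra $\langle L \rangle$ of ad-nilpotent elements.
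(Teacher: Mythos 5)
Your proposal is correct and follows essentially the same route as the paper: the key input for unipotency in part (1) is the Engel--Jacobson theorem for Lie subsets of ad-nilpotent elements (the paper invokes it to produce a simultaneously strictly upper triangular basis for $\ad\langle L\rangle$, which is equivalent to the termination of your descending flag), closedness and connectedness come from the standard result on groups generated by irreducible subvarieties through the identity, and parts (2) and (3) are handled by differentiating the one-parameter subgroups $\lambda\mapsto\exp(\lambda\ell)$ and by the conjugation identity $\sigma\circ\exp(x)\circ\sigma^{-1}=\exp(\sigma(x))$, respectively, just as in the paper.
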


\begin{proof} (1) By assumption, $L$ is a Lie subset that acts via the adjoint representation on $\fg$ by nilpotent transformations. Thus, the Engel-Jacobson
theorem provides a basis of $\fg$, such that, for each element $x \in \langle L \rangle$, the transformation $\ad x$ is represented by a strictly upper triangular
matrix. Consequently, $\Exp(L)$ is a subgroup of the closed subgroup of upper triangular unipotent matrices and thus is in particular unipotent.

Each element $c\in L$ gives rise to a one-parameter subgroup
\[\varphi_c: k \lra \Exp(L) \ \ ; \ \ \lambda\mapsto \exp(\lambda c).\]
As $\Exp(L)$ is generated by all $\varphi_c(k)$, general theory (cf.\ \cite[Theorem 2.4.6]{Ge}) provides $c_1,\ldots, c_n \in L$ such that
\[\Exp(L) =\varphi_{c_1}(k)\cdots\varphi_{c_n}(k).\]
Hence $\Exp(L)$ is a closed, connected subgroup of $\Aut(\fg)$. Our assumption $\exp(L) \subseteq \Aut_p(\fg)$ now yields $\Exp(L)\subseteq G_\fg$.

(2) Given $c \in L$, we have
\[ \varphi_c(\lambda) = \sum_{i=0}^{\frac{p-1}{2}} \lambda^i \frac{(\ad c)^i}{i!},\]
so that $\ad c =\msd(\varphi_c)(0) \in \Lie(\Exp(L))$. This implies $\ad(\langle L \rangle) \subseteq \Lie(\Exp(L))$.

(3) This was observed before. \end{proof}

\bigskip

\begin{Cor} \label{Em4} Let $(\fg,[p])$ be a restricted Lie algebra such that $\exp(\Sw(\fg))\subseteq \Aut_p(\fg)$. If $\{0\} \subsetneq X$ is a closed conical $G_\fg$-stable subset of $\fg$,
then there is $x_0 \in X\!\smallsetminus\!\{0\}$ such that $[c,x_0]=0$ for all $c \in \Sw(\fg)$. \end{Cor}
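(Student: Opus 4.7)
The plan is to obtain a non-trivial fixed point for the action of a suitable connected unipotent subgroup of $G_\fg$ on the projective variety $\PP(X)$, and then to translate the fixed point condition into the vanishing of $[c,x_0]$ using the fact that $(\ad c)^2=0$.

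First, I would verify that the hypotheses of Lemma~\ref{Em3} are met for $L := \Sw(\fg)$. By definition $\Sw(\fg) \subseteq \cN_{\frac{p+1}{2}}(\fg)$ and it is conical; by Lemma~\ref{Em2}(2) it is a Lie subset; and by assumption $\exp(\Sw(\fg)) \subseteq \Aut_p(\fg)$. Moreover $\Sw(\fg)$ is $\Aut(\fg)$-stable, hence in particular $G_\fg$-stable, so part~(3) of Lemma~\ref{Em3} applies. Consequently, $H := \Exp(\Sw(\fg))$ is a closed, connected, unipotent subgroup of $G_\fg$, and it is normalized by $G_\fg$.

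Next, since $X$ is a closed conical subset of $\fg$, its projectivization $\PP(X) \subseteq \PP(\fg)$ is a (non-empty) projective variety, and the action of $G_\fg$ on $X$ descends to an action on $\PP(X)$. The subgroup $H$, being connected and unipotent, therefore admits a fixed point on $\PP(X)$ by the Borel fixed-point theorem. Choose a representative $x_0 \in X \smallsetminus \{0\}$ of such a fixed point, so that $\varphi(x_0) \in k x_0$ for every $\varphi \in H$; in particular $\exp(c)(x_0) \in k x_0$ for each $c \in \Sw(\fg)$.

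Fix $c \in \Sw(\fg)$. Because $(\ad c)^2 = 0$, we have $\exp(\lambda c)(x_0) = x_0 + \lambda [c,x_0]$ for all $\lambda \in k$. The fixed-point condition yields, for every $\lambda \in k$, a scalar $\mu(\lambda) \in k$ with
\[
x_0 + \lambda [c,x_0] = \mu(\lambda)\, x_0.
\]
Substituting $\lambda = 1$ shows $[c,x_0] \in k x_0$, say $[c,x_0] = \nu x_0$. Since $\ad c$ is a nilpotent endomorphism of $\fg$, its only eigenvalue is $0$, forcing $\nu = 0$ and hence $[c,x_0] = 0$. As $c \in \Sw(\fg)$ was arbitrary, $x_0$ has the required property.

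The only delicate point is the verification that the Borel fixed-point theorem is available, which boils down to the combination of Lemma~\ref{Em3}(1) (giving connectedness and unipotence of $\Exp(\Sw(\fg))$) with the projectivity of $\PP(X)$; everything else reduces to the truncated expansion of $\exp(c)$ for a sandwich element $c$.
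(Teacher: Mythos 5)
Your proof is correct and mirrors the paper's argument: in both cases one applies Lemma~\ref{Em3} with $L=\Sw(\fg)$ to obtain the closed connected unipotent subgroup $\Exp(\Sw(\fg))\subseteq G_\fg$, invokes Borel's fixed point theorem on $\PP(X)$, and then uses nilpotency of $\ad c$ to force the eigenvalue to vanish. The extra expansion $\exp(\lambda c)(x_0)=x_0+\lambda[c,x_0]$ that you spell out is exactly how the paper arrives at $[c,x_0]=(\lambda(c)-1)x_0$.
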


\begin{proof} Thanks to Lemma \ref{Em3}, $\Exp(\Sw(\fg))$ is a unipotent subgroup of $G_\fg$ that acts on the projective variety $\PP(X)$.
Borel's fixed point theorem \cite[(7.2.5)]{Sp} provides $x_0 \in X\!\smallsetminus\!\{0\}$ and a function $\lambda : \Sw(\fg) \lra k$ such that
\[ [c,x_0] = (\lambda(c)\!-\!1)x_0 \ \ \ \ \ \forall \ c \in \Sw(\fg).\]
Since $\ad c$ is nilpotent, it follows that $\lambda(c)\!-\!1 = 0$ for every $c \in \Sw(\fg)$. \end{proof}

\bigskip

\subsection{Inner ideals and almost classical Lie algebras}
Let $\fg$ be a finite-dimensional Lie algebra. A subspace $V \subseteq \fg$ is called an {\it inner ideal}, provided $[V,[V,\fg]] \subseteq V$.

\bigskip

\begin{Thm}[\cite{Pr86,Pr87a}] \label{ACL0} Suppose that $p\!\ge\!5$. Every non-zero Lie algebra $\fg$ affords a one-dimensional inner ideal. \hfill $\square$ \end{Thm}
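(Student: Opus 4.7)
The plan is to exhibit a non-zero element $c \in \fg$ with $(\ad c)^2(\fg) \subseteq kc$; the simplest sufficient condition is $c \in C(\fg) \cup \Sw(\fg)$, since each of these yields the stronger relation $[c,[c,\fg]] = 0 \subseteq kc$. I would split into the solvable case (handled elementarily) and the semisimple case (where the essential input is Premet's solution of Kostrikin's conjecture).

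\emph{Step 1: reduction via the solvable radical.} Suppose the solvable radical $R$ of $\fg$ is non-zero, and let $A := R^{(k)}$ be the last non-zero term of the derived series of $R$. Being characteristic in $R$, which is itself characteristic in $\fg$, $A$ is a non-zero abelian ideal of $\fg$. For any $0 \ne a \in A$, one has
\[
[a,[a,\fg]] \ \subseteq \ [a,A] \ = \ 0 \ \subseteq \ ka,
\]
so $ka$ is the desired $1$-dimensional inner ideal. In particular, the result is clear whenever $C(\fg) \ne 0$.

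\emph{Step 2: the semisimple case.} Assume $R = 0$. Passing to a minimal ideal $\fi \unlhd \fg$, the goal is to locate a sandwich element in $\fi$ (or an ambient element whose square of $\ad$ lands in a chosen line). The crucial input is Premet's theorem \cite{Pr86, Pr87a}, resolving Kostrikin's conjecture: over an algebraically closed field of characteristic $p \ge 5$, every non-zero finite-dimensional Lie algebra (without strong degeneration would force severe structural constraints that Premet rules out) contains a non-zero sandwich element $c \in \Sw(\fg)$. Since $(\ad c)^2 = 0$, the line $kc$ is automatically a $1$-dimensional inner ideal.

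The main obstacle is entirely Step 2: Premet's existence theorem is genuinely deep, drawing on the structural theory of filtered and graded simple Lie algebras (thin Lie algebras, the Recognition Theorem, local analysis of root spaces) and is the culmination of a long program initiated by Kostrikin. For the present purposes it would simply be quoted from \cite{Pr86, Pr87a}. The restriction $p \ge 5$ is inherited verbatim from Premet's work and is known to be sharp, since the analogous statement already fails in characteristic $3$; this is the reason the hypothesis appears in the theorem.
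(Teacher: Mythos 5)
Your Step 1 (the case $R \ne 0$) is fine: the last non-zero term $A$ of the derived series of the solvable radical is an abelian ideal, so for $0 \ne a \in A$ one has $[a,\fg] \subseteq A$ and hence $[a,[a,\fg]] \subseteq [a,A] = 0$, giving the one-dimensional inner ideal $ka$.

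The gap is in Step 2, and it is fatal. You assert that Premet's theorem implies every non-zero finite-dimensional Lie algebra over an algebraically closed field of characteristic $p \ge 5$ contains a non-zero sandwich element. That is not what Premet proved and it is false. What Premet's theorem (quoted as Theorem \ref{ACL1} in this paper) actually says is that $\Sw(\fg) = \{0\}$ \emph{if and only if} $\fg$ is almost classical semisimple; it \emph{characterizes} the sandwich-free algebras rather than ruling them out. The simplest counterexample to your claim is $\fg = \fsl(2)$ for $p \ge 5$: one checks directly that $(\ad e)^2(f) = [e,[e,f]] = [e,h] = -2e \ne 0$, so $e$ is not a sandwich and indeed $\Sw(\fsl(2)) = 0$, yet $ke$ \emph{is} a one-dimensional inner ideal because $[e,[e,\fg]] = ke$. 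So inner ideals exist in cases where sandwiches do not, and your reduction of the inner-ideal statement to sandwich existence collapses. To finish Step 2 you would need a separate argument covering the almost classical semisimple case (e.g.\ a highest-root vector in each classical simple summand gives a one-dimensional inner ideal, as the $\fsl(2)$ computation above illustrates), or simply cite Theorem \ref{ACL0} as a distinct result of Premet from \cite{Pr86}, which is exactly what the paper does---it states the theorem with a reference and no proof, treating it and Theorem \ref{ACL1} as two independent black boxes.
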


\bigskip
\noindent
A finite-dimensional Lie algebra, which is representable as a direct sum of ideals which are simple Lie algebras of classical type, is called a {\it classical semisimple} Lie algebra. The reader is referred
to \cite[Chap.II]{Se67} for a comprehensive account concerning classical Lie algebras. Following Premet \cite{Pr87a}, we will say that $\fg$ is an {\it almost classical semisimple} Lie algebra, if there
exists a classical semisimple Lie algebra $\fL$ such that $\Der\fL\supseteq\fg\supseteq\ad\fL$. We record the following useful theorem, which was established by Premet (cf.\ \cite[Thm.3]{Pr87a}, \cite{Pr87b}):

\bigskip

\begin{Thm}\label{ACL1} Suppose that $p\!\ge\!5$. Let $\fg$ be a non-zero finite-dimensional Lie algebra. The following conditions are equivalent:
\begin{enumerate}
\item[(i)] $\Sw(\fg)=\{0\}$.
\item[(ii)] $\fg$ is an almost classical semisimple Lie algebra. \hfill $\square$ \end{enumerate}  \end{Thm}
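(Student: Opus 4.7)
The plan is to prove the two implications separately; the direction (i)$\Rightarrow$(ii) will require substantially more machinery than the reverse.

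For (ii)$\Rightarrow$(i), I would argue as follows. Suppose $\ad\fL \subseteq \fg \subseteq \Der\fL$ for a classical semisimple $\fL=\fL_1\oplus\cdots\oplus\fL_t$. By Lemma \ref{Em2}(2), $\Sw(\fg)$ is a conical, $\Aut(\fg)$-stable Lie subset, so the Engel-Jacobson theorem (as already used in the Remark above) forces $\fn:=\langle\Sw(\fg)\rangle$ to be a nilpotent ideal of $\fg$ acting strictly triangulably on $\fg$. Intersecting $\fn$ with each simple ideal $\ad\fL_i$ yields either $0$ or all of $\ad\fL_i$; the latter is ruled out because classical simple Lie algebras are not nilpotent. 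Hence $\fn\cap\ad\fL=0$, and since both are ideals of $\fg$ one obtains $[\fn,\ad\fL]=0$. A derivation $d\in\fn\subseteq\Der\fL$ commuting with $\ad\fL$ satisfies $\ad d(x)=[d,\ad x]=0$ for every $x\in\fL$, so $d(x)\in C(\fL)=0$, forcing $\fn=0$ and hence $\Sw(\fg)=\{0\}$.

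For (i)$\Rightarrow$(ii), the first step is to observe that $\fg$ has no non-zero abelian ideal: if $I\unlhd\fg$ is abelian and $c\in I$, then $[c,\fg]\subseteq I$ and $[c,I]=0$ give $(\ad c)^2=0$, whence $c=0$ by assumption. Consequently every solvable ideal of $\fg$ vanishes (its last non-zero derived power would be an abelian ideal), so $\fg$ is semisimple. By Block's structure theorem for semisimple modular Lie algebras, the socle $\fL:=\Soc(\fg)$ decomposes as a direct sum of simple ideals $\fL_i$, and the adjoint action of $\fg$ on $\fL$ identifies $\fg$ with a subalgebra of $\Der\fL$ satisfying $\ad\fL\subseteq\fg\subseteq\Der\fL$. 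The decisive step is to show that each $\fL_i$ is of classical type, which I would approach by invoking the Block-Wilson-Premet-Strade classification of simple modular Lie algebras in characteristic $p\ge 5$ and then exhibiting, inside every non-classical simple summand (i.e.\ in each Cartan-type series $W$, $S$, $H$, $K$), an explicit non-zero element $c$ with $(\ad c)^2=0$; top-degree divided-power monomials furnish such absolute zero divisors, ruling out every non-classical factor and producing a contradiction with $\Sw(\fg)=\{0\}$.

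The principal obstacle is clearly the second step of the hard direction: it rests on the full force of the modular classification of simple Lie algebras and on the case-by-case production of non-zero absolute zero divisors in each Cartan-type family, which is precisely where Premet's original analysis is concentrated. The \emph{easy} direction, by contrast, is a clean consequence of the Engel-Jacobson machinery already invoked in the preceding Remark.
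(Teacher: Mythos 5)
This theorem is stated in the paper \emph{without proof}: the $\square$ at the end of the statement marks it as a quotation from Premet \cite{Pr86,Pr87a,Pr87b}, so there is no internal argument to measure your proposal against. Two remarks on the substance of your sketch are nonetheless in order.

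Your route through the hard direction (i)$\Rightarrow$(ii) is not the one Premet takes in the cited sources. Premet's argument is built on minimal inner ideals (the result recorded here as Theorem \ref{ACL0}) and constructs the classical structure directly from absence of strong degeneration, deliberately avoiding the classification of simple modular Lie algebras --- which, for \emph{unrestricted} Lie algebras at $p=5$, simply was not available at the time and only became so through Strade's much later work. Your classification-based route is defensible in the current state of the art, but you also need more care in two places: Block's theorem describes the socle as $\bigoplus_i S_i\otimes A_i$ with the $A_i$ truncated polynomial algebras, and those nontrivial tensor factors have to be excluded separately (any $s\otimes a$ with $a$ in the top degree of $A_i$ is a sandwich); and at $p=5$ the Melikian algebras appear as an additional family and must also be shown to possess nonzero absolute zero divisors, which your ``$W$, $S$, $H$, $K$'' enumeration omits.

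In the ``easy'' direction (ii)$\Rightarrow$(i) there is a genuine gap. Engel--Jacobson applied to the Lie subset $\Sw(\fg)$ shows only that $\fn:=\langle\Sw(\fg)\rangle$ is a nilpotent subalgebra acting strictly triangulably on $\fg$; it does \emph{not} show $\fn$ is an ideal. In the paper's Remark the ideal property is obtained because there $\fg=\Lie(G)$ with $G$ connected reductive and $\Sw(\fg)$ is $\Ad(G)$-stable --- a hypothesis you do not have for an arbitrary almost classical Lie algebra, so the subsequent step ``$[\fn,\ad\fL]\subseteq\fn\cap\ad\fL$'' is unsupported. The defect is repairable without the ideal claim: for $c\in\Sw(\fg)$ and $x\in\fL$ one has $0=[c,[c,\ad x]]=\ad(c^{2}(x))$, so $c^{2}(x)\in C(\fL)=(0)$ and $c^{2}=0$ as a derivation of $\fL$; writing $\Der\fL=\bigoplus_i\Der\fa_i$ over the simple summands, each component of $c$ is a nilpotent derivation, hence inner, and one reduces to the statement that a classical simple Lie algebra has no nonzero absolute zero divisor, which is the classical nondegeneracy result going back to \cite{Kos}.
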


\bigskip

\begin{Lem}\label{ACL2} Let $(\fg,[p])$ be almost classical semisimple. Then $[\fg,\fg]$ is a classical semisimple $p$-ideal of $\fg$ such that $V([\fg,\fg])=V(\fg)$.\end{Lem}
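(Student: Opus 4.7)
The plan is to derive all three assertions from two structural facts about the classical semisimple Lie algebra $\fL$ in characteristic $p\!\ge\!5$: (a) $\fL$ is perfect, and (b) $\Der\fL/\ad\fL$ is a torus (abelian Lie algebra consisting of semisimple elements). Both follow, after decomposing $\fL=\fL_1\oplus\cdots\oplus\fL_s$ into simple classical ideals, from the standard structure theory of simple classical Lie algebras in characteristic $p\!\ge\!5$, which I would cite rather than rederive.

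Given (a) and (b), the identification $[\fg,\fg]=\ad\fL$ is formal. Applying $\ad$ to $\fL=[\fL,\fL]$ gives $\ad\fL=[\ad\fL,\ad\fL]\subseteq[\fg,\fg]$; conversely, since $\Der\fL/\ad\fL$ is abelian by (b), one obtains $[\fg,\fg]\subseteq[\Der\fL,\Der\fL]\subseteq\ad\fL$. As $\ad:\fL\to\ad\fL$ is an isomorphism (the classical simple summands being centerless), $[\fg,\fg]\cong\fL$ is classical semisimple, taking care of the first assertion.

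To verify that $\ad\fL$ is a $p$-ideal of $\fg$, I would first argue that the given $p$-map on $\fg$ agrees with the associative $p$-th power inherited from $\fgl(\fL)$ under the inclusion $\fg\subseteq\Der\fL\subseteq\fgl(\fL)$. For $x\in\fg$, the identities $\ad_\fg(x^{[p]})=(\ad_\fg x)^p$ and $\ad_{\fgl(\fL)}(x^p)=(\ad x)^p$ force $x^{[p]}-x^p$ to centralize $\fg$ in $\fgl(\fL)$, hence to centralize $\ad\fL$. By Schur's lemma $C_{\fgl(\fL)}(\ad\fL)$ consists of operators that are scalar on each simple summand of $\fL$, and such operators are derivations only when zero, while $x^{[p]}-x^p\in\Der\fL$ since $p$-th powers of derivations are derivations. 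Therefore $x^{[p]}=x^p$, and the restricted Lie algebra axiom $(\ad a)^p=\ad(a^{[p]})$ shows $\ad\fL$ is closed under this $p$-map.

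The inclusion $V([\fg,\fg])\subseteq V(\fg)$ is immediate once $[\fg,\fg]$ is a $p$-ideal. For the reverse, I would consider the induced embedding of restricted Lie algebras $\fg/[\fg,\fg]\hookrightarrow\Der\fL/\ad\fL$; by (b) the target, and therefore the source, is a torus. Any $x\in V(\fg)$ projects to a nilpotent element of this torus, which must vanish, so $x\in[\fg,\fg]=\ad\fL$ and thus $x\in V([\fg,\fg])$. The main obstacle is establishing (b) that the outer derivation algebra of a simple classical Lie algebra is toral in characteristic $p\!\ge\!5$ — a case-by-case classical result — while everything else is formal and rests only on Schur's lemma and $(\ad a)^p=\ad(a^{[p]})$.
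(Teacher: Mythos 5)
Your proposal is correct and follows essentially the same route as the paper: both reduce to the facts that $\fL$ is perfect and that $\Der\fL/\ad\fL$ is toral, conclude $[\fg,\fg]=\ad\fL$, force $x^{[p]}=x^p$ by showing the difference centralizes $\ad\fL$ and hence vanishes, and then deduce that every $p$-nilpotent element of $\fg$ lies in $\ad\fL=[\fg,\fg]$. The small presentational variations --- you invoke Schur's lemma where the paper computes $[d,\ad a]=\ad(d(a))=0$ directly, and you pass to the quotient torus $\Der\fL/\ad\fL$ where the paper works with a complementary torus $\ft\subseteq\Der\fL$ satisfying $\Der\fL=\ft\oplus\ad\fL$ --- are inessential.
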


\begin{proof} Given $m \in \NN$, we consider the classical simple Lie algebra $\tilde{A}_{mp-1}:=\fsl(mp)/k$. Using \cite[(1.2)]{Fa88} one can show by direct computation that
\[\Der\tilde{A}_{mp-1} =  \ad\tilde{A}_{mp-1}\!\oplus\! kt,\]
where $kt \subseteq \Der\tilde{A}_{mp-1}$ is a torus. In particular, $[\Der\tilde{A}_{mp-1},\Der\tilde{A}_{mp-1}] = \ad\tilde{A}_{mp-1}$ is a $p$-subalgebra of $\Der\tilde{A}_{mp-1}$
and $V([\Der\tilde{A}_{mp-1},\Der\tilde{A}_{mp-1}]) = V(\Der\tilde{A}_{mp-1})$.

Since $\fg$ is almost classical, there exists a classical semisimple Lie algebra $\fL$ such that
\[ \Der\fL\supseteq\fg\supseteq\ad\fL.\]
Writing $\fL=\fa_1\!\oplus\cdots\oplus\!\fa_\ell$, where the $\fa_i$ are simple ideals of classical type, it follows that
\[\Der\fL\cong\Der\fa_1\oplus\cdots\oplus\Der\fa_\ell\]
is a direct sum of restricted Lie algebras. It is well-known that $\ad\fa=\Der\fa$ for every classical simple Lie algebra $\fa\not \cong\tilde{A}_{mp-1}$, cf.\ \cite[p.160]{Pr86}. The observation above
now implies,
\[[\Der\fL,\Der\fL]=\ad\fL,\]
as well as $\ad\fL = [\ad\fL,\ad\fL] \subseteq [\fg,\fg] \subseteq [\Der\fL,\Der\fL] =\ad\fL$. Accordingly, $[\fg,\fg] = \ad\fL$ is a $p$-subalgebra of $\Der\fL$, whose $p$-th power map
is denoted by $x \mapsto x^p$.

Given $x \in [\fg,\fg]$, we consider the derivation $d:= x^{[p]}\!-\!x^p \in \Der\fL$. We obtain $(0) = [d,[\fg,\fg]] = [d,\ad\fL] = \ad d(\fL)$, so that $d(\fL) \subseteq C(\fL)=(0)$. Thus, $x^{[p]} = x^p
\in [\fg,\fg]$, showing that $[\fg,\fg]$ is a $p$-ideal of $\fg$.

In view of the above, there exists a torus $\ft \subseteq \Der\fL$ such that $\Der\fL = \ft \! \oplus \! \ad\fL$. This readily implies $V(\Der\fL)=V(\ad\fL)$.

Let $d \in V(\fg)$. For $y \in \fL$ we obtain $0 = (\ad d)^p(\ad y) = \ad d^p(y)$, showing that $d^p=0$. Consequently, $d \in \ad\fL = [\fg,\fg]$, whence $V(\fg)=V([\fg,\fg])$. \end{proof}

\bigskip

\section{Connectedness of $\EE(2,\fg)$} \label{S:Co}
Let $(\fg,[p])$ be a restricted Lie algebra. Recall that
\[ V(\fg) := \{x \in \fg \ ; \ x^{[p]}=0\}\]
is the {\it (restricted) nullcone} of $V(\fg)$. For $r \in \NN_0$, we denote by $\Gr_r(\fg)$ the Grassmann variety of $r$-dimensional subspaces of $\fg$. In view of
\cite[(7.2),(7.3)]{Fa04}, the set $\Ab_r(\fg) \subseteq \Gr_r(\fg)$ of all abelian Lie subalgebras of dimension $r$ is closed, while the map
\[ \Gr_r(\fg) \lra \NN_0 \ \ ; \ \ X \mapsto \dim X \cap V(\fg)\]
is upper semicontinuous. Consequently,
\[ \EE(r,\fg):= \{ \fe \in \Ab_r(\fg) \ ; \ \fe \subseteq V(\fg)\}\]
is a closed subset of $\Gr_r(\fg)$. This is the projective variety of $r$-dimensional elementary abelian subalgebras of $\fg$, that was first introduced by Carlson-Friedlander-Pevtsova in \cite{CFP13}.
Given a $p$-subalgebra $\fh \subseteq \fg$, the canonical inclusion provides a closed immersion
\[ \EE(r,\fh) \hookrightarrow \EE(r,\fg),\]
whose image is $\{\fe \in \EE(r,\fg) \ ; \ \fe \subseteq \fh\}$. We shall henceforth identify these two spaces without further notice.

Carlson's result \cite{Ca} implies that the variety $\EE(1,\fg)$ is connected. This is no longer the case for $\EE(2,\fg)$. In fact, as the following example
illustrates, the property of connectedness may depend on the characteristic of $k$.

\bigskip

\begin{Example} We consider the $3$-dimensional Heisenberg algebra $\fh := kx\!\oplus\!ky\!\oplus kz$, whose bracket and $p$-map are given by
\[ [x,y]=z \ , \ [x,z]=0=[y,z] \ \ ; \ \ x^{[p]}=0 \ , \ y^{[p]}=0 \ , \ z^{[p]}=0,\]
respectively.

If $\fe \in \EE(2,\fh)$ satisfies $\fe\cap kz=(0)$, then $\fh=\fe\!\oplus\!kz$, so that $[\fh,\fh] = [\fe,\fe] = (0)$, a contradiction. Consequently, $kz \subseteq \fe$ for every $\fe \in \EE(2,\fh)$.

Our general assumption $p\!\ge\!3$ yields $\fh^{[p]}=\{0\}$, and the foregoing observation implies that the variety $\EE(2,\fh)\cong \PP^1$ is irreducible. However, for $p\!=\!2$, an element
$\fe \in \EE(2,\fh)$ is of the form $\fe := k(\alpha x\!+\!\beta y)\!\oplus\!kz$, with $\alpha\beta = 0$ and $(\alpha,\beta)\in k^2\!\smallsetminus\!\{0\}$. Consequently, the variety $\EE(2,\fh)=\{kx\!\oplus\!kz,
ky\!\oplus\!kz\}$ is not connected. \end{Example}

\bigskip

\begin{Remark} Let $\cG \subseteq \GL(n)$ be a closed subgroup scheme of exponential type and consider the variety $V_2(\cG)=\Hom(\GG_{a(2)},\cG)$ of infinitesimal one-parameter
subgroups (cf.\ \cite[\S1]{SFB1}). In view of \cite[(1.7)]{SFB1}, the exponential maps provide an isomorphism between $V_2(\cG)$ and the commuting variety $C_2(V(\fg))$ of the nullcone
associated to the Lie algebra $\fg:=\Lie(\cG)$. The variety $\EE(2,\fg)$ is an image of an open subset of $C_2(V(\fg))$, cf.\ \cite[(1.4)]{CFP13}. Moreover, Carlson's Theorem \cite[(7.7)]{SFB2}
ensures that the projective space $\Proj(C_2(V(\fg))$, defined by the action
\[ \alpha\dact(x,y)=(\alpha x,\alpha^py).\]
of $k^\times$ on $\fg\!\times\!\fg$, is connected. The example above shows that for the Heisenberg group $\cH$ and $p\!=\!2$, the variety $\Proj(C_2(V(\fh)))$ is connected, while $\EE(2,\fh)$ is not.
\end{Remark}

\bigskip

\subsection{A basic criterion}
Let $(\fg,[p])$ be a restricted Lie algebra. We define the \textit{$p$-rank} of $(\fg,[p])$ via
\[ \msrke(\fg) := \max \{\dim_k\fe \ ; \ \fe \subseteq \fg \ \text{elementary abelian}\}.\]
If $G$ is a closed subgroup of the automorphism group $\Aut_p(\fg)$ of $(\fg,[p])$, then $G$ acts on the variety $\EE(r,\fg)$ via
\[ g\dact \fe := g(\fe)\]
for all $g \in G$ and $\fe \in \EE(r,\fg)$.

Recall that $\Lie(G_\fg) \subseteq \Der \fg$ is a $p$-subalgebra of the algebra of derivations of $\fg$. Thus, if $\rho : H \lra G_\fg$ is a homomorphism of algebraic groups, its
differential $\msd(\rho)$ sends $\Lie(H)$ to $\Der\fg$.

\bigskip

\begin{Lem} \label{Bc1}Let $H$ be a connected, solvable algebraic group, $\rho:H\lra G_\fg$ be a homomorphism of algebraic groups such that there is a $p$-subalgebra
$\fh \subseteq \fg$ with the following properties:
\begin{enumerate}
\item[(a)] $\msrke(\fh)\!\ge\!2$, and
\item[(b)] there is a homomorphism $\zeta : \fh \lra \Lie(H)$ of restricted Lie algebras such that
                \[ \msd(\rho)(\zeta(h))(x)=[h,x]\]
for all $h \in \fh$, $x \in \fg$. \end{enumerate}
Then the variety $\EE(2,\fg)$ is connected. \end{Lem}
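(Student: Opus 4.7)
The plan is to reduce connectedness of $\EE(2,\fg)$ to a problem about the $H$-fixed locus via Borel's fixed point theorem, and then to use hypothesis (b) to impose a strong structural constraint on each fixed point. The key observations at the start are that $\EE(2,\fg)$ is a closed subvariety of the Grassmannian $\Gr_2(\fg)$ (hence projective) and is nonempty by (a), since $\EE(2,\fh) \ne \emptyset$; and that the homomorphism $\rho : H \to G_\fg \subseteq \GL(\fg)$ makes $\fg$ into an $H$-module whose action preserves both bracket and $[p]$-operation and so descends to an action on $\EE(2,\fg)$. Since $H$ is connected, each connected component $C$ of $\EE(2,\fg)$ is $H$-stable; being closed in a projective variety, $C$ is itself projective, and Borel's fixed point theorem \cite[(7.2.5)]{Sp} produces $\fe_C \in C$ fixed by $H$.

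Next I would extract structural information from the fixing condition. Differentiating $\rho(g)\fe_C = \fe_C$ at the identity of $H$ yields $\msd(\rho)(Y)(\fe_C) \subseteq \fe_C$ for every $Y \in \Lie(H)$; specializing $Y = \zeta(h)$ and invoking (b) gives $[\fh,\fe_C] \subseteq \fe_C$, so $\fh$ normalizes $\fe_C$. Using Jacobson's formula as in the proof of Lemma~\ref{Em2}(4), the sum $\tilde{\fh}_C := \fh + \fe_C$ is then itself a $p$-subalgebra of $\fg$. By construction $\fe_C \in \EE(2,\tilde{\fh}_C)$, and any fixed base element $\fe_0 \in \EE(2,\fh)$ also belongs to $\EE(2,\tilde{\fh}_C) \subseteq \EE(2,\fg)$.

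The idea is now to connect $\fe_C$ and $\fe_0$ through a connected subset of $\EE(2,\fg)$; if this succeeds for every component $C$, then each component contains the single element $\fe_0$ and $\EE(2,\fg)$ is forced to be connected. I would aim for the stronger statement that $\EE(2,\tilde{\fh}_C)$ is connected, exploiting that $\tilde{\fh}_C$ is a solvable restricted Lie algebra: indeed $\ad\fh \subseteq \msd(\rho)(\Lie(H))$ is solvable as a Lie subalgebra of the Lie algebra of a solvable group, and $\fe_C$ is an abelian ideal of $\tilde{\fh}_C$. Within $\EE(2,\tilde{\fh}_C)$ the connecting paths should be produced from one-parameter deformations $\lambda \mapsto \exp(\lambda c)\fe_C$ for sandwich elements $c \in \Sw(\tilde{\fh}_C)$, with the ambient group controlled by Lemma~\ref{Em3}.

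The main obstacle is this last construction: producing enough unipotent deformations in $\tilde{\fh}_C$ to deform $\fe_C$ into a subalgebra of $\fh$. A case analysis on $\dim(\fh \cap \fe_C) \in \{0,1,2\}$ is natural, the top case placing $\fe_C \subseteq \fh$ directly, while the cases of intersection $0$ or $1$ require explicit connecting families built from exponentials of sandwich elements whose adjoint action rotates $\fe_C$ into $\fh$. The Engel-Jacobson theorem together with the exponential techniques of Section~1 should provide the required transformations, but the delicate point is verifying that the full deformation family remains inside $\EE(2,\fg)$ at every parameter value.
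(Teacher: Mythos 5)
Your opening is right and matches the paper: reduce to the Borel fixed locus, and deduce from hypothesis~(b) that any $H$-fixed $\fe_C$ satisfies $[\fh,\fe_C]\subseteq\fe_C$. The issue is what you do with this. You form the potentially large subalgebra $\tilde{\fh}_C:=\fh+\fe_C$ and then say you would like to show $\EE(2,\tilde{\fh}_C)$ is connected. But that is essentially a special case of the very statement being proved --- for a solvable restricted Lie algebra that is not (in general) algebraic, and without the hypothesis $p\ge 5$ that powers the later Theorem~\ref{CRL1}. You yourself flag that "the delicate point is verifying that the full deformation family remains inside $\EE(2,\fg)$"; that delicate point is precisely the content of the lemma, and the proposal offers no mechanism that actually resolves it. In particular, nothing you write down guarantees that, starting from $\fe_C$, a chain of sandwich-exponential deformations inside $\tilde{\fh}_C$ can be made to reach a subalgebra of $\fh$: the case split on $\dim(\fh\cap\fe_C)$ identifies where you stand but not how to move.

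What the paper does instead is replace the whole of $\fh$ by a single $\fe_0\in\EE(2,\fh)$ (which exists by~(a)) and work with the tiny subalgebra $\fl:=\fe_X+\fe_0$. Since $\fe_0$ is a two-dimensional abelian Lie algebra of nilpotent operators stabilizing the two-dimensional space $\fe_X$, Engel's theorem forces $(\ad a)(\ad b)(\fe_X)=0$ for all $a,b\in\fe_0$. Hence $[\fl,\fl]\subseteq[\fe_0,\fe_X]$ has dimension $\le 1$, $\dim_k\fl\le 4$, and Jacobson's formula (using $p\ge 3$) shows $\fl\subseteq V(\fg)$, i.e.\ $\fl$ is $p$-trivial. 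Now $\EE(2,\fl)$ is explicitly computable in every case (abelian: $\Gr_2(\fl)$; three-dimensional Heisenberg: $\PP(\fl/[\fl,\fl])$; four-dimensional with two-dimensional center: connect through $C(\fl)$), and these small connected pieces carry $\fe_X$ to $\fe_0$ inside the same component. That bounded, completely elementary structure is what your argument lacks, and it is exactly what makes the lemma work at $p=3$.
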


\begin{proof} The variety $\EE(2,\fg)$ affords only finitely many irreducible components. Hence it also has only finitely many connected components.
We let $H$ act on $\EE(2,\fg)$ via
\[ h\dact \fe := \rho(h)\dact\fe \  \   \   \  \  \  \  \  \ \forall \ h \in H, \, \fe \in \EE(2,\fg).\]
Let $X\subseteq \EE(2,\fg)$ be a connected component. Then $X$ is closed, and the stabilizer $H_X := \{h \in H\ ; \ h\dact X \subseteq X\}$ is a closed subgroup
of $H$ of finite index. As $H$ is connected, it follows that $H_X=H$, so that $X$ is $H$-stable. Since $H$ is solvable and connected, Borel's fixed point theorem \cite[(7.2.5)]{Sp}
provides an element $\fe_X \in X$ such that $H\dact \fe_X = \{\fe_X\}$.

Differentiation in conjunction with (b) yields $[\fh,\fe_X] \subseteq \fe_X$.  In view of (a), there is $\fe_0 \in \EE(2,\fh)$. Engel's Theorem readily implies
that
\[ (\ast) \ \ \ \ \ \ \ [(\ad a)\circ (\ad b)](\fe_X) = (0) \ \ \ \text{for all} \ a,b \in \fe_0.\]
Hence $\fl:=\fe_X\!+\!\fe_0$ is a $p$-subalgebra of dimension $2\! \le\! \dim_k\fl\! \le\! 4$ such that $[\fl,\fl] \subseteq [\fe_0,\fe_X] \subsetneq \fe_X$ has dimension $\le 1$.  Jacobson's formula
(cf.\ \cite[(II.1)]{SF}) in conjunction with $p\ge 3$ now yields $\fl \subseteq V(\fg)$.

Let $\fl' \subseteq \fl$ be a subalgebra. Then
\[ (\ast\ast) \ \ \ \ \ \ \ \EE(2,\fl') \ \text{is connected and} \ \EE(2,\fl') \cap X \ne \emptyset \ \Rightarrow \ \EE(2,\fl') \subseteq X.\]
If $\fl$ is abelian, then $\EE(2,\fl)=\Gr_2(\fl)$ is irreducible. As $\fe_X,\fe_0 \in \EE(2,\fl)$, implication ($\ast\ast$) gives $\fe_0 \in X$.

Alternatively, $\dim_k[\fl,\fl] = \dim_k[\fe_0,\fe_X]=1$, and $\dim_k\fl \in \{3,4\}$. If $\dim_k\fl=3$, then $\fl$ is the Heisenberg algebra of the example above. Thus, $\EE(2,\fl)\cong \PP(\fl/[\fl,\fl])$ is irreducible 
and the above arguments yield $\fe_0 \in X$.

If $\dim_k\fl = 4$, then $\dim_k C(\fl) = 2$. Since $\fe_X\!+\!C(\fl)$ is abelian, implication ($\ast\ast$) yields $C(\fl) \in X$. It now follows that $\fe_0\!+\!C(\fl)$ is
abelian with $\EE(2,\fe_0\!+\!C(\fl))\cap X \ne \emptyset$, whence $\fe_0 \in X$.

In sum, we have shown that $\fe_0 \in X$ in all cases, so that $X$ is the only connected component of $\EE(2,\fg)$. \end{proof}

\bigskip

\begin{Remark} The interested reader is referred to \cite{CF} for a description of the restricted Lie algebras of $p$-rank $\msrke(\fg)\! \le\! 1$.\end{Remark}

\bigskip
\noindent
We record the following important consequence:

\bigskip

\begin{Cor} \label{Bc2} Let $(\fg,[p])$ be a restricted Lie algebra. Suppose there exists $\fe \in \EE(2,\fg)$ such that
\begin{enumerate}
\item[(a)] $\fe = \langle \fe \cap \cN_{\frac{p+1}{2}}(\fg) \rangle$, and
\item[(b)] $\exp(\fe\cap\cN_{\frac{p+1}{2}}(\fg)) \subseteq \Aut_p(\fg)$. \end{enumerate}
Then $\EE(2,\fg)$ is connected. \end{Cor}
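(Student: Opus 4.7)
My strategy is to apply Lemma \ref{Bc1} with $\fh := \fe$, building the required connected solvable algebraic group $H$ and homomorphism $\rho : H \to G_\fg$ out of the exponentials of elements of $\fe$. All the heavy lifting has been packaged into Lemma \ref{Em3}, so the proof should reduce to checking hypotheses.

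First, I would set $L := \fe \cap \cN_{\frac{p+1}{2}}(\fg)$ and verify that $L$ is a conical Lie subset of $\cN_{\frac{p+1}{2}}(\fg)$. Conicality is immediate from $(\ad \alpha x)^{(p+1)/2} = \alpha^{(p+1)/2}(\ad x)^{(p+1)/2}$, and the Lie subset property holds vacuously because $\fe$ is abelian, so $[L,L]=(0)\subseteq L$. By hypothesis (b), $\exp(L) \subseteq \Aut_p(\fg)$, and hence Lemma \ref{Em3}(1) applies: the group $H := \Exp(L)$ is a closed, connected, unipotent subgroup of $G_\fg$, hence in particular solvable. I take $\rho : H \hookrightarrow G_\fg$ to be the canonical inclusion.

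Next I would construct the restricted Lie homomorphism $\zeta : \fe \to \Lie(H)$. Lemma \ref{Em3}(2) gives $\ad(\langle L \rangle) \subseteq \Lie(H)$, and hypothesis (a) says $\langle L \rangle = \fe$, so $\ad(\fe) \subseteq \Lie(H)$; I set $\zeta := \ad|_\fe$. Since $\fe$ is abelian, $\zeta$ trivially respects brackets. For the $p$-operation, the $p$-map on $\Lie(H) \subseteq \Lie(\Aut(\fg)) \subseteq \End(\fg)$ is the associative $p$-th power, so $\zeta(h)^{[p]} = (\ad h)^p = \ad(h^{[p]}) = 0 = \zeta(h^{[p]})$, the vanishing using $\fe \subseteq V(\fg)$. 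By construction one has $\msd(\rho)(\zeta(h))(x) = (\ad h)(x) = [h,x]$ for all $h \in \fe$ and $x \in \fg$, which is exactly condition (b) of Lemma \ref{Bc1}.

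Finally, condition (a) of Lemma \ref{Bc1} is satisfied since $\msrke(\fe) = 2$, as $\fe$ itself is a two-dimensional elementary abelian subalgebra. Hence Lemma \ref{Bc1} applies and yields that $\EE(2,\fg)$ is connected. I do not foresee any real obstacle: the two assumptions on $\fe$ in the corollary have been tailored precisely so that $L$ meets the input requirements of Lemma \ref{Em3}, after which Lemma \ref{Bc1} finishes the job. The only point to verify with some care is that $\zeta$ is a homomorphism of \emph{restricted} (not just ordinary) Lie algebras, and that is settled by identifying the $p$-map on $\Lie(H)$ with the $p$-th power in $\End(\fg)$ combined with $\fe \subseteq V(\fg)$.
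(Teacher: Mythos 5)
Your proof is correct and arrives at the same endpoint as the paper, namely Lemma \ref{Bc1}, with $H$ built from the exponentials of $\fe$. The packaging differs slightly: the paper writes $\fe = kx\oplus ky$ with $x,y \in \cN_{\frac{p+1}{2}}(\fg)$, uses Lemma \ref{Em1}(2) to show directly that $\exp : \fe \to G_\fg$ is a homomorphism of algebraic groups, and then takes $H := \fe \cong \GG_a^2$, $\rho := \exp$, $\zeta := \id_\fe$; whereas you take $H := \Exp(L)$ with $L := \fe\cap\cN_{\frac{p+1}{2}}(\fg)$, get its connectedness, unipotence and solvability for free from Lemma \ref{Em3}(1), and then set $\zeta := \ad|_\fe$ via Lemma \ref{Em3}(2) together with hypothesis (a). Both choices satisfy the hypotheses of Lemma \ref{Bc1}, and both hinge on the same observation that (b) puts $\exp(L)$ inside $\Aut_p(\fg)$; your route trades the explicit factorization via Lemma \ref{Em1}(2) for a one-line appeal to the more general Lemma \ref{Em3}, which is perfectly legitimate and arguably cleaner. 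The one point you were right to flag and did settle correctly is that $\zeta = \ad|_\fe$ is a homomorphism of \emph{restricted} Lie algebras, using $(\ad h)^p = \ad(h^{[p]}) = 0$ for $h \in \fe \subseteq V(\fg)$.
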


\begin{proof} We consider the exponential map
\[ \exp : \fe \lra \GL(\fg).\]
As $\fe\cap \cN_{\frac{p+1}{2}}(\fg)$ is closed and conical, condition (b) ensures that $\exp(\fe\cap\cN_{\frac{p+1}{2}}(\fg)) \subseteq G_\fg$.

In view of condition (a), there are $x,y \in \cN_{\frac{p+1}{2}}(\fg)$ such that $\fe = kx\!\oplus\!ky$. Given elements $a=a_xx\!+\!a_yy$ and $b=b_xx\!+\!b_yy$ of $\fe$, repeated application of
Lemma \ref{Em1}(2) yields
\[ \exp(a\!+\!b) = \exp((a_x\!+\!b_x)x)\circ \exp((a_y\!+\!b_y)y) = \exp(a_xx)\circ\exp(b_xx)\circ\exp(a_yy)\circ \exp(b_yy),\]
so that $\exp(a\!+\!b) \in G_\fg$ and $\exp(a\!+\!b)=\exp(a)\circ\exp(b)$. As a result, the map $\exp : \fe \lra G_\fg$ is a homomorphism of algebraic groups such that $\msd(\exp)(a)= \ad a$
for all $a \in \fe$. Setting $H:= \fe \cong \GG^2_a$, we obtain $\Lie(H)=\fe$. The assertion now follows from Lemma \ref{Bc1} with $\rho:=\exp$ and $\zeta$ being the identity of $\fh:=\fe$.
\end{proof}

\bigskip

\begin{Cor} \label{Bc3}Suppose that $\fn \unlhd \fg$ is an elementary abelian $p$-ideal of dimension $\ge\! 2$. Then $\EE(2,\fg)$ is connected. \end{Cor}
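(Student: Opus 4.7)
The plan is to reduce the claim directly to Corollary \ref{Bc2} by exhibiting a suitable $\fe \in \EE(2,\fg)$ lying inside $\fn$.

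First, I would produce the element $\fe$. Since $\fn$ is elementary abelian of dimension at least $2$, every $2$-dimensional subspace of $\fn$ is automatically an elementary abelian subalgebra of $\fg$, so we may pick any $\fe \in \EE(2,\fn)\subseteq \EE(2,\fg)$.

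Next, I would verify the two hypotheses of Corollary \ref{Bc2}. Because $\fn$ is an abelian ideal, for every $x \in \fn$ and $y \in \fg$ we have $[x,y]\in\fn$ and hence $(\ad x)^2(y)=[x,[x,y]]=0$; thus $\fn\subseteq \Sw(\fg)$. Since $p\ge 3$, the inclusion $\Sw(\fg) \subseteq \cN_{\frac{p+1}{2}}(\fg)$ noted earlier in the paper gives $\fe \subseteq \fn \subseteq \cN_{\frac{p+1}{2}}(\fg)$. This forces $\fe \cap \cN_{\frac{p+1}{2}}(\fg) = \fe$, which is condition (a). For condition (b), Lemma \ref{Em2}(4) asserts that $\exp(\fn) \subseteq G_\fg \subseteq \Aut_p(\fg)$, and restricting to $\fe \subseteq \fn$ gives $\exp(\fe \cap \cN_{\frac{p+1}{2}}(\fg)) = \exp(\fe) \subseteq \Aut_p(\fg)$.

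Both conditions of Corollary \ref{Bc2} are met, so $\EE(2,\fg)$ is connected. There is really no hard step here; the only substantive input is Lemma \ref{Em2}(4), which provides the passage from the elementary abelian $p$-ideal $\fn$ to a connected subgroup of $G_\fg$ realised by the exponential map, and this is precisely the mechanism Corollary \ref{Bc2} was designed to exploit.
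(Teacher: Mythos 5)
Your proof is correct and follows essentially the same route as the paper: choose a $2$-dimensional subspace $\fe$ of $\fn$, observe that $\fn$ being an abelian ideal forces $\fe\subseteq\Sw(\fg)\subseteq\cN_{\frac{p+1}{2}}(\fg)$ (giving condition (a)), and invoke Lemma \ref{Em2}(4) for condition (b) before applying Corollary \ref{Bc2}. The only difference is that you spell out the intermediate inclusion $\fn\subseteq\Sw(\fg)$, which the paper leaves implicit.
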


\begin{proof}  Since $\fn$ is an abelian ideal, it follows that $\fe \subseteq \cN_{\frac{p+1}{2}}(\fg)$, so that (a) of Corollary \ref{Bc2} holds. Condition (b) is a direct consequence of Lemma \ref{Em2}(4).
Consequently, Corollary \ref{Bc2} yields the assertion. \end{proof}

\bigskip

\begin{Remarks} (1) The example of the Heisenberg algebra shows that Corollary \ref{Bc3} does not hold for $p\!=\!2$.

(2) Suppose that $\fg$ is centerless. Then every abelian ideal is elementary abelian.

(3) Let $(\fg,[p])$ be a $p$-trivial Lie algebra, i.e.,  $[p]\!=\!0$. If $\dim_k\fg \ge 2$, Engel's Theorem provides an elementary abelian ideal $\fn \unlhd \fg$ of dimension $\ge 2$. Hence the
variety $\EE(2,\fg)$ is connected. However, it is not necessarily irreducible, see Section \ref{S:IrrD} below.

(4) According to Corollary \ref{Bc3}, the variety $\EE(2,\fg)$ is connected, whenever $\dim V(C(\fg))\!\ge\!2$. \end{Remarks}

\bigskip

\begin{Cor} \label{Bc4} Let $(\fg,[p])$ be a centerless restricted Lie algebra. If $\fg\!=\! \fg_{(-r)} \supsetneq \cdots \supsetneq \fg_{(s)} \supsetneq \fg_{(s+1)}=(0) \ \ (r,s \in \NN)$
is filtered such that
\begin{enumerate}
\item[(a)] $\frac{p-1}{2}(s\!-\!1)\!\ge\! r\!+\!2$, or
\item[(b)] $\dim_k\fg_{(s)}\!\ge\!2$ and $\frac{p-1}{2}s\!\ge\! r\!+\!1$,
\end{enumerate}
then $\EE(2,\fg)$ is connected.\end{Cor}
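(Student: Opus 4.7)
The plan is to verify the two hypotheses of Corollary \ref{Bc2} for a suitably chosen $\fe \in \EE(2,\fg)$ lying in the deepest part of the filtration. The main tool is the filtration identity $(\ad x)^k(\fg) \subseteq \fg_{(ki-r)}$ for $x \in \fg_{(i)}$, which vanishes once $ki-r \ge s+1$. Centerlessness will do double duty: on the one hand it promotes $(\ad x)^p = 0$ to $x^{[p]} = 0$, so that the relevant filtration layers sit inside $V(\fg)$; on the other hand the argument in the proof of Lemma \ref{Em2}(3) extends to show $\Aut(\fg) = \Aut_p(\fg)$, and hence by Lemma \ref{Em1}(1) condition (b) of Corollary \ref{Bc2} is automatic for any subspace $\fe \subseteq \cN_{\frac{p+1}{2}}(\fg)$.

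Under hypothesis (b) the filtration identity with $i=s$ and $k=\tfrac{p+1}{2}$ puts $\fg_{(s)}$ inside $\cN_{\frac{p+1}{2}}(\fg)$, while $[\fg_{(s)},\fg_{(s)}] \subseteq \fg_{(2s)} = 0$ shows that $\fg_{(s)}$ is abelian. Combined with the preceding paragraph, $\fg_{(s)}$ is an elementary abelian subalgebra of dimension at least $2$ contained in $\cN_{\frac{p+1}{2}}(\fg)$, and any $\fe \in \EE(2,\fg_{(s)})$ fulfills both hypotheses of Corollary \ref{Bc2}.

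For hypothesis (a) the same calculation with $i = s-1$ yields $\fg_{(s-1)} \subseteq \cN_{\frac{p+1}{2}}(\fg) \cap V(\fg)$, and the inequality $\tfrac{p-1}{2}(s-1) \ge r+2$ forces $s \ge 2$. Since $2s-1 \ge s+1$, the bracket $[\fg_{(s-1)},\fg_{(s)}] \subseteq \fg_{(2s-1)}$ vanishes, so $\fg_{(s)}$ is central in $\fg_{(s-1)}$. Picking $0 \ne z \in \fg_{(s)}$ together with any $w \in \fg_{(s-1)} \setminus kz$ (which exists because $\fg_{(s-1)} \supsetneq \fg_{(s)}$) then produces a two-dimensional elementary abelian subspace $\fe := kz \oplus kw \subseteq \cN_{\frac{p+1}{2}}(\fg)$, again fitting the hypotheses of Corollary \ref{Bc2}.

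The main obstacle I anticipate lies in the case-(a) construction, where $\fg_{(s)}$ may be only one-dimensional and the elementary abelian subspace must be assembled from two adjacent layers of the filtration rather than sitting in a single layer. The crucial input is that $\fg_{(s)}$ lies in the center of $\fg_{(s-1)}$, a fact guaranteed precisely by the bound $s \ge 2$ forced by hypothesis (a); without it the two chosen elements need not commute and the argument collapses.
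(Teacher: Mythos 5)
Your proof is correct and follows essentially the same route as the paper: both verify the hypotheses of Corollary \ref{Bc2} by producing a two-dimensional elementary abelian subalgebra inside $\cN_{\frac{p+1}{2}}(\fg)$, located in $\fg_{(s)}$ under hypothesis (b) and in $\fg_{(s-1)}$ under hypothesis (a), with centerlessness supplying both $\cN_p(\fg)=V(\fg)$ and $\Aut(\fg)=\Aut_p(\fg)$. The paper leaves most of the arithmetic and the verification of the two hypotheses of \ref{Bc2} implicit; you spell these out, including the observation that $\fg_{(s)}$ is central in $\fg_{(s-1)}$, which is exactly what makes the case-(a) construction work.
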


\begin{proof} Conditions (a) and (b) ensure that there is $\fe \in \EE(2,\fg_{(s-1)})$ or $\fe \in \EE(2,\fg_{(s)})$ such that $\fe \subseteq \cN_{\frac{p+1}{2}}(\fg)$. The assertion thus follows from
Corollary \ref{Bc2}. \end{proof}

\bigskip

\subsection{Algebraic Lie algebras}
We shall employ our criterion (\ref{Bc1}) to treat the case, where $\fg\! =\! \Lie(G)$ is the Lie algebra of a connected algebraic group $G$.

Let $G$ be a connected algebraic group with Lie algebra $\fg$. A $p$-subalgebra $\fb \subseteq \fg$ is called a {\it Borel subalgebra}, if there exists a Borel subgroup $B \subseteq G$ such that $\fb\! =\! \Lie(B)$.
We denote by $\Bor(G)$ and $\Bor(\fg)$ the sets of Borel subgroups and Borel subalgebras, respectively.

Let $T\subseteq G$ be a maximal torus, whose normalizer and centralizer in $G$ will be denoted $N_G(T)$ and $C_G(T)$, respectively. Then $W(G,T)\!:=\!N_G(T)/C_G(T)$ is the {\it Weyl group} of $G$
relative to $T$. We denote by $U_G$ the unipotent radical of $G$ and consider the reductive group $G':= G/U_G$ along with the canonical projection $\pi : G \lra G'$. By general theory
(cf.\ \cite[(21.3C),(24.1B)]{Hu81}), the group $T'\!:=\!\pi(T)$ is a maximal torus of $G'$ and $\pi$ induces an isomorphism $W(G,T) \cong W(G',T')$. By the same token, we have $\Bor(G')=\{\pi(B) \ ; \ B \in
\Bor(G)\}$.

Let $B \subseteq G$ be a Borel subgroup containing $T$. Since $T'$ is self-centralizing (cf.\ \cite[(22.3),(26.2)]{Hu81}), it follows that $C_G(T)\subseteq U_GT \subseteq B$. Thus, if $g,h \in
N_G(T)$ are representatives of $w \in W(G,T)$, then we have $Bg\!=\!Bh$, so that the coset $Bw\!:=\!Bg$ is well-defined.

Let $R_G$ be the solvable radical of $G$, $\hat{\Phi}$ be the root system of the semisimple group $\hat{G}\!:=\!G/R_G$. Following \cite{LMT}, we say that a prime number $q$ is a {\it torsion prime for $G$},
if it is a torsion prime for some irreducible component of the root system $\hat{\Phi}$, or if $q$ is a divisor of the order of the fundamental group $\pi_1(\hat{G})$ of $\hat{G}$. If $G$ is solvable, then, by definition, 
no prime number is a torsion prime for $G$. A complete list of the torsion primes for irreducible root systems can be found in \cite[(1.13)]{St}. We refer the reader to \cite[(9.14)]{MT11} for the definition of the fundamental 
group $\pi_1(G)$ of a connected semisimple algebraic group $G$. Note that $\pi_1(G)$ is a finite group, whose order $\ord(\pi_1(G))$ divides $\det(C_\Phi)$, the determinant of the Cartan matrix $C_\Phi$ associated to the root system $\Phi$ of $G$. The group $G$ is simply connected if and only if $\pi_1(G)\!=\{1\}$.

We require the following:

\bigskip

\begin{Lem} \label{AL1} Let $G$ be a connected algebraic group with Lie algebra $\fg$. Then $\bigcap_{\fb \in \Bor(\fg)} \fb$ is the solvable radical of $\fg$. \end{Lem}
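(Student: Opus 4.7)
The plan is to establish the equality $I = \rad(\fg)$, where $I := \bigcap_{\fb \in \Bor(\fg)} \fb$, by proving both inclusions. For $I \subseteq \rad(\fg)$: as $G$ acts transitively on $\Bor(G)$ (and hence on $\Bor(\fg) = \{\Lie(B)\, ;\, B \in \Bor(G)\}$ via $\Ad$), the subset $I$ is $\Ad(G)$-invariant. Connectedness of $G$ then forces $I$ to be an ideal of $\fg$. Since $I$ is contained in a single (solvable) Borel subalgebra, it is solvable, and so $I \subseteq \rad(\fg)$ by the maximality of the radical among solvable ideals.

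For the reverse inclusion, I would first observe that $R_G \subseteq B$ for every Borel subgroup $B$: the subgroup $R_G \cdot B$ is connected and solvable, hence equals $B$ by maximality. Consequently $\Lie(R_G) \subseteq \fb$ for every $\fb \in \Bor(\fg)$. Passing to the connected semisimple quotient $\bar G := G/R_G$ with $\bar\fg := \fg/\Lie(R_G) = \Lie(\bar G)$, one has $\Bor(\bar\fg) = \{\fb/\Lie(R_G)\, ;\, \fb \in \Bor(\fg)\}$, while $\rad(\fg)/\Lie(R_G)$, being a solvable ideal of $\bar\fg$, is contained in $\rad(\bar\fg)$. It therefore suffices to prove $\rad(\bar\fg) \subseteq \bar\fb$ for every $\bar\fb \in \Bor(\bar\fg)$; that is, the problem reduces to the semisimple case.

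For semisimple connected $G$, the aim is to show $\rad(\fg) \subseteq C(\fg)$. Fix a maximal torus $T \subseteq G$, set $\fh := \Lie(T)$, and take the root space decomposition $\fg = \fh \oplus \bigoplus_{\alpha \in \Phi} \fg_\alpha$ (with one-dimensional root spaces). Being a characteristic ideal, $\rad(\fg)$ is $T$-stable and thus a direct sum of weight spaces. If $\fg_\alpha \subseteq \rad(\fg)$ for some nonzero root $\alpha$, the ideal property would yield $[\fg_{-\alpha}, \fg_\alpha] = kh_\alpha \subseteq \rad(\fg)$, and then $[kh_\alpha, \fg_{-\alpha}] = -2\fg_{-\alpha} \subseteq \rad(\fg)$ (using $p \ge 3$, so $2 \ne 0$); the $\fsl_2$-subalgebra spanned by $\fg_\alpha, kh_\alpha, \fg_{-\alpha}$ would then lie inside the solvable ideal $\rad(\fg)$, contradicting the simplicity of $\fsl_2$ for $p \ge 3$. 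Thus $\rad(\fg) \subseteq \fh$, and for each root $\alpha$ the ideal condition forces $[\rad(\fg), \fg_\alpha] \subseteq \rad(\fg) \cap \fg_\alpha = 0$, so $\alpha$ vanishes on $\rad(\fg)$. Consequently $\rad(\fg) \subseteq \bigcap_{\alpha \in \Phi} \ker\alpha = C(\fg)$. Since $C(\fg) \subseteq \fh \subseteq \fb$ for every Borel $\fb$ containing $T$, and $C(\fg)$ is $\Ad(G)$-invariant while $G$ acts transitively on Borels, $C(\fg)$ lies in every Borel subalgebra, completing the proof.

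The main obstacle is the semisimple reduction: in positive characteristic $\rad(\fg)$ may strictly contain $\Lie(R_G)$ (for instance $\rad(\fsl_p) = k\cdot\id$ while $\Lie(R_{\SL_p}) = 0$ in characteristic $p$), so the desired inclusion cannot be obtained merely from group-theoretic properties of $R_G$, and the $\fsl_2$-triple argument must be verified carefully for $p \ge 3$.
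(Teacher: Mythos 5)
Your proof is correct, and the two inclusions $I := \bigcap_\fb \fb \subseteq \rad(\fg)$ and $\rad(\fg) \subseteq I$ are both established, the first by exactly the same $\Ad(G)$-invariance argument as in the paper. For the reverse inclusion, however, you take a genuinely different route. The paper's proof appeals directly to the structure theorem \cite[(10.2)]{Hu67}, which says that $\rad(\fg)/\Lie(R_G) \cong C(\hat\fg)$ is a torus; since a torus lies inside any maximal torus $\ft$ of $\hat\fg$ (here the paper also invokes that $\Lie(T)$ is a maximal torus of $\fg$, \cite[(13.2)]{Hu67}), one immediately gets $\rad(\fg) \subseteq \Lie(R_G) + \ft \subseteq \Lie(B)$. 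You instead reduce to the semisimple quotient and prove the needed inclusion $\rad(\bar\fg) \subseteq \fh$ by hand: you use $T$-stability of the characteristic ideal $\rad(\bar\fg)$ to decompose it into weight spaces, and then the $\fsl_2$-triple argument (which works precisely because $p \ge 3$ ensures $\alpha(h_\alpha) = 2 \neq 0$, so $h_\alpha \neq 0$, and the rank-one subgroup $G_\alpha$ has $\Lie(G_\alpha) \cong \fsl_2$) to exclude root space contributions. Your route is more self-contained — it re-derives the relevant part of the Humphreys structure theorem rather than citing it — at the cost of being longer; the paper's proof buys brevity via the citation. One small point worth making explicit in your write-up is that the equality $[\fg_{-\alpha},\fg_\alpha] = kh_\alpha$ (rather than $\subseteq$) requires knowing the bracket is nonzero in characteristic $p$; this follows from $\Lie(G_\alpha)\cong\fsl_2$ for $p\neq 2$, but it is a fact that should be recorded rather than tacitly assumed.
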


\begin{proof} We put $\hat{G} := G/R_G$ as well as $\hat{\fg} := \Lie(\hat{G})$ and denote by $\hat{\pi} : G \lra \hat{G}$ the canonical projection. As $\hat{\pi}$ is separable, there results
an exact sequence
\[ (0) \lra \Lie(R_G) \lra \fg \stackrel{\msd(\hat{\pi})} \lra \hat{\fg}\lra (0),\]
see \cite[(5.2.3)]{Sp} and \cite[(3.2.21)]{Sp}. Since $\Lie(R_G)$ is a solvable ideal of $\fg$, it is contained in the solvable radical $\fr$ of $\fg$. In view of \cite[(10.2)]{Hu67}, it follows that $\fr/\Lie(R_G)
\cong C(\hat{\fg})$ is a torus.

Let $B \in \Bor(G)$ be a Borel subgroup of $G$, $T \subseteq B$ be a maximal torus of $G$. Owing to \cite[(13.2)]{Hu67}, $\ft:=\Lie(T) \subseteq \Lie(B)$ is a maximal
torus of $\fg$. As $\msd(\hat{\pi})$ is surjective, $\msd(\hat{\pi})(\ft)$ is a maximal torus of $\hat{\fg}$ (cf.\ \cite[(II.4.5)]{SF}), so that $C(\hat{\fg}) \subseteq \msd(\hat{\pi})(\ft)$. This implies $\fr
\subseteq \Lie(R_G)\!+\!\ft$. In view of $R_G\subseteq B$, we obtain $\fr \subseteq \Lie(B)\!+\!\ft \subseteq \Lie(B)$, whence $\fr \subseteq \bigcap_{B \in \Bor(G)} \Lie(B) =
\bigcap_{\fb \in \Bor(\fg)}\fb$.

Since the set $\bigcap_{B \in \Bor(G)}\Lie(B)$ is $G$-stable, it is a solvable ideal of $\fg$ and hence is contained in $\fr$. \end{proof}

\bigskip

\begin{Remark} Our result fails for $p\!=\!2$. In that case, $\fsl(2)\!=\!\Lie(\SL(2))$ is solvable, while $\bigcap_{\fb\in \Bor(\fsl(2))}\fb = kI_2$ is the center of $\fsl(2)$. \end{Remark}

\bigskip
\noindent
The adjoint representation $\Ad : G \lra \Aut_p(\fg)$ induces an action of $G$ on $\EE(2,\fg)$:
\[ g\dact\fe := \Ad(g)(\fe) \ \ \ \ \ \ \  \forall \ g \in G, \, \fe \in \EE(2,\fg).\]

\bigskip

\begin{Thm} \label{AL2} Let $G$ be a connected algebraic group with Lie algebra $\fg$ such that $p$ is a non-torsion prime for $G$. Then the following statements hold:
\begin{enumerate}
\item We have $\EE(2,\fg)=\bigcup_{\fb \in \Bor(\fg)}\EE(2,\fb)$.
\item If $\fb \in \Bor(\fg)$, then $\EE(2,\fg) = G\dact\EE(2,\fb)$.
\item If $B \subseteq G$ is a Borel subgroup with maximal torus $T$, then $\EE(2,\fg)=\bigcup_{w \in W(G,T)}Bw\dact\EE(2,\Lie(B))$.
\end{enumerate} \end{Thm}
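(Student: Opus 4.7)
The plan is to prove (1) first and then derive (2) and (3) by standard manipulations.

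For (2), recall that all Borel subgroups of a connected algebraic group are $G$-conjugate \cite[(21.3)]{Hu81}, so for any fixed $\fb = \Lie(B) \in \Bor(\fg)$ one has $\Bor(\fg) = \{\Ad(g)(\fb) : g \in G\}$. Granted (1), this yields
\[
\EE(2,\fg) \;=\; \bigcup_{\fb' \in \Bor(\fg)} \EE(2,\fb') \;=\; \bigcup_{g \in G} g \dact \EE(2,\fb) \;=\; G \dact \EE(2,\fb).
\]
For (3), the Bruhat decomposition $G = \bigcup_{w \in W(G,T)} BwB$ (valid for reductive $G$ by \cite[(28.3)]{Hu81}, and extending to the general algebraic case via the projection $\pi: G \to G/U_G$ described earlier) combined with the $B$-stability of $\fb$---and hence of $\EE(2,\fb)$---gives $BwB \dact \EE(2,\fb) = Bw \dact \EE(2,\fb)$; substituting into (2) yields (3).

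The content lies in (1): I must show that every $\fe = kx \oplus ky \in \EE(2,\fg)$ is contained in $\Lie(B)$ for some Borel $B \subseteq G$. Since $\fe$ is elementary abelian, the restricted identity $(\ad v)^p = \ad(v^{[p]})$ gives $\fe \subseteq \cN_p(\fg)$, so Lemma \ref{Em1} makes $\exp : \fe \to \Aut(\fg)$ a morphism of algebraic groups with connected commutative unipotent image. My strategy is to lift this to a homomorphism $\tilde{\exp}: \fe \to G$ of algebraic groups whose differential is the inclusion $\fe \hookrightarrow \fg$ and which satisfies $\Ad \circ \tilde{\exp} = \exp$; once constructed, $U := \tilde{\exp}(\fe) \subseteq G$ is a connected commutative unipotent subgroup with $\fe \subseteq \Lie(U)$, and Borel's fixed point theorem \cite[(7.2.5)]{Sp} applied to the $U$-action on the projective variety $G/B_0$ (for some fixed $B_0 \in \Bor(G)$) provides a fixed point $g B_0$, whence $U \subseteq g B_0 g^{-1} =: B \in \Bor(G)$ and consequently $\fe \subseteq \Lie(B) \in \Bor(\fg)$.

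The main obstacle is the construction of $\tilde{\exp}$ under the non-torsion prime hypothesis. For reductive $G$, this is classical: when $p$ is non-torsion, a Springer-type exponential isomorphism identifies the nilpotent cone of $\fg$ with the unipotent variety of $G$ equivariantly, and commuting $[p]$-nilpotent pairs lift to commuting unipotent pairs. For general connected algebraic $G$, I would reduce along the unipotent radical $U_G \unlhd G$: since $U_G \subseteq B$ for every Borel subgroup, $\Lie(U_G)$ lies in every Borel subalgebra, which trivially handles the portion of $\fe$ meeting $\Lie(U_G)$; the image of $\fe$ in $\fg/\Lie(U_G) = \Lie(G/U_G)$ is then covered by the reductive case for $G/U_G$. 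The delicate point is gluing these two partial lifts into a single homomorphism $\GG_a^2 \to G$ compatibly with the inclusion $\fe \hookrightarrow \fg$, and it is precisely here that the non-torsion assumption is essential.
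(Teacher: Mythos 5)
Parts (2) and (3) of your proposal are correct and track the paper's argument closely (conjugacy of Borel subgroups for (2), Bruhat decomposition for $G/U_G$ and $B$-stability of $\EE(2,\fb)$ for (3)). The gap is in (1).

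Your plan for (1) requires the existence of a homomorphism of \emph{algebraic groups} $\tilde{\exp}\colon \GG_a^2 \to G$ whose differential is the inclusion $\fe\hookrightarrow\fg$, i.e.\ a smooth, connected, $2$-dimensional abelian unipotent subgroup of $G$ integrating $\fe$. This is a genuinely stronger assertion than anything the hypotheses guarantee, and you never establish it. Even for reductive $G$ in good characteristic, a Springer isomorphism $\cN(\fg)\to\cU(G)$ is an equivariant isomorphism of \emph{varieties}, not of group structures, and one cannot conclude from $[x,y]=0$ that the unipotent elements corresponding to $x$ and $y$ commute in $G$. The additional compatibility required here is precisely the ``exponential type'' condition of Suslin--Friedlander--Bendel, which is not implied by $p$ being a non-torsion prime (already for $E_8$ with $p=7$ one is far below the Coxeter number, and the matrix-exponential shortcut available in type $A$ and $C$ is unavailable). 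Your appeal to ``gluing partial lifts'' across $U_G$ is also not carried out, and there is no reason to expect a lift of $\fe\cap\Lie(U_G)$ and a lift of the image in $\Lie(G/U_G)$ to assemble into a single homomorphism $\GG_a^2\to G$ with the prescribed differential.

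The paper avoids the integration problem altogether. It passes to $\hat{G}:=G/R_G$ and uses the Demazure--Gabriel correspondence $\cite[\mathrm{II},\S7,4.3]{DG}$ to associate to $\hat{\fe}$ an \emph{infinitesimal} unipotent subgroup scheme $\hat{\cU}\subseteq\hat{G}_1$ of the first Frobenius kernel with $\Lie(\hat{\cU})=\hat{\fe}$; this exists automatically and needs no smoothing. The non-torsion hypothesis enters exactly once, through Levy--McNinch--Testermann $\cite[(2.2)]{LMT}$, which places $\hat{\cU}$ inside a Borel subgroup of $\hat{G}$. After pulling back to a Borel $B_\fe\subseteq G$, the containment $\fe\subseteq\Lie(B_\fe)$ follows because Lemma~\ref{AL1} puts the solvable radical $\fr=\Lie(R_G)$ inside every Borel subalgebra. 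You should either invoke the [LMT] theorem directly (which is the crux of the statement), or, if you insist on a lifting argument, you would need to prove the integrability claim from the non-torsion hypothesis, which is not known to hold in that generality.
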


\begin{proof} (1) We put $\fr\!:=\!\Lie(R_G)$ as well as $\hat{\fg}\! := \!\Lie(\hat{G})$, where $\hat{G}\!:=\!G/R_G$. Since the natural projection $\hat{\pi} : G \lra \hat{G}$ is separable (\cite[(5.2.3)]{Sp}),
there results an exact sequence
\[ (0) \lra \fr \lra \fg \stackrel{\msd(\hat{\pi})}{\lra} \hat{\fg}\lra (0)\]
of restricted Lie algebras, cf.\  \cite[(3.2.21)]{Sp}.

Let $\fe \in \EE(2,\fg)$ and put $\hat{\fe}\!:=\! \msd(\hat{\pi})(\fe) \subseteq \hat{\fg}$. We denote by $\hat{\cU} \subseteq \hat{G}_1$ the unipotent subgroup scheme of the first Frobenius kernel
$\hat{G}_1$ of $\hat{G}$ such that $\hat{\fe}=\Lie(\hat{\cU})$, cf.\ \cite[(II,\S7,4.3)]{DG}. Since $p$ is a non-torsion prime for $\hat{G}$, \cite[(2.2)]{LMT} ensures the existence of a Borel
subgroup $\hat{B}\subseteq \hat{G}$ such that $\hat{\cU} \subseteq \hat{B}$. General theory provides a Borel subgroup $B_\fe \subseteq G$ such that $\hat{B}=\hat{\pi}(B_\fe)$ (cf.\
\cite[(21.3C)]{Hu81}). Since $R_G\subseteq B_\fe$ and $\hat{\pi}|_{B_\fe}$ is separable, we have $\Lie(\hat{B})=\msd(\hat{\pi})(\Lie(B_{\fe}))$.

Now let $x \in \fe$. By the above, there exists $y \in \Lie(B_\fe)$ such that $x\!-\!y \in \fr$. As Lemma \ref{AL1} yields $\fr \subseteq\Lie(B_\fe)$, we obtain $x \in \Lie(B_\fe)$.
Thus, $\fe \subseteq \Lie(B_\fe)$. This implies (1).

(2) Let $\fb=\Lie(B)$ be a Borel subalgebra. Since all Borel subgroups of $G$ are conjugate, (1) yields $\EE(2,\fg)=\bigcup_{g \in G}\EE(2,\Ad(g)(\fb)) =
\bigcup_{g \in G}g\dact\EE(2,\fb) = G\dact \EE(2,\fb)$.

(3) Let $B \subseteq G$ be a Borel subgroup of $G$ containing the maximal torus $T$. Setting $G':= G/U_G$ and letting $\pi : G \lra G'$ be the canonical projection, we put $B':= \pi(B)$
and $T':=\pi(T)$. Then the Bruhat decomposition for $G'$ yields
\[ G' = \bigsqcup_{w' \in W(G',T')} B'w'B'.\]
Let $g \in G$. Then there exist $b_1,b_2 \in B$ and $\tilde{w} \in N_G(T)$ such that $\pi(g)=\pi(b_1)\pi(\tilde{w})\pi(b_2)$. Consequently, $u:= g(b_1\tilde{w}b_2)^{-1} \in U_G\subseteq B$,
so that $g= ub_1\tilde{w}b_2 \in B\tilde{w}B$. It thus follows that
\[ G = \bigcup_{w \in W(G,T)}BwB.\]
Let $\fb:=\Lie(B)$, so that $\EE(2,\fb)$ is $B$-stable. Now (2) implies $\EE(2,\fg)=  \bigcup_{w \in W(G,T)}BwB\dact\EE(2,\fb) = \bigcup_{w \in W(G,T)}Bw\dact
\EE(2,\fb)$. \end{proof}

\bigskip

\begin{Remarks} (1) The foregoing result holds more generally for the varieties $\EE(r,\fg)$ $(r\!\ge\!1)$. 

(2) Consider the simply connected covering
\[ \pi : \SL(p) \lra \PGL(p) \ \ ; \ \ g \mapsto gZ(\GL(p)).\]
We have $\ord(\pi_1(\PGL(p)))\!=\!p$ and $\ker\msd(\pi)\!=\!C(\fsl(p))$. In \cite[\S3]{LMT}, the authors construct a three-dimensional $p$-subalgebra $\fh \subseteq \fsl(p)$ containing $C(\fsl(p))$, which is not trigonalizable, 
and such that $\msd(\pi)(\fh) \in \EE(2,\fpgl(p))$. If $\msd(\pi)(\fh) \subseteq \Lie(B')$ for some $B' \in \Bor(\PGL(p))$, then $\fh \subseteq \msd(\pi)^{-1}(\Lie(B'))$. Let $B \in \Bor(\SL(p))$ be such that $B'\!=\!\pi(B)$ and 
$B_0$ be the Borel subgroup of $\SL(p)$ consisting of upper triangular matrices. Then $C(\fsl(p)) \subseteq \Lie(B_0)$, so that conjugacy of Borel subgroups implies $C(\fsl(p)) \subseteq \Lie(B)$. This implies
$\dim_k \im \msd(\pi)|_{\Lie(B)} = \dim_k \im\msd(\pi)\cap \Lie(B')$, so that $\msd(\pi)^{-1}(\Lie(B'))\!=\!\Lie(B)$. We therefore obtain $\fh\!=\!\msd(\pi)^{-1}(\msd(\pi)(\fh)) \subseteq \msd(\pi)^{-1}(\Lie(B')) \subseteq \Lie(B)$,
which contradicts $\fh$ is not being trigonalizable. As a result, $\PGL(p)\dact \EE(2,\fb') \subsetneq \EE(2,\fpgl(p))$ for every $\fb' \in \Bor(\fpgl(p))$. \end{Remarks} 

\bigskip

\begin{Thm} \label{AL3} Let $\fg = \Lie(G)$ be the Lie algebra of a connected algebraic group $G$. Then the variety $\EE(2,\fg)$ is connected. \end{Thm}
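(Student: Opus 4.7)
The plan is to apply Lemma~\ref{Bc1} directly, choosing the solvable group $H$ to be a Borel subgroup of $G$. Assume $\EE(2,\fg)\neq\emptyset$ (otherwise the claim is vacuous) and fix some $\fe_0 \in \EE(2,\fg)$. The first step is to place $\fe_0$ inside the Lie algebra of a Borel subgroup. When $p$ is a non-torsion prime for $G$, Theorem~\ref{AL2}(1) immediately furnishes a Borel subgroup $B \subseteq G$ with $\fe_0 \subseteq \fb := \Lie(B)$.

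Having obtained such a $B$, set $H := B$, which is connected and solvable, and consider the restricted adjoint representation
\[
  \rho := \Ad|_B : B \lra \Aut_p(\fg).
\]
Since $B$ is connected, $\rho(B)$ lies in the identity component $G_\fg = \Aut_p(\fg)^{\circ}$. Take $\fh := \fe_0$ and let $\zeta : \fh \hookrightarrow \fb = \Lie(B) = \Lie(H)$ be the inclusion; this is a morphism of restricted Lie algebras because $\fh$ is elementary abelian and hence is a $p$-subalgebra of $\fb$.

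Hypotheses (a) and (b) of Lemma~\ref{Bc1} are then verified at once: (a) $\msrke(\fh) = 2$ by construction, and (b) the differential of the adjoint representation $\Ad$ of $B$ on $\fg$ is the adjoint action $\ad$ of $\fb$ on $\fg$, so that $\msd(\rho)(\zeta(h))(x) = (\ad h)(x) = [h,x]$ for all $h \in \fh$ and $x \in \fg$. Lemma~\ref{Bc1} then yields the connectedness of $\EE(2,\fg)$.

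The main obstacle is the reliance on Theorem~\ref{AL2}(1), which in turn requires that $p$ be a non-torsion prime for $G$. For a torsion prime $p \geq 3$ (which can occur only in specific simple types — e.g.\ $p = 3$ for $G_2$, $F_4$, $E_6$, $E_7$, $E_8$ and $p = 5$ for $E_8$) one cannot invoke Theorem~\ref{AL2}(1) directly; instead one must separately verify either that some $\fe_0 \in \EE(2,\fg)$ still lies in a Borel subalgebra (so that the argument above applies verbatim), or alternatively that $\fg$ admits a two-dimensional elementary abelian $p$-ideal, in which case Corollary~\ref{Bc3} yields the conclusion. Either reduction, combined with the clean application of Lemma~\ref{Bc1} above, completes the proof for all $p \geq 3$.
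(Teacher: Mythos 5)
Your plan is close to the paper's, but there is a genuine gap that the paper sidesteps with a cleaner choice. You take $\fh := \fe_0$ in Lemma~\ref{Bc1} and hence need to first locate $\fe_0$ inside a Borel subalgebra, which forces you to invoke Theorem~\ref{AL2}(1). That result is only available when $p$ is a non-torsion prime, and you correctly note this as a problem. But your two fallback suggestions do not close the gap: the first (``some $\fe_0 \in \EE(2,\fg)$ still lies in a Borel subalgebra'') is essentially the statement you need to prove and is offered without justification; the second (``$\fg$ admits a two-dimensional elementary abelian $p$-ideal'') is simply false in general for connected algebraic groups --- for instance, a simple classical Lie algebra with $p$ coprime to the relevant data has no proper nonzero ideals at all.

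The paper instead sets $\fh := \fb$, the \emph{entire} Borel subalgebra (with $\zeta = \id_\fb$), rather than $\fe_0$. The hypothesis of Lemma~\ref{Bc1} is then $\msrke(\fb) \geq 2$, which requires no knowledge of where a given $\fe_0$ sits and no appeal to non-torsion primes. This handles the generic case immediately. The remaining cases $\msrke(\fb) \leq 1$ are then dispatched by showing directly that $\EE(2,\fg) = \emptyset$: if $\msrke(\fb) = 1$, the structure of $\fb$ (via \cite[(4.1.1)]{CF}) shows $\hat{G} := G/R_G$ has rank $1$, whence $p \geq 3$ is \emph{automatically} a non-torsion prime, and only now is Theorem~\ref{AL2} invoked to give $\EE(2,\fg) = G \dact \EE(2,\fb) = \emptyset$; if $\msrke(\fb) = 0$, then $\fb$ is a torus and $G$ itself is a torus, again giving $\EE(2,\fg) = \emptyset$. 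The key advantage is that $\fh = \fb$ decouples the verification of hypothesis (a) from the question of where a particular elementary abelian subalgebra sits in $\fg$, so the torsion-prime obstruction never arises in the case where there is actually something to prove.
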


\begin{proof} Let $B \subseteq G$ be a Borel subgroup with Lie algebra $\fb$. Setting $H\!:=\!B$, $\fh\!:=\!\fb$, $\zeta\!:=\!\id_\fb$ and $\rho\!:=\!\Ad|_B : B \lra G_\fg$ in Lemma \ref{Bc1}, we see that
$\EE(2,\fg)$ is connected, whenever $\msrke(\fb)\!\ge\!2$.

If $\msrke(\fb)\!=\!1$, then \cite[(4.1.1)]{CF} ensures that
\[ \fb = \ft\!\ltimes\!(kx)_p\]
for some torus $\ft \subseteq \fg$ and some $p$-nilpotent element $x \in \fg\!\smallsetminus\!\{0\}$. Here $(kx)_p:=\sum_{n \ge 0} kx^{[p]^n}$ is the $p$-unipotent radical of $\fb$. Thanks to
\cite[(13.3)]{Hu67}, $\ft = \Lie(T)$ is the Lie algebra of a maximal torus $T$ of $B$. As $T$ acts on $(kx)_p$ via homomorphisms of restricted Lie algebras, there is a character $\alpha:T \lra k^\times$
such that the root space decomposition of $\fb$ relative to $T$ has the form
\[ \fb = \Lie(T)\!\oplus\!\bigoplus_{n\ge 0} \fb_{p^n\alpha}.\]
Let $\hat{\pi} : G \lra \hat{G}:=G/R_G$ be the canonical projection. In view of \cite[(21.3C)]{Hu81} and \cite[(27.1)]{Hu81}, the Borel subalgebra $\hat{\fb} = \msd(\hat{\pi})(\fb)$ has only one root
relative to the maximal torus $\hat{\pi}(T)$ of $\hat{G}$. Consequently, $\hat{G}$ has rank $1$. Thus, $\hat{G}$ is almost simple with root system $A_1$ and the order of fundamental group $\pi_1(\hat{G})$
divides $2$. Now \cite[(1.13)]{St} shows that the prime $p\!\ge\! 3$ is a non-torsion prime for $G$. Theorem \ref{AL2} yields $\EE(2,\fg)\!=\!G\dact\EE(2,\fb)\!=\!\emptyset$.

In the remaining case where $\msrke(\fb)\!=\!0$, the Lie algebra $\fb$ is a torus, so that $B\!=\!T$ and $G\!=\!\bigcup_{w\in W(G,T)}TwT=\bigcup_{w\in W(G,T)}wT$. Since $G$ is connected, we conclude
that $G\!=\!T$ is a torus, whence $\EE(2,\fg)\!=\!\emptyset$. \end{proof}

\bigskip

\begin{Lem} \label{AL4} Let $G$ be a connected algebraic group with Borel subgroup $B\subseteq G$. We put $\fg:=\Lie(G)$ and $\fb:=\Lie(B)$. If $\fn \unlhd \fg$ is a $G$-stable $p$-ideal
such that there is $\fe \in \EE(2,\fb)$ with $\fe\cap \fn = (0)$, then $\EE(2,\fg/\fn)$ is connected. \end{Lem}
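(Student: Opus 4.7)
My plan is to invoke Lemma \ref{Bc1} directly on the quotient restricted Lie algebra $\fg/\fn$. Since $\fn \unlhd \fg$ is a $G$-stable $p$-ideal, the adjoint representation $\Ad : G \lra \Aut_p(\fg)$ descends to a homomorphism of algebraic groups $\rho : G \lra \Aut_p(\fg/\fn)$, and connectedness of $G$ then forces $\rho(G) \subseteq G_{\fg/\fn}$. I will apply Lemma \ref{Bc1} to the Lie algebra $\fg/\fn$ with $H := B$ (connected and solvable) and $\rho|_B : B \lra G_{\fg/\fn}$.

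For the $p$-subalgebra required in Lemma \ref{Bc1}, let $\pi : \fg \lra \fg/\fn$ be the canonical projection and set $\bar{\fe} := \pi(\fe)$. The assumption $\fe \cap \fn = (0)$ renders $\pi|_\fe : \fe \lra \bar{\fe}$ an isomorphism of restricted Lie algebras, so $\bar{\fe} \in \EE(2,\fg/\fn)$ is elementary abelian of dimension $2$. In particular $\msrke(\bar{\fe}) \ge 2$, which is hypothesis (a) of Lemma \ref{Bc1}.

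For hypothesis (b), take $\zeta : \bar{\fe} \lra \Lie(B) = \fb$ to be the inverse of the isomorphism $\pi|_\fe$; since both sides are elementary abelian of dimension two, any linear isomorphism is automatically a homomorphism of restricted Lie algebras. For $h \in \bar{\fe}$ and $\bar{x} \in \fg/\fn$ one has
\[
\msd(\rho|_B)(\zeta(h))(\bar{x}) \ = \ \pi\bigl([\zeta(h),x]\bigr) \ = \ [\pi(\zeta(h)),\bar{x}] \ = \ [h,\bar{x}],
\]
where $x \in \fg$ is any lift of $\bar{x}$. Hence the hypotheses of Lemma \ref{Bc1} are met, and the lemma yields the connectedness of $\EE(2,\fg/\fn)$.

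The only points demanding any care are that $\rho$ actually lands in $\Aut_p(\fg/\fn)$ (and not merely $\Aut(\fg/\fn)$) and that its differential coincides with the adjoint action modulo $\fn$. Both facts are routine: the former because $\Ad$ preserves the $p$-map and $\fn$ is a $p$-ideal, the latter by functoriality of $\Lie$ together with the identification $\msd(\Ad) = \ad$. I therefore anticipate no substantive obstacle; the essential content of the lemma is that whenever the quotient $\fg/\fn$ inherits a genuine (i.e., faithful on $\fe$) $2$-dimensional elementary abelian subalgebra sitting inside the Lie algebra of a Borel, the criterion of Lemma \ref{Bc1} applies verbatim.
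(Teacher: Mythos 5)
Your proof is correct and follows essentially the same strategy as the paper's: both invoke Lemma~\ref{Bc1} with the Borel subgroup acting on $\fg/\fn$ through the reduced adjoint representation $\widetilde{\Ad}$, the only cosmetic difference being that the paper sets $H:=\widetilde{\Ad}(B)$, $\fh:=\pi(\fb)$, and $\zeta:=\ad$, whereas you take $H:=B$, $\fh:=\bar{\fe}$, and $\zeta:=(\pi|_\fe)^{-1}$. All the verifications you flag as routine (that $\rho$ lands in $\Aut_p(\fg/\fn)$, hence in $G_{\fg/\fn}$ by connectedness, and that $\msd(\widetilde{\Ad})$ is the induced adjoint action) are indeed straightforward and exactly what the paper uses.
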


\begin{proof} By assumption, the group $B$ acts on $\fg':=\fg/\fn$ via
\[ \widetilde{\Ad} : B \lra G_{\fg'} \ \ ; \ \ \widetilde{\Ad}(b)(x\!+\!\fn) = \Ad(b)(x)\!+\!\fn.\]
We put $H:=\widetilde{\Ad}(B) \subseteq G_{\fg'}$, let $\rho : H \hookrightarrow G_{\fg'}$ be the canonical inclusion, and set $\fh:= \{b\!+\!\fn \ ; \ b \in \fb\}$. Since
\[ \msd(\widetilde{\Ad})(b)(x\!+\!\fn) = [b,x]\!+\!\fn\]
for all $b \in \fb$ and $x \in \fg$, it follows that $\ad(\fh)\subseteq \Lie(H)$. Consequently,
\[ \zeta : \fh \lra \Lie(H) \ \ ; \ \ b\!+\!\fn \lra \ad(b\!+\!\fn)\]
is a homomorphism of restricted Lie algebras such that
\[ \msd(\rho)(\zeta(b\!+\!\fn))(x\!+\!\fn) = \zeta(b\!+\!\fn)(x\!+\!\fn)= [b\!+\!\fn,x\!+\!\fn]\]
for all $b \in \fb$ and $x \in \fg$.

Our assumption on $\EE(2,\fb)$ yields $\msrke(\fh)\!\ge\! 2$. The assertion thus follows from Lemma \ref{Bc1}. \end{proof}

\bigskip

\begin{Cor} \label{AL5} Let $\fg$ be a classical semisimple Lie algebra. Then $\EE(2,\fg)$ is connected. \end{Cor}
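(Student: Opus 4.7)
The plan is to reduce the statement to Theorem \ref{AL3} by realizing $\fg$ as the Lie algebra of a connected algebraic group. Since $\fg$ is classical semisimple, it decomposes as a direct sum $\fg = \fa_1 \oplus \cdots \oplus \fa_\ell$ of simple ideals $\fa_i$ of classical type. Every classical simple Lie algebra is centerless, and standard structure theory (essentially Chevalley's construction) identifies each $\fa_i$ with $\Lie(G_i)$, where $G_i$ is the adjoint simple algebraic group of the corresponding Dynkin type. In particular, the delicate type-$A$ case $\fa_i \cong \fsl(mp)/k$ featured in the proof of Lemma \ref{ACL2} is realized as $\Lie(\PGL(mp))$.

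Setting $G := G_1 \times \cdots \times G_\ell$ then produces a connected algebraic group with $\Lie(G) \cong \fg$. An application of Theorem \ref{AL3} to $G$ immediately yields that $\EE(2, \fg)$ is connected.

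The only nontrivial ingredient is the identification of each classical simple ideal $\fa_i$ as the Lie algebra of a connected algebraic group. This is a standard piece of structure theory for classical simple Lie algebras in characteristic $p\!\ge\!5$, so I do not expect it to pose a genuine obstacle to the argument; if one wishes to avoid this identification, an alternative route would be to observe that $\fg$ is centerless and almost classical with $\ad\fg = \fg$, and then to work directly with $G_\fg = \Aut_p(\fg)^\circ$ and apply Lemma \ref{Bc1} with $H$ a Borel subgroup of $G_\fg$, after checking that such a Borel subgroup has $p$-rank $\ge 2$.
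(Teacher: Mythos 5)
Your main route has a genuine gap at precisely the point you flag as ``the only nontrivial ingredient.'' It is \emph{not} true that $\fsl(mp)/kI_{mp}$ equals $\Lie(\PGL(mp))$: the adjoint group $\PGL(mp)$ has Lie algebra $\mathfrak{pgl}(mp)=\fgl(mp)/kI_{mp}$, which has dimension $(mp)^2-1$, whereas the classical simple Lie algebra $\mathfrak{psl}(mp)=\fsl(mp)/kI_{mp}$ has dimension $(mp)^2-2$. They are genuinely different, and $\mathfrak{psl}(mp)$ is in fact the standard example of a restricted Lie algebra that is \emph{not} the Lie algebra of any algebraic group. Chevalley's construction produces $\fsl(mp)$ for the simply connected group and $\mathfrak{pgl}(mp)$ for the adjoint group; the simple ideal $\mathfrak{psl}(mp)$ sits strictly between the center of the former and the whole of the latter and is not algebraic. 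So the reduction ``$\fg=\Lie(G)$ for connected $G$, apply Theorem~\ref{AL3}'' breaks down exactly when some $\fa_i$ is of type $\tilde{A}_{mp-1}$, which is the non-trivial case.

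This is precisely where the paper's proof does more work than a bare appeal to Theorem~\ref{AL3}: each $\fg_i$ is realized as $\tilde{\fg}_i/\tilde{\fn}_i$ where $\tilde{\fg}_i=\Lie(\tilde{G}_i)$ and $\tilde{\fn}_i$ is a \emph{toral} $\tilde{G}_i$-stable ideal (nonzero only in type $\tilde{A}_{mp-1}$, with $\tilde{G}_i=\SL(mp)$ and $\tilde{\fn}_i=kI_{mp}$). One then checks $\EE(2,\tilde{\fb})\ne\emptyset$ and invokes Lemma~\ref{AL4}, which is specifically set up to handle factor algebras $\fg/\fn$ by a $G$-stable $p$-ideal, not algebraic Lie algebras themselves. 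Your sketched alternative --- working with $G_\fg=\Aut_p(\fg)^\circ$ via Lemma~\ref{Bc1} --- is not obviously wrong, but it is far from a one-liner: one must verify that $\Der(\mathfrak{psl}(mp))=\mathfrak{pgl}(mp)$ is $\Lie(G_\fg)$, exhibit a Borel subgroup $H\subseteq G_\fg$ and a $p$-subalgebra $\fh\subseteq\fg$ with $\ad\fh\subseteq\Lie(H)$ and $\msrke(\fh)\ge 2$, and confirm condition~(b) of Lemma~\ref{Bc1}; merely ``checking that such a Borel subgroup has $p$-rank $\ge 2$'' does not supply the required $\fh$ and $\zeta$.
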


\begin{proof} Since $\fg$ is classical semisimple,
\[ \fg=\bigoplus_{i=1}^n\fg_i\]
is a direct sum of classical simple Lie algebras. Hence there exist for each $i \in \{1,\ldots,n\}$ a connected algebraic group $\tilde{G}_i$ and a toral $\tilde{G}_i$-stable $p$-ideal $\tilde{\fn}_i$
of $\tilde{\fg}_i:=\Lie(\tilde{G}_i)$ such that $\fg_i \cong \tilde{\fg}_i/\tilde{\fn}_i$. Moreover, we have $\tilde{\fn}_i \ne (0)$ if and only if $\tilde{G}_i = \SL(n_ip)$.

Setting $\tilde{G}:=\prod_{i=1}^n\tilde{G}_i$ as well as $\tilde{\fg}:=\Lie(\tilde{G})$, we have $\fg \cong \tilde{\fg}/\tilde{\fn}$, where $\tilde{\fn} := \bigoplus_{i=1}^n\tilde{\fn}_i$ is a toral ideal of
$\tilde{\fg}=\bigoplus_{i=1}^n\tilde{\fg}_i$.

Let $\tilde{\fb}_i$ be a Borel subalgebra of $\tilde{\fg}_i$. It follows from \cite[(21.3C)]{Hu81} that $\tilde{\fb}:=\bigoplus_{i=1}^n \tilde{\fb}_i$ is a Borel subalgebra of $\tilde{\fg}$. If $\EE(2,\tilde{\fb})
=\emptyset$, then $n\!=\!1$ and $\tilde{\fg} = \fsl(2)$. Since $p\!\ge\!3$, it follows that $\EE(2,\tilde{\fb})\ne\emptyset$, whenever $\tilde{\fn}\ne(0)$. In view of $\tilde{\fn}$ being a torus, Lemma \ref{AL4}
thus shows that $\EE(2,\fg)$ is connected in case $\tilde{\fn} \ne (0)$. Alternatively, our result follows from Theorem \ref{AL3}. \end{proof}

\bigskip

\subsection{Admissible Lie algebras with strong degeneration}
We are interested in Lie algebras $\fg$ {\it with strong degeneration}, i.e.\ the case where $\Sw(\fg)\!\ne\!\{0\}$. We denote by $\langle \Sw(\fg) \rangle$ the linear span of the sandwich elements
of $\fg$. Owing to Lemma \ref{Em2}, $\langle \Sw(\fg) \rangle$ is a Lie subalgebra of $\fg$ containing the center $C(\fg)$. Thus, if $(\fg,[p])$ is a restricted Lie algebra, then $\langle \Sw(\fg) \rangle$
is a $p$-subalgebra of $\fg$.

Our approach necessitates the consideration of restricted Lie algebras, whose subsets $\cN_3(\fg)$ satisfy certain technical conditions that turn out to be fulfilled in most cases of interest.

\bigskip

\begin{Definition} A restricted Lie algebra $(\fg,[p])$ is called \textit{admissible}, provided
\begin{enumerate}
\item[(i)] $\cN_3(\fg) \subseteq V(\fg)$, and
\item[(ii)] $\exp(\cN_3(\fg)) \subseteq \Aut_p(\fg)$. \end{enumerate} \end{Definition}

\bigskip

\begin{Remarks} (1) If $p\!\ge\!5$ and $\fg$ is centerless, then $\fg$ is admissible.

(2) The four-dimensional restricted Lie algebra $\fsl(2)_s$ to be discussed in Section \ref{S:Ex} below satisfies (i), but not (ii). \end{Remarks}

\bigskip

\begin{Lem} \label{SL1} Suppose that $p\!\ge\!5$. If $(\fg,[p])$ is a restricted Lie algebra, then $\fg/C(\fg)$ is admissible. \end{Lem}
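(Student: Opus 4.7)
The plan is to reduce admissibility of $\fh := \fg/C(\fg)$ to computations carried out upstairs in $\fg$. Given $\overline{x} \in \cN_3(\fh)$, choose a lift $x \in \fg$; the defining condition $(\ad \overline{x})^3 = 0$ translates into $(\ad x)^3(\fg) \subseteq C(\fg)$. Applying $\ad x$ once more and using that $[x,C(\fg)] = 0$ collapses this inclusion to the stronger statement $(\ad x)^4 = 0$ as an operator on $\fg$, and since $p \ge 5$ this forces $(\ad x)^p = 0$.

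Condition (i) of admissibility is then immediate: $\ad(x^{[p]}) = (\ad x)^p = 0$ gives $x^{[p]} \in C(\fg)$, so $\overline{x}^{[p]} = 0$ in $\fh$.

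For condition (ii), I would introduce the truncated exponential $E := \sum_{i=0}^{p-1}(\ad x)^i/i! \in \End \fg$. Since $\ad x$ is a nilpotent derivation of $\fg$ with $(\ad x)^p = 0$, one gets $E \in \Aut(\fg)$; and because $E$ restricts to the identity on $C(\fg)$ (as $[x, C(\fg)] = 0$), it descends to a Lie automorphism of $\fh$ that coincides with $\exp(\overline{x})$. To show this induced map lies in $\Aut_p(\fh)$, I would invoke the standard conjugation identity $E \circ \ad y \circ E^{-1} = \ad(E(y))$ in $\End \fg$ and raise it to the $p$-th power; using $\ad(z^{[p]}) = (\ad z)^p$ this gives $\ad(E(y^{[p]})) = \ad(E(y)^{[p]})$, whence $E(y^{[p]}) - E(y)^{[p]} \in \ker(\ad) = C(\fg)$ for every $y \in \fg$. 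Reducing modulo $C(\fg)$ yields exactly $\exp(\overline{x})(\overline{y}^{[p]}) = \exp(\overline{x})(\overline{y})^{[p]}$, as required.

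The main conceptual subtlety — and what prevents one from simply invoking the centerless remark preceding the statement — is that $\fh$ itself need not be centerless, so one cannot upgrade the conjugation argument to the conclusion $E \in \Aut_p(\fg)$. The point is rather that the obstruction $E(y^{[p]}) - E(y)^{[p]}$, which would have to vanish in the centerless case, here lives precisely in $\ker(\ad|_{\fg}) = C(\fg)$; this is exactly the amount of information that disappears upon passing to the quotient, which is what admissibility of $\fh$ requires.
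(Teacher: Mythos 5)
Your argument follows the paper's blueprint — lift $\overline{x}$ to $x$, observe $(\ad x)^3(\fg)\subseteq C(\fg)$ and $(\ad x)^4=0$, deduce $x^{[p]}\in C(\fg)$, then show the exponential descends to a restricted automorphism of $\fg/C(\fg)$ — and the conjugation trick $E\circ\ad y\circ E^{-1}=\ad(E(y))$, raised to the $p$-th power, is a clean way to see that $E(y^{[p]})-E(y)^{[p]}\in\ker(\ad)=C(\fg)$; that is a legitimate (arguably slicker) route to condition (ii) than the paper's direct Leibniz-mod-$C(\fg)$ computation.

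There is, however, one genuine gap: the step ``since $\ad x$ is a nilpotent derivation with $(\ad x)^p=0$, one gets $E\in\Aut(\fg)$'' is not a valid inference. The truncated exponential of a nilpotent derivation $D$ is a Lie automorphism when $D^N=0$ with $2(N\!-\!1)\le p\!-\!1$, i.e., $N\le\tfrac{p+1}{2}$ (this is precisely the content of Lemma \ref{Em1}(1)); mere $p$-nilpotency is not enough. You have $N=4$, so this criterion covers $p\ge 7$ but fails at $p=5$, where $\tfrac{p+1}{2}=3$. At $p=5$ the obstruction is the set of Leibniz terms $\tfrac{1}{k!\ell!}[(\ad x)^k b,(\ad x)^\ell c]$ with $k\!+\!\ell\ge 5$ and $k,\ell\le 3$, namely $(k,\ell)\in\{(2,3),(3,2),(3,3)\}$ — and each of these involves a factor $(\ad x)^3(\cdot)\in C(\fg)$, hence vanishes. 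So the fact you already recorded, that $(\ad x)^3(\fg)\subseteq C(\fg)$, is exactly what rescues $E\in\Aut(\fg)$ at $p=5$; you must invoke it explicitly rather than appeal to $(\ad x)^p=0$. With that one line inserted, the rest of your argument — $E$ restricts to the identity on $C(\fg)$, descends to $\exp(\overline{x})\in\Aut(\fh)$, and the $p$-map discrepancy lands in $C(\fg)$ — is correct.
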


\begin{proof} We put $\fg':=\fg/C(\fg)$. Given $x:=a\!+\!C(\fg) \in \cN_3(\fg')$, we have $(\ad a)^4(\fg)=(0)$, whence $a^{[p]}\in C(\fg)$. Consequently, $x \in V(\fg')$.

Let $x:=a\!+\!C(\fg) \in \cN_3(\fg')$ be an element, so that $(\ad a)^3(\fg) \subseteq C(\fg)$ (and $(\ad a)^4=0$). If $b,c \in \fg$, then the Leibniz rule for $(\ad a)^3$ and $(\ad a)^4$ implies
\[ \exp(a)([b,c]) \equiv [\exp(a)(b),\exp(a)(c)] \ \ \ \ \ \modd C(\fg)\]
for all $b,c \in \fg$. Hence
\[ [\exp(a)(b)^{[p]}\!-\!\exp(a)(b^{[p]}),\exp(a)(c)] \equiv 0 \ \ \modd C(\fg) \ \ \ \ \ \forall \ b,c \in \fg,\]
so that $\exp(x) \in \Aut_p(\fg')$. As a result, the restricted Lie algebra $\fg'$ is admissible. \end{proof}

\bigskip

\begin{Prop} \label{SL2} If $(\fg,[p])$ is an admissible restricted Lie algebra such that $\Sw(\fg)$ is not an ideal of dimension $\le 1$, then $\EE(2,\fg)$ is connected. \end{Prop}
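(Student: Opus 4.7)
The plan is to verify the hypotheses of Corollary~\ref{Bc2} for a $2$-dimensional abelian subalgebra $\fe$ spanned by two commuting sandwich elements. Since $\fg$ is admissible, $\Sw(\fg) \subseteq \cN_3(\fg) \cap V(\fg)$ and $\exp(\Sw(\fg)) \subseteq \exp(\cN_3(\fg)) \subseteq \Aut_p(\fg)$, while $\Sw(\fg)$ is a $G_\fg$-stable closed conical subset of $\fg$, nonzero because the hypothesis excludes the $0$-dimensional ideal $\{0\}$. Hence Corollary~\ref{Em4} applied to $X := \Sw(\fg)$ furnishes an element $x_0 \in \Sw(\fg)\setminus\{0\}$ with $[c,x_0]=0$ for every $c \in \Sw(\fg)$.

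The central step is to exhibit a sandwich element $c \in \Sw(\fg)$ linearly independent from $x_0$. If $\Sw(\fg) \subseteq k x_0$, the closedness and conicality of $\Sw(\fg)$ force $\Sw(\fg) = k x_0$, a $1$-dimensional subspace which by hypothesis is not an ideal. I would derive a contradiction by showing that admissibility nevertheless forces $k x_0$ to be an ideal. The key ingredients are: by Lemma~\ref{Em3}(3), the one-parameter unipotent subgroup $\Exp(\Sw(\fg)) = \exp(k x_0)$ is normal in $G_\fg$, so the line $k\ad x_0 = \Lie(\Exp(\Sw(\fg)))$ is stable under the adjoint action of $\Lie(G_\fg)$; the standard inclusion $\ad\fg \subseteq \Lie(G_\fg)$ then yields $\ad([y,x_0]) = [\ad y, \ad x_0] \in k\ad x_0$ for every $y \in \fg$, so $[y,x_0] \in k x_0 + C(\fg)$; finally the elementary containment $C(\fg) \cap V(\fg) \subseteq \Sw(\fg) = k x_0$ is used to rule out a genuine central contribution outside $k x_0$.

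Having produced $c \in \Sw(\fg)\setminus k x_0$ with $[x_0,c]=0$, set $\fe := k x_0 \oplus k c$. As $x_0,c \in V(\fg)$ commute, $\fe$ is a $2$-dimensional abelian subalgebra contained in $V(\fg)$, whence $\fe \in \EE(2,\fg)$. Writing $A := \ad x_0$, $B := \ad c$, we have $A^2 = B^2 = 0$ and $[A,B] = \ad[x_0,c] = 0$; the binomial formula then gives $(\alpha A + \beta B)^3 = 0$ for all scalars $\alpha,\beta \in k$, so $\fe \subseteq \cN_3(\fg)$. Condition~(a) of Corollary~\ref{Bc2} holds because $x_0,c \in \Sw(\fg) \subseteq \cN_{\frac{p+1}{2}}(\fg)$ span $\fe$, and condition~(b) holds because $\fe \cap \cN_{\frac{p+1}{2}}(\fg) \subseteq \fe \subseteq \cN_3(\fg)$ and admissibility yields $\exp(\cN_3(\fg)) \subseteq \Aut_p(\fg)$. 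Corollary~\ref{Bc2} then delivers the asserted connectedness of $\EE(2,\fg)$.

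The main obstacle is the ideal-deduction in the second step: the differential argument only a priori produces a central summand in $[y,x_0]$, and one must argue that this summand lies in $k x_0$. This is a technical but essential step; the key inputs are the nilpotence $(\ad x_0)^2 = 0$, which forces $[\fg,x_0]$ into a very restricted part of $\fg$, together with the inclusion of central $p$-nilpotent elements in $\Sw(\fg)$. Once this point is settled, the remaining steps are routine consequences of the earlier lemmas.
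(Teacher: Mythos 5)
Your proposal is correct, and it takes a genuinely different route from the paper's proof. The paper bifurcates on whether some non-zero sandwich element $c$ spans a line $kc$ that fails to be an ideal. In that case it builds a filtration $\fg \supseteq \fg_{(0)} := \ker(\ad c) \supseteq \fg_{(1)} := \im(\ad c)+kc$, shows $\fg_{(1)}$ is an elementary abelian ideal of $\fg_{(0)}$ of dimension $\ge 2$ (so $\EE(2,\fg_{(0)})$ is connected by Corollary~\ref{Bc3}), and then runs a morphism-and-limit argument with $\exp(\alpha c)$ to show that every connected component of $\EE(2,\fg)$ meets $\EE(2,\fg_{(0)})$. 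In the complementary case, where every $kc$ is an ideal, two independent sandwiches (which exist by hypothesis) automatically commute, and Corollary~\ref{Bc3} applies directly.

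You instead open with Corollary~\ref{Em4} (Borel's fixed point for $\Exp(\Sw(\fg))$ acting on $\PP(\Sw(\fg))$) to produce a sandwich $x_0$ centralizing all of $\Sw(\fg)$, and then split on whether $\Sw(\fg)$ is strictly larger than $kx_0$. If so, any independent sandwich $c$ yields $\fe = kx_0\oplus kc\in\EE(2,\fg)$ satisfying both hypotheses of Corollary~\ref{Bc2} (the computation $\fe\subseteq\cN_3(\fg)$ together with admissibility handles condition~(b)), and connectedness follows. If not, $\Sw(\fg)=kx_0$, and you derive a contradiction with the hypothesis by showing $kx_0$ is forced to be an ideal: normality of $\Exp(\Sw(\fg))$ in $G_\fg$, the standard inclusion $\ad\fg\subseteq\Lie(G_\fg)$ (valid because inner derivations are always restricted), and the automatic containment $C(\fg)\subseteq\Sw(\fg)=kx_0$ (one does not even need to intersect with $V(\fg)$ here) give $[y,x_0]\in kx_0$ for all $y$. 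The argument is sound at every step. Your route bypasses both the filtration construction and the morphism-limit argument of the paper, at the cost of establishing the independent structural fact that for an admissible $\fg$ a one-dimensional $\Sw(\fg)$ is automatically an ideal; the resulting proof is shorter and conceptually cleaner, though it leans more heavily on the fixed-point machinery of Corollary~\ref{Em4} and Lemma~\ref{Em3}(3).
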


\begin{proof} We first assume that $\Sw(\fg)$ contains an element $c\ne 0$ such that $kc$ is not an ideal. We recall that
\[ \fg:= \fg_{(-1)} \supseteq \fg_{(0)} \supseteq \fg_{(1)} \supseteq (0),\]
defined via $\fg_{(1)} := \im(\ad c)\!+\!kc$ and $\fg_{(0)}:=\ker(\ad c)$ is a filtration of $\fg$:
\begin{enumerate}
\item[(a)] Since $(\ad c)^2=0$, the Leibniz rule readily implies that $[\im(\ad c),\im(\ad c)]=(0)$, so that $\fg_{(1)}$ is an abelian subalgebra of $\fg$.
\item[(b)] If $x \in \fg$ and $y\! =\! [c,z]\!+\!\alpha c \in \fg_{(1)}$, then (a) implies $(\ad c)([x,y]) = [[c,x],[c,z]] = 0$, whence $[\fg_{(-1)},\fg_{(1)}] \subseteq \fg_{(0)}$.
\item[(c)] Since $\fg_{(0)}$ is a subalgebra of $\fg$, we have $[\fg_{(0)},\fg_{(0)}] \subseteq \fg_{(0)}$.
\item[(d)] If $x \in \fg_{(0)}$ and $y = [c,z]\!+\!\alpha c \in \fg_{(1)}$, then $[x,y] = [x,[c,z]] = [c,[x,z]]$, whence $[\fg_{(0)},\fg_{(1)}] \subseteq \fg_{(1)}$. \end{enumerate}
Given $x \in \fg_{(1)}$, we thus obtain
\[ (\ad x)^3(\fg) \in [\fg_{(1)},(\ad x)^2(\fg)] \subseteq [\fg_{(1)},\fg_{(1)}] = (0),\]
so that $\fg$ being admissible ensures that $\fg_{(1)}$ is an elementary abelian ideal of $\fg_{(0)}$. If $\dim_k\fg_{(1)}=1$, then $\im (\ad c) \subseteq kc$, implying that $kc$ is an ideal, a contradiction.
Corollary \ref{Bc3} now shows that the variety $\EE(2,\fg_{(0)}) \ne \emptyset$ is connected.

Given $\alpha \in k$, we consider the linear map
\[ f_\alpha : \fg \lra \fg \ \ ; \ \ x \mapsto \alpha x\!+\![c,x].\]
There results a morphism
\[ k \lra \End_k(\fg\wedge\fg) \ \ ; \ \ \alpha \mapsto f_\alpha \wedge f_\alpha\]
of affine varieties. Thus, if $v \in \fg\wedge \fg$ is such that $(f_\alpha \wedge f_\alpha)(v) \ne 0$ for all $\alpha \in k$, then
\[ k \lra \PP(\fg\wedge\fg) \ \ ; \ \ \alpha \mapsto k(f_\alpha \wedge f_\alpha)(v)\]
is a morphism. In particular, if $V\in \Gr_2(\fg)$ is a subspace such that $\dim_kf_\alpha(V)=2$ for all $\alpha \in k$, then
\[ f_V : k \lra \Gr_2(\fg) \ \ ; \ \ \alpha \mapsto f_\alpha(V)\]
is a morphism, cf.\ \cite[(2.1.2)]{Fa17}.

Let $\cX \subseteq \EE(2,\fg)$ be a connected component. We put
\[ d_\cX := \min \{\dim_k\fe\cap\fg_{(0)} \ ; \ \fe \in \cX\}.\]
If $d_\cX=0$, then there is $\fe \in \cX$ with $\fe\cap\fg_{(0)}=(0)$. Since $\fg$ is admissible, we have $\exp(\alpha c)(\cX)\subseteq \cX$ for all $\alpha \in k$, and it follows that $f_\alpha(\fe) \in
\cX$ for all $\alpha \in k^\times$. If $\dim_kf_0(\fe) \le1$, then $\fe\cap\fg_{(0)}\ne (0)$, a contradiction. By the above, the map
\[ f_\fe : k \lra \EE(2,\fg) \ \ ; \ \ \alpha \mapsto f_\alpha(\fe)\]
is a morphism such that $f_\fe(k^\times)\subseteq \cX$. As $\cX$ is closed, we have $f_0(\fe) \in \cX\cap\EE(2,\fg_{(0)})$, and $\EE(2,\fg_{(0)})$ being connected yields $\EE(2,\fg_{(0)}) \subseteq \cX$.

If $d_\cX = 1$, there is $\fe \in \cX$ such that $\fe = kx\!\oplus\!ky$, where $x \not\in \fg_{(0)}$ and $y \in \fg_{(0)}$. Application of $\exp(\alpha c)$ then shows that
\[ \fe_\alpha:= kf_\alpha(x)\!\oplus\!ky \in \cX \ \ \ \  \ \ \text{for all} \ \alpha \in k^\times.\]
Note that the subalgebra $\fe_0:=k[c,x]\!+\!ky$ is elementary abelian. If $\dim_k\fe_0 =2$, then
\[ \varphi_\fe : k \lra \EE(2,\fg) \ \ ; \ \ \alpha \mapsto kf_\alpha(x)\!\oplus ky\]
is a morphism such that $\varphi_\fe(k^\times) \subseteq \cX$. Hence $\fe_0 \in \cX\cap\EE(2,\fg_{(0)})$, so that $\EE(2,\fg_{(0)}) \subseteq \cX$.
Alternatively, there is $\beta \in k^\times$ such that $[c,x]=\beta y$. Then $\fh := kc\!+\!\fe$ is a $p$-trivial, non-abelian subalgebra, so that $\dim_k\fh=3$.
Hence $\EE(2,\fh)$ is connected such that $\fe, kc\!\oplus\!ky \in \EE(2,\fh)$. This shows $\EE(2,\fh)\subseteq \cX$ as well as $\cX\cap \EE(2,\fg_{(0)}) \ne
\emptyset$. Thus, $\EE(2,\fg_{(0)}) \subseteq \cX$.

In the remaining case, where $d_\cX=2$, we get $\cX=\EE(2,\fg_{(0)})$.

As a result, every connected component $\cX$ of $\EE(2,\fg)$ contains $\EE(2,\fg_{(0)})$, implying that there is only one component. Hence the variety $\EE(2,\fg)$ is connected.

We may thus assume that $kc$ is an ideal for every $c \in \Sw(\fg)\!\smallsetminus\!\{0\}$. Our assumption on $\Sw(\fg)$ then implies the existence of two linearly independent elements $c_1,c_2 \in \Sw(\fg)$. 
As each $kc_i$ is an ideal, we have $[c_1,c_2]=0$, so that $\fe:=kc_1\!\oplus\!kc_2$ is an elementary abelian ideal of $\fg$. Now Corollary \ref{Bc3} shows that the variety $\EE(2,\fg)$ is connected. \end{proof}

\bigskip

\subsection{Proof of Theorem A} We are now in a position to prove our main result concerning the connectedness of $\EE(2,\fg)$.
If $S \subseteq \fg$ is a subset, we denote by
\[ C_\fg(S) := \{ x \in \fg \ ; \ [x,s]=0 \ \ \ \forall \ s \in S\}\]
the \textit{centralizer} of $S$ in $\fg$.

\bigskip

\begin{Thm} \label{CRL1} Let $p\!\geq\!5$. If $(\fg,[p])$ is an admissible restricted Lie algebra, then the variety $\EE(2,\fg)$ is connected. \end{Thm}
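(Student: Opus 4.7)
The plan is a three-case analysis based on the structure of $\Sw(\fg)$. If $\Sw(\fg)$ is not an ideal of dimension $\leq 1$, Proposition~\ref{SL2} applies directly. If $\Sw(\fg) = \{0\}$, Premet's Theorem~\ref{ACL1} renders $\fg$ almost classical semisimple; Lemma~\ref{ACL2} then gives $V(\fg) = V([\fg,\fg])$, so every $\fe \in \EE(2,\fg)$ lies in $[\fg,\fg]$, and connectedness of $\EE(2,\fg) = \EE(2,[\fg,\fg])$ follows from Corollary~\ref{AL5}.

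The remaining and main case is $\Sw(\fg) = kc$ with $[\fg,c] \subseteq kc$. Admissibility supplies $c \in \cN_3(\fg) \subseteq V(\fg)$ and $\exp(\lambda c) \in \Aut_p(\fg)$, so by Lemma~\ref{Em3} the subgroup $\Exp(kc)$ sits closed, connected, and unipotent inside $G_\fg$, acting on the projective variety $\EE(2,\fg)$. My strategy is to combine this action with Borel's fixed-point theorem to connect an arbitrary $\fe' \in \EE(2,\fg)$ to the closed subvariety $\cA := \{\fe \in \EE(2,\fg) : c \in \fe\}$, and then to prove $\cA$ itself is connected by a Carlson-type argument.

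For the linking step, the orbit closure $\bar U := \overline{\Exp(kc)\dact\fe'}$ is an irreducible $\Exp(kc)$-stable closed subvariety of $\EE(2,\fg)$, hence connected, and Borel's theorem yields a fixed point $\fe^* \in \bar U$. From $[c,\fe^*] \subseteq \fe^* \cap [c,\fg] \subseteq \fe^* \cap kc$, a case inspection on whether $c \in \fe^*$ shows $[c,\fe^*] = 0$ in either situation (abelianness of $\fe^*$ if $c \in \fe^*$; the intersection is zero otherwise), so $\fe^* \subseteq \fh := C_\fg(c)$. If $c \in \fe^*$ then $\fe^* \in \cA$; if $c \notin \fe^*$, Jacobson's formula makes $\fe^* \oplus kc$ a three-dimensional elementary abelian subalgebra, and the embedded $\Gr_2(\fe^* \oplus kc) \cong \PP^2 \hookrightarrow \EE(2,\fg)$ furnishes a connected subset containing $\fe^*$ as well as elements $kc + kw \in \cA$ for $w \in \fe^*$.

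For the connectedness of $\cA$, each $\fe \in \cA$ has the form $kc \oplus kx$ with $x \in V(\fh) \setminus kc$, and the identification $\fe \mapsto [x+kc]$ presents $\cA$ as $\PP(V(\fh)/kc)$. The linking step has exhibited $\fe^* \subseteq V(\fh)$ of dimension two, so $\dim V(\fh) \geq 2$; Carlson's theorem then makes $\PP(V(\fh))$ connected and positive-dimensional, and a careful analysis of the rational projection $\PP(V(\fh)) \dashrightarrow \PP(V(\fh)/kc)$ transfers connectedness to $\cA$. Combining, every $\fe' \in \EE(2,\fg)$ lies in a connected subset meeting the connected $\cA$, so $\EE(2,\fg)$ has a single component. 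The principal obstacle is precisely this third case, where admissibility (for the unipotent action), Borel's theorem (for the commuting fixed point) and Carlson's theorem (for parametrising $\cA$) must be orchestrated together, and where the dichotomy ``$c \in \fe^*$ versus $c \notin \fe^*$'' demands the auxiliary three-dimensional Grassmannian interpolation.
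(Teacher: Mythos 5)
Your first two cases coincide exactly with the paper's: you dispose of $\Sw(\fg)$ not an ideal of dimension $\le 1$ via Proposition~\ref{SL2}, and $\Sw(\fg)=\{0\}$ via Theorem~\ref{ACL1}, Lemma~\ref{ACL2}, and Corollary~\ref{AL5}. Where you diverge is the core case $\Sw(\fg)=kc$ a one-dimensional ideal, and here there is a genuine gap. The linking step is fine: $\Exp(kc)$ acts on each connected component, Borel's theorem produces a fixed point $\fe^*$, and since $[c,\fg]\subseteq kc$ the dichotomy $c\in\fe^*$ versus $c\notin\fe^*$ gives either $\fe^*\in\cA:=\{\fe\in\EE(2,\fg)\ ;\ c\in\fe\}$ or a $3$-dimensional elementary abelian $\fe^*\!\oplus kc$ bridging $\fe^*$ to $\cA$. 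The breakdown is your claim that $\cA$ is connected. Writing $\fh:=C_\fg(c)$, the assignment $[x]\mapsto kc\!\oplus kx$ is only a \emph{rational} map $\PP(V(\fh))\dashrightarrow\cA$, because $c\in V(\fh)$ (admissibility puts $c \in \cN_3(\fg)\subseteq V(\fg)$, and $c$ is central in $\fh$), so $[c]$ lies in the base locus; there is no morphism defined there, and Carlson's connectedness of $\PP(V(\fh))$ does not transfer. Nothing in your argument rules out that $V(\fh)$ has two irreducible components whose intersection is exactly the line $kc$, in which case $\PP(V(\fh))$ is connected but $\PP(V(\fh))\!\smallsetminus\!\{[c]\}$, and hence its image $\cA$, falls apart into two nonempty closed pieces. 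Also note that $V(\fh)$ is not a linear subspace in general, so ``$\PP(V(\fh)/kc)$'' means the projectivization of the (closed conical) image $\pi(V(\fh))\subseteq\fh/kc$, and linear projection of a cone away from a line in the cone is exactly the operation that can disconnect.

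The paper carefully avoids this obstruction. In the single place where it deploys a Carlson morphism of your shape --- the sub-case of Theorem~\ref{CRL1} where $\fb_t$ is not abelian, with $\varphi:\PP(V(C_\fg(\fb_t)))\to\EE(2,\fg)$, $kx\mapsto kx\!\oplus kc$ --- the ambient centralizer is chosen so that $c\notin C_\fg(\fb_t)$, making $\varphi$ an honest morphism rather than a rational map. In the general case $\Sw(\fg)=kc$, the paper instead invokes Premet's Theorem~\ref{ACL0} on one-dimensional inner ideals to produce an element $c'$ with $(\ad c')^3(\fg)\subseteq kc$, analyzes $C(\fg)$ and the Jordan--Chevalley--Seligman decomposition of $c'$, and ultimately exhibits an explicit $\fe_0:=kc'\!\oplus kc$ inside $\cN_3(\fg)\subseteq\cN_{\frac{p+1}{2}}(\fg)$. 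Corollary~\ref{Bc2} then yields connectedness directly from this single $\fe_0$; the connectedness of $\cA$ is never needed, and the extra structural work with $c'$ is precisely what substitutes for the step you have left unproven.
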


\begin{proof} If $\Sw(\fg)$ is not an ideal of dimension $\le 1$, then Proposition \ref{SL2} implies the result. If $\Sw(\fg)=\{0\}$, a consecutive application of Theorem \ref{ACL1} and Lemma \ref{ACL2}
shows that $\fg' := [\fg,\fg]$ is classical semisimple along with $\EE(2,\fg)=\EE(2,\fg')$. Our result now follows from Corollary \ref{AL5}.

We may therefore assume that $\fa:=\Sw(\fg)=kc$ for some $c \in \Sw(\fg)\!\smallsetminus\{0\}$ is an ideal. Since every abelian ideal $\fn \unlhd \fg$ is contained in $\Sw(\fg)$, we conclude that $\fa$ is
the only non-zero abelian ideal of $\fg$.

We consider the restricted Lie algebra $\fg':= \fg/\fa$. Thanks to Theorem \ref{ACL0}, there exists an element $c' \in \fg\!\smallsetminus\!\fa$ such that $(\ad (c'\!+\!\fa))^2(\fg') \subseteq k(c'\!+\!\fa)$. It
follows that $(\ad c')^2(\fg) \subseteq kc'\!\oplus\!\fa$ while $(\ad c')^3(\fg) \subseteq \fa$.

If $C(\fg)\ne (0)$, then $C(\fg)=kc=\fa$, so that $(\ad c')^3=0$. Since $\fg$ is admissible, the subalgebra $\fe:=kc'\!\oplus\!\fa$ is elementary abelian. Moreover, $\fe \subseteq \cN_3(\fg)$, so that
$\fg$ being admissible in conjunction with Corollary \ref{Bc2} implies that $\EE(2,\fg)$ is connected.

Alternatively, $\fg$ is centerless, and the centralizer $C_\fg(\fa)$ of $\fa$ in $\fg$ is a $p$-ideal of codimension $1$. Hence there is $x_0 \in \fg$ such that $\fg = kx_0\!\oplus\! C_\fg(\fa)$ and $[x_0,c]=c$.

Let $c'=c'_s\!+\!c'_n$ be the Jordan-Chevalley-Seligman decomposition of $c'$. Since $c'_s$ is a $p$-polynomial in $c'$ without terms of degree $1$ \cite[(II.3.5)]{SF},
we see that $[c'_s,\fg]\subseteq \fa$. Hence the $p$-subalgebra $(kc'_s)_p$ is a torus such that $[(kc'_s)_p,\fg] \subseteq \fa$. Let $t \in (kc'_s)_p$ be a toral element. Then $\fb_t:= kt\!+\!\fa$ is a
$2$-dimensional $p$-ideal of $\fg$.

If $\fb_t$ is not abelian, then $\fb_t$ is centerless with all derivations of $\fb_t$ being inner, so that
\[\fg=\fb_t\!\oplus\!C_\fg(\fb_t)\]
is a direct sum of $p$-ideals. Our present assumption $\Sw(\fg)=\fa$ yields $\Sw(C_\fg(\fb_t)) = \{0\}$. Hence the arguments above show that
$\EE(2,C_\fg(\fb_t))$ is a connected subspace of $\EE(2,\fg)$.

Since $V(\fg) = \{\alpha c\!+\! x \ ; \ x \in V(C_\fg(\fb_t)), \alpha \in k\}$, there is a morphism
\[ \varphi : \PP(V(C_\fg(\fb_t))) \lra \EE(2,\fg) \ \ ; \ \ kx \mapsto kx\!\oplus\!kc\]
of projective varieties. By Carlson's Theorem \cite{Ca}, $\im\varphi$ is connected, and we let $\cX$ be the connected component of $\EE(2,\fg)$ containing
$\im\varphi$.

If there is $\fe_0 \in \EE(2,C_\fg(\fb_t))$, then $\EE(2,\fe_0\!\oplus\!kc) \cong \Gr_2(\fe_0\!\oplus\!kc)$ is connected, while $\EE(2,\fe_0\!\oplus\!kc)\cap \cX \ne
\emptyset$, whence $\fe_0 \in \cX$. Consequently, we always have $\EE(2,C_\fg(\fb_t)) \subseteq \cX$.

Let $\fe \in \EE(2,\fg)$. Then there are $\alpha, \beta \in k$ and $x,y \in V(C_\fg(\fb_t))$ such that $\fe = k(x\!+\!\alpha c)\!\oplus\!k(y\!+\!\beta c)$. If $x$ and $y$ are
linearly independent, then $\fe \in \EE(2,kx\!\oplus\!ky\!\oplus\!kc)$. Since this space is connected and intersects $\cX$, we see that $\fe \in \cX$. Alternatively, $\fe
\in \im\varphi \subseteq \cX$. This implies that $\cX=\EE(2,\fg)$ is connected.

Hence we may assume that $\fb_t$ is abelian for every toral element $t \in (kc'_s)_p$. Thus, $\fb_t=\fa$, so that $(\ad t)^2=0$. This yields $t=0$.
Since the torus $k(c'_s)_p$ is generated by toral elements \cite[(II.3.6)]{SF}, it follows that $(kc'_s)_p=(0)$, whence $c'_s=0$. We conclude that $(\ad c')$ is nilpotent,
so that $(\ad c')^4=0$.

Writing $c' = d'\!+\!\alpha x_0$, with $\alpha \in k$ and $d' \in C_\fg(\fa)$, we obtain,
\[ 0 = (\ad c')(c) = \alpha (\ad x_0)(c)= \alpha c,\]
so that $\alpha = 0$ and $c' \in C_\fg(\fa)$.

Recall that $(\ad c')^2(\fg) \subseteq kc'\!\oplus\!\fa=:\fe$. As $c'\in C_\fg(\fa)$, we obtain $(\ad c')^3=0$. Thus, $\fe \in \EE(2,\fg)$ is such that $\fe = \langle \fe\cap \cN_{\frac{p+1}{2}}(\fg)\rangle$
and Corollary \ref{Bc2} implies that $\EE(2,\fg)$ is connected. \end{proof}

\bigskip
\noindent
We record the proof of Theorem A, which generalizes earlier work by the first author \cite[(3.4.10)]{Ch}.

\bigskip

\begin{Cor} \label{CRL2} Let $p\!\ge\!5$. If $\fg$ is a restricted Lie algebra, then $\EE(2,\fg/C(\fg))$ is connected. \end{Cor}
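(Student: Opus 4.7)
The plan is to observe that this is essentially immediate from the two main ingredients already in place, namely Theorem \ref{CRL1} and Lemma \ref{SL1}. The only content is to chain them together.

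First, I would invoke Lemma \ref{SL1}: since $p\!\ge\!5$, the restricted Lie algebra $\fg':=\fg/C(\fg)$ is admissible, meaning $\cN_3(\fg')\subseteq V(\fg')$ and $\exp(\cN_3(\fg'))\subseteq \Aut_p(\fg')$. Then, applying Theorem \ref{CRL1} to the admissible restricted Lie algebra $\fg'$ yields that $\EE(2,\fg')=\EE(2,\fg/C(\fg))$ is connected.

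Since both Lemma \ref{SL1} and Theorem \ref{CRL1} are already fully established in the excerpt, there is no genuine obstacle here; the corollary is a direct specialization and the proof should be a one-line deduction. The substantive work has already been done in verifying admissibility of the quotient (Lemma \ref{SL1}) and in the case analysis of Theorem \ref{CRL1} that handles the three possibilities for $\Sw(\fg')$ (namely being a non-ideal or higher-dimensional configuration, being zero, or being a one-dimensional ideal).
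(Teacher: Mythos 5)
Your proposal is correct and coincides exactly with the paper's proof, which is the one-line deduction "apply Lemma \ref{SL1} to obtain admissibility of $\fg/C(\fg)$, then apply Theorem \ref{CRL1}." Nothing more is needed.
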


\begin{proof} This follows by applying Lemma \ref{SL1} and Theorem \ref{CRL1} consecutively. \end{proof}

\bigskip

\begin{Example} The foregoing result does in general not hold for $p\!=\!2$. We consider the semidirect product
\[\fg:= \fsl(2)\!\ltimes\!L(1),\]
of $\fsl(2)$ with its standard module $L(1)$, the so-called {\it Schr\"odinger algebra}. Note that $\fg$ is centerless with unique $p$-map given by
\[ (x,v)^{[2]} := (x^{[2]},x.v) \ \ \ \ \ \ \forall \ x \in \fsl(2), v \in L(1).\]
In particular, $L(1)$ is isomorphic to an elementary abelian ideal of $\fg$.

Since $\EE(2,\fsl(2))\!=\!\emptyset$, we have $C_{\fsl(2)}(x)\cap V(\fsl(2)) = kx$ for every $x \in V(\fsl(2))\!\smallsetminus\!\{0\}$. We claim that 
\[ \EE(2,\fg) = \{L(1)\} \cup \{ \fe \in \EE(2,\fg) \ ; \ \dim_k \fe \cap \fsl(2) \ge 1\}.\]

Let $\fe \ne L(1)$ be an element of $\EE(2,\fg)$. Then $\fe = k(x,v)\!\oplus\!k(y,w)$, where $x,y \in V(\fsl(2))$, $x\!\ne\!0$ and $v,w \in L(1)$. If $\{v,w\}$ is linearly dependent, then we may assume without loss
of generality that $v\!=\!\alpha w$ for some $\alpha \in k$. Consequently, $0\!\ne\! (x,v)\!-\!\alpha(y,w)\! =\! (x\!-\!\alpha y, 0) \in \fe\cap\fsl(2)$.

We may thus assume that $L(1)\!=\!kv\!\oplus\!kw$. Since
\[ 0 =[(x,v),(y,w)] = ([x,y],x.w\!-\!y.v)  \ \ \text{and} \ \ 0 = (a,u)^{[2]} = (a^{[2]},a.u),  \ \ \ (a,u) \in \{(x,v),(y,w)\},\]
the observation above provides $\alpha \in k$ such that $y\!=\!\alpha x$ and we obtain $0\!=\!x.(w\!-\!\alpha v)$ and $0\!=\!x.v$. Hence $x.w\!=\!0$, so that $x.L(1)\!=\!(0)$, a contradiction. This establishes our claim.

It now follows from \cite[(7.3)]{Fa04} that $\EE(2,\fg)$ is the disjoint union of two non-empty closed subsets. \end{Example}

\bigskip

\subsection{Irreducible components and dimension}\label{S:IrrD} Elaborating on \cite[(1.7)]{CFP13}, we address the case where $\fg=\Lie(G)$ is the Lie algebra of a reductive group $G$. To that end, we
collect a few results concerning equidimensional varieties and principal fiber bundles. For an algebraic variety $X$, we let $\Irr(X)$ be the (finite) set of its irreducible components.

\bigskip

\begin{Lem} \label{IrrD1} Let $X$ be equidimensional, $\cO \subseteq X$ be a nonempty open subset. Then the following statements hold:
\begin{enumerate}
\item $\cO$ is equidimensional, $\dim \cO = \dim X$, and $\Irr(\cO) = \{\cO\cap C \ ; \ C \in \Irr(X) \ \text{and} \ \cO\cap C \ne \emptyset\}$.
\item If $\cO\cap C \ne \emptyset$ for every $C \in \Irr(X)$, then $\overline{\cO} = X$ and the map
\[ \Irr(X) \lra \Irr(\cO) \ \ ; \ \ C \mapsto C\cap\cO\]
is a bijection. \end{enumerate} \end{Lem}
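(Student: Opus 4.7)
The plan is to reduce everything to standard facts about irreducible components and to exploit the basic fact that, for any irreducible variety $Y$, every nonempty open subset of $Y$ is irreducible and dense in $Y$.

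First I would fix $C \in \Irr(X)$ with $C\cap\cO \ne \emptyset$. Since $C$ is closed in $X$ and $\cO$ is open in $X$, the subset $C\cap\cO$ is both closed in $\cO$ and open in $C$. Being a nonempty open subset of the irreducible variety $C$, it is irreducible and dense in $C$, so $\overline{C\cap\cO}=C$ (closure taken in $X$) and $\dim(C\cap\cO)=\dim C$. Equidimensionality of $X$ gives $\dim C = \dim X$, hence each such $C\cap\cO$ has dimension $\dim X$.

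Next I would show the $C\cap\cO$ are exactly the irreducible components of $\cO$. They cover $\cO$: indeed, $\cO = X\cap\cO = \bigcup_{C\in\Irr(X)}(C\cap\cO)$. For maximality, suppose $Y\subseteq\cO$ is irreducible, closed in $\cO$, and contains $C\cap\cO$. Its closure $\overline{Y}$ in $X$ is irreducible and contains $\overline{C\cap\cO}=C$; by maximality of $C$ as an irreducible closed subset of $X$, we must have $\overline{Y}=C$, and hence $Y\subseteq C\cap\cO$. This shows each $C\cap\cO$ is an irreducible component of $\cO$. Conversely, given $Z\in\Irr(\cO)$, its closure $\overline{Z}$ in $X$ is irreducible, so $\overline{Z}\subseteq C$ for some $C\in\Irr(X)$, forcing $Z\subseteq C\cap\cO$; maximality of $Z$ then yields $Z=C\cap\cO$. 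Combined with the dimension computation above, this establishes (1).

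For (2), if $C\cap\cO\ne\emptyset$ for every $C\in\Irr(X)$, then $\overline{\cO}\supseteq\overline{C\cap\cO}=C$ for each $C$, so $\overline{\cO}\supseteq\bigcup_{C\in\Irr(X)}C=X$, giving $\overline{\cO}=X$. By (1), the assignment $C\mapsto C\cap\cO$ defines a surjection $\Irr(X)\tha\Irr(\cO)$; since $C$ is recovered from $C\cap\cO$ as its closure in $X$, the map is also injective, hence bijective.

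There is no substantive obstacle here; the only point requiring care is the maximality argument in (1), which rests on the density of $C\cap\cO$ in $C$ and the fact that the closure (taken in $X$) of an irreducible closed subset of $\cO$ remains irreducible.
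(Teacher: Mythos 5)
Your argument is correct and follows essentially the same route as the paper's own proof: cover $\cO$ by the sets $C\cap\cO$, observe that each nonempty $C\cap\cO$ is open and dense in $C$ (hence irreducible of dimension $\dim X$) and closed in $\cO$, and deduce the description of $\Irr(\cO)$ and the bijection in part (2) from $\overline{C\cap\cO}=C$. You merely spell out the maximality argument that the paper compresses into ``this readily implies.''
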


\begin{proof} (1) Suppose that $\{C_1,\ldots, C_n\}$ is the set of those irreducible components of $X$ that meet $\cO$. Then $\cO = \bigcup_{i=1}^n\cO\cap C_i$, and each subset $\cO\cap C_i$
is open and dense in $C_i$. Hence $\cO\cap C_i$ is irreducible and $\dim \cO\cap C_i = \dim C_i = \dim X$. Since $\cO\cap C_i$ is closed in $\cO$,
this readily implies that $\cO$ is equidimensional and $\Irr(\cO)=\{C\cap\cO \ ; \ C \in \Irr(X) \ \text{and} \ C\cap\cO \neq \emptyset\}$.

(2) In view of (1), the given map is surjective. Since $ \overline{\cO\cap C} =C$ for all $C \in \Irr(X)$, it is also injective. Moreover, we have
\[ \overline{\cO} = \bigcup_{C \in \Irr(X)} \overline{\cO\cap C} = \bigcup_{C \in \Irr(X)}C = X,\]
as asserted. \end{proof}

\bigskip

\begin{Lem} \label{IrrD2} Let $G$ be a connected algebraic group acting simply on an equidimensional variety $X$. If $\varphi : X \lra Y$ is a dominant morphism such that
$\varphi^{-1}(\varphi(x))=G\dact x$ for all $x \in X$, then the following statements hold:
\begin{enumerate}
\item $Y$ is equidimensional of dimension $\dim Y = \dim X\!-\!\dim G$.
\item The map
\[ \Phi : \Irr(X) \lra \Irr(Y) \ \ ; \ \ C \mapsto \overline{\varphi(C)}\]
is bijective. \end{enumerate} \end{Lem}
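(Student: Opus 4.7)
The plan rests on two ingredients: (a) a connected algebraic group permutes the (finitely many) irreducible components of a variety trivially, and (b) the fiber-dimension theorem together with the assumption that every fiber of $\varphi$ is a $G$-orbit of dimension $\dim G$ (freeness of the action).

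First I would isolate, for each $C \in \Irr(X)$, the open dense subset
\[ U_C := C\smallsetminus\bigcup_{C'\in \Irr(X),\,C'\ne C} C'.\]
Since $G$ is connected, it stabilises each irreducible component, hence also each $U_C$. The crucial observation is that the sets $\varphi(U_C)$, $C\in\Irr(X)$, are pairwise disjoint: if $\varphi(x_1)=\varphi(x_2)$ with $x_i\in U_{C_i}$, then $x_2\in G\dact x_1\subseteq U_{C_1}$, forcing $C_1=C_2$. Since the action is free, each fiber of $\varphi|_C:C\to\overline{\varphi(C)}$ is an orbit of pure dimension $\dim G$, so the fiber-dimension theorem yields
\[ \dim\overline{\varphi(C)} = \dim C - \dim G = \dim X - \dim G,\]
the last equality by equidimensionality of $X$. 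As $\varphi$ is dominant, $Y=\overline{\varphi(X)}=\bigcup_{C\in\Irr(X)}\overline{\varphi(C)}$, a finite union of irreducible closed subsets all of the same dimension.

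For part (2), I would prove injectivity of $\Phi$ first. Suppose $\overline{\varphi(C_1)}\subseteq\overline{\varphi(C_2)}$ with $C_1,C_2\in\Irr(X)$. Because both sides are irreducible of the same dimension $\dim X-\dim G$, they coincide; call this set $Z$. By Chevalley's theorem, $\varphi(U_{C_1})$ and $\varphi(U_{C_2})$ are constructible subsets of $Z$, and each is dense in $Z$ (since $U_{C_i}$ is dense in $C_i$ and $\overline{\varphi(U_{C_i})}=\overline{\varphi(C_i)}=Z$). Consequently each contains a non-empty open subset of $Z$, and these two open subsets must intersect because $Z$ is irreducible --- contradicting the disjointness established above, unless $C_1=C_2$. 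Hence the map $\Phi$ is injective, and the irreducible subsets $\overline{\varphi(C)}$ are pairwise incomparable.

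Surjectivity is then immediate: given $D\in\Irr(Y)$, the decomposition $D=\bigcup_{C}(D\cap\overline{\varphi(C)})$ together with irreducibility of $D$ forces $D\subseteq\overline{\varphi(C)}$ for some $C$; by maximality of $D$ we obtain $D=\overline{\varphi(C)}=\Phi(C)$. This simultaneously shows that each $\overline{\varphi(C)}$ is an irreducible component of $Y$, whence $Y$ is equidimensional of dimension $\dim X-\dim G$, completing (1). The main technical point that needs care is the disjointness of the images $\varphi(U_C)$ and the Chevalley/density argument used to upgrade constructible density to genuine open density; everything else is a clean application of the fiber-dimension theorem.
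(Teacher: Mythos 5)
Your proof is correct, and for the injectivity step it follows a genuinely cleaner route than the paper's. Part (1) and the opening moves of part (2) (fiber-dimension theorem plus Chevalley) coincide with the paper's argument. The difference lies in how injectivity of $\Phi$ is concluded. The paper works with the full components $C_1,C_2$: from $\overline{\varphi(C_1)}=\overline{\varphi(C_2)}=:Z$ and Chevalley it arrives at $\overline{\varphi(C_1\cap C_2)}=Z$, then uses $G$-stability of $C_1\cap C_2$ together with a second application of the fiber-dimension theorem to compute $\dim(C_1\cap C_2)=\dim C_1$ and deduce $C_1\subseteq C_2$. You instead pass from the start to the $G$-stable open set $U_C:=C\smallsetminus\bigcup_{C'\ne C}C'$, where the hypothesis $\varphi^{-1}(\varphi(x))=G\dact x$ forces $\varphi(U_{C_1})\cap\varphi(U_{C_2})=\emptyset$ for $C_1\ne C_2$; then two dense constructible subsets of the irreducible $Z$ must meet, and the contradiction is immediate. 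This trades the paper's dimension count on $C_1\cap C_2$ for a disjointness argument, avoids introducing the irreducible components of $C_1\cap C_2$ altogether, and is, in my view, the more transparent of the two. One minor stylistic note: you begin the injectivity paragraph by assuming the containment $\overline{\varphi(C_1)}\subseteq\overline{\varphi(C_2)}$ rather than equality; this is fine (and in fact yields the slightly stronger statement that the images are pairwise incomparable, which you then use), but you could state up front that this is what you are proving.
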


\begin{proof} (1) Let $C \subseteq X$ be an irreducible component. Since $G$ is connected, the variety $C$ is $G$-stable, so that
\[ \varphi|_C : C \lra \overline{\varphi(C)}\]
is a dominant morphism such that $\varphi^{-1}(\varphi(c))=G\dact c \subseteq C$ for every $c \in C$. The fiber dimension theorem thus yields
\[ \dim \overline{\varphi(C)} = \dim C\!-\!\dim G = \dim X\!-\!\dim G.\]
As $Y=\bigcup_{C \in \Irr(X)}\overline{\varphi(C)}$, the variety $Y$ is equidimensional of dimension $\dim X\!-\!\dim G$ and $\Irr(Y) =\{\overline{\varphi(C)} \ ; \ C \in \Irr(X)\}$.

(2) By the above, the map $\Phi : \Irr(X) \lra \Irr(Y)$ is well-defined and surjective.

Suppose that $C_1,C_2 \in \Irr(X)$ are such that $\Phi(C_1)=\Phi(C_2)$. Then we have $\overline{\varphi(C_1)} = \overline{\varphi(C_2)}=:Z$ and Chevalley's Theorem provides
subsets $U_i \subseteq \varphi(C_i)$ that are open and dense in $Z$. Hence $U:=U_1\cap U_2$ enjoys the same properties. Given $u \in U$, there are $c_i \in C_i$ such that $u=\varphi(c_i)$.
Hence there is $g \in G$ such that $c_2=g.c_1 \in C_1$. It follows that $U \subseteq \varphi(C_1\cap C_2)$, so that
\[ \overline{\varphi(C_1)} = \overline{U} = \overline{\varphi(C_1\cap C_2)}.\]
Note that $C_1\cap C_2$ is $G$-stable, so that this also holds for each irreducible component $D \subseteq C_1\cap C_2$. It follows that
\[ \dim \overline{\varphi(D)} = \dim D\!-\!\dim G,\]
and hence
\begin{eqnarray*}
\dim C_1 & = & \dim Y\!+\!\dim G = \dim Z\!+\!\dim G = \dim \overline{\varphi(C_1\cap C_2)}\!+\!\dim G \\
                & = & \max\{\dim \overline{\varphi(D)} \ ; \ D \in \Irr(C_1\cap C_2)\}\!+\!\dim G = \max\{\dim D \ ; \ D \in \Irr(C_1\cap C_2)\}\\
                & = &\dim(C_1\cap C_2).
\end{eqnarray*}
Since $C_1$ is irreducible, we conclude that $C_1=C_1\cap C_2$, so that $C_1$ being a component implies $C_1=C_2$.

As a result, the map $\Phi$ is also injective. \end{proof}

\bigskip
\noindent
The foregoing observations will be applied in the following context, cf.\ \cite[\S1]{CFP13}. For a restricted Lie algebra $(\fg,[p])$ we denote by
\[ \cC_2(V(\fg)) := \{(x,y) \in V(\fg)\!\times\!V(\fg) \ ; \ [x,y]=0\}\]
the {\it commuting variety} of its nullcone $V(\fg)$. Note that the general linear group $\GL_2(k)$ acts on $\cC_2(V(\fg))$ via
\[ \left(\begin{smallmatrix}\alpha & \beta \\ \gamma & \delta \end{smallmatrix}\right)\dact (x,y) := (\alpha x\!+\!\beta y, \gamma x\!+\!\delta y).\]
Let
\[ \cO_2(V(\fg)) := \{(x,y) \in \cC_2(V(\fg)) \ ; \ \dim_kkx\!+\!ky=2\}.\]
Then $\cO_2(V(\fg))$ is a $\GL_2(k)$-stable, open subset of $\cC_2(V(\fg))$ on which $\GL_2(k)$ acts simply. The canonical morphism
\[ \varphi : \cO_2(V(\fg)) \lra \EE(2,\fg) \ \ ; \ \ (x,y) \lra kx\!\oplus\!ky\]
is surjective and such that $\varphi^{-1}(\varphi(x,y)) = \GL_2(k)\dact (x,y)$ for all $(x,y) \in \cO_2(V(\fg))$.

Following \cite[(II.1.17)]{Ja03}, we refer to a homomorphism $\pi : \hat{G} \lra G$ of connected algebraic groups as a {\it covering}, provided
\begin{enumerate}
\item[(i)] $\pi$ is surjective, and
\item[(ii)] $\ker\pi$ is a finite subgroup scheme of the scheme-theoretic center $\cZ(\hat{G})$ of $\hat{G}$. \end{enumerate}
We denote by $\hat{\fg}$ and $\fg$ the Lie algebras of $\hat{G}$ and $G$, respectively. Given $\hat{g} \in \hat{G}$, we have
\[ \pi\circ \kappa_{\hat{g}} = \kappa_{\pi(\hat{g})}\circ \pi,\]
where $\kappa_{\hat{g}}$ and $\kappa_{\pi(\hat{g})}$ denote the conjugation by $\hat{g}$ and $\pi(\hat{g})$, respectively. Differentiation thus implies
\[ \msd(\pi)\circ \Ad(\hat{g})= \Ad(\pi(\hat{g}))\circ \msd(\pi),\]
so that $\msd(\pi)$ is equivariant in the sense that
\[ (\ast) \ \ \ \ \ \ \pi(\hat{g})\dact\msd(\pi)(\hat{x}) = \msd(\pi)(\hat{g}\dact \hat{x}) \ \ \ \ \ \ \forall \ \hat{g} \in \hat{G}, \hat{x} \in \hat{\fg}.\]
The Coxeter number of a reductive group $G$ will be denoted $h(G)$, cf.\ \cite[(II.6.2)]{Ja03}.

Let $G$ be connected, semisimple with root system $\Phi$ and Cartan matrix $C_\Phi$. We require the following facts:
\begin{enumerate}
\item[(a)] If $\cZ(G)$ is the scheme-theoretic center of $G$, then $\det(C_\Phi)\!=\!\ord(\cZ(G))\ord(\pi_1(G))$.
\item[(b)] If $\pi : \hat{G} \lra G$ is a simply connected covering such that $p\!\nmid\!\ord(\pi_1(G))$, then the differential $\msd(\pi): \Lie(\hat{G}) \lra \Lie(G)$ is an isomorphism of restricted Lie algebras. \end{enumerate}
Fact (a) follows from the definition of $\pi_1(G)$ and \cite[(II.1.6)]{Ja03}. Fact (b) is a consequence of \cite[(2.23)]{Mil17} and the observation that the scheme-theoretic kernel of $\pi$ is \'{e}tale whenever 
$p\!\nmid\!\ord(\pi_1(G))$.

\bigskip

\begin{Lem} \label{IrrD3} Let $\pi : \hat{G} \lra G$ be a simply connected covering of the connected semisimple algebraic group $G$. If $p\!\nmid \ord(\pi_1(G))$, then the differential $\msd(\pi) : \hat{\fg} \lra \fg$ induces 
isomorphisms
\begin{enumerate}
\item $\msd(\pi) : V(\hat{\fg}) \lra V(\fg)$, and
\item $\Psi_\pi : \EE(2,\hat{\fg}) \lra \EE(2,\fg) \ \ ; \ \ \hat{\fe} \mapsto \msd(\pi)(\hat{\fe})$ \end{enumerate}
of varieties. \end{Lem}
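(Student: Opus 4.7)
The plan is to reduce everything to one structural observation: since $\pi$ is a covering, $\ker\pi$ is a finite subgroup scheme of $Z(\hat G)$, and the assumption that $\hat G$ is simply connected means $Z(\hat G)$ is a diagonalizable group scheme. Consequently, $\ker\msd(\pi) = \Lie(\ker\pi)$ lies in $\Lie(Z(\hat G)) \subseteq C(\hat\fg)$ and is toral in the restricted sense. From this, $\ker\msd(\pi) \cap V(\hat\fg) = (0)$, which will immediately give injectivity in both (1) and (2): in (2) it reads that two $2$-dimensional elementary abelian subalgebras with a common image under $\msd(\pi)$ coincide as subsets of $V(\hat\fg)$, hence as subspaces.

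For surjectivity in (1), I plan to use $p\ge h(G)$ to invoke the Seitz Springer isomorphism, producing $\hat G$-equivariant bijections $\exp_{\hat G} : V(\hat\fg) \to \hat\cU$ and $\exp_G : V(\fg) \to \cU$ onto the unipotent varieties, which intertwine $\pi$ and $\msd(\pi)$. The task then becomes showing that $\pi$ sends $\hat\cU(k)$ onto $\cU(k)$. Given $u \in \cU(k)$ and any lift $\hat g \in \hat G(k)$ (which exists because $\pi$ is surjective), I would take its Jordan decomposition $\hat g = \hat g_s \hat g_u$; the uniqueness of the Jordan decomposition of the unipotent $\pi(\hat g) = u$ forces $\pi(\hat g_s) = 1$, so $\hat g_u \in \hat\cU(k)$ is the required lift.

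For surjectivity of $\Psi_\pi$, fix $\fe = kx \oplus ky \in \EE(2,\fg)$ and take the unique preimages $\hat x, \hat y \in V(\hat\fg)$ supplied by (1). These are linearly independent, and I claim that $k\hat x \oplus k\hat y$ is the sought element of $\EE(2,\hat\fg)$ provided that $[\hat x, \hat y] = 0$. To verify this, I would pass to the group: set $\hat u := \exp_{\hat G}(\hat x)$ and $\hat v := \exp_{\hat G}(\hat y)$; their images $\pi(\hat u) = \exp_G(x)$ and $\pi(\hat v) = \exp_G(y)$ commute in $G$ by Lemma \ref{Em1}. Then $\hat v' := \hat u\hat v\hat u^{-1}$ is unipotent with $\pi(\hat v') = \pi(\hat v)$, so $\hat v'\hat v^{-1} \in \ker\pi(k) \subseteq Z(\hat G)(k)$ is central and semisimple. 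Writing $\hat v' = (\hat v'\hat v^{-1})\hat v$ as a product of commuting central-semisimple and unipotent factors and comparing with the Jordan decomposition of the unipotent $\hat v'$ forces $\hat v'\hat v^{-1} = 1$; thus $\hat u$ and $\hat v$ commute in $\hat G$.

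The main obstacle is the final pullback step, namely deducing $[\hat x,\hat y] = 0$ from $\hat u\hat v = \hat v\hat u$. This relies on the fact that Seitz's Springer isomorphism respects commutativity, i.e., that its restriction to abelian restricted Lie subalgebras corresponds bijectively to commutative unipotent subgroup schemes under the hypothesis $p\ge h(G)$; this is the essential structural input. Granting it, both $\msd(\pi)|_{V(\hat\fg)}$ and $\Psi_\pi$ are bijective morphisms of quasi-projective varieties, and any such morphism is automatically a homeomorphism on the underlying topological spaces, yielding the concluding assertion.
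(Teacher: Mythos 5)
Your injectivity arguments (in both parts) are in the same spirit as the paper's and are correct: $\ker\msd(\pi)$ is a torus, so it meets $V(\hat\fg)$ trivially, and injectivity of $\Psi_\pi$ then follows formally. Your surjectivity argument for (1) via lifting a unipotent element along $\pi$ and taking the unipotent part of a preimage is also fine, though the paper simply cites \cite[(2.7)]{Ja04} for (1) in one stroke and does not need the Springer isomorphism at all.

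The surjectivity of $\Psi_\pi$ is where you have a genuine gap. You reduce to showing $[\hat x,\hat y]=0$ from $\hat u\hat v=\hat v\hat u$, and you label the needed fact (``Seitz's Springer isomorphism respects commutativity'') as ``the essential structural input,'' then proceed ``Granting it.'' This is not a citable one-liner. A Springer isomorphism is by definition only a $G$-equivariant isomorphism $V(\fg)\to\cU$; it is not unique, and in general it does not convert Lie-bracket vanishing to group commutation. To get what you want, you would need the specific Springer morphism that is of \emph{exponential type} in the sense of Suslin--Friedlander--Bendel, together with the identity $\Ad(\exp\hat x)=\exp(\ad\hat x)$; only then does $\hat u\hat v=\hat v\hat u$ yield $\exp(\ad\hat x)\hat y=\hat y$, and even then one still has to invert: since $\ad\hat x$ is nilpotent, $(\exp(\ad\hat x)-\mathrm{id})=(\ad\hat x)\circ\bigl(\mathrm{id}+\tfrac{1}{2!}\ad\hat x+\cdots\bigr)$ with the second factor unipotent, hence $\ker(\exp(\ad\hat x)-\mathrm{id})=\ker(\ad\hat x)$, giving $[\hat x,\hat y]=0$. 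None of that is in your proposal, and the existence of such an exponential-type isomorphism for arbitrary semisimple $G$ at $p\ge h(G)$ is itself a nontrivial theorem. The paper sidesteps all of this: it uses Theorem~\ref{AL2} to $G$-conjugate $\fe$ into a fixed $\Lie(B)$-unipotent radical $\fu$, observes that \cite[(2.7)]{Ja04} already gives an isomorphism of \emph{restricted Lie algebras} $\msd(\pi):\hat\fu\to\fu$ (so surjectivity on $\EE(2,\hat\fu)\to\EE(2,\fu)$ is automatic), and then uses equivariance of $\msd(\pi)$ to move the preimage back. You should either adopt that reduction, or supply a proof (or a precise reference, e.g.\ to the exponential-type theory) for the commutativity-preserving statement you assume.
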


\begin{proof} This follows directly from (b) above. \end{proof}

\bigskip
\noindent
Let $\fg =\Lie(G)$ be the Lie algebra of an algebraic group $G$. We say that $x \in V(\fg)$ is {\it distinguished}, provided every torus $T$ of the centralizer $C_G(x)$ of
$x$ in $G$ is contained in the center $Z(G)$ of $G$. In view of \cite[(21.4)]{Hu81}, the connected component $C_G(x)^\circ$ of a distinguished element $x$ is nilpotent.
Consequently, $C_G(x)^\circ$ is contained in a Borel subgroup $B$ of $G$. Moreover, the Lie-Kolchin Theorem implies that $C_G(x)^\circ$ is unipotent in case $Z(G)$ is finite.

We let $\rk(G)$ denote the {\it rank} of $G$, that is, the dimension of a maximal torus of $G$.

\bigskip

\begin{Lem} \label{IrrD4} Let $G$ be a connected, simply connected, semisimple algebraic group of rank $\rk(G)\!\ge\!2$. If $p\!\ge\!h(G)$, then we have
\[ C\cap \cO_2(V(\fg)) \ne \emptyset\]
for every $C \in \Irr(\cC_2(V(\fg)))$. \end{Lem}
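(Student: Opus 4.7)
The plan is a dimension count. Set
\[
Z \ := \ \cC_2(V(\fg))\!\smallsetminus\!\cO_2(V(\fg)) \ = \ \{(x,y)\in\cC_2(V(\fg))\ ;\ \dim_k(kx\!+\!ky)\le 1\},
\]
a closed subset of $\cC_2(V(\fg))$. Since each component $C$ is irreducible, it is enough to show $\dim C > \dim Z$, as this rules out $C\subseteq Z$.

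For the upper bound on $\dim Z$: every element of $Z$ can be written as $(\lambda x, \mu x)$ with $x\in V(\fg)$ and $(\lambda,\mu)\in k^2$, so $Z$ is the image of the morphism $\A^2\!\times\!V(\fg)\lra V(\fg)\!\times\!V(\fg)$, $(\lambda,\mu,x)\mapsto (\lambda x,\mu x)$. The $\GG_m$-action $t\dact(\lambda,\mu,x):=(t^{-1}\lambda,t^{-1}\mu,tx)$ preserves this map, so each fiber over a non-zero point has dimension at least $1$. General fiber-dimension theory therefore yields $\dim Z\le \dim V(\fg)\!+\!1$.

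For the lower bound on $\dim C$: I would invoke Premet's theorem on the nilpotent commuting variety of a reductive Lie algebra, which asserts that in sufficiently good positive characteristic, $\cC_2(V(\fg))$ is equidimensional of dimension $\dim\fg=\dim V(\fg)\!+\!\rk(G)$. The hypothesis $p\!\ge\!h(G)$, combined with simple-connectedness of the semisimple group $G$, is precisely what places us in the applicable range of this result. Combined with $\rk(G)\ge 2$, this gives
\[
\dim C \ = \ \dim V(\fg)\!+\!\rk(G) \ \ge \ \dim V(\fg)\!+\!2 \ > \ \dim Z,
\]
so $C\not\subseteq Z$ and hence $C\cap \cO_2(V(\fg))\ne\emptyset$.

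The main obstacle is pinning down a reference for Premet's equidimensionality theorem in exactly the characteristic range $p\!\ge\!h(G)$; the original statement is in characteristic zero, with extensions to various ``good'' ranges in positive characteristic. Should invoking this result be problematic, a more self-contained alternative is to apply Theorem \ref{AL2} to reduce to saturating pairs inside a Borel subalgebra $\fb$, and then exhibit explicit members of $\cO_2(V(\fg))\cap C$ using that a regular nilpotent $e\in\fb$ has an abelian centralizer $C_\fg(e)$ of dimension $\rk(G)\ge 2$ lying in $V(\fg)$, so that $(e,z)$ with $z\in C_\fg(e)$ not proportional to $e$ deforms any degenerate pair $(e,\lambda e)$ out of $Z$ within $\cC_2(V(\fg))$.
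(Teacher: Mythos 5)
Your proposal is correct, and it takes a genuinely different route from the paper. You argue by a dimension count: the degenerate pairs $Z$ form a closed subset of dimension at most $\dim V(\fg)\!+\!1$ (correct, via the morphism $\A^2\!\times\!V(\fg)\to Z$ with $\GG_m$-fibers, using that $V(\fg)=\cN(\fg)$ is irreducible when $p\ge h(G)$), while Premet's equidimensionality theorem gives $\dim C=\dim\fg=\dim V(\fg)\!+\!\rk(G)\ge\dim V(\fg)\!+\!2$, forcing $C\not\subseteq Z$. The paper instead uses the \emph{other} facet of the same theorem of Premet \cite[Theorem]{Pr03}, namely the explicit description of the irreducible components: each component contains a set $\{x\}\!\times\!C_\fg(x)$ for a distinguished $x\in V(\fg)$; the paper then produces a non-proportional $y\in V(C_\fg(x))$ directly, by showing that $V(\fg)$ is covered by two-dimensional elementary abelian subalgebras (a two-fold application of Theorem \ref{AL2} using $\rk(G)\ge 2$). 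Your argument is arguably shorter, but the paper's constructive version also sets up the machinery (the map to $\EE(2,\fg)$ and the component description via distinguished orbits) that is re-used immediately in Theorem \ref{IrrD5}(2). Your worry about the reference is unfounded: Premet's theorem is stated in positive characteristic from the outset, and the paper invokes it under precisely the hypotheses $p\ge h(G)$ with $G$ simply connected and semisimple in the proof of Theorem \ref{IrrD5}(1). A minor caveat on your sketched alternative at the end: a regular nilpotent $e$ takes care only of the regular component; for the other components you need the distinguished representatives that Premet's theorem supplies, which is exactly what the paper does.
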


\begin{proof} Since $p\!\ge\!h(G)$, the prime $p$ is good for $G$, (cf.\ \cite[p.80]{Hu90}, \cite[(2.6)]{Ja04}) and the nullcone $V(\fg)$ coincides with the nilpotent cone $\cN(\fg)$. Moreover, $p$ is a non-torsion prime for $G$.

Let $C \in \Irr(\cC_2(V(\fg)))$ be an irreducible component. Since $G$ is simply connected, Premet's Theorem \cite[Theorem]{Pr03} provides a distinguished element $x \in V(\fg)$ such that
$\{x\}\!\times\!(C_\fg(x)\cap V(\fg)) \subseteq C$. Let $\fb\! =\! \Lie(B)$ be a Borel subalgebra. As $\rk(G)\!\ge\!2$, it follows that $V(\fb) = \bigcup_{\fe \in \EE(2,\fb)}\fe$ and a two-fold application
of Theorem \ref{AL2} and its succeeding remark yields $V(\fg)\!=\!G\dact V(\fb)\!=\!G\dact (\bigcup_{\fe \in \EE(2,\fb)}\fe)\!=\!\bigcup_{\fe \in \EE(2,\fg)}\fe$, so that $V(C_\fg(x))\ne kx$. Hence there 
is $(x,y) \in \cO_2(V(\fg))\cap C$. \end{proof}

\bigskip

\begin{Remark} Let $G$ be connected, semisimple with root system $\Phi$ and such that $p\!>\!h(G)$. Let $G_1,\ldots, G_n$ be the almost simple components of $G$, with corresponding root systems
$\Phi_1,\ldots,\Phi_n$. Then $h(G_i)\!\ge\!\det(C_{\Phi_i})$ for all $i \in \{1,\ldots, n\}$, so that $p$ is not a divisor of $\det(C_\phi)\!=\!\prod_{i=1}^n\det(C_{\Phi_i})$. In view of (a), we obtain $p\!\nmid\!\ord(\pi_1(G))$.
\end{Remark}

\bigskip
\noindent
Let $G$ be a connected, reductive algebraic group. Then we put $\pi_1(G)\!:=\!\pi_1((G,G))$. Let $x \in V(\fg)$. Then $\EE(2,\fg,x)\!:=\!\{\fe \in \EE(2,\fg) \ ; \ x \in \fe\}$ is a closed subset of $\EE(2,\fg)$. 

\bigskip

\begin{Thm} \label{IrrD5} Let $G$ be a reductive connected algebraic group such that $p\!\ge\!h(G)$ and $p\!\nmid\!\ord(\pi_1(G))$. Then the following statements hold:
\begin{enumerate}
\item The variety $\EE(2,\fg)$ is equidimensional of dimension $\dim \EE(2,\fg)\!=\!\dim (G,G)\!-\!4$.
\item Let $x_1, \ldots, x_n$ be representatives for the distinguished orbits of $V(\fg)$. Then
\[ \Irr(\EE(2,\fg)) = \{\overline{G\dact \EE(2,\fg,x_i)} \ ; \ 1\!\le\!i\!\le\!n\}.\]
\end{enumerate} \end{Thm}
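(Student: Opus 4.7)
The plan is to perform two reductions to the case of a simply connected semisimple group, invoke Premet's description of the commuting variety of the nilpotent cone, and then transport the resulting equidimensionality down to $\EE(2,\fg)$ by means of Lemmas \ref{IrrD1}, \ref{IrrD2} and \ref{IrrD4}.

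First I would reduce to the case where $G$ is connected, semisimple and simply connected. Under $p\!\ge\!h(G)$ the decomposition $\fg = \Lie(Z(G)^\circ)\!\oplus\!\Lie((G,G))$ is a decomposition of restricted Lie algebras with toral first summand; since the only $p$-nilpotent element of a torus is $0$, we obtain $V(\fg) = V(\Lie((G,G)))$ and hence $\EE(2,\fg) = \EE(2,\Lie((G,G)))$. Moreover $Z(G)^\circ$ acts trivially on $\fg$ via $\Ad$, so the $G$-orbits on $V(\fg)$ coincide with the $(G,G)$-orbits, and the definition of a distinguished element transports between $G$ and $(G,G)$ (using that $Z((G,G))$ is finite). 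Applying Lemma \ref{IrrD3} to the simply connected cover $\pi : \hat{G} \lra (G,G)$, the $\pi$-equivariant bijections $\msd(\pi) : V(\hat{\fg}) \lra V(\Lie((G,G)))$ and $\Psi_\pi : \EE(2,\hat{\fg}) \lra \EE(2,\Lie((G,G)))$ identify irreducible components, their dimensions, distinguished orbits, and the subsets $\EE(2,-,x)$ in a compatible fashion, so it suffices to prove the theorem with $\hat{G}$ in place of $G$.

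If $\rk(\hat{G})\!\le\!1$, then $\EE(2,\hat{\fg}) = \emptyset$ and $\dim(G,G)\!-\!4\!<\!0$, so both assertions are vacuous. Assume $\rk(\hat{G})\!\ge\!2$ and invoke Premet's theorem \cite{Pr03}: for $p\!\ge\!h(\hat{G})$, the commuting variety $\cC_2(V(\hat{\fg}))$ is equidimensional of dimension $\dim\hat{\fg}$, with irreducible components
\[
 C_i := \overline{\hat{G}\dact(\{x_i\}\!\times\!V(C_{\hat{\fg}}(x_i)))} \qquad (1\!\le\!i\!\le\!n)
\]
indexed by representatives $x_1,\ldots,x_n$ of the distinguished orbits of $V(\hat{\fg})$. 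By Lemma \ref{IrrD4} each $C_i$ meets the open subset $\cO_2(V(\hat{\fg}))$, whence Lemma \ref{IrrD1} shows that $\cO_2(V(\hat{\fg}))$ is equidimensional of dimension $\dim\hat{\fg}$ with irreducible components $C_i\cap\cO_2(V(\hat{\fg}))$. Applying Lemma \ref{IrrD2} to the $\GL_2(k)$-equivariant surjective morphism $\varphi : \cO_2(V(\hat{\fg})) \lra \EE(2,\hat{\fg})$, whose fibres are precisely the free $\GL_2(k)$-orbits of dimension $4$, yields that $\EE(2,\hat{\fg})$ is equidimensional of dimension $\dim\hat{\fg}\!-\!4 = \dim(G,G)\!-\!4$, and that $C_i \mapsto \overline{\varphi(C_i\cap\cO_2(V(\hat{\fg})))}$ is a bijection onto $\Irr(\EE(2,\hat{\fg}))$.

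To conclude part (2) it remains to identify $\overline{\varphi(C_i\cap\cO_2(V(\hat{\fg})))}$ with $\overline{\hat{G}\dact\EE(2,\hat{\fg},x_i)}$: the slice $(\{x_i\}\!\times\!V(C_{\hat{\fg}}(x_i)))\cap\cO_2(V(\hat{\fg}))$ maps under $\varphi$ onto $\EE(2,\hat{\fg},x_i)$, and since the $\hat{G}$- and $\GL_2(k)$-actions on $\cO_2(V(\hat{\fg}))$ commute and $\varphi$ is $\hat{G}$-equivariant, passing to the $\hat{G}$-saturation and then taking closures produces the desired form. The principal obstacle is securing the precise form of Premet's theorem in this characteristic (equidimensionality of $\cC_2(V(\hat{\fg}))$ of dimension $\dim\hat{\fg}$, with components indexed by distinguished orbits), together with the verification that distinguished orbits, centralizers, and the subsets $\EE(2,-,x)$ transport correctly through both reductions; once this is granted the remainder is a mechanical application of the three lemmas above.
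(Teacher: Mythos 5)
Your structure is exactly the paper's: reduce from reductive to semisimple, then from semisimple to simply connected via Lemma~\ref{IrrD3}, then invoke Premet's description of $\cC_2(V(\fg))$ and transport equidimensionality and the component count down to $\EE(2,\fg)$ through Lemmas~\ref{IrrD1}, \ref{IrrD2} and \ref{IrrD4}, identifying the components at the end via $G$-equivariance of $\varphi$. There is, however, a genuine error in your first reduction step. The asserted decomposition $\fg = \Lie(Z(G)^\circ)\oplus\Lie((G,G))$ does \emph{not} hold in general under the hypothesis $p\ge h(G)$: for $G=\GL(p)$ one has $h(G)=p$, yet $\Lie(Z(G)^\circ)=kI_p$ already lies inside $\Lie(\SL(p))=\fsl(p)$ because $\tr(I_p)=p=0$ in $k$, so the sum is not direct. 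The failure occurs exactly when the finite central subgroup scheme $Z(G)^\circ\cap(G,G)$ is infinitesimal, and $p\ge h(G)$ does not preclude this.

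The two consequences you actually use, namely $V(\fg)=V(\Lie((G,G)))$ and $\EE(2,\fg)=\EE(2,\Lie((G,G)))$, are nevertheless correct, but they must be obtained the way the paper does: from the short exact sequence $(0)\lra\Lie((G,G))\lra\fg\lra\Lie(G/(G,G))\lra(0)$ of restricted Lie algebras, where the right-hand term is a torus because $G/(G,G)$ is an algebraic torus (exactness holds because $(G,G)$ is smooth and the differential of $G\to G/(G,G)$ is surjective by a dimension count). A $p$-nilpotent element of $\fg$ then maps to $0$ in the toral quotient and so lies in $\Lie((G,G))$. With that single repair, the rest of your sketch (transport of the distinguished condition under both reductions, Lemma~\ref{IrrD3}, Premet's theorem, the combinatorial Lemmas~\ref{IrrD1}, \ref{IrrD2}, \ref{IrrD4}, and the identification $\overline{G\dact\EE(2,\fg,x_i)}$) agrees step by step with the paper's proof.
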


\begin{proof} We let $Z(G)^\circ$ be the identity component of the center $Z(G)$ of $G$.  In view of \cite[(19.5),(27.5)]{Hu81}, the group $G=Z(G)^\circ (G,G)$ is a product, whose factors
intersect in a finite set. Moreover, $Z(G)^\circ$ is a torus, while $(G,G)$ is semisimple and connected. Consequently, the restriction of the canonical projection $\pi : G \lra G/(G,G)$ yields a surjection
$Z(G)^\circ \twoheadrightarrow G/(G,G)$, implying that the factor group $G/(G,G)$ is a torus, cf.\ \cite[(21.3C)]{Hu81}.

Setting $G':= (G,G)$ and $\fg':=\Lie(G')$, we obtain an exact sequence
\[ (0) \lra \fg' \lra \fg \stackrel{\msd(\pi)}{\lra} \Lie(G/G') \lra (0)\]
of restricted Lie algebras, whose second term from the right is a torus. This readily implies $V(\fg)=V(\fg')$ as well as $\EE(2,\fg)=\EE(2,\fg')$, while $h(G)=h(G')\!\le\!p$.

Let $x \in V(\fg)$ be distinguished for $G$. If $T \subseteq C_{G'}(x)$ is a maximal torus, then $T\subseteq Z(G)^\circ\cap G'$ is finite, so that $T=\{1\}=Z(G')^\circ$. Hence $x \in V(\fg')$ is
distinguished for $G'$. Conversely, if $x \in V(\fg')$ is distinguished, then $C_{G'}(x)^\circ$ is unipotent, so that every torus $T \subseteq C_G(x)=Z(G)^\circ C_{G'}(x)$ is contained in $Z(G)^\circ$. Since
$G\dact\EE(2,\fg,x)\!=\!G'\dact\EE(2,\fg',x)$ for every $x \in V(\fg)\!=\!V(\fg')$, it suffices to verify (1) and (2) for $G$ being connected and semisimple.

We consider a simply connected covering $\pi : \hat{G} \lra G$, so that $\hat{G}$ is in particular connected and semisimple. Setting $\hat{\fg}\!:=\!\Lie(\hat{G})$ and $\fg\!:=\! \Lie(G)$, we apply Lemma \ref{IrrD3} in
conjunction with ($\ast$) to see that $\msd(\pi) : \hat{\fg} \lra \fg$ induces equivariant isomorphisms
\[ \msd(\pi) : V(\hat{\fg}) \lra V(\fg) \ \ \text{and} \ \ \Psi_\pi : \EE(2,\hat{\fg}) \lra \EE(2,\fg).\]

(1) Since $p\!\ge\!h(G)\!=\!h(\hat{G})$, the nullcone $V(\hat{\fg})$ coincides with the nilpotent cone $\cN(\hat{\fg})$. Hence \cite[Theorem]{Pr03} ensures that the variety $\cC_2(V(\hat{\fg}))$ is equidimensional and of 
dimension $\dim \hat{G}\!=\!\dim G$. In view of (\ref{IrrD4}), (\ref{IrrD1}), and (\ref{IrrD2}), it follows that $\EE(2,\hat{\fg})$ is equidimensional of dimension $\dim \EE(2,\hat{\fg})=\dim \hat{G}\!-\!4$. As a result,
$\EE(2,\fg)$ has the same properties.

(2) Since $\msd(\pi) : V(\hat{\fg}) \lra V(\fg)$ is equivariant, it follows that $G\dact\msd(\pi)(\hat{x})\!=\!\msd(\pi)(\hat{G}\dact \hat{x})$ and $\pi(C_{\hat{G}}(\hat{x}))\!=\!C_G(\msd(\pi)(\hat{x}))$ for every $\hat{x} \in V(\hat{\fg})$. 
Using \cite[(21.3C)]{Hu81}, we see that $\hat{x} \in V(\hat{\fg})$ is distinguished if and only if $\msd(\pi)(\hat{x})\in V(\fg)$ exhibits this property. 

By the above, $\msd(\pi)$ sends a set of distinguished orbit representatives for $V(\hat{\fg})$ bijectively to  a set of distinguished orbit representatives for $V(\fg)$, while $\Psi_\pi(\hat{G}\dact \EE(2,\hat{\fg},\hat{x}))
\!=\!G\dact\EE(2,\fg,\msd(\pi)(\hat{x}))$ for every $\hat{x} \in V(\hat{\fg})$. It follows that we may assume that $G\!=\!\hat{G}$ is simply connected.

Let $x_1,\ldots, x_n$ be representatives for the distinguished orbits of $V(\fg)$, and put
\[ \fC(x_i) := \overline{G\dact(\{x_i\}\!\times\!(C_\fg(x_i)\cap V(\fg)))} \ \ \ \ 1\!\le\!i\!\le\!n.\]
Premet's aforementioned theorem asserts that $\Irr(\cC_2(V(\fg)))=\{\fC(x_i) \ ; \ 1\!\le\! i\!\le\!n\}$.  If $\EE(2,\fg)\ne \emptyset$, then $\rk(G)\!\ge\!2$ and a consecutive application of Lemma \ref{IrrD4} and Lemma \ref{IrrD1} 
shows that the $n$ irreducible components of $\cO_2(V(\fg))$ are of the form $\fC(x_i)\cap\cO_2(V(\fg))$ for $i \in \{1,\ldots, n\}$.

Recall that the morphism
\[ \varphi : \cO_2(V(\fg)) \lra \EE(2,\fg) \ \ ; \ \ (x,y) \mapsto kx\!\oplus\!ky\]
is surjective with fibers given by the orbits of the canonical $\GL_2(k)$-action. In view of Lemma \ref{IrrD2}, we have
\[ \Irr(\EE(2,\fg)) = \{ \overline{\varphi(\fC(x_i)\cap\cO_2(V(\fg)))} \ ; \ 1\!\le\!i\!\le\!n\}.\]
Since $\varphi$ is $G$-equivariant with respect to the canonical action of $G$ on $\cO_2(V(\fg))$, while $\EE(2,\fg,x_i)=\varphi((\{x_i\}\!\times\!C_\fg(x_i))\cap \cO_2(V(\fg))$, it follows that
$\varphi(G\dact(\{x_i\}\!\times\!C_\fg(x_i))\cap \cO_2(V(\fg))) = G\dact\EE(2,\fg,x_i)$, whence
\[ \fC(x_i) \cap \cO_2(V(\fg)) \subseteq \overline{G\dact(\{x_i\}\!\times\!C_\fg(x_i))\cap\cO_2(V(\fg))} \subseteq \varphi^{-1}(\overline{G\dact\EE(2,\fg,x_i)}).\]
As a result,
\[ \overline{G\dact\EE(2,\fg,x_i)} = \overline{\varphi(G\dact(\{x_i\}\!\times\!C_\fg(x_i))\cap \cO_2(V(\fg))} \subseteq \overline{\varphi(\fC(x_i)\cap \cO_2(V(\fg)))} \subseteq \overline{G\dact\EE(2,\fg,x_i)},\]
so that the irreducible components of $\EE(2,\fg)$ have the asserted form. \end{proof}

\bigskip
\noindent
The foregoing result shows in particular, that the number of irreducible components of $\EE(2,\fg)$ coincides with the number of distinguished nilpotent orbits. The Bala-Carter theory relates these orbits
to the distinguished parabolic subgroups of $G$. Letting ${\rm pa}(n) \in \{0,1\}$ denote the parity of $n \in \NN$, we record below for $p\!\ge\!h(G)$ those cases, where the variety of $2$-dimensional 
elementary abelian subalgebras of a classical Lie algebra is irreducible.

\bigskip

\begin{Cor} \label{IrrD6} Let $\fg\!=\!\Lie(G)$ be the Lie algebra of an almost simple algebraic group.
\begin{enumerate}
\item If $\fg\!=\!\fsl(n)$ and $p\!\ge\!n\!\ge\!3$, then $\EE(2,\fg)$ is irreducible.
\item If $\fg\!=\!\fso(n) \ \ (n\!\ge\!5)$ and $p\!\ge\!n\!-\!2\!+\!{\rm pa}(n)$, then $\EE(2,\fg)$ is irreducible if and only if $n \in \{5,6,7\}$.
\item If $\fg\!=\!\fsp(2n) \ \ (n\!\ge\!2)$ and $p\!\ge\! 2n$, then $\EE(2,\fg)$ is irreducible if and only if $n\!=\!2$.
\item If $G$ of type $E_6,E_7,E_8,F_4,G_2$ and $p\!\ge \!h(G)$, then $\EE(2,\fg)$ is not irreducible. \end{enumerate} \end{Cor}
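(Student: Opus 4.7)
The plan is to invoke Theorem \ref{IrrD5}(2), which places the irreducible components of $\EE(2,\fg)$ in bijection with the closures $\overline{G\dact\EE(2,\fg,x_i)}$ as $x_i$ runs over representatives of the distinguished nilpotent orbits of $\fg$. Thus $\EE(2,\fg)$ is irreducible precisely when $\fg$ admits exactly one distinguished nilpotent orbit (with the mild proviso that $\EE(2,\fg)$ be nonempty, which is clear in the hypothesised rank ranges from the existence of two commuting linearly independent $p$-nilpotent root vectors).

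Once the problem is reduced in this way, I would appeal to the Bala-Carter classification of distinguished nilpotent orbits in good characteristic (applicable here since the hypotheses $p\ge h(G)$ of Theorem \ref{IrrD5} are exactly the Coxeter-number-type bounds written out in each of the four items). For the classical types the distinguished orbits correspond to very specific partitions: for $\fsl(n)$ only the regular partition $(n)$, which immediately gives part (1); for $\fso(n)$ the partitions of $n$ into pairwise distinct odd parts; and for $\fsp(2n)$ the partitions of $2n$ into pairwise distinct even parts. The proof then becomes a short combinatorial count.

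For $\fso(n)$, the unique partitions into pairwise distinct odd parts for $n\in\{3,4,5,6,7\}$ are $(3)$, $(3,1)$, $(5)$, $(5,1)$ and $(7)$ respectively, while for $n=8$ already both $(7,1)$ and $(5,3)$ occur, and the pair $(n)$ versus $(n-4,3,1)$ (or a similar second partition) exists for every $n\ge 8$; this yields part (2). For $\fsp(2n)$, the unique partition of $2n$ into distinct even parts when $n=2$ is $(4)$, whereas for every $n\ge 3$ both $(2n)$ and $(2n-2,2)$ appear, giving part (3). Finally, for the exceptional types $E_6,E_7,E_8,F_4,G_2$ the standard tables of Bala-Carter labels record $3$, $6$, $11$, $4$ and $2$ distinguished orbits respectively; since each count exceeds $1$, part (4) follows.

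No serious obstacle is anticipated: the entire corollary is extracted from Theorem \ref{IrrD5}(2) by the classical counting of distinguished nilpotent orbits. The only care required is to match the numerical bound on $p$ in each item with the bound $p\ge h(G)$ used in Theorem \ref{IrrD5}, which is routine: $h(\SL(n))=n$, $h(\SO(n))=n-2+\mathrm{pa}(n)$, and $h(\Sp(2n))=2n$, so in every case the hypothesis of the corollary is precisely the good-characteristic hypothesis needed to apply Theorem \ref{IrrD5} and the Bala-Carter classification.
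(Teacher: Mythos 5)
Your proposal is correct and follows essentially the same route as the paper: reduce via Theorem~\ref{IrrD5}(2) to counting distinguished nilpotent orbits, then invoke the Bala--Carter partition classification for the classical types and the tables of distinguished orbits for the exceptional types. The only minor imprecision is the phrase ``$(n)$ versus $(n\!-\!4,3,1)$'' for general $n\ge 8$ in the orthogonal case, since $(n)$ is not a valid partition when $n$ is even; the paper instead gives the pairs $(7\!+\!2m,1),(5\!+\!2m,3)$ for $n$ even and $(9\!+\!2m),(5\!+\!2m,3,1)$ for $n$ odd, but your parenthetical caveat and your explicit $n=8$ example show you were aware of this.
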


\begin{proof} Let $\Phi$ be the root system of $G$. Since $G$ is almost simple, we have $h(G)\!\ge\!\det(C_\Phi)\!\ge\!\ord(\pi_1(G))$, with equality throughout only if $\Phi$ is of Dynkin type $A$.  

(1) In this case, we have $h(\SL(n))\!=\!n$, while $\ord(\pi_1(\SL(n))\!=\!1$. Our assertion thus follows directly from Theorem \ref{IrrD5} and \cite[(4.1)]{Ja04}.

(2) Let $\fg\!=\!\fso(n)$ for $n\!\ge\!3$, so that $h(G)\!=\!n\!-\!2\!+\!{\rm pa}(n)$. Thanks to \cite[(4.2)]{Ja04}, the Lie algebra $\fso(n)$ possesses only $1$ distinguished orbit for $n\!\le\!7$, while the pairs of
partitions $(7\!+\!2m,1),(5\!+\!2m,3)$ and $(9\!+\!2m),(5\!+\!2m,3,1)$ give rise to two distinguished orbits, whenever $n\!\ge\!8$ is even and $n\!\ge\!9$ is odd, respectively. The assertion now follows from Theorem 
\ref{IrrD5}.

(3) Let $\fg\!=\!\fsp(2n)$ for $n\!\ge\!2$, so that $h(G)\!=\!2n$. In view of \cite[(4.2)]{Ja04}, $\fsp(2n)$ possesses $1$ distinguished orbit for $n\!=\!2$, while the partitions $(2(m\!+\!3)),(2(m\!+\!2),2)$ define
two distinguished orbits for $m\!\ge\!0$. The assertion now follows from Theorem \ref{IrrD5}.

(4) This is a consequence of Theorem \ref{IrrD5} in conjunction with \cite[p.175-177]{Car}.\end{proof}

\bigskip

\begin{Remarks} (1) We refer the reader to \cite[(5.1)]{Wa} for a summary of those cases, where $\EE(r,\fgl(n))$ is known to be irreducible.

(2) If $(\fg,[p])$ is a restricted Lie algebra such that $\EE(2,\fg)$ is irreducible, then the conical closed subset $V_{\EE(2,\fg)}\!:=\!\bigcup_{\fe \in \EE(2,\fg)}\fe \subseteq V(\fg)$ is irreducible, cf.\ \cite[(1.5)]{Fa04} and Lemma 
\ref{LCF2} below. The preceding result provides examples of Lie algebras $\fb$, where $V_{\EE(2,\fb)}$ is a linear subspace of $\fb$ while $\EE(2,\fb)$ is not irreducible: Let $\fb$ be a Borel subalgebra of $\fsp(2n)$, where 
$p\!\ge\!2n\!\ge\!6$. If $\EE(2,\fb)$ is irreducible, then Theorem \ref{AL2} implies that $\EE(2,\fsp(2n))\!=\!\Sp(2n)\dact\EE(2,\fb)$ is also irreducible, a contradiction. Since $p\!\ge\!h(\Sp(2n))$, it follows that $V_{\EE(2,\fb)}\!=\!
V(\fb)$ coincides with the unipotent radical of $\fb$. 

(3) Theorem \ref{IrrD5} fails for $G\!:=\!\PGL(p)$. Since there is a simply connected covering $\pi : \SL(p) \lra \PGL(p)$, we have $h(G)\!=\!p\!=\!\ord(\pi_1(G))$. Then the restriction $\msd(\pi) : V(\fsl(p)) \lra V(\fpgl(p))$ of the
differential $\msd(\pi) : \fsl(p) \lra \fpgl(p)$ is bijective (cf.\ \cite[(2.7)]{Ja04}), and the arguments of the proof of Theorem \ref{IrrD5}(2) show that $\msd(\pi)$ maps distinguished orbits of $V(\fsl(p))$ to distinguished orbits of 
$V(\fpgl(p))$. The validity of Theorem \ref{IrrD5} in conjunction with \cite[(4.1)]{Ja04} would imply that $\EE(2,\fpgl(p))$ is irreducible of dimension $p^2\!-\!5$. 

On the other hand, \cite[(2.1)]{CF} implies that $\Psi_\pi : \EE(2,\fsl(p)) \lra \EE(2,\fpgl(p))$ is an injective morphism of projective varieties, so that Theorem \ref{IrrD5} ensures that $\im\Psi_\pi$ is a closed, irreducible subset
of dimension $p^2\!-\!5$. It follows that $\Psi_\pi$ is surjective. Let $B_0 \subseteq \SL(p)$ be the standard Borel subgroup, $\fb_0\!:=\!\Lie(B_0)$. In view of Theorem \ref{AL2}, we have 
\[\EE(2,\fpgl(p))\!=\!\Psi_\pi(\SL(p)\dact \EE(2,\fb_0)) \subseteq \PGL(p)\dact\EE(2,\msd(\pi)(\fb_0)) \subseteq \PGL(p)\dact \EE(2,\Lie(\pi(B_0))).\] 
This, however, contradicts the remarks following Theorem \ref{AL2}. \end{Remarks}

\bigskip

\section{Locally constant functions on $\Gr_2(\fg)^{V(\fg)}$}\label{S:LCF}
Given a restricted Lie algebra $(\fg,[p])$, we consider the subset
\[ \Gr_2(\fg)^{V(\fg)} := \{ \fv \in \Gr_2(\fg) \ ; \ \fv \subseteq V(\fg)\}\]
of the Grassmannian $\Gr_2(\fg)$ of $2$-planes in $\fg$. Since the map
\[ \Gr_2(\fg) \lra \NN_0 \ \ ; \ \ \fv \mapsto \dim\fv\cap V(\fg)\]
is upper semicontinuous (cf.\ \cite[(7.3)]{Fa04}), it follows that $\Gr_2(\fg)^{V(\fg)} = \{\fv \in  \Gr_2(\fg) \ ; \ \dim \fv\cap V(\fg) \ge 2\}$ is closed and thus in particular
a projective variety.

Let $\cX \subseteq \Gr_2(\fg)^{V(\fg)}$ be a closed subset, $V_\cX:= \bigcup_{\fv \in \cX}\fv$. In this section, we associate to every morphism $\varphi : \PP(V_\cX) \lra
\PP^m$ a locally constant function on $\cX$. In case $\EE(2,\fg) \subseteq \cX \subseteq \Gr_2(\fg)^{V(\fg)}$ is connected, this will enable us define degrees for
the so-called $\fg$-modules of constant $j$-rank.

We begin with some general preliminary observations. For a finite-dimensional vector space $V$ over $k$, we denote by $S(V)$ the symmetric algebra on $V$.
Thus, $S(V^\ast)$ is the $\ZZ$-graded algebra of polynomial functions on $V$. Being a polynomial ring in $\dim_kV$ variables, $S(V^\ast)$ is a unique factorization domain.
A subspace $W \subseteq V$ defines a homomorphism
\[ \res_W : S(V^\ast) \lra S(W^\ast) \ \ ; \ \ f \mapsto f|_W\]
of commutative graded $k$-algebras.

Let $\cX$ be a topological space. A map $\varphi : \cX \lra S$ with values in some set $S$ is called {\it locally constant}, provided there exists for every $x \in \cX$
an open neighborhood $U_x$ of $x$ such that $\varphi|_{U_x}$ is constant. Note that a map $\varphi : \cX \lra \NN_0$ is locally constant if and only if it is upper
semicontinuous and lower semicontinuous.

Let $\cX \subseteq \PP(V)$ be locally closed. A morphism $\varphi : \cX \lra \PP^m$ is said to be {\it homogeneous}, if there exist homogeneous elements $f_0,
\ldots, f_m \in S(V^\ast)_d$ such that
\[ \varphi(x) = (f_0(x):\cdots :f_m(x)) \ \ \ \ \ \ \forall \ x \in \cX.\]
The $(m\!+\!1)$-tuple of polynomials $(f_0,\ldots,f_m)$ is called a {\it defining system} for $\varphi$. Such a system is said to be {\it reduced}, provided $\gcd(f_0,\ldots,f_m)=1$.

If $\cX\subseteq \PP(V)$ is open, then any morphism $\varphi: \cX \lra \PP^m$ is homogeneous and the common degree $\deg(\varphi)$ of the non-zero $f_i$'s
does not depend on the choice of the reduced defining system $(f_0,\ldots, f_m) \in S(V^\ast)^{m+1}$, cf.\ \cite[\S1]{Fa17} for more details.

\bigskip

\begin{Lemma} \label{LCF1} Let $W \subseteq V$ be a subspace, and $\fU\subseteq \PP(V)$ be locally closed such that $\fU\cap \PP(W) \ne \emptyset$ is open in $\PP(W)$. If $\varphi : \fU \lra \PP^m$
is a homogeneous morphism with defining system $(f_0,\ldots,f_m)\in S(V^\ast)^{m+1}_d$, then
\[\deg(\varphi|_{\fU\cap\PP(W)}) = d\!-\!\deg(\gcd(\res_W(f_0),\ldots, \res_W(f_m))).\] \end{Lemma}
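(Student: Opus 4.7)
The strategy is to construct a reduced defining system for $\varphi|_{\fU\cap\PP(W)}$ by dividing the naive restricted one through by its gcd, and then to appeal to the well-definedness of the degree of a homogeneous morphism on an open subset of a projective space, as recalled just before the statement.

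First I would note that the tuple $(\res_W(f_0),\ldots,\res_W(f_m))\in S(W^\ast)^{m+1}_d$ is itself a defining system for $\varphi|_{\fU\cap\PP(W)}$. Indeed, given $x\in\fU\cap\PP(W)$, a representative of $x$ lies in $W$, so $f_i(x)=\res_W(f_i)(x)$ for each $i$, and at least one of these values is non-zero because $\varphi$ is defined at $x$. In general, however, this system will not be reduced.

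Setting $g:=\gcd(\res_W(f_0),\ldots,\res_W(f_m))\in S(W^\ast)$ of degree $e$, I would then factor $\res_W(f_i)=g\cdot h_i$, obtaining a tuple $(h_0,\ldots,h_m)\in S(W^\ast)^{m+1}_{d-e}$ that is reduced by construction. The key step---and the only real content of the proof---is to show that $g$ does not vanish at any point of $\fU\cap\PP(W)$: if $x\in\fU\cap\PP(W)$ satisfied $g(x)=0$, then $\res_W(f_i)(x)=g(x)h_i(x)=0$ for every $i$, hence $f_i(x)=0$ for every $i$, contradicting the fact that $\varphi$ is defined at $x$. This is the main (and only) obstacle in the argument; everything else is formal.

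Once the non-vanishing of $g$ on $\fU\cap\PP(W)$ is established, at every such $x$ one has
\[ \varphi(x)=(\res_W(f_0)(x):\cdots:\res_W(f_m)(x))=(g(x)h_0(x):\cdots:g(x)h_m(x))=(h_0(x):\cdots:h_m(x)), \]
so $(h_0,\ldots,h_m)$ is a reduced defining system for $\varphi|_{\fU\cap\PP(W)}$. Since $\fU\cap\PP(W)$ is open in $\PP(W)$, the recalled well-definedness of the degree on reduced defining systems then yields
\[ \deg(\varphi|_{\fU\cap\PP(W)})=d-e=d-\deg(\gcd(\res_W(f_0),\ldots,\res_W(f_m))), \]
as asserted.
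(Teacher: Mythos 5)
Your proposal is correct and follows essentially the same route as the paper: observe that the restrictions $\res_W(f_i)$ form a defining system on $\fU\cap\PP(W)$, factor out their gcd $g$ to obtain a reduced system, note that $g$ cannot vanish on $\fU\cap\PP(W)$ (so the two systems define the same morphism there), and then invoke the well-definedness of degree via reduced defining systems on open subsets of projective space. The only superficial difference is that you make explicit the non-vanishing of $g$ on $\fU\cap\PP(W)$, a point the paper leaves implicit.
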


\begin{proof} Since $\fU\cap \PP(W)$ is open, the morphism $\varphi|_{\fU\cap \PP(W)}$ is homogeneous and its degree may be computed via a reduced defining system.

Note that $(\res_W(f_0), \ldots, \res_W(f_m)) \in S(W^\ast)^{m+1}_d$ is a defining system for the morphism $\varphi|_{\fU\cap \PP(W)}$. Setting $r:= \deg(\gcd(\res_W(f_0),\ldots, \res_W(f_m)))$, while
observing \cite[(1.1.1)]{Fa17}, the element $h:=  \gcd(\res_W(f_0),\ldots, \res_W(f_m))$ belongs to $S(W^\ast)_r$. Then there exist homogeneous elements $g_0,\ldots,$ $g_m \in S(W^\ast)_{d-r}$ such
that $\res_W(f_i)=hg_i$ for $0\!\le\!i\!\le\!m$.
Consequently, $\gcd(g_0,\ldots,g_m)=1$, and for $u \in \fU\cap\PP(W)$, we obtain
\[ \varphi(u)= (\res_W(f_0)(u):\cdots : \res_W(f_m)(u)) = (g_0(u):\cdots : g_m(u)).\]
This shows that $(g_0,\ldots, g_m)$ is a reduced defining system for $\varphi|_{\fU\cap\PP(W)}$, whence
\[ \deg(\varphi|_{\fU\cap\PP(W)})= \deg(g_i) = d\!-\!r = d\!-\!\deg(\gcd(\res_W(f_0),\ldots, \res_W(f_m))),\]
whenever $g_i\ne 0$. \end{proof}

\bigskip

\begin{Lemma} \label{LCF2} Let $(\fg,[p])$ be a restricted Lie algebra, $\cX \subseteq \Gr_2(\fg)^{V(\fg)}$ be a closed subset. Then the following
statements hold:
\begin{enumerate}
\item  $V_\cX := \bigcup_{\fv \in \cX}\fv$ is a conical, closed subset of $V(\fg)$.
\item If $U \subseteq V_\cX$ is a conical open subset of $V_\cX$, then
\[ \cX_U :=\{\fv \in \cX \ ; \ \fv\cap U \ne \emptyset\}\]
is an open subset of $\cX$. \end{enumerate} \end{Lemma}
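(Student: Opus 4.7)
The plan is to realise both assertions via the (restriction of the) tautological rank-$2$ sub-bundle of the Grassmannian $\Gr_2(\fg)$. Set
\[
Z := \{(x,\fv) \in \fg \times \cX \ ; \ x \in \fv\},
\]
with first projection $\rho : Z \to \fg$ and second projection $\pi : Z \to \cX$. As $Z$ is obtained by restricting the tautological sub-bundle to $\cX$, it is closed in $\fg \times \cX$, and $\pi$ is a Zariski-locally trivial rank-$2$ vector bundle.

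For (1), the set $V_\cX$ is conical because it is a union of linear subspaces of $\fg$. To prove closedness, I would use that $\cX$ is projective (a closed subvariety of the Grassmannian), so the projection $\fg \times \cX \to \fg$ is a closed morphism. Applying this to the closed subvariety $Z$ gives that $V_\cX = \rho(Z)$ is closed in $\fg$. The inclusion $V_\cX \subseteq V(\fg)$ is immediate from the definition of $\cX \subseteq \Gr_2(\fg)^{V(\fg)}$.

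For (2), the key fact is that $\pi : Z \to \cX$ is an open morphism: locally in the Zariski topology it has the form $\cU \times \A^2 \to \cU$, and any such product projection is open in the Zariski topology. Since $U$ is open in $V_\cX$ and $\rho(Z) \subseteq V_\cX$, the preimage $\rho^{-1}(U)$ is open in $Z$. Its image under $\pi$ is precisely
\[
\pi(\rho^{-1}(U)) = \{\fv \in \cX \ ; \ \fv \cap U \ne \emptyset\} = \cX_U,
\]
which is therefore open in $\cX$. I do not foresee any serious obstacle; the only point worth emphasising is the local triviality of the tautological bundle, which is what delivers the openness of $\pi$ and thereby reduces (2) to transporting an open set through an open morphism.
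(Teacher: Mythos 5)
Both parts of your argument are correct. For part (1) you are using exactly the same incidence variety as the paper (there called $\Sigma$) together with the properness of $\cX$, so that step coincides with the paper's proof. For part (2) you take a genuinely different route: the paper passes to the complementary closed conical set $A:=V_\cX\setminus U$ and invokes upper semicontinuity of $\fv\mapsto\dim_k(\fv\cap A)$ (citing \cite[(7.3)]{Fa04}) to show that $\cC:=\{\fv\in\cX\ ;\ \fv\subseteq A\}$ is closed, whence $\cX_U=\cX\setminus\cC$ is open; you instead observe that $\pi:Z\to\cX$ is the pullback to $\cX$ of the tautological rank-$2$ subbundle over $\Gr_2(\fg)$, hence Zariski-locally trivial and in particular an open morphism, and that $\cX_U=\pi(\rho^{-1}(U))$ with $\rho^{-1}(U)$ open in $Z$ because $\rho$ maps $Z$ into $V_\cX$. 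Your version is arguably more self-contained, replacing the cited semicontinuity lemma by the standard fact that a locally trivial fibration is open; the paper's version stays closer to the Grassmannian rank-function machinery it uses elsewhere (e.g.\ in establishing that $\Gr_2(\fg)^{V(\fg)}$ and $\EE(r,\fg)$ are closed), which keeps the toolkit uniform across the section. Either argument is sound; neither has a gap.
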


\begin{proof} (1) It is well-known that
\[ \Sigma := \{(x,\fv) \in V(\fg)\!\times\!\cX \ ; \ x \in \fv\}\]
is a closed subset of $V(\fg)\!\times\!\cX$. Since the variety $\cX$ is projective, the projection map $\pi : V(\fg)\!\times\!\cX \lra V(\fg)$ is closed. As a result,
$V_\cX=\pi(\Sigma)$ is closed.

(2) Owing to (1), the set $A:=V_\cX\!\smallsetminus\!U$ is a closed, conical subset of $\fg$. In view of \cite[(7.3)]{Fa04}, the map $\cX \lra \NN_0 \ ; \ \fv \mapsto
\dim\fv\cap A$ is upper semicontinuous, so that the set
\[ \cC := \{ \fv \in \cX \ ; \ \dim \fv\cap A \ge 2\}\]
is closed. As each $\fv \in \cX$ is irreducible, we have $\cC = \{ \fv \in \cX \ ; \ \fv \subseteq A\}$, implying that
\[ \cX_U = \cX\!\smallsetminus\!\cC\]
is open. \end{proof}

\bigskip

\begin{Theorem} \label{LCF3} Let $(\fg,[p])$ be a restricted Lie algebra, $\cX \subseteq \Gr_2(\fg)^{V(\fg)}$ be a closed subset, $\varphi : \PP(V_\cX) \lra \PP^m$ be a morphism.
\begin{enumerate}
\item The map
\[ \msdeg_\varphi : \cX \lra \NN_0 \ \ ; \ \ \fv \mapsto \deg(\varphi|_{\PP(\fv)})\]
is locally constant.
\item If $\cX$ is connected, then $\msdeg_\varphi$ is constant. \end{enumerate} \end{Theorem}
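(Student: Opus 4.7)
Part (2) will follow at once from part (1), since a locally constant function with values in the discrete set $\NN_0$ on a connected space is constant; the substance therefore lies in establishing local constancy in (1).

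My plan for (1) is to identify $\msdeg_\varphi(\fv)$ with the fiberwise degree of a line bundle in a flat proper $\PP^1$-family, and then invoke the standard semicontinuity of the Euler characteristic. Concretely, let $\cT \subseteq \fg\otimes_k \cO_\cX$ denote the tautological rank-$2$ subbundle on $\cX \subseteq \Gr_2(\fg)$, and form the associated projective bundle $\pi \colon \Pi := \PP(\cT) \to \cX$; this is a Zariski-locally trivial $\PP^1$-bundle, hence flat and proper over $\cX$. The tautological inclusion induces a canonical morphism $p \colon \Pi \to \PP(\fg)$, $(x,\fv)\mapsto [x]$, whose image lies in $\PP(V_\cX)$ because $x \in \fv \subseteq V_\cX$. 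Composing with $\varphi$ yields $\Phi := \varphi\circ p \colon \Pi \to \PP^m$, and the pullback line bundle $L := \Phi^*\cO_{\PP^m}(1)$ restricts on each fiber $\pi^{-1}(\fv) \cong \PP(\fv) \cong \PP^1$ to $\cO_{\PP^1}(\msdeg_\varphi(\fv))$. Applying the semicontinuity theorem to the flat proper morphism $\pi$ and the line bundle $L$, the fiberwise Euler characteristic $\fv \mapsto \chi(\pi^{-1}(\fv), L|_{\pi^{-1}(\fv)})$ is locally constant on $\cX$; since $\chi(\PP^1, \cO_{\PP^1}(d)) = d+1$, this forces $\msdeg_\varphi$ itself to be locally constant.

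The principal obstacle is importing the flat-family / semicontinuity machinery, which, while standard, lies slightly outside the explicit spirit of Lemmas \ref{LCF1}--\ref{LCF2}. A more elementary alternative would proceed via those lemmas: around each $\fv_0 \in \cX$, produce a local homogeneous defining system $(f_0,\ldots,f_m) \in S(\fg^*)^{m+1}_d$ for $\varphi$ on an open $\fU \subseteq \PP(\fg)$ that meets $\PP(\fv_0)$ (obtained by representing the components $\varphi^*y_i$ as ratios of homogeneous polynomials on an affine chart of $\PP(\fg)$ and clearing a common denominator), then apply Lemmas \ref{LCF1} and \ref{LCF2}(2) to write $\msdeg_\varphi(\fv) = d - \deg\gcd(\res_\fv(f_0),\ldots,\res_\fv(f_m))$ on a suitable open $\cX_U \subseteq \cX$. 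The hardest step would then be verifying that this gcd-degree is itself locally constant in $\fv$; since that encodes exactly the same flat-family phenomenon, the projective-bundle formulation appears preferable overall.
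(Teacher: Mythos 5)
Your argument is correct and follows a genuinely different route from the paper's. You identify $\msdeg_\varphi(\fv)$ with the degree of the line bundle $(\varphi\circ\iota_\fv)^*\cO_{\PP^m}(1)$ on $\PP(\fv)\cong\PP^1$ (which matches the reduced-defining-system degree because a reduced system gives a basepoint-free linear series) and then invoke the invariance of the Euler characteristic $\chi(\PP^1,\cO(d))=d+1$ in the flat proper family $\pi\colon\PP(\cT)\to\cX$; a small terminology note is that what you need is the \emph{constancy} of Euler characteristics in flat proper families rather than the semicontinuity theorem proper, but that is a naming quibble, not a gap. The paper instead argues entirely within the elementary commutative-algebraic toolkit it set up: for each $\fv_0$ it takes a local homogeneous defining system $(f_0,\ldots,f_m)$ of $\varphi$ on an open $\fU$ meeting $\PP(\fv_0)$, a linear complement $W$ with $\fv_0\oplus W=\fg$, and works on the open set $\cU_W\cap\cX_U\ni\fv_0$, using Lemma \ref{LCF1}, Lemma \ref{LCF2}(2) and \cite[(1.1.1),(1.1.2)]{Fa17} to show the restricted degree equals $d$ minus a gcd-degree not varying over that open set. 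Your route imports sheaf-cohomological machinery but is shorter and conceptually crisper, and it sidesteps precisely the step you flagged as delicate---local constancy of the gcd-degree---which on the paper's approach is engineered by the choice of $W$ and the shrinking to $\cU_W\cap\cX_U$. Both are valid; the flat-family formulation is arguably the ``explanation'' of the phenomenon, while the paper's proof is self-contained and in line with the elementary character of Section \ref{S:LCF}.
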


\begin{proof} (1) Let $\fv_0 \in \cX$. General theory (cf.\ \cite[(1.65)]{GW}) provides an open subset $\fU$ of $\PP(V_\cX)$ and $(f_0,\ldots, f_m) \in
S(\fg^\ast)^{m+1}_d$ such that
\begin{enumerate}
\item[(a)] $\PP(\fv_0) \cap \fU \ne \emptyset$, and
\item[(b)] $(f_0,\ldots, f_m) \in S(\fg^\ast)^{m+1}_d$ is a defining system for the homogeneous morphism $\varphi|_{\fU}$. \end{enumerate}
In particular, $\fU$ is a locally closed subset of $\PP(\fg)$ such that $\PP(\fv)\cap\fU$ is open in $\PP(\fv)$ for every $\fv \in \cX$. We denote by
$U\!:=\!\{x \in V_\cX\!\smallsetminus\!\{0\} \ ; \ [x] \in \fU\}$ the cone of $\fU$. This is a conical, open subset of $V_\cX$. According to Lemma \ref{LCF2}, $\cX_U$ is an open
subset of $\cX$ that contains $\fv_0$.

Let $W\subseteq \fg$ be a subspace such that $\fv_0\!\oplus\!W = \fg$. As before, we consider the canonical map $\res_W : S(\fg^\ast) \lra S(W^\ast)$. An application of \cite[(7.3)]{Fa04} shows that
\[ \cU_W := \{ \fv \in \cX \ ; \ \fv\!\oplus\!W=\fg\}\]
is an open subset of $\cX$ that contains $\fv_0$.

Now let $\fv \in \cU_W\cap\cX_U$. Then $\fv\cap U$ is a non-empty, open subset of $\fv$. Hence it lies dense in $\fv$ and $\fv\cap U \ne \{0\}$. As a result, $\PP(\fv)\cap\fU \ne \emptyset$. The decomposition
$\fg\!=\!\fv\!\oplus\!W$ induces a decomposition $\fg^\ast\!=\!\fv^\ast\!\oplus\!W^\ast$. Let $\pi_W : S(\fg^\ast) \lra S(\fg^\ast)/I_W$ be the canonical projection, where $I_W\!:=\! S(\fg^\ast)S(W^\ast)^\dagger$ is 
generated by the augmentation ideal $S(W^\ast)^\dagger$ of $S(W^\ast) \subseteq S(\fg^\ast)$. Since $S(\fg^\ast)\!=\!S(\fv^\ast)\!\oplus\!I_W$, the restriction $(\pi_W)|_{S(\fv^\ast)} : S(\fv^\ast) \lra S(\fg^\ast)/I_W$ 
is an isomorphism (of degree $0$) such that $\pi_W \circ \res_{\fv}\!=\!\pi_W$. Consequently, 
\[ \gcd(\pi_W(f_0),\ldots,\pi_W(f_m))\!=\!\gcd(\pi_W(\res_{\fv}(f_0)),\ldots,\pi_W(\res_{\fv}(f_m)))\!=\!\pi_W(\gcd(\res_{\fv}(f_0),\ldots,\res_{\fv}(f_m))),\] 
Thus, setting $r\!:=\!\deg(\gcd(\pi_W(f_0),\ldots,\pi_W(f_m)))$,  Lemma \ref{LCF1} yields 
\[\deg(\varphi|_{\PP(\fv)\cap\fU})\! =\!d\!-\!\deg(\gcd(\res_{\fv}(f_0),\ldots,\res_{\fv}(f_m)))\!=\! d\!-\!r.\] 
Now \cite[(1.1.2)]{Fa17} implies $\msdeg_\varphi(\fv)\!=\!d\!-\!r$. As a result, the map $\msdeg_\varphi$ is locally constant.

(2) Let $n \in \NN$. According to (1), each fiber $\msdeg_\varphi^{-1}(n)$ is open in $\cX$. In view of
\[ \msdeg_\varphi^{-1}(n) = \cX\!\smallsetminus\!(\bigcup_{m\ne n}\msdeg_\varphi^{-1}(m)),\]
the fiber is also closed. As $\cX$ is connected, each non-empty fiber coincides with $\cX$, so that $\msdeg_\varphi$ is constant. \end{proof}

\bigskip

\section{Degree functions}
Let $(\fg,[p])$ be a restricted Lie algebra with restricted enveloping algebra $U_0(\fg)$. By definition,
\[ U_0(\fg):= U(\fg)/(\{x^p\!-\!x^{[p]} \ ; \ x \in \fg\})\]
is a finite-dimensional quotient of the ordinary enveloping algebra $U(\fg)$. The Lie algebra $\fg$ is a subalgebra of the commutator algebra $U_0(\fg)^-$ and the
$U_0(\fg)$-modules are precisely those $\fg$-modules $M$ for which $x^{[p]}$ acts via the $p$-th power of the action of $x \in \fg$.

To a $U_0(\fg)$-module $M$ of constant $j$-rank, we will associate a degree function
\[\msdeg^j_M : \Gr_2(\fg)^{V(\fg)} \lra \NN_0,\]
which will allow us to generalize results of \cite{Fa17} concerning modules of constant $j$-rank over Lie algebras with smooth nullcones.

For a $U_0(\fg)$-module $M$ and $u \in U_0(\fg)$, we denote by
\[ u_M : M \lra M \ \ ; \ \ m \mapsto u.m\]
the operator associated to $u$. Given $j \in \{1,\ldots,p\!-\!1\}$, the {\it generic $j$-rank} of $M$ is defined via $\rk^j(M)\!:=\!\max\{\rk(x_M^j) \ ; \ x \in V(\fg)\}$. Semicontinuity of ranks
shows that the set $U_{M,j}\!:=\! \{ [x] \in \PP(V(\fg)) \ ; \ \rk(x_M^j)\!=\!\rk^j(M)\}$ is open. We say that $M$ has {\it constant $j$-rank}, provided $U_{M,j}=\PP(V(\fg))$. These modules
were first investigated by Friedlander and Pevtsova in \cite{FP10}.

Given $d \in \NN_0$, we denote by
\[ \mspl_M : \Gr_d(M) \lra \PP(\bigwedge^d(M))\]
the {\it Pl\"ucker embedding}.  Let $j \in \{1,\ldots,p\!-\!1\}$. Following \cite{Fa17}, we associate to a $U_0(\fg)$-module $M$ the morphism
\[ \msim_M^j : U_{M,j} \lra \Gr_{\rk^j(M)}(M) \  \ ; \  \ [x] \mapsto \im x_M^j\]
of quasi-projective varieties. If $\fv \subseteq V(\fg)$ is a linear subspace of $\fg$, then $U_{M,j}\cap\PP(\fv)$ is an open subset of a projective space and the morphism
\[\mspl_M \circ \msim_M^j|_{\PP(\fv)} : U_{M,j}\cap\PP(\fv) \lra \PP(\bigwedge^{\rk^j(M)}(M))\]
is homogeneous whenever $U_{M,j}\cap\PP(\fv)\ne \emptyset$. It is thus is given by a reduced defining system, whose degree  $\deg(\mspl_M \circ \msim_M^j|_{\PP(\fv)})$ does not depend
on the choice of the system, cf.\ \cite[\S 1.1]{Fa17} for more details.

Suppose that $M$ is a $U_0(\fg)$-module of constant $j$-rank. Given a subspace $\fv \subseteq V(\fg)$ of dimension $\dim_k\fv \ge 2$, there results a morphism
\[ \msim_M^j|_{\PP(\fv)} : \PP(\fv) \lra \Gr_{\rk^j(M)}(M)\]
of projective varieties and we put
\[ \msdeg^j_M(\fv) := \deg(\mspl_M\circ\msim^j_M|_{\PP(\fv)}).\]
If $M$ is a $U_0(\fg)$-module and $\fh \subseteq \fg$ is a $p$-subalgebra, then $M|_\fh$ denotes the restriction of $M$ to $U_0(\fh)\subseteq U_0(\fg)$. Suppose that
$M$ has constant $j$-rank and let $\fe \subseteq \fg$ be an elementary abelian subalgebra of dimension $\ge\!2$. Then $\msdeg^j_M(\fe)$ is just the $j$-degree $\deg^j(M|_\fe)$
that was introduced in \cite{Fa17}.

\bigskip

\begin{Definition} Let $(\fg,[p])$ be a restricted Lie algebra such that $\msrke(\fg)\!\ge\! 2$, $M$ be a $U_0(\fg)$-module of constant $j$-rank. Then
\[ \msdeg^j_M : \EE(2,\fg) \lra \NN_0 \ \ ; \ \ \fe \mapsto \deg^j(M|_\fe)\]
is called the {\it $j$-degree function of $M$}. \end{Definition}

\bigskip
\noindent
The $j$-degree function of $M$ is expected to provide information concerning the nilpotent operators $x_M$, where $x \in V_{\EE(2,\fg)}$.

\bigskip

\begin{Remarks} (1) Let $M$ be a $U_0(\fg)$-module of constant $j$-rank. If $\fe \in \EE(r,\fg)$ for some $r\!\ge\!2$, then $\deg^j(M|_\fe)=\deg^j(M|_\ff)$ for every two-dimensional
subalgebra $\ff \subseteq \fe$, cf.\ \cite[(4.1.2)]{Fa17}. Accordingly, the consideration of degree functions
\[ \EE(r,\fg) \lra \NN_0 \ \ ; \ \ \fe \mapsto \deg^j(M|_\fe)  \ \ \ \ \ \ (r \ge 3)\]
provides no additional information. Moreover, if $\fg$ is reductive, then $\EE(\msrke(\fg),\fg)$ and $\EE(\msrke(\fg)\!-\!1,\fg)$ are usually not connected, cf.\ \cite{Pa,PS}.

(2) Since $V(\fe)\!=\!\fe$ for all $\fe \in \EE(2,\fg)$ it is of course possible to define degree functions
\[ \msdeg^j_M : \EE(2,\fg)_{\tilde{U}_{M,j}} \lra \NN_0 \ \ \ \ \ \ \ (j \in \{1,\ldots,p\!-\!1\})\]
on the open subset $\EE(2,\fg)_{\tilde{U}_{M,j}}$ of $\EE(2,\fg)$ (cf.\ Lemma \ref{LCF2}) for an arbitrary $U_0(\fg)$-module $M$. (Here $\tilde{U}_{M,j}:=\{x \in V_{\EE(2,\fg)}\!\smallsetminus\!\{0\} \ ; \ [x] \in U_{M,j}\}$.)
However, at this juncture the behavior of these functions remains somewhat obscure. \end{Remarks}

\bigskip

\subsection{Criteria for $\msdeg^j_M$ being constant}\label{S:CBC}
Throughout, $(\fg,[p])$ denotes a restricted Lie algebra. Given a $p$-subalgebra $\fh \subseteq \fg$, we let
\[ \Nor_\fg(\fh):=\{x \in \fg \ ; \ [x,\fh] \subseteq \fh\}\]
be the {\it normalizer} of $\fh$ in $\fg$. Recall that $\Nor_\fg(\fh)$ is a $p$-subalgebra of $\fg$.

\bigskip

\begin{Lem} \label{CBC1} Suppose that $\msrke(\fg)\ge 2$, and let $M$ be a $U_0(\fg)$-module of constant $j$-rank.
\begin{enumerate}
\item If the morphism $\mspl_M\circ\msim^j_M$ is homogeneous, then $\msdeg^j_M$ is constant.
\item If $M$ is self-dual, then $\msdeg^j_M$ is constant.
\item If there exists a closed connected subspace $\EE(2,\fg)\subseteq \cX \subseteq \Gr_2(\fg)^{V(\fg)}$, then $\msdeg^j_M$ is constant.
\item If $V(\fg)$ is a linear subspace of $\fg$, then $\msdeg^j_M$ is constant. \end{enumerate} \end{Lem}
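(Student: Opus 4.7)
My plan is to handle (1) first, then reduce (4) and (2) to it by establishing homogeneity of $\varphi:=\mspl_M\circ\msim^j_M$, while (3) will follow immediately from Theorem~\ref{LCF3}. Throughout, the constant $j$-rank hypothesis gives $U_{M,j}=\PP(V(\fg))$, so $\varphi$ is defined on $\PP(\fe)$ for every $\fe\in\EE(2,\fg)$.

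For (1), I would fix a reduced defining system $(f_0,\ldots,f_m)\in S(\fg^\ast)^{m+1}_d$ of $\varphi$. Given $\fe\in\EE(2,\fg)$, the inclusion $\fe\subseteq V(\fg)$ yields $\PP(\fe)\subseteq U_{M,j}$, so the tuple $(\res_\fe(f_0),\ldots,\res_\fe(f_m))\in S(\fe^\ast)^{m+1}_d$ defines $\varphi|_{\PP(\fe)}$ and has no common zero on $\PP(\fe)\cong\PP^1$. Any non-unit homogeneous common divisor would factor over $k$ into linear forms, each producing such a common zero, so the restriction is already reduced and Lemma~\ref{LCF1} yields $\msdeg^j_M(\fe)=d$, independent of $\fe$.

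Part (4) follows at once from (1): if $V(\fg)$ is a linear subspace then $U_{M,j}=\PP(V(\fg))$ is open in the projective space $\PP(V(\fg))$, and the general principle recalled in Section~\ref{S:LCF} (cf.\ \cite[\S1]{Fa17}) makes $\varphi$ homogeneous. For (3), $V_\cX\subseteq V(\fg)$ gives $\PP(V_\cX)\subseteq U_{M,j}$, so $\varphi|_{\PP(V_\cX)}:\PP(V_\cX)\to\PP(\bigwedge^{\rk^j(M)}M)$ is a morphism, and Theorem~\ref{LCF3}(2) applied to the closed connected variety $\cX$ gives constancy of $\msdeg_\varphi$ on $\cX$; this restricts to $\msdeg^j_M$ on $\EE(2,\fg)\subseteq\cX$.

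The main obstacle is (2). My strategy is to exhibit a global homogeneous defining system for $\varphi$ from the self-dual structure and then quote (1). A non-degenerate invariant bilinear form $\beta:M\times M\to k$ makes each $x_M^j$ $\beta$-self-adjoint or $\beta$-skew-adjoint according to the parity of $j$, whence $\im(x_M^j)=\ker(x_M^j)^{\perp_\beta}$ for every $x\in V(\fg)$. The involution $W\mapsto W^{\perp_\beta}$ of Grassmannians is induced by a linear isomorphism of the ambient Pl\"ucker spaces, so $\varphi$ coincides, up to this linear isomorphism, with $\mspl$ applied to the kernel map $x\mapsto\ker(x_M^j)$. The maximal minors of $x_M^j$ are homogeneous polynomials in $S(\fg^\ast)$ of common degree $j\cdot\rk^j(M)$; the $\beta$-symmetry is what allows one to organise them into a genuinely global defining system of uniform degree rather than a patchwork of local ones arising from different column choices, thereby certifying homogeneity of $\varphi$. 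Part (1) then concludes. The principal difficulty lies precisely in this global assembly step, where the $\beta$-symmetric kernel description replaces the non-canonical column choices intrinsic to the image description.
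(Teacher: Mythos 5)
Your handling of (3) coincides with the paper's, and your routes through (1) and (4) differ from the paper's but are valid. For (1), the paper invokes \cite[(1.1.5)]{Fa17}: the quantity $\deg(\mspl_M\circ\msim^j_{M|_\fe})/\deg(\iota_\fe)$ is independent of the degree-one closed immersion $\iota_\fe:\PP(\fe)\hookrightarrow\PP(\fg)$ once $\mspl_M\circ\msim^j_M$ is homogeneous. Your direct argument---that a reduced global defining system restricts to a reduced system on each $\PP(\fe)$, since a common zero on $\PP(\fe)\subseteq U_{M,j}$ is impossible over an algebraically closed field---reaches the same conclusion $\msdeg^j_M(\fe)=d$, and is equally correct. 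For (4), the paper notes that $\Gr_2(\fg)^{V(\fg)}\cong\Gr_2(V(\fg))$ is irreducible and quotes (3), whereas you observe that $\PP(V(\fg))$ is a projective space, so $\mspl_M\circ\msim^j_M$ is automatically homogeneous (a fact recalled at the beginning of Section~\ref{S:LCF}), and quote (1). Both are fine.

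The genuine gap is (2). You attempt to reduce it to (1) by arguing that self-duality forces $\mspl_M\circ\msim^j_M$ to be homogeneous. The step you yourself flag as "the principal difficulty"---using $\beta$-symmetry to assemble the maximal minors of $x_M^j$ into a single global defining system---is precisely what does not follow. The Pl\"ucker coordinates of $\im x_M^j$ require fixing $\rk^j(M)$ columns of $x_M^j$ that span its image (dually, after passing to $\ker x_M^j=(\im x_M^j)^{\perp_\beta}$, a fixed set of rows), and the existence of one such choice valid uniformly over $\PP(V(\fg))$ is exactly the homogeneity question; $\beta$-invariance pins down the relation between rows and columns but does nothing to make a single choice work globally. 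No proof is given, and the machinery of Sections~\ref{S:LCF}--\ref{S:CBC} exists precisely because such morphisms generally fail to be globally homogeneous. The paper proves (2) far more economically and without any appeal to homogeneity: by the rank--degree formula \cite[(4.2.2)]{Fa17}, $\deg^j(N|_\fe)+\deg^j(N^\ast|_\fe)=j\rk^j(N|_\fe)$ for any $N$ of constant $j$-rank, so self-duality of $M$ gives at once
\[
\msdeg^j_M(\fe)=\deg^j(M|_\fe)=\tfrac{1}{2}\,j\,\rk^j(M|_\fe)=\tfrac{1}{2}\,j\,\rk^j(M),
\]
which is manifestly independent of $\fe$. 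Constancy of $\msdeg^j_M$ is much weaker than homogeneity of the underlying morphism, and trying to establish the stronger property here is both unproved and the wrong strategy.
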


\begin{proof} (1) Given $\fe \in \EE(2,\fg)$, the canonical inclusion $\fe \subseteq \fg$ defines a morphism $\iota_\fe : \PP(\fe) \lra \PP(\fg)$ of degree $1$ that
factors through $\PP(V(\fg))$. We have
\[ \msdeg^j_M(\fe) = \deg^j(M|_\fe) = \deg(\mspl_M\circ\msim^j_{M|_{\fe}}) = \frac{\deg(\mspl_M\circ\msim^j_{M|_{\fe}})}{\deg(\iota_\fe)}.\]
Thanks to \cite[(1.1.5)]{Fa17}, the right-hand fraction is independent of the choice of the morphism $\iota_\fe : \PP(\fe) \lra \PP(\fg)$ ($\fe \in \EE(2,\fg)$). As a
result, the map $\msdeg^j_M$ is constant.

(2) Let $\fe \in \EE(2,\fg)$. Since $M$ is self-dual, \cite[(4.2.2)]{Fa17} implies
\[ \msdeg^j_M(\fe) = \deg^j(M|_\fe) = \frac{j\rk^j(M|_\fe)}{2} = \frac{j\rk^j(M)}{2},\]
so that $\msdeg^j_M$ is constant.

(3) We consider the morphism
\[ \varphi^j_M : \PP(V_\cX) \lra \PP(\bigwedge^{\rk^j(M)}(M)) \ \ ; \ \ x \mapsto \mspl_M\circ\msim^j_M(x).\]
Since $\cX$ is connected, Theorem \ref{LCF3} shows that the map
\[ \msdeg_{\varphi^j_M} : \cX \lra \NN_0\]
is constant. Hence this also holds for $\msdeg^j_M=\msdeg_{\varphi^j_M}|_{\EE(2,\fg)}$.

(4) If $V(\fg)$ is a linear subspace, then $\Gr_2(\fg)^{V(\fg)}\cong \Gr_2(V(\fg))$ is irreducible. Hence (3) yields the result.  \end{proof}

\bigskip
\noindent
We are now in a position to verify Theorem B.

\bigskip

\begin{Thm} \label{CBC2} Suppose that $\msrke(\fg)\!\ge\! 2$ and let $M$ be a $U_0(\fg)$-module of constant $j$-rank with $j$-degree function
$\msdeg^j_M : \EE(2,\fg) \lra \NN_0$.
\begin{enumerate}
\item If $\fe_0 \in \EE(2,\fg)$, then $\msdeg^j_M$ is constant on $\EE(2,\Nor_\fg(\fe_0))$.
\item If $\fg\!=\!\Lie(G)$ is an algebraic Lie algebra, then $\msdeg^j_M$ is constant.
\item If $p\!\ge\!5$ and $\dim V(C(\fg))\!\ne\!1$, then $\msdeg^j_M$ is constant. \end{enumerate} \end{Thm}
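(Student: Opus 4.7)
The plan is to verify each part by producing a closed connected subset $\cX \subseteq \Gr_2(\fg)^{V(\fg)}$ containing the relevant elementary abelian subalgebras, then applying Theorem \ref{LCF3}(2) to the morphism $\varphi^j_M := \mspl_M \circ \msim^j_M : \PP(V(\fg)) \to \PP(\bigwedge^{\rk^j(M)}(M))$ restricted to $\PP(V_\cX)$; since $\msdeg^j_M$ coincides with $\msdeg_{\varphi^j_M}$ on $\EE(2,\fg)$, constancy of the latter on $\cX$ gives the desired conclusion.

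For (1), given $\fe \in \EE(2, \Nor_\fg(\fe_0))$, I will form $\fl := \fe + \fe_0$, a subspace of dimension at most $4$. The argument from the proof of Lemma \ref{Bc1} goes through: since $\fe$ and $\fe_0$ are abelian, $[\fl, \fl] = [\fe, \fe_0]$; the commuting nilpotent operators $\ad(\fe)|_{\fe_0}$ are simultaneously strictly upper triangulated via Engel's theorem, forcing $\dim_k[\fl,\fl] \le 1$. Jacobson's formula together with $p \ge 3$ then yields $\fl \subseteq V(\fg)$. Hence $\Gr_2(\fl)$ is a closed irreducible (and thus connected) subset of $\Gr_2(\fg)^{V(\fg)}$ containing both $\fe$ and $\fe_0$, and Theorem \ref{LCF3}(2) applied with $\cX = \Gr_2(\fl)$ delivers $\msdeg^j_M(\fe) = \msdeg^j_M(\fe_0)$. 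For (2), Theorem \ref{AL3} supplies connectedness of $\EE(2, \fg)$, and Lemma \ref{CBC1}(3) (with $\cX = \EE(2, \fg)$) concludes.

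For (3), I will split into two subcases according to $\dim V(C(\fg))$. When $\dim V(C(\fg)) \ge 2$, the subspace $V(C(\fg)) \subseteq C(\fg)$ (linear by Jacobson's formula, as $C(\fg)$ is abelian) contains a two-dimensional elementary abelian $p$-ideal $\fn \unlhd \fg$, and Corollary \ref{Bc3} ensures $\EE(2,\fg)$ is connected, so Lemma \ref{CBC1}(3) applies. I expect the main obstacle to be the complementary subcase $V(C(\fg)) = 0$: here $C(\fg)$ is a torus (its $[p]$-map is bijective), and there is no central elementary abelian ideal available to invoke Corollary \ref{Bc3}. My plan is to enlarge $\EE(2,\fg)$ inside $\Gr_2(\fg)^{V(\fg)}$ by introducing
\[
\cX := \{\fv \in \Gr_2(\fg)^{V(\fg)} : [\fv, \fv] \subseteq C(\fg)\},
\]
which is closed in $\Gr_2(\fg)^{V(\fg)}$ since the Lie bracket on the tautological rank-two bundle landing in $C(\fg)$ is a closed condition, and which visibly contains $\EE(2,\fg)$. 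Because $V(C(\fg))=0$ forces $\fv \cap C(\fg) = 0$ for every $\fv \in \cX$, the projection $\pi : \fg \to \fg/C(\fg)$ induces a morphism $\tilde\pi : \cX \to \EE(2, \fg/C(\fg))$, $\fv \mapsto \pi(\fv)$.

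The crucial step will be to verify that $\tilde\pi$ is bijective. For surjectivity, given $\bar\fe = k\bar x \oplus k\bar y$, I take arbitrary lifts $x_0,y_0 \in \fg$ (so $x_0^{[p]}, y_0^{[p]} \in C(\fg)$) and exploit the bijectivity of $[p]$ on the torus $C(\fg)$ to find central $c_1, c_2$ making $x_1 := x_0 - c_1$ and $y_1 := y_0 - c_2$ lie in $V(\fg)$; since $[x_1,y_1] \in C(\fg)$ is central, Jacobson's formula collapses and shows $kx_1 \oplus ky_1 \subseteq V(\fg)$, hence lies in $\cX$ and maps to $\bar\fe$. For injectivity, the same computation shows that any lift of $\alpha\bar x + \beta\bar y$ to $V(\fg)$ inside $\pi^{-1}(\bar\fe)$ must coincide with $\alpha x_1 + \beta y_1$ (any central adjustment $c$ contributes $c^{[p]}$ to the $[p]$-power, forcing $c \in V(C(\fg)) = 0$), so $\tilde\pi^{-1}(\bar\fe)$ is the singleton $\{kx_1 \oplus ky_1\}$. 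Being a bijective morphism between projective varieties, $\tilde\pi$ is a homeomorphism; Corollary \ref{CRL2} then transfers the connectedness of $\EE(2, \fg/C(\fg))$ to $\cX$, and Lemma \ref{CBC1}(3) finishes the proof.
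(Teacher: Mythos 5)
Your proof is correct and closely tracks the paper's approach. Parts (2) and (3) are essentially identical to the paper's argument: in part (3) the paper names your set $\cX$ as $\CC(2,\fg)$ and proves bijectivity of $\pi_\ast$ somewhat more structurally, by noting that $[p]$ is $p$-semilinear on $\ff:=\pi^{-1}(\fe')$ and hence $\ff=V(\ff)\oplus C(\fg)$, whereas you construct and verify uniqueness of the lift explicitly; both routes are sound, and the paper resolves the $\dim V(C(\fg))\ge 2$ subcase by reducing to part (1) rather than directly invoking Corollary \ref{Bc3}. For part (1) the paper is briefer: it observes that $\fe_0$ is a two-dimensional elementary abelian $p$-ideal of $\Nor_\fg(\fe_0)$, so Corollary \ref{Bc3} gives connectedness of $\EE(2,\Nor_\fg(\fe_0))$, and then invokes Lemma \ref{CBC1}(3) (applied with $\Nor_\fg(\fe_0)$ in place of $\fg$). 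Your alternative --- forming $\fl := \fe+\fe_0 \subseteq V(\fg)$ for each $\fe$ and connecting $\fe$ to $\fe_0$ through the irreducible $\Gr_2(\fl)$ via Theorem \ref{LCF3}(2) --- re-derives the Engel/Jacobson computation from the proof of Lemma \ref{Bc1} and bypasses the exponential-map machinery behind Corollary \ref{Bc3}; it is more self-contained, though in effect it re-proves the relevant special case of that corollary.
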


\begin{proof} (1) Since $\fe_0$ is an elementary abelian $p$-ideal of $\Nor_\fg(\fe_0)$, the assertion follows from Corollary \ref{Bc3} and Lemma \ref{CBC1}.

(2) This is a consequence of Lemma \ref{CBC1} and Theorem \ref{AL3}.

(3) We first assume that $\dim V(C(\fg))\!=\!0$, so that $C(\fg)$ is a torus. We consider the factor algebra $\fg' := \fg/C(\fg)$ along with the canonical projection $\pi : \fg \lra \fg'$. Let
\[ \CC(2,\fg) := \{\fv \in \Gr_2(\fg)^{V(\fg)} \ ; \ [\fv,\fv] \subseteq C(\fg)\}.\]
The Lie bracket defines a linear map
\[ b : \bigwedge^2(\fg) \lra \fg \ \ ; \ \ v\wedge w \mapsto [v,w].\]
Letting $\mspl_\fg : \Gr_2(\fg) \lra \PP(\bigwedge^2(\fg))$ be the Pl\"ucker embedding, we conclude that
\[\CC(2,\fg) = \{ \fv \in \Gr_2(\fg)^{V(\fg)} \ ; \ \mspl_\fg(\fv) \in \PP(b^{-1}(C(\fg)))\} = \mspl_\fg^{-1}(\PP(b^{-1}(C(\fg))))\cap\Gr_2(\fg)^{V(\fg)}\]
is closed.

Since $C(\fg)$ is a torus, we have $\fv\cap C(\fg) = (0)$ for every $\fv \in \Gr_2(\fg)^{V(\fg)}$. Hence $\pi$ induces a morphism
\[ \Gr_2(\fg)^{V(\fg)} \lra \Gr_2(\fg') \ \ ; \ \ \fv \mapsto \pi(\fv).\]
Note that this map restricts to a morphism
\[ \pi_\ast : \CC(2,\fg) \lra \EE(2,\fg')\]
of projective varieties.

Let $\fe' \in \EE(2,\fg')$. Then $\ff\! :=\! \pi^{-1}(\fe')$ is a $p$-subalgebra of $\fg$ containing $C(\fg)$ such that $[\ff,\ff] \subseteq C(\fg)$ and $\ff^{[p]} \subseteq C(\fg)$.
This implies that $[p] : \ff \lra \ff$ is $p$-semilinear along with $\ff = V(\ff)\!\oplus\!C(\fg)$. Consequently, $V(\ff) \in \CC(2,\fg)$ and $\pi_\ast(V(\ff))
= \fe'$. As a result, the morphism $\pi_\ast$ is surjective.

Let $\fv \in \CC(2,\fg)$. Then we have $\pi^{-1}(\pi_\ast(\fv)) = \fv\!\oplus\!C(\fg)$, so that $\fv = V(\pi^{-1}(\pi_\ast(\fv)))$. Consequently, $\pi_\ast$ is
also injective.

We conclude that $\pi_\ast$ is in fact a homeomorphism. In view of Corollary \ref{CRL2}, the variety $\CC(2,\fg)$ is connected and Lemma \ref{CBC1} ensures that the $j$-degree function
$\msdeg^j_M$ is constant.

If $\dim V(C(\fg))\!\ge\! 2$, then there are linearly independent elements $x,y \in V(C(\fg))$. Hence $\fe:=kx\!\oplus\!ky \in \EE(2,\fg)$ is a $p$-ideal and the assertion follows from (1).  \end{proof}

\bigskip
\noindent
The following example shows that the variety $\EE(2,\fg)$ may be disconnected even if $\Gr_2(\fg)^{V(\fg)}$ is irreducible.

\bigskip

\begin{Example} We consider the $8$-dimensional vector space
\[ \fg := \fe\!\oplus\!\ff\!\oplus\!T(\fg),\]
where $\fe\!:=\!kx_1\!\oplus\!kx_2$, $\ff\!:=\!ky_1\!\oplus\!ky_2$ and $T(\fg)\!:=\!\bigoplus_{i,j=1}^2kz_{ij}$. The Lie bracket and the $p$-map are given
by
\[ [\fe,\fe] = (0) = [\ff,\ff] \ \ ; \  \ [T(\fg),\fg] = (0) \ \ ; \  \ [x_i,y_j] = z_{ij}\]
as well as
\[ \fe^{[p]}=\{0\}=\ff^{[p]} \ \ ; \  \ z_{ij}^{[p]}=z_{ij}.\]
Since $p\!\ge\!3$, we have $V(\fg)\!=\!\fe\!\oplus\!\ff$, so that $\Gr_2(\fg)^{V(\fg)} \cong \Gr_2(\fe\!\oplus\!\ff)$ is irreducible.

Note that the map
\[ \zeta : \fe\otimes_k\!\ff \lra T(\fg) \   \   ;   \     \  a\otimes b \lra [a,b]\]
is surjective and hence bijective.

Suppose that $a,b \in V(\fg)$ are linearly independent and such that $[a,b]\!=\!0$. Writing $a\!=\!x\!+\!y$ and $b\!=\!x'\!+\!y'$ with $x,x'\in \fe$ and $y,y' \in \ff$,
we obtain $0\! =\! [x,y']\!+\![y,x'] = \zeta(x\otimes y'\!-\!x'\otimes y)$, so that
\[ (\ast) \ \ \ \ \ \ \ \ x\otimes y'\!-\!x'\otimes y=0.\]
If $\dim_k(kx\!+\!kx')\!=\!1$, we may assume that $x\!\ne\!0$ and $x'\!=\!\alpha x$. Then $0\!=\!x\otimes(y'\!-\!\alpha y)$, whence $y'\!=\!\alpha y$. This implies
$b\!=\!\alpha a$, a contradiction.

In view of ($\ast$), the assumption $\dim_k(kx\!+\!kx')\!=\!2$ yields $y\!=\!0\!=\!y'$, so that $ka\oplus kb\!=\!\fe$. Alternatively, $\dim_k(kx\!+\!kx')\!=\!0$ and $ka\!\oplus\!kb\!=\!\ff$. As a result, the variety 
$\EE(2,\fg)\!=\!\{\fe,\ff\}$ is not connected. \end{Example}

\bigskip
\noindent
Given a $U_0(\fg)$-module $M$ and $j \in \{1,\ldots,p\!-\!1\}$, we put
\[\fK^j(M) :=\sum_{[x] \in \PP(V(\fg))} \ker x^j_M.\]
If $M$ has constant $j$-rank and $\PP(V(\fg))$ is irreducible, then $\fK^j(M)$ coincides with the $j$-th power generic kernel defined in \cite[\S9]{CFS}. We also define $\fK(M)\!:=\!\fK^1(M)$.

\bigskip

\begin{Cor} \label{CBC3} Let $(\fg,[p])$ be a restricted Lie algebra with $\msrke(\fg)\ge 2$, $M$ be a $U_0(\fg)$-module of constant $1$-rank.
\begin{enumerate}
\item The function
\[ \fK_M : \EE(2,\fg) \lra \NN_0 \ \ ; \ \ \fe \mapsto \dim_k\fK(M|_\fe)\]
is locally constant.
\item If $\fg\!=\!\Lie(G)$ is algebraic, or if $p\! \ge\! 5$ and $\dim V(C(\fg))\! \ne \! 1$, then $\fK_M$ is constant. \end{enumerate}.\end{Cor}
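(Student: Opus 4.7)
The strategy is to reduce both parts of the Corollary to facts about $\msdeg^1_M$ established earlier in the paper, via the identity
\[ \fK_M(\fe) \;=\; \dim_k M - \msdeg^1_M(\fe) \qquad \text{for every } \fe \in \EE(2,\fg). \]
Granting this, part (1) follows from Theorem \ref{LCF3}: since $M$ has constant $1$-rank, $\msim^1_M$ is globally defined on $\PP(V(\fg))$, so $\varphi := \mspl_M \circ \msim^1_M : \PP(V(\fg)) \lra \PP(\bigwedge^{\rk^1(M)}M)$ is a well-defined morphism. Taking $\cX := \EE(2,\fg) \subseteq \Gr_2(\fg)^{V(\fg)}$ in Theorem \ref{LCF3}, the function $\fe \mapsto \deg(\varphi|_{\PP(\fe)}) = \msdeg^1_M(\fe)$ is locally constant on $\EE(2,\fg)$, hence so is $\fK_M$. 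Part (2) follows in the same manner from Theorem \ref{CBC2}(2),(3), which yield constancy of $\msdeg^1_M$ under the stated hypotheses.

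The essential content is the displayed identity. Fix $\fe \in \EE(2,\fg)$ and work on $\PP(\fe) \cong \PP^1$. The constant $1$-rank hypothesis guarantees that the kernel and image subsheaves $\cK, \cI \subseteq M \otimes_k \cO_{\PP(\fe)}$, with fibers $\ker(x_M)$ and $\im(x_M)$ at $[x]$, are locally free subbundles of ranks $\dim_k M - r$ and $r := \rk^1(M)$, respectively. Setting $\cQ := (M \otimes_k \cO_{\PP(\fe)})/\cK$, the canonical action morphism
\[ \mu :\; M \otimes_k \cO_{\PP(\fe)}(-1) \lra M \otimes_k \cO_{\PP(\fe)}, \]
whose fiber at $[x]$ is $x_M$, factors through an isomorphism $\cI \cong \cQ(-1)$.

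The crucial observation is that the Grothendieck splitting of $\cQ$ on $\PP^1$ takes the form $\cQ \cong \cO^{r-s} \oplus \cO(1)^s$ for some $0 \le s \le r$: each summand of the subbundle $\cI$ of the trivial bundle has degree $\le 0$, so each summand of $\cQ \cong \cI(1)$ has degree $\le 1$; conversely each summand of the quotient $\cQ$ of the trivial bundle has degree $\ge 0$. The Pl\"ucker pullback yields $\msdeg^1_M(\fe) = -\deg \cI = r - s$, so $s = r - \msdeg^1_M(\fe)$. Dualizing the short exact sequence $0 \to \cK \to M \otimes_k \cO_{\PP(\fe)} \to \cQ \to 0$ and taking global sections identifies $\fK(M|_\fe)^{\perp} \subseteq M^{\ast}$ with the image of $H^0(\PP^1, \cQ^{\ast}) \hookrightarrow M^{\ast}$, whose dimension is $h^0(\cO^{r-s} \oplus \cO(-1)^s) = r - s = \msdeg^1_M(\fe)$. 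Passing to orthogonal complements yields the displayed identity.

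The principal obstacle is the splitting-type computation for $\cQ$: once one sees that $\cI$ is simultaneously a subbundle of a trivial bundle and an $\cO(-1)$-twist of the quotient $\cQ$, the two-sided degree constraint is strong enough to force each summand of $\cQ$ to lie in $\{\cO, \cO(1)\}$, and everything else is routine bookkeeping with degrees and cohomology on $\PP^1$.
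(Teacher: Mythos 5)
Your proof is correct and follows the paper's route exactly: the paper too reduces both parts to the identity $\fK_M(\fe)=\dim_k M-\msdeg^1_M(\fe)$ and then invokes Theorem \ref{LCF3} and Theorem \ref{CBC2}. The sole difference is that the paper simply cites this identity from \cite[(6.2.8)]{Fa17}, whereas you supply a self-contained vector-bundle proof on $\PP(\fe)\cong\PP^1$ (the degree bound $0\le\deg\le 1$ on summands of $\cQ$ coming from $\cQ$ being both a quotient of a trivial bundle and a twist $\cI(1)$ of a subbundle of a trivial bundle, followed by the $H^0(\cQ^\ast)=\fK(M|_\fe)^\perp$ computation), which is a correct and instructive reconstruction of the cited fact.
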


\begin{proof} Let $\fe \in \EE(2,\fg)$. In view of \cite[(6.2.8)]{Fa17}, we have $\deg^1(M|_\fe) = \dim_kM/\fK(M|_\fe)$. Hence
\[\fK_M(\fe)=\dim_kM\!-\!\msdeg^1_M(\fe),\]
so that our assertions follow from Theorem \ref{LCF3} and Theorem \ref{CBC2}, respectively. \end{proof}

\bigskip

\subsection{Miscellaneous observations}
We shall show in the next section that the function $\msdeg^j_M$ of a $U_0(\fg)$-module $M$ of constant $j$-rank may not be constant in case the variety $V(C(\fg))$ is one-dimensional.
In view of Lemma \ref{CBC1}, this also implies that the morphism $\mspl_M\circ \msim^j_M : \PP(V(\fg)) \lra \PP(\bigwedge^{\rk^j(M)}(M))$ is not necessarily homogeneous.

Recall that the finite-dimensional Hopf algebra $U_0(\fg)$ is a Frobenius algebra, which is symmetric if and only if $\tr(\ad x)=0$ for all $x \in \fg$, cf.\ \cite{Hu78}. The reader is referred to \cite{ARS} for the theory of 
almost split sequences. We require the following basic subsidiary results. 

\bigskip

\begin{Lem} \label{MO1} Let $(\fg,[p])$ be a restricted Lie algebra such that $U_0(\fg)$ is symmetric. If $S$ is a self-dual simple $U_0(\fg)$-module, then the following statements hold:
\begin{enumerate}
\item The projective cover $P(S)$ of $S$ is self-dual.
\item We have $\Rad(P(S)) \cong (P(S)/\Soc(P(S)))^\ast$.
\item The heart $\Ht(P(S)):= \Rad(P(S))/\Soc(P(S))$ is self-dual. \end{enumerate} \end{Lem}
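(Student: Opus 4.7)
The plan is to exploit the standard contravariant duality functor $(-)^\ast = \Hom_k(-,k)$ on left $U_0(\fg)$-modules, where the action is twisted by the antipode of the Hopf algebra $U_0(\fg)$. This functor is exact, involutive, and swaps tops with socles: $\Top(M^\ast) \cong \Soc(M)^\ast$ and $\Soc(M^\ast) \cong \Top(M)^\ast$. It also interchanges projectivity and injectivity. Since $U_0(\fg)$ is symmetric by hypothesis, every indecomposable projective $P(T)$ is injective and satisfies $\Soc(P(T)) \cong T$ for every simple module $T$; in particular $P(T)=I(T)$.

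For (1), the module $P(S)^\ast$ is indecomposable injective with simple socle $\Top(P(S))^\ast = S^\ast \cong S$. Since the indecomposable injective hull with socle $S$ is $I(S)$, which coincides with $P(S)$ by symmetry, we conclude $P(S)^\ast \cong P(S)$. For (2), I would dualize the canonical sequence $0 \to \Rad(P(S)) \to P(S) \to S \to 0$. Using (1) together with $S^\ast \cong S$, this gives
\[ 0 \to S \to P(S) \to \Rad(P(S))^\ast \to 0.\]
Because $\Soc(P(S)) \cong S$ is simple, the embedded copy of $S$ must coincide with $\Soc(P(S))$, so $\Rad(P(S))^\ast \cong P(S)/\Soc(P(S))$; dualizing once more yields (2).

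For (3), I would dualize the canonical sequence $0 \to \Soc(P(S)) \to \Rad(P(S)) \to \Ht(P(S)) \to 0$. Substituting (2) identifies $\Rad(P(S))^\ast$ with $P(S)/\Soc(P(S))$, while $\Soc(P(S))^\ast \cong S^\ast \cong S$, so the dualized sequence reads
\[ 0 \to \Ht(P(S))^\ast \to P(S)/\Soc(P(S)) \to S \to 0.\]
The right-hand surjection is onto the unique simple quotient $S$ of $P(S)/\Soc(P(S))$ and must therefore agree with the canonical projection, forcing $\Ht(P(S))^\ast \cong \Ht(P(S))$.

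The only step requiring a moment of thought is the identification in (2) of the embedded $S$ with $\Soc(P(S))$; this rests on the fact that $P(S)$ is indecomposable projective-injective over a symmetric algebra, hence has a simple socle isomorphic to $S$. All remaining work is formal manipulation of short exact sequences under the duality functor.
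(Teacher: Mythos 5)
Your proof is correct. Parts (1) and (2) run along essentially the same lines as the paper's: for (1) you identify the indecomposable injective $P(S)^\ast$ with the injective hull $I(S) \cong P(S)$ by looking at its socle, and for (2) you dualize the radical short exact sequence $0 \to \Rad(P(S)) \to P(S) \to S \to 0$ whereas the paper dualizes the socle sequence $0 \to S \to P(S) \to P(S)/\Soc(P(S)) \to 0$; the two are dual to one another and the remaining identification (of the embedded $S$ with $\Soc(P(S))$, resp.\ of the kernel of $P(S)\to S$ with $\Rad(P(S))$) uses the same facts about $P(S)$, so the content is the same. Where you genuinely diverge is in (3). You dualize $0 \to \Soc(P(S)) \to \Rad(P(S)) \to \Ht(P(S)) \to 0$ and substitute (2) to obtain $0 \to \Ht(P(S))^\ast \to P(S)/\Soc(P(S)) \to S \to 0$, then observe that $P(S)/\Soc(P(S))$ is a nonzero quotient of the projective cover $P(S)$ and therefore has a unique maximal submodule, namely $\Ht(P(S))$; this forces $\Ht(P(S))^\ast \cong \Ht(P(S))$. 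The paper instead dualizes the \emph{standard almost split sequence} $0 \to \Rad(P(S)) \to \Ht(P(S)) \oplus P(S) \to P(S)/\Soc(P(S)) \to 0$, invokes the uniqueness of almost split sequences to get $\Ht(P(S))^\ast \oplus P(S) \cong \Ht(P(S)) \oplus P(S)$, and then cancels $P(S)$ by Krull--Remak--Schmidt. Your argument is the more elementary one: it needs nothing beyond the fact that a projective cover of a simple module has a simple top, avoiding Auslander--Reiten theory altogether. The paper's route has the advantage of fitting smoothly with the AR machinery already used in Section~\ref{S:Ex} (compare the locally split standard AR-sequence in the proof of Lemma~\ref{Ex1}), which is presumably why the authors chose it.
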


\begin{proof} (1) Since $P(S)^\ast$ is an indecomposable, injective module with $S\cong S^\ast \subseteq P(S)^\ast$, it follows that $P(S)^\ast \cong I(S)$ is the injective hull
of $S$. As $U_0(\fg)$ is symmetric, there is an isomorphism $P(S)\cong I(S)$, so that $P(S)$ is self-dual.

(2) Owing to (1), dualization of the exact sequence
\[ (0) \lra S \lra P(S) \lra P(S)/\Soc(P(S)) \lra (0)\]
gives an exact sequence
\[ (0) \lra (P(S)/\Soc(P(S)))^\ast \lra P(S) \lra S \lra (0),\]
so that $(P(S)/\Soc(P(S)))^\ast \cong \Rad(P(S))$.

(3) In view of (1) and (2), dualization of the standard almost split sequence \cite[(V.5.5)]{ARS} provides an almost split sequence
\[ (0) \lra \Rad(P(S)) \lra \Ht(P(S))^\ast\!\oplus\!P(S)\lra P(S)/\Soc(P(S)) \lra (0).\]
The unicity of almost split sequences \cite[(V.1.16)]{ARS} thus yields $\Ht(P(S))^\ast\!\oplus\! P(S) \cong \Ht(P(S))\!\oplus\!P(S)$. Since $P(S)$ is indecomposable, the Theorem of
Krull-Remak-Schmidt implies our assertion.\end{proof}

\bigskip
\noindent
Given an automorphism $\lambda \in \Aut_p(\fg)$ of a restricted Lie algebra $(\fg,[p])$, we denote the induced automorphism of $U_0(\fg)$ also by $\lambda$. If $M \in
\modd U_0(\fg)$, let $M^{(\lambda)}$ be the $U_0(\fg)$-module with underlying $k$-space $M$ and action
\[ u\dact m := \lambda^{-1}(u).m \ \ \ \ \ \ \ \ \ \forall \ u \in U_0(\fg), \, m \in M.\]

\bigskip

\begin{Lem} \label{MO2} Let $M$ be a $U_0(\fg)$-module of constant $j$-rank. Then the following statements hold:
\begin{enumerate}
\item If $\lambda \in \Aut_p(\fg)$, then $M^{(\lambda)}$ has constant $j$-rank, and we have
\[ \msdeg^j_{M^{(\lambda)}}(\fe)= \msdeg^j_M(\lambda^{-1}(\fe))\]
for all  $\fe \in \EE(2,\fg)$.
\item If $\varphi : M \lra N$ is an isomorphism of $U_0(\fg)$-modules, then $N$ has constant $j$-rank and $\msdeg^j_N = \msdeg^j_M$. \end{enumerate} \end{Lem}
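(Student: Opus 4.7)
The plan is to track how the relevant ranks, image subspaces, and homogeneous morphisms into Grassmannians transform under the twist by $\lambda$ and under the isomorphism $\varphi$, and then to invoke a stability principle for the $j$-degree. Recall that $\msdeg^j_M(\fe)$ is computed from a reduced defining system for the homogeneous morphism $\mspl_M\circ \msim^j_M|_{\PP(\fe)}$. The principle I will use is the following: if $(f_0,\ldots,f_m) \in S(V^\ast)_d^{m+1}$ is reduced and $\sigma : W \to V$ is a linear isomorphism, then $(f_0 \circ \sigma,\ldots,f_m\circ \sigma)$ remains reduced of the same degree $d$; dually, any invertible $k$-linear change of coordinates applied to a reduced defining system in the target again yields a reduced system of degree $d$. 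Both assertions follow because such linear maps induce graded $k$-algebra isomorphisms, which preserve gcds and degree.

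For part (1), I would first observe that by the definition of $M^{(\lambda)}$ the operator associated to $x \in \fg$ satisfies $x_{M^{(\lambda)}} = (\lambda^{-1}(x))_M$, whence $x_{M^{(\lambda)}}^j = (\lambda^{-1}(x))_M^j$. Since $\lambda \in \Aut_p(\fg)$ restricts to a bijection on $V(\fg)$, the module $M^{(\lambda)}$ has constant $j$-rank with $\rk^j(M^{(\lambda)}) = \rk^j(M)$. Setting $\fe' := \lambda^{-1}(\fe) \in \EE(2,\fg)$ and noting that $M^{(\lambda)}$ and $M$ have the same underlying $k$-vector space, the identity $\im x_{M^{(\lambda)}}^j = \im (\lambda^{-1}(x))_M^j$ yields the factorization
\[
\mspl_M \circ \msim^j_{M^{(\lambda)}}|_{\PP(\fe)} \ = \ (\mspl_M \circ \msim^j_M|_{\PP(\fe')}) \circ \lambda^{-1}|_{\PP(\fe)},
\]
where $\lambda^{-1}|_{\PP(\fe)} : \PP(\fe) \to \PP(\fe')$ is the projective isomorphism induced by the linear isomorphism $\lambda^{-1}|_\fe$. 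The stability principle applied to $\sigma = \lambda^{-1}|_\fe$ then gives $\msdeg^j_{M^{(\lambda)}}(\fe) = \msdeg^j_M(\fe')$.

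For part (2), the $U_0(\fg)$-linearity of $\varphi$ gives $x_N = \varphi \circ x_M \circ \varphi^{-1}$, hence $x_N^j = \varphi \circ x_M^j \circ \varphi^{-1}$, so that $\rk(x_N^j) = \rk(x_M^j)$ and $\im x_N^j = \varphi(\im x_M^j)$. Therefore $N$ has constant $j$-rank with $\rk^j(N) = \rk^j(M) =: d$. The isomorphism $\Lambda := \bigwedge^d \varphi : \bigwedge^d M \to \bigwedge^d N$ is a linear isomorphism, and the compatibility of the Pl\"ucker embeddings with $\varphi$ yields
\[
\mspl_N \circ \msim^j_N|_{\PP(\fe)} \ = \ \PP(\Lambda) \circ (\mspl_M \circ \msim^j_M|_{\PP(\fe)}).
\]
The stability principle, now applied on the target side, forces $\msdeg^j_N(\fe) = \msdeg^j_M(\fe)$ for every $\fe \in \EE(2,\fg)$.

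The main (and rather mild) obstacle is the stability principle for degrees under linear projective isomorphisms on source and target: this is essentially the observation that an invertible graded $k$-algebra automorphism of a polynomial ring preserves both degree and gcds of homogeneous families, and it should follow directly from the formalism of Section~\ref{S:LCF} together with \cite[\S 1.1]{Fa17}.
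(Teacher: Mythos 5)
Your argument is correct and follows essentially the same route as the paper: compute $x_{M^{(\lambda)}}^j = (\lambda^{-1}(x))_M^j$ and $x_N^j = \varphi\circ x_M^j\circ \varphi^{-1}$, translate these into factorizations of $\mspl\circ\msim^j$ through a degree-$1$ projective linear map on source (part 1) or target (part 2), and conclude by invariance of the degree under such compositions. The paper delegates that invariance to \cite[(1.1.5)]{Fa17}, whereas you justify it directly via the observation that a linear isomorphism induces a graded $k$-algebra isomorphism of the symmetric algebra and hence preserves gcds of reduced defining systems; this is the same principle, just proved rather than cited.
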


\begin{proof} (1) Let $\lambda \in \Aut_p(\fg)$ and $\fe \in \EE(2,\fg)$. Given $x \in V(\fg)\!\smallsetminus\!\{0\}$, we have $\lambda^{-1}(x) \in V(\fg)\!\smallsetminus\!\{0\}$ and
\[ x^j_{M^{(\lambda)}} = \lambda^{-1}(x)^j_M,\]
so that
\[ \msim^j_{M^{(\lambda)}}([x]) = \msim_M^j([\lambda^{-1}(x)]).\]
Thus, the linear map $\lambda^{-1}$ induces a morphism $\lambda^{-1} :\PP(\fe) \lra \PP(\lambda^{-1}(\fe))$ of degree $1$ such that  $\msim^j_{M^{(\lambda)}}|_\fe =
(\msim^j_M\circ \lambda^{-1})|_\fe$. It now follows from \cite[(1.1.5)]{Fa17} that
\begin{eqnarray*}
 \msdeg^j_{M^{(\lambda)}}(\fe) & = & \deg(\mspl_M\circ \msim^j_{M^{(\lambda)}}|_\fe) = \deg(\mspl_M\circ\msim^j_M|_{\lambda^{-1}(\fe)}\circ \lambda^{-1}|_\fe) \\
 & = &  \deg(\mspl_M\circ\msim^j_M|_{\lambda^{-1}(\fe)}) = \msdeg^j_M(\lambda^{-1}(\fe)),
 \end{eqnarray*}
as desired.

(2) Given $x \in V(\fg)$, we have $x^j_N\circ \varphi = \varphi\circ x^j_M$, so that $N$ has constant $j$-rank $\rk^j(N)=\rk^j(M)$ and
\[ \msim^j_N = \kappa_\varphi \circ \msim^j_M,\]
where $\kappa_\varphi(V)=\varphi(V)$ for all $V \in \Gr_{\rk^j(M)}(M)$. Note that $\varphi$ induces an isomorphism $\eta : \PP(\bigwedge^{\rk^j(M)}(M))$ $\lra \PP(\bigwedge^{\rk^j(N)}(N))$ such that
\[ \eta \circ \mspl_M = \mspl_N\circ \kappa_\varphi.\]
We thus obtain
\[ \mspl_N\circ \msim^j_N = \mspl_N\circ \kappa_\varphi \circ \msim_M^j = \eta \circ (\mspl_M\circ \msim_M^j).\]
Since the isomorphism $\eta$ necessarily has degree $1$, our assertion follows from \cite[(1.1.5)]{Fa17}.\end{proof}

\bigskip

\subsection{The Example $\fsl(2)_s$}\label{S:Ex}
In the sequel, we let $\{e,h,f\}$ be the standard basis of $\fsl(2)$, i.e.,
\[ e:= \left(\begin{smallmatrix} 0 & 1 \\ 0 & 0 \end{smallmatrix}\right) \  \ ; \  \   h:= \left(\begin{smallmatrix} 1 & 0 \\ 0 & -1 \end{smallmatrix}\right) \  \ ;  \ \
 f:= \left(\begin{smallmatrix} 0 & 0 \\ 1 & 0 \end{smallmatrix}\right).\]
We consider a particular central extension $\fsl(2)_s\!:=\! \fsl(2)\!\oplus\!kc_0$ of $\fsl(2)$ by a one-dimensional elementary abelian ideal. By definition, this extension splits as an extension of
ordinary Lie algebras. The $p$-map is given by the semisimple linear form $\psi : \fsl(2) \lra k$, that satisfies $\psi(h)\!=\!1$ and $\ker\psi\!=\!ke\!\oplus\!kf$. Accordingly, we have
\[ (x,\alpha c_0)^{[p]} = (x^{[p]},\psi(x)^pc_0)\]
for all $x \in \fsl(2)$ and $\alpha \in k$. Since $\ker \psi\!=\! ke\!\oplus\!kf$, this readily implies
\[ V(\fsl(2)_s) = \{ (x,\lambda c_0) \ ; \ x \in V(\fsl(2))\cap\ker \psi, \, \lambda \in k\} = ke\!\oplus\!kc_0 \cup kf\!\oplus\!kc_0,\]
whence
\[ \EE(2,\fsl(2)_s) = \{ \fe_e, \fe_f\}=\Gr_2(\fsl(2)_s)^{V(\fsl(2)_s)},\]
where $\fe_x\!:=\!kx\!\oplus\!kc_0$ for $x \in \{e,f\}$.

We denote by
\[ \omega : \fsl(2)_s \lra \fsl(2)_s  \  \  ;  \  \ e\mapsto f \ , \ f \mapsto e \ , \ h \mapsto -h \ , \ c_0 \mapsto -c_0\]
the ``Cartan involution'' of $\fsl(2)_s$. Since $U_0(\fsl(2)_s)$ has simple modules $L(0), \ldots, L(p\!-\!1)$, where $\dim_kL(i)\!=\!i\!+\!1$, we have
\[ L(i)^{(\omega)} \cong L(i) \]
for all $i \in \{0,\ldots,p\!-\!1\}$. Moreover, $\omega(\fe_e)=\fe_f$ and $\omega(\fe_f)=\fe_e$, so that $\omega \not \in \Aut_p(\fsl(2)_s)^\circ$.

\bigskip
\noindent
Let $P(i)$ be the projective cover of $L(i)$. As $L(i)$ is $\omega$-stable, we obtain $P(i)\cong P(i)^{(\omega)}$. According to \cite[(1.5)]{FS}, the module $P(i)$ is $2p^2$-dimensional, so that
$P(i)|_{\fe_x} \cong 2 U_0(\fe_x)$ is self-dual. This readily implies that $P(i)$ has constant $j$-rank $\rk^j(P(i))=2p(p\!-\!j)$, while $\deg^j(P(i)|_{\fe_x})=jp(p\!-\!j)$ for $j \in
\{1,\ldots,p\!-\!1\}$ and $x \in \{e,f\}$, cf.\ \cite[\S4]{Fa17}.

A short exact sequence
\[ (0) \lra N \lra E \lra M \lra (0)\]
of $U_0(\fsl(2)_s)$-modules is referred to as {\it locally split}, provided the restricted sequence
\[ (0) \lra N|_{U_0(kx)} \lra E|_{U_0(kx)} \lra M|_{U_0(kx)} \lra (0)\]
splits for every $x \in V(\fsl(2)_s)\!\smallsetminus\!\{0\}$.

\bigskip

\begin{Lem} \label{Ex1} Let $0\! \le\! i\!\le\! p\!-\!2$ and $1\!\le\! j\!\le\! p\!-\!i\!-\!1$. Then the following statements hold:
\begin{enumerate}
\item $\Rad(P(i))$ is a $U_0(\fsl(2)_s)$-module of constant $j$-rank $\rk^j(\Rad(P(i)))\! = \! 2p(p\!-\!j)\!-\!i\!-\!1$.
\item We have $\deg^j(\Rad(P(i))|_{\fe_x})\!=\!  j(p(p\!-\!j)\!-\!i\!-\!1)$ for $x \in \{e,f\}$.
\item $P(i)/\Soc(P(i))$ is a $U_0(\fsl(2)_s)$-module of constant $j$-rank such that $\deg^j((P/\Soc(P(i)))|_{\fe_x})\!=\! \deg^j(P(i)|_{\fe_x})$ for $x \in \{e,f\}$.
\item $\Ht(P(i))$ is a self-dual module of constant $j$-rank $\rk^j(\Ht(P(i)))\!=\!2p(p\!-\!j)\!-\!2i\!-\!2$ such that $\deg^j(\Ht(P(i))|_{\fe_x})\!=\! j(p(p\!-\!j)\!-\!i\!-\!1)$ for
         $x \in \{e,f\}$. \end{enumerate}\end{Lem}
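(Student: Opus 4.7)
The plan is to reduce everything to a computation on $\fe_e$ by exploiting that $\EE(2,\fsl(2)_s)=\{\fe_e,\fe_f\}$ consists of only two points and that the Cartan involution $\omega\in\Aut_p(\fsl(2)_s)$ swaps them. Since $\Rad$, $\Soc$ and $\Ht$ are functorial, the identity $P(i)^{(\omega)}\cong P(i)$ passes to the four modules in the statement, and Lemma \ref{MO2} then transports every degree value from $\fe_e$ to $\fe_f$. Thus all four claims reduce to Jordan-type analyses of restrictions to $1$-dimensional $p$-subalgebras $U_0(ky)$ with $y\in V(\fsl(2)_s)\!\smallsetminus\!\{0\}$, combined with the structural input from Lemma \ref{MO1}.

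The basic Jordan types are as follows. The central element $c_0$ acts by a scalar on $L(i)$; as $c_0^{[p]}=0$, that scalar vanishes, so $c_0|_{L(i)}=0$ and $e$ acts on $L(i)$ as a single Jordan block of size $i+1$. For $y=\alpha e+\beta c_0\in\fe_e$, this gives $L(i)|_{U_0(ky)}\cong [i+1]$ when $\alpha\ne 0$ and $L(i)|_{U_0(ky)}\cong [1^{i+1}]$ when $\alpha=0,\beta\ne 0$. Since $\fe_e\subseteq\fsl(2)_s$ is a restricted Lie subalgebra, $P(i)|_{\fe_e}\cong 2\,U_0(\fe_e)$ is free, and elementarity of $\fe_e$ gives $P(i)|_{U_0(ky)}\cong [p^{2p}]$. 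Inserting these into the exact sequences
\[0\lra\Rad(P(i))\lra P(i)\lra L(i)\lra 0,\qquad 0\lra L(i)\lra P(i)\lra P(i)/\Soc(P(i))\lra 0,\]
and factoring the surjection $[p^{2p}]\tha L(i)|_{U_0(ky)}$ block-by-block through projective covers of $k[y]/(y^p)$, I expect both $\Rad(P(i))|_{U_0(ky)}$ and $P(i)/\Soc(P(i))|_{U_0(ky)}$ to come out equal to $[p^{2p-1},p-i-1]$ when $\alpha\ne 0$ and to $[p^{2p-i-1},(p-1)^{i+1}]$ when $\alpha=0$. A direct rank computation under the hypothesis $1\le j\le p-i-1$ then gives $\rk(y^j)=2p(p-j)-i-1$ in every case, yielding (1) and the constant-$j$-rank assertion of (3). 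A parallel analysis of $0\to\Ht(P(i))\to P(i)/\Soc(P(i))\to L(i)\to 0$ produces $\rk(y^j|_{\Ht(P(i))})=2p(p-j)-2i-2$ with the same value regardless of which blocks are consumed by the surjection, which is the constant-$j$-rank claim in (4).

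For the degree assertions, I would invoke Lemma \ref{MO1}. Since $L(i)\cong L(i)^*$ over $U_0(\fsl(2)_s)$ (self-duality as an $\fsl(2)$-module, and $c_0$ acting as $0$ on both sides) and $\tr(\ad z)=0$ for every $z\in\fsl(2)_s$ (which makes $U_0(\fsl(2)_s)$ symmetric), the lemma yields $P(i)\cong P(i)^*$, $\Rad(P(i))\cong(P(i)/\Soc(P(i)))^*$, and the self-duality of $\Ht(P(i))$. Assertion (4) is then immediate from \cite[(4.2.2)]{Fa17}:
\[\deg^j(\Ht(P(i))|_{\fe_x})=\tfrac{j\,\rk^j(\Ht(P(i)))}{2}=j\bigl(p(p-j)-i-1\bigr).\]
For (3), the key observation is that $L(i)\subseteq y^jP(i)$ for every $y\in V(\fsl(2)_s)\!\smallsetminus\!\{0\}$ in the range $1\le j\le p-i-1$: indeed $y^jP(i)=\ker(y^{p-j}|_{P(i)})$ by freeness, while $y^{p-j}|_{L(i)}=\alpha^{p-j}e^{p-j}|_{L(i)}=0$ (using $p-j\ge i+1$) when $\alpha\ne 0$ and $y^{p-j}|_{L(i)}=0$ trivially when $\alpha=0$. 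Consequently $\msim^j_{P(i)/\Soc(P(i))}([y])=y^jP(i)/L(i)$, exhibiting $\msim^j_{P(i)/\Soc(P(i))}$ as the composition of $\msim^j_{P(i)}$ with the rational map $V\mapsto V/L(i)$ on Grassmannians. In Pl\"ucker coordinates the latter is the linear contraction against $L(i)$, and the common degree of a defining system is preserved provided no common factor appears under contraction; this verification together with $\deg^j(P(i)|_{\fe_x})=jp(p-j)$ then gives (3).

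The main obstacle is (2). Since $\Rad(P(i))\cong(P(i)/\Soc(P(i)))^*$ without being self-dual, (4) is not applicable and a new ingredient is needed. My preferred route is to establish the duality identity
\[\deg^j(M|_{\fe})+\deg^j(M^*|_{\fe})=j\,\rk^j(M|_{\fe})\]
for $U_0(\fg)$-modules $M$ of constant $j$-rank: this generalizes \cite[(4.2.2)]{Fa17} and should follow from the duality pairing $\im y^j_{M^*}=(\ker y^j_M)^\perp$ together with the Pl\"ucker-duality isomorphism $\Gr_d(M)\cong\Gr_{\dim M-d}(M^*)$. Applied with $M=\Rad(P(i))$ and combined with (1) and (3), it yields $\deg^j(\Rad(P(i))|_{\fe_x})=j(2p(p-j)-i-1)-jp(p-j)=j(p(p-j)-i-1)$, as required. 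Should such a general identity not be directly available from \cite{Fa17}, the alternative is to compute a reduced defining system of $\mspl\circ\msim^j_{\Rad(P(i))}|_{\PP(\fe_e)}$ by hand using the embedding $\Rad(P(i))\hookrightarrow P(i)\cong 2\,U_0(\fe_e)$—a tractable but lengthy calculation.
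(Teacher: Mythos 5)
The core of your attempt—reducing to Jordan-type analysis on $U_0(ky)$—is a genuinely different route from the paper's, and some of it works cleanly, but there are real gaps in the degree computations for (2), (3) and in the rank computation for (4).

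What works and is actually an improvement: your observation that $L(i)=\Soc(P(i))\subseteq y^jP(i)=\ker y^{p-j}_{P(i)}$ for all $y\in V(\fsl(2)_s)\!\smallsetminus\!\{0\}$ and $j\le p\!-\!i\!-\!1$ is, after dualizing (use that $P(i)$ and $L(i)$ are self-dual, so $\Soc(P(i))=\Rad(P(i))^\perp$ and $\im y^j_{P(i)}=(\ker y^j_{P(i)})^\perp$), precisely the paper's key Claim $(\ast)$ that $\fK^j(P(i)|_{\fe_e})\subseteq\Rad(P(i))$. The paper spends most of its proof establishing $(\ast)$ via a weight analysis of the $\SL(2)$-module $Q(i)=P(i)/c_0P(i)$; your nilpotency-degree argument bypasses all of that. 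Had you made the duality link explicit, you could have applied \cite[(4.1.4)]{Fa17} to $0\to\Rad(P(i))\to P(i)\to L(i)\to 0$ exactly as the paper does, obtaining (1) \emph{and} (2) at once, and then (3) by the rank-degree formula \cite[(4.2.2)]{Fa17} as in the paper. Your Jordan-type computations of $\Rad(P(i))|_{U_0(ky)}$ and $P(i)/\Soc(P(i))|_{U_0(ky)}$ are correct (over $U_0(ky)\cong k[T]/(T^p)$ the kernel of a surjection from a free module onto a fixed module is determined up to isomorphism, since any such surjection factors as a minimal cover direct-summed with zero), so the rank assertions in (1) and (3) are established.

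Where the gaps are. For (3) you assert that in Pl\"ucker coordinates the passage $V\mapsto V/L(i)$ is linear (true) and then that ``the common degree of a defining system is preserved provided no common factor appears under contraction''; this provision is exactly what needs to be proved, and you do not prove it. This is not a harmless technicality: it is precisely the subtlety that \cite[(4.1.4)]{Fa17} is designed to handle, and in the analogous argument for the \emph{other} short exact sequence the degree does drop (by $j\dim_kL(i)$, not by $0$). Because your derivation of (2) via the duality identity $\deg^j(M|_\fe)+\deg^j(M^*|_\fe)=j\rk^j(M|_\fe)$—which is indeed \cite[(4.2.2)]{Fa17}—feeds in the value from (3), the gap propagates and (2) is not established either. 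Finally, for the rank in (4) you claim that the Jordan type of the kernel of $(P(i)/\Soc(P(i)))|_{U_0(ky)}\tha L(i)|_{U_0(ky)}$ is independent of ``which blocks are consumed.'' That is false in general: when $p-i-1\ge i+1$ the surjection could in principle be absorbed by the $[p-i-1]$ summand, producing kernel $[p^{2p-1},p-2i-2]$ rather than $[p^{2p-2},(p-i-1)^2]$, and for $p-2i-2<j\le p-i-1$ these two Jordan types have different $j$-ranks. The paper avoids this entirely by invoking rank-additivity along the standard almost split sequence, which is locally split; your argument needs either that input or a direct justification of which block is actually consumed.
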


\begin{proof} Let $j \in \{1, \ldots,p\!-\!i\!-\!1\}$. We claim that

\medskip
\[ (\ast) \  \  \  \  \  \  \  \  \fK^j(P(i)|_{\fe_e}) \subseteq \Rad(P(i)).\]

\smallskip
\noindent
Since $P(i)$ is a projective $U_0(kc_0)$-module, we have $\ker (c_0)_{P(i)}^j = \im (c_0)_{P(i)}^{p-j} \subseteq \im (c_0)_{P(i)}$. As $c_0$ is a central, nilpotent element of $U_0(\fsl(2)_s)$,
it follows that $\im (c_0)_{P(i)} \subseteq \Rad(P(i))$.

By virtue of \cite[(1.5)]{FS}, $Q(i):= P(i)/c_0.P(i)$ is a principal indecomposable $U_0(\fsl(2))$-module. Let $\pi : P(i) \lra Q(i)$ be the canonical projection. Let $m \in \ker(\alpha e \!+\!\beta c_0)^j_{P(i)}$.
If $\alpha\! \ne\! 0$, the binomial formula yields $e^j\dact m \in \im (c_0)_{P(i)}$, so that $\pi(m) \in \ker e^j_{Q(i)}$. If $\alpha \!=\!0$ and $\beta\!\ne\!0$, then $m \in \im (c_0)_{P(i)}$, so that $\pi(m)=0$.
We therefore obtain
\[ \pi(\fK^j(P(i)|_{\fe_e})) \subseteq \ker e^j_{Q(i)}.\]

Let $T \subseteq \SL(2)$ be the standard maximal torus of diagonal matrices and recall that $\ZZ$ is the character group of $T$. Then $Q(i)$ is an $\SL(2)$-module (cf.\ \cite[Thm.3]{Hu73}) and
$\ker e^j_{Q(i)}$ is a $T$-submodule of $Q(i)$. Suppose that $\ker e^j_{Q(i)}\!\smallsetminus\! \Rad(Q(i)) \ne \emptyset$ and let $j_0 \in \{1,\ldots, j\}$ be minimal subject to this property. As
$\ker e^{j_0}_{Q(i)} \not \subseteq \Rad(Q(i))$, there is a weight vector $v_\lambda \in Q(i)_\lambda$ such that $v_\lambda \in  \ker e^{j_0}_{Q(i)}\!\smallsetminus\!\Rad(Q(i))$. Let $\lambda \in \ZZ$
be maximal subject to this property and consider the projection $\sigma : Q(i) \lra  L(i)$. Then we have $\sigma (v_\lambda)\! \ne\! 0$, while the choice of $\lambda$ in conjunction with $e.v_\lambda \in
(\ker e^{j_0}_{Q(i)})_{\lambda+2}$ gives $e.\sigma(v_\lambda)\!=\!0$. This implies $\lambda\! =\! i$.  Since $v_i \not \in \Rad(Q(i))$, we obtain $Q(i)\!=\! U_0(\fsl(2))v_i\! =\! U_0(kf)(\sum_{\ell=0}^{j_0-1}ke^\ell.v_i)$.
This shows that the set $\wt(Q(i)) \subseteq \ZZ$ of weights of $Q(i)$ satisfies $\wt(Q(i)) \subseteq \{i\!+\!2s\!-\!2t \ ; \ (s,t) \in \{0,\ldots,j_0\!-\!1\} \!\times\!\{0,\ldots,p\!-\!1\}\}$.  If $i\!+\!2j_0\!-\!2$ is not a
weight of $Q(i)$, then $e^{j_0-1}.v_i\! =\! 0$, so that $v_i \in \ker e^{j_0-1}_{Q(i)}\!\smallsetminus \! \Rad(Q(i))$. By choice of $j_0$, this implies $j_0\!=\!1$, whence $i\!+\!2j_0\!-\!2\! =\!i$. This contradicts our
assumption that the former number is not a weight. We thus conclude that $i\!+\!2j_0\!-\!2$ is the maximal weight of $Q(i)$.

According to the proof of \cite[Thm.3]{Hu73}, the $\SL(2)$-module $Q(i)$ has a presentation $[Q(i)]\!=\! 2[L(i)]\!+\![L(2p\!-\!2\!-\!i)]$ in the Grothendieck group of $\SL(2)$. Consequently,
\[ 2p\!-\!2\!-\!i = i\!+\!2j_0\!-\!2,\]
which implies $j \!\ge \!j_0\! =\! p\!-\!i$, which contradicts $j\!\le\!p\!-\!i\!-\!1$.

We thus have $\pi(\fK^j(P(i)|_{\fe_e})) \subseteq \ker e^j_{Q(i)} \subseteq \Rad(Q(i))\!=\!\pi(\Rad(P(i)))$, so that $\fK^j(P(i)|_{\fe_e}) \subseteq \Rad(P(i))\!+\!\ker\pi \subseteq \Rad(P(i))$, as desired. \hfill $\diamond$

\medskip
\noindent
(1), (2) Recall that $P(i)$ is a $U_0(\fsl(2)_s)$-module of constant $j$-rank $\rk^j(P(i))\!=\!2p(p\!-\!j)$. We consider the exact sequence
\[ (0) \lra \Rad(P(i))|_{\fe_e} \lra P(i)|_{\fe_e} \lra L(i)|_{\fe_e} \lra (0)\]
of $U_0(\fe_e)$-modules. In view of ($\ast$), we may apply \cite[(4.1.4)]{Fa17} to see that $\Rad(P(i))|_{\fe_e}$ has constant $j$-rank $\rk^j(\Rad(P(i))|_{\fe_e}) = 2p(p\!-\!j)-\!i\!-\!1$ and $j$-degree
$\deg^j(\Rad(P(i))|_{\fe_e}) = jp(p\!-\!j)\!-\!ji\!-\!j= j(p(p\!-\!j)\!-\!i\!-\!1)$.

Since $\Rad(P(i))$ is stable under the Cartan involution, Lemma \ref{MO2} ensures that we obtain the same formulae for $\Rad(P(i))|_{\fe_f}$. In particular, $\Rad(P(i))$ has constant $j$-rank
$\rk^j(\Rad(P(i))) = 2p(p\!-\!j)\!-\!i\!-\!1$.

(3) Recall that $U_0(\fsl(2)_s)$ is a symmetric algebra such that every simple module is self-dual. According to Lemma \ref{MO1}, we therefore have $P(i)/\Soc(P(i)) \cong \Rad(P(i))^\ast$, implying that
the former module has constant $j$-rank $\rk^j(\Rad(P(i)))$. The rank-degree formula \cite[(4.2.2)]{Fa17} implies for $x \in \{e,f\}$
\begin{eqnarray*}
\deg^j((P(i)/\Soc(P(i)))|_{\fe_x}) & = & j\rk^j(\Rad(P(i)))\!-\!\deg^j(\Rad(P(i))|_{\fe_x})\\
  & = &j(2p(p\!-\!j)\!-\!i\!-\!1)\!-\!j(p(p\!-\!j)\!-\!i\!-\!1)\\
 & = & jp(p\!-\!j) = \deg^j(P(i)|_{\fe_x}).
 \end{eqnarray*}

(4) Since the standard AR-sequence is locally split (cf.\ for instance \cite[(2.3)]{Fa14}), $\Ht(P(i))$ is a module of constant $j$-rank
\begin{eqnarray*}
 \rk^j(\Ht(P(i))) & = & \rk^j(\Rad(P(i)))\!+\!\rk^j(P(i)/\Soc(P(i))\!-\!\rk^j(P(i))\\
                     &= & 2\rk^j(\Rad(P(i)))\!-\!\rk^j(P(i)) = 2p(p\!-\!j)\!-\!2i\!-\!2.\end{eqnarray*}
Lemma \ref{MO1} ensures that $\Ht(P(i))$ is a self-dual module, so that \cite[(4.2.2)]{Fa17} implies
\[ \deg^j(\Ht(P(i))|_{\fe_x})=\frac{1}{2}j\rk^j(\Ht(P(i)))=j(p(p\!-\!j)\!-\!i\!-\!1),\]
as desired. \end{proof}

\bigskip
\noindent
We put $\fb_s\! :=\! k(h\!+\!c_0)\!\oplus\!ke$ and $\fb^-_s:\!=\!k(h\!+\!c_0)\!\oplus\!kf$. Observe that $\omega(\fb_s)\!=\!\fb^-_s$. For $i \in \{0,\ldots, p\!-\!1\}$, we define {\it baby Verma modules}
\[ Z(i) := U_0(\fsl(2)_s)\!\otimes_{U_0(\fb_s)}\!k_i \ \ \text{and} \ \ Z'(i) := U_0(\fsl(2)_s)\!\otimes_{U_0(\fb^-_s)}\!k_i,\]
with $h\!+\!c_0$ acting on $k_i$ via $i$. Let $i \in \{0,\ldots,p\!-\!2\}$. By virtue of \cite[(3.1)]{FS}, the module $Z(i)$ is uniserial of Loewy length $\ell\ell(Z(i))=2p$ with top $L(i)$ and socle
$L(p\!-\!2\!-\!i)$.

Since $U_0(\fsl(2)_s)\!:\!U_0(\fb_s)$ is a Frobenius extension, general theory (cf.\ \cite[(1.15)]{Ja00}) implies that $Z(i)^\ast \cong Z(p\!-\!2\!-\!i)$, so that $Z(i)$ is not self-dual for $i \in \{0,\ldots,p\!-\!2\}$,
while $Z(p\!-\!1)$ is self-dual. On the other hand, the restriction $Z(i)|_{\fe_f} \cong U_0(\fe_f)$ is self-dual.

\bigskip

\begin{Lem} \label{Ex2} Setting $Z'(p)\!:=\!Z'(0)$, we have
\[ Z(i)^{(\omega)} \cong Z'(p\!-\!i)\]
for every $i \in \{0,\ldots,p\!-\!1\}$. In particular, $Z'(i)$ is uniserial for $2\!\le\!i\!\le\!p$. \end{Lem}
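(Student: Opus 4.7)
The plan is to exhibit a $\fb^-_s$-eigenvector in the twisted module $Z(i)^{(\omega)}$ that is cyclic, invoke the universal property of induction to obtain a map from $Z'(p-i)$, and conclude by a dimension count. The uniseriality assertion then follows by reducing to the known uniseriality of $Z(j)$ for $j \in \{0,\ldots,p-2\}$.

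First I would take $v := 1\otimes 1 \in Z(i)$, viewed as an element of the underlying space of $Z(i)^{(\omega)}$. Since $\omega^{-1}=\omega$, the twisted action on $v$ is computed via $\omega(h+c_0)=-(h+c_0)$ and $\omega(f)=e$, giving
\[ (h+c_0)\dact v = -iv, \qquad f\dact v = e.v = 0.\]
Thus $v$ is a $\fb^-_s$-eigenvector of weight $-i \equiv p-i \pmod p$, and the universal property of the induced module furnishes a $U_0(\fsl(2)_s)$-homomorphism
\[ \phi : Z'(p-i) \lra Z(i)^{(\omega)}\]
sending the canonical generator of $Z'(p-i)$ to $v$; the convention $Z'(p):=Z'(0)$ covers the case $i=0$.

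Next I would verify that $\phi$ is an isomorphism. Because $v$ generates $Z(i)$ over $U_0(\fsl(2)_s)$ and $\omega$ is an algebra automorphism, the image $U_0(\fsl(2)_s)\dact v$ coincides with $U_0(\fsl(2)_s).v = Z(i)$, so $\phi$ is surjective. By PBW, both $Z'(p-i)$ and $Z(i)^{(\omega)}$ have dimension $p^{\dim\fsl(2)_s - \dim\fb^-_s} = p^2$, whence $\phi$ is an isomorphism.

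For the uniseriality claim, fix $2 \le i \le p$ and set $j := p-i \in \{0,\ldots,p-2\}$. Applying the established isomorphism with $j$ in place of $i$ (and using $\omega^2 = \id_{\fsl(2)_s}$) yields $Z'(i) \cong Z(j)^{(\omega)}$. Since $j \in \{0,\ldots,p-2\}$, the module $Z(j)$ is uniserial by \cite[(3.1)]{FS}. Because twisting by an algebra automorphism does not alter the lattice of submodules, $Z(j)^{(\omega)}$ is uniserial as well, and hence so is $Z'(i)$. The only delicate point — and I do not foresee a serious obstacle — is the mod-$p$ bookkeeping around the convention $Z'(p):=Z'(0)$, forced by the identification $-i \equiv p-i \pmod p$ for the toral element $h+c_0$.
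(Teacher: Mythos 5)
Your proof is correct and follows essentially the same route as the paper's: both exploit the universal property of the induced module to produce the isomorphism, differing only in that the paper writes the map down explicitly as a $U_0(\fb_s)$-balanced bilinear map while you invoke Frobenius reciprocity abstractly and close with a dimension count. Your additional observation that twisting by an algebra automorphism preserves the submodule lattice, which gives the uniseriality of $Z'(i)$ for $2\le i\le p$ from that of $Z(p-i)$, is exactly the intended justification for the ``in particular'' in the statement (the paper leaves it implicit).
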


\begin{proof} We consider the bilinear map
\[ \gamma : U_0(\fsl(2)_s) \! \times \! k_i \lra Z'(p\!-\!i) \  \ ; \  \ (u,\alpha) \mapsto \omega(u)\!\otimes\!\alpha.\]
This map is $U_0(\fb_s)$-balanced:
\[ \gamma (u(h\!+\!c_0),\alpha) = \omega(u)(-(h\!+\!c_0))\!\otimes\!\alpha = \omega(u)\!\otimes\!-(p\!-\!i)\alpha = \gamma (u,i\alpha) = \gamma(u,(h\!+\!c_0).\alpha).\]
There thus results a bijective $k$-linear map
\[ \hat{\gamma} : Z(i) \lra Z'(p\!-\!i) \  \ ; \  \ u\!\otimes\!\alpha \mapsto \omega(u)\!\otimes\!\alpha.\]
Given $a \in U_0(\fsl(2)_s)$, we have
\[ \hat{\gamma}(a\dact (u\!\otimes\!\alpha)) = \hat{\gamma}(\omega^{-1}(a)u\!\otimes\!\alpha) = a\omega(u)\!\otimes\!\alpha = a.\hat{\gamma}(u\!\otimes\!\alpha).\]
As a result, $\hat{\gamma}$ is an isomorphism $Z(i)^{(\omega)}\cong Z'(p\!-\!i)$ of $U_0(\fsl(2)_s)$-modules. \end{proof}

\bigskip
\noindent
Since $\Top(Z(i)) \cong L(i)$, while $\Soc(Z(i))\cong L(p\!-\!2\!-\!i)$ (cf.\ \cite[(3.1)]{FS}), the $\omega$-stable module $P(i)$ is a projective cover of $Z(i)$ and an injective hull of
$Z(p\!-\!2\!-\!i)$. In view of Lemma \ref{Ex2}, we thus have $U_0(\fsl(2)_s)$-linear maps
\[ \iota : Z'(i\!+\!2) \hookrightarrow P(i) \  \ \text{and} \  \ \pi : P(i) \twoheadrightarrow Z(i)\]
for every $i \in \{0,\ldots,p\!-\!2\}$.

\bigskip
\noindent
Let $(\fg,[p])$ be a restricted Lie algebra, $M$ be a $U_0(\fg)$-module. Then
\[ V(\fg)_M :=\{ x \in V(\fg) \ ; \ M|_{kx} \ \text{is not free}\}\cup\{0\}\]
is called the \textit{rank variety} of $M$. The following result provides the decomposition of the hearts of the principal indecomposable $U_0(\fsl(2)_s)$-modules.

\bigskip

\begin{Lem} \label{Ex3} Let $i \in \{0,\ldots,p\!-\!2\}$. Then the following statements hold:
\begin{enumerate}
\item We have $\Rad(Z(i))^{(\omega)} \not \cong \Rad(Z(i))$.
\item We have
\[ \Ht(P(i)) \cong (Z'(i\!+\!2)/\Soc(Z'(i\!+\!2)))\!\oplus\!\Rad(Z(i)).\]  \end{enumerate} \end{Lem}

\begin{proof} (1) The presence of an isomorphism $\Rad(Z(i))^{(\omega)} \cong \Rad(Z(i))$ implies that its restrictions are isomorphisms
\[ \Rad^\ell(Z(i))^{(\omega)} \cong \Rad^\ell(Z(i)) \ \ \ \text{for all} \ \ell \ge 1,\]
so that in particular
\[(\ast) \ \ \ \ \ (\Rad^2(Z(i))/\Rad^4(Z(i)))^{(\omega)} \cong \Rad^2(Z(i))/\Rad^4(Z(i)) .\]
Let $Z_{\fsl(2)}(i)\!:=\!U_0(\fsl(2))\!\otimes_{U_0(kh\oplus ke)}\!k_i$ be the baby Verma module of $U_0(\fsl(2))$ with highest weight $i$. In view of ($\ast$), an
application of \cite[(3.1)]{FS} now yields isomorphisms
\[ Z_{\fsl(2)}(i)^{(\omega)} \cong Z_{\fsl(2)}(i),\]
where $\omega$ denotes the Cartan involution of $\fsl(2)$. Since the rank varieties of these $U_0(\fsl(2))$-modules are $kf$ and $ke$, respectively, we have
reached a contradiction.

(2) In view of \cite[(6.3)]{FS}, the block $\cB(i) \subseteq U_0(\fsl(2)_s)$ containing $P(i)$ is special biserial, so that
\[ \Ht(P(i)) \cong E_1\!\oplus\!E_2\]
is a direct sum of two uniserial modules. We put $J\!:=\!\Rad(U_0(\fsl(2)_s)$. Thanks to \cite[(1.6)]{FS}, we have $(0)\! \ne\! J^{2p}P(i) \subseteq \Soc(P(i)) \cong L(i)$. Thus,
$\Rad^{2p}(P(i))\!=\!\Soc(P(i))$, whence $\ell\ell(E_i)\le \ell\ell(\Ht(P(i)))\!=\! \ell\ell(P(i))\!-\!2\! =\!2p\!-\!1$.

The canonical projection $P(i) \lra Z(i)$ induces a surjective homomorphism $\pi : \Rad(P(i)) \lra \Rad(Z(i))$. Since the module $Z(i)$ is uniserial of Loewy length $2p$, we obtain
\[ \pi(\Soc(P(i)))=\pi(J^{2p-1}(\Rad(P(i)))) = J^{2p-1}\Rad(Z(i))=(0),\]
and there results a surjective homomorphism
\[ \hat{\pi} : \Ht(P(i)) \lra \Rad(Z(i)).\]
Thus, $\Rad(Z(i))\!=\!\hat{\pi}(E_1)\!+\!\hat{\pi}(E_2)$, so that without loss of generality $\hat{\pi}(E_1)\!=\!\Rad(Z(i))$. Since $\ell(E_1)\!\le\! 2p\!-\!1\! =\! \ell(\Rad(Z(i)))$, we conclude that $E_1 \cong \Rad(Z(i))$.

The canonical injection $\iota : Z'(i\!+\!2) \hookrightarrow \Rad(P(i))$ induces a map $\hat{\iota} : Z'(i\!+\!2)/\Soc(Z'(i\!+\!2)) \lra \Ht(P(i))$.
Since $\iota^{-1}(\Soc(\Rad(P(i))) )\! =\! \Soc(Z'(i\!+\!2))$, the map $\hat{\iota}$ is injective. We write $\hat{\iota}\! =\! \binom{\hat{\iota}_1}{\hat{\iota}_2}$,
with linear maps $\hat{\iota}_r : Z'(i\!+\!2)/\Soc(Z'(i\!+\!2)) \lra E_r$. As $Z'(i\!+\!2)/\Soc(Z'(i\!+\!2))$ has a simple socle, there is $r \in \{1,2\}$ such that
$\hat{\iota}_r$ is injective. In view of $\ell(Z'(i\!+\!2)/\Soc(Z'(i\!+\!2)))\! =\! 2p\!-\!1\! \ge\! \ell(E_r)$, it follows that $Z'(i\!+\!2)/\Soc(Z'(i\!+\!2)) \cong E_r$.

A consecutive application of Lemma \ref{Ex2} and \cite[(3.2)]{FS} implies
\[ Z'(i\!+\!2)/\Soc(Z'(i\!+\!2)) \cong (Z(p\!-\!2\!-\!i)/\Soc(Z(p\!-\!2\!-\!i))^{(\omega)} \cong (\Rad Z(i))^{(\omega)}.\]
Thus, (1) in conjunction with $E_1 \cong \Rad(Z(i))$ yields $r\!=\!2$, so that
\[\Ht(P(i)) \cong (Z'(i\!+\!2)/\Soc(Z'(i\!+\!2)))\!\oplus\!\Rad(Z(i)),\]
as asserted. \end{proof}

\bigskip

\begin{Lem} \label{Ex4} Let $i \in \{0,\ldots, p\!-\!2\}$. If $1\!\le \!j\! \le\! p\!-\!i\!-\!1$, then the following statements hold:
\begin{enumerate}
\item The module $\Rad(Z(i))$ has constant $j$-rank $\rk^j(\Rad(Z(i)))\!=\!p(p\!-\!j)\!-\!i\!-\!1$.
\item The module $Z'(i\!+\!2)/\Soc(Z'(i\!+\!2))$ has constant $j$-rank $\rk^j(Z'(i\!+\!2)/\Soc(Z'(i\!+\!2)))\!=\!p(p\!-\!j)\!-\!i\!-\!1$. \end{enumerate} \end{Lem}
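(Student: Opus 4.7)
The plan is to prove (1) and (2) in tandem, exploiting the heart decomposition $\Ht(P(i)) \cong R \oplus R'$ of Lemma \ref{Ex3}(2), where $R := Z'(i+2)/\Soc Z'(i+2)$ and $R' := \Rad Z(i)$, together with the constant $j$-rank of $\Ht(P(i))$ from Lemma \ref{Ex1}(4). My strategy is first to prove (1) directly on $\fe_f$ via the $\fK^j$-inclusion technique from \cite[(4.1.4)]{Fa17}; next to transfer this to (2) on $\fe_e$ by duality and the Cartan twist $\omega$; and finally to read off the two remaining cases from the rank-additivity of $R \oplus R'$.

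In the first step, the PBW decomposition $\fsl(2)_s = \fe_f \oplus \fb_s$ makes $Z(i)|_{\fe_f}$ a free $U_0(\fe_f)$-module of rank one generated by $v_i$, so $Z(i)|_{\fe_f}$ has constant $j$-rank $p(p-j)$. A direct calculation in the basis $\{f^a c_0^b v_i\}$ identifies
\[ \fK^j(Z(i)|_{\fe_f}) = (f,c_0)^{p-j}\cdot v_i,\]
and an inspection of the canonical surjection $\pi : Z(i) \tha L(i)$ shows that $\pi$ annihilates this subspace precisely when $j \le p-i-1$, using that $c_0$ acts trivially on $L(i)$ and that $f^a u_i = 0$ for $a \ge i+1$. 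An application of \cite[(4.1.4)]{Fa17} to $0 \to \Rad Z(i) \to Z(i) \to L(i) \to 0$ then shows that $R'|_{\fe_f}$ has constant $j$-rank $p(p-j)-(i+1)$.

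For the second step, dualising the sequence $0 \to \Soc Z'(i+2) \to Z'(i+2) \to R \to 0$ and combining $Z(m)^{\ast} \cong Z(p-2-m)$ with Lemma \ref{Ex2} yields $R^{\ast} \cong \Rad Z'(p-i) \cong (\Rad Z(i))^{(\omega)}$. Since $j$-ranks are preserved under dualisation and transform under the Cartan twist as $\rk(z^j|_{M^{(\omega)}}) = \rk(\omega(z)^j|_M)$ by Lemma \ref{MO2}(1), and since $\omega$ exchanges $\fe_e$ with $\fe_f$, step one gives that $R|_{\fe_e}$ has constant $j$-rank $p(p-j)-i-1$. Finally, Lemma \ref{Ex1}(4) provides $\rk(z^j|_{\Ht(P(i))}) = 2(p(p-j)-i-1)$ for every $z \in V(\fsl(2)_s)\!\smallsetminus\!\{0\}$, so subtraction against the decomposition $\Ht(P(i)) = R \oplus R'$ produces the two remaining cases: $R'|_{\fe_e}$ and $R|_{\fe_f}$ also have $j$-rank $p(p-j)-i-1$, completing both (1) and (2).

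The delicate point is the inclusion $\fK^j(Z(i)|_{\fe_f}) \subseteq \Rad Z(i)$ in step one: it is precisely this inclusion that pins down the hypothesis $j \le p-i-1$ and fails for larger $j$ (as soon as some $f^a u_i$ with $a \ge p-j$ remains nonzero in $L(i)$), so the whole scheme rests on the explicit identification of $\fK^j$ inside the free $U_0(\fe_f)$-module $Z(i)|_{\fe_f}$.
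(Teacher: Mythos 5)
Your argument is correct, but it follows a genuinely different route than the paper does. The paper disposes of constancy in one stroke by invoking \cite[(4.13)]{CFP13}: since $\EE(1,\fsl(2)_s)$ is connected, having constant $j$-rank passes to direct summands, so Lemma~\ref{Ex1}(4) applied to the heart decomposition of Lemma~\ref{Ex3}(2) settles both summands simultaneously; the numerical value is then read off at the single central element $c_0$, using that $Z(i)/c_0Z(i)$ is uniserial of dimension $p$ and that $c_0^j(\Rad Z(i)) = \Rad(c_0^j Z(i))$. You instead establish constancy and value together by covering $V(\fsl(2)_s) = \fe_e \cup \fe_f$ pointwise. Your step one is essentially the $\fK^j$-inclusion computation the paper reserves for Proposition~\ref{Ex5}(1), yielding the rank of $\Rad Z(i)|_{\fe_f}$ via \cite[(4.1.4)]{Fa17}; Lemma~\ref{Ex2} and dualisation give $R^{\ast} \cong \Rad Z'(p-i) \cong (\Rad Z(i))^{(\omega)}$, so the Cartan twist $\omega$ carries the $\fe_f$-computation over to $Z'(i+2)/\Soc Z'(i+2)$ on $\fe_e$ (since $j$-ranks are insensitive to dualisation, this is equally well served by the isomorphism $R \cong (\Rad Z(i))^{(\omega)}$ appearing in the proof of Lemma~\ref{Ex3}, so your detour through the dual is harmless); finally, rank-additivity in $\Ht(P(i)) = R \oplus R'$ fills in the two remaining half-lines. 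What you buy is independence from \cite[(4.13)]{CFP13} and the connectedness of $\EE(1,\fsl(2)_s)$; what it costs is doing the explicit kernel arithmetic up front, including the identification of the threshold $j \le p-i-1$ from $\fK^j(Z(i)|_{\fe_f}) = J^{p-j}v_i$, which the paper only needs later in Proposition~\ref{Ex5}.
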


\begin{proof} Thanks to (4) of Lemma \ref{Ex1}, the module $\Ht(P(i))$ has constant $j$-rank. Since $\EE(1,\fg)$ is connected, \cite[(4.13)]{CFP13} shows that this holds for every direct
summand of $\Ht(P(i))$. Lemma \ref{Ex3} thus ensures that the modules $\Rad(Z(i))$ and $Z'(i\!+\!2)/\Soc(Z'(i\!+\!2))$ have constant $j$-rank.

We put $\hat{Z}(i)\!:=\!Z(i)/c_0Z(i)$. According to \cite[(3.1)]{FS}, this module is uniserial of dimension $p$. By the same token, we have
$\hat{Z}(i) \cong c_0^\ell Z(i)/c_0^{\ell+1}Z(i)$ for $0\le \ell \le p\!-\!1$. In particular, $\Top(c_0^jZ(i)) \cong L(i)$. Next, we observe that
$c_0^j(\Rad(Z(i)))=\Rad(c_0^jZ(i))$, so that $\rk^j(\Rad(Z(i)))\!=\! \dim_kc_0^jZ(i)\!-\!\dim_kL(i)\!=\! p^2\!-\!jp\!-\!i\!-\!1\!=\! p(p\!-\!j)\!-\!i\!-\!1\!=\!\frac{1}{2}\rk^j(\Ht(P(i))$.
Now Lemma \ref{Ex3} readily yields $\rk^j(Z'(i\!+\!2)/\Soc(Z'(i\!+\!2)))\!=\!p(p\!-\!j)\!-\!i\!-\!1$. \end{proof}

\bigskip
\noindent
Our next result shows in particular that the function $\deg^j_{\Rad(Z(i))} : \EE(2,\fsl(2)_s) \lra \NN_0$ is not constant whenever $1\!\le\! j\! \le\! p\!-\!i\!-\!1$.

\bigskip

\begin{Prop}\label{Ex5} Let $i \in \{0,\ldots, p\!-\!2\}$ and $j \in \{1,\ldots, p\!-\!i\!-\!1\}$. Then the following statements hold:
\begin{enumerate}
\item The module $\Rad(Z(i))|_{\fe_f}$ has constant $j$-rank and $\deg^j(\Rad(Z(i))|_{\fe_f})\! =\! j(\frac{p(p-j)}{2}\!-\!i\!-\!1)$.
\item The module $(Z'(i\!+\!2)/\Soc(Z'(i\!+\!2)))|_{\fe_f}$ has constant $j$-rank and $j$-degree \newline $\deg^j(Z'(i\!+\!2)/\Soc(Z'(i\!+\!2))|_{\fe_f})\! =\! j\frac{p(p-j)}{2}$.
\item The module $\Rad(Z(i))|_{\fe_e}$ has constant $j$-rank and $\deg^j(\Rad(Z(i))|_{\fe_e})\!=\! j\frac{p(p-j)}{2}$.
\end{enumerate} \end{Prop}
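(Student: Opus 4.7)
The plan is to establish (1) by exploiting the freeness of $Z(i)|_{\fe_f}$ via a $(\ast)$-type argument, to derive (2) from the self-duality of $\Ht(P(i))$ using Lemma \ref{Ex3}(2), and to obtain (3) from (2) via the Cartan involution $\omega$ using Lemma \ref{MO2}(1).

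For (1), the PBW theorem furnishes the basis $\{f^rc_0^s\otimes 1 : 0\le r,s\le p\!-\!1\}$ of $Z(i) = U_0(\fsl(2)_s)\otimes_{U_0(\fb_s)}k_i$, and since $\fe_f = kf\oplus kc_0$ acts by left multiplication on this generating monomial, $Z(i)|_{\fe_f}\cong U_0(\fe_f)$ is free of rank $1$; in particular it is self-dual with $\rk^j = p(p\!-\!j)$ and $\deg^j(Z(i)|_{\fe_f}) = jp(p\!-\!j)/2$ by \cite[(4.2.2)]{Fa17}. For any nonzero $x\in V(\fe_f)$ the restriction $Z(i)|_{kx}$ is free over $U_0(kx)$, so $\ker x^j_{Z(i)} = x^{p-j}\!\cdot\! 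Z(i)$; since $x\in V(\fg) \subseteq \Rad(U_0(\fg))$, this kernel lies in $\Rad(U_0(\fg))\cdot Z(i) = \Rad(Z(i))$. Thus $\fK^j(Z(i)|_{\fe_f}) \subseteq \Rad(Z(i))$, and \cite[(4.1.4)]{Fa17} applied to $0\to\Rad(Z(i))\to Z(i)\to L(i)\to 0$ yields the constant $j$-rank together with
\[
\deg^j(\Rad(Z(i))|_{\fe_f}) \;=\; \deg^j(Z(i)|_{\fe_f})\!-\!j\dim L(i) \;=\; j\bigl(\tfrac{p(p-j)}{2}\!-\!i\!-\!1\bigr).
\]

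For (2), Lemma \ref{Ex3}(2) gives the direct-sum decomposition $\Ht(P(i)) = \Rad(Z(i))\oplus Z'(i\!+\!2)/\Soc(Z'(i\!+\!2))$, while Lemma \ref{MO1}(3) (applicable because $U_0(\fsl(2)_s)$ is symmetric---since $\tr(\ad x) = 0$ for all $x\in\fsl(2)_s$---and $L(i)$ is self-dual) forces $\Ht(P(i))$ to be self-dual. By \cite[(4.2.2)]{Fa17} together with Lemma \ref{Ex1}(4), one has $\deg^j(\Ht(P(i))|_{\fe_f}) = j(p(p\!-\!j)\!-\!i\!-\!1)$. Additivity of the $j$-degree over direct sums of modules of constant $j$-rank (whose validity on each summand is secured by Lemma \ref{Ex4}) then gives
\[
\deg^j\bigl((Z'(i\!+\!2)/\Soc(Z'(i\!+\!2)))|_{\fe_f}\bigr) \;=\; j(p(p\!-\!j)\!-\!i\!-\!1)\!-\!j\bigl(\tfrac{p(p-j)}{2}\!-\!i\!-\!1\bigr) \;=\; j\tfrac{p(p-j)}{2}.
\]

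For (3), the involution $\omega$ sending $e\leftrightarrow f$, $h\mapsto -h$, $c_0\mapsto -c_0$ belongs to $\Aut_p(\fsl(2)_s)$: one checks $\psi(\omega(x))^p = -\psi(x)^p$ (using $p$ odd and $\psi(-h) = -1$), which ensures compatibility with the $[p]$-map $(x,\alpha c_0)^{[p]} = (x^{[p]},\psi(x)^pc_0)$. Since $\omega$ exchanges $\fe_e$ and $\fe_f$ and is an involution, Lemma \ref{MO2}(1) gives
\[
\deg^j(\Rad(Z(i))|_{\fe_e}) \;=\; \deg^j(\Rad(Z(i))^{(\omega)}|_{\fe_f}).
\]
The chain of isomorphisms in the proof of Lemma \ref{Ex3}(2) identifies $\Rad(Z(i))^{(\omega)}$ with $Z'(i\!+\!2)/\Soc(Z'(i\!+\!2))$, so the right-hand side equals $jp(p\!-\!j)/2$ by (2).

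The main obstacle, really the only delicate step, is verifying the hypothesis of \cite[(4.1.4)]{Fa17} in (1)---that the generic kernel $\fK^j(Z(i)|_{\fe_f})$ is contained in $\Rad(Z(i))$---but this reduces to the observation that $V(\fg)\subseteq\Rad(U_0(\fg))$ together with freeness. Steps (2) and (3) are then essentially bookkeeping once the self-duality of $\Ht(P(i))$ and the chain in Lemma \ref{Ex3}(2) are in hand.
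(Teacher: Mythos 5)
Your overall structure matches the paper's: establish freeness of $Z(i)|_{\fe_f}$, verify the containment $\fK^j(Z(i)|_{\fe_f}) \subseteq \Rad(Z(i))$ so that \cite[(4.1.4)]{Fa17} applies, then obtain (2) by additivity over the decomposition in Lemma \ref{Ex3}(2) and (3) via the Cartan involution together with Lemma \ref{MO2}. Parts (2) and (3) are correct and essentially coincide with the paper's argument.

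Your justification of the key containment in (1), however, has a genuine gap. You assert $V(\fg) \subseteq \Rad(U_0(\fg))$, but this is false for $\fg = \fsl(2)_s$: the element $f \in V(\fsl(2)_s)$ acts nontrivially on the simple modules $L(1), \ldots, L(p\!-\!1)$, and therefore does not lie in the Jacobson radical, which is the common annihilator of all simple modules. Nilpotent elements of a noncommutative algebra need not belong to its radical. A telltale sign that something has gone wrong is that your argument as written never uses the hypothesis $j \le p\!-\!i\!-\!1$, yet that bound is essential: for $x = f$ and $j = p\!-\!i$ one has $\ker f^{p-i}_{Z(i)} = f^i\!\cdot\!Z(i)$, and since $f^i$ acts nontrivially on $L(i) = \Top(Z(i))$ for $i\ge 1$, this kernel is not contained in $\Rad(Z(i))$. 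The correct repair, staying close to your freeness idea, is to observe that for $x = \alpha f\!+\!\beta c_0 \in V(\fe_f)\!\smallsetminus\!\{0\}$ the element $x^{p-j}$ acts on $L(i) = Z(i)/\Rad(Z(i))$ as $\alpha^{p-j}f^{p-j}$ (since $c_0$ is central nilpotent and kills $L(i)$), and the constraint $j\le p\!-\!i\!-\!1$ gives $p\!-\!j\ge i\!+\!1$, so $f^{p-j}$ annihilates the $(i\!+\!1)$-dimensional module $L(i)$; hence $\ker x^j_{Z(i)} = x^{p-j}\!\cdot\!Z(i) \subseteq \Rad(Z(i))$. This is in fact more direct than the paper's route through $\hat{Z}(i) = Z(i)/c_0Z(i)$ and the $\fsl(2)$-composition series, but as written your step is invalid.
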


\begin{proof} Let $\ell \in \{1,\ldots,p\!-\!1\}$. We first show that

\medskip
($\ast$) \ \ \ \ the module $Z(i)|_{\fe_f}$ is projective and $\deg^\ell(Z(i)|_{\fe_f}) = \frac{\ell p(p\!-\!\ell)}{2}$.

\smallskip
\noindent
General theory implies that $V(\fsl(2)_s)_{Z(i)} \subseteq V(\fb_s) = ke$. Consequently, $V(\fsl(2)_s)_{Z(i)}\cap \fe_f = \{0\}$, so that $Z(i)|_{\fe_f}$
is projective. Since $\dim_kZ(i)=p^2$, we see that $Z(i)|_{\fe_f} \cong U_0(\fe_f)$. Hence $\deg^\ell(Z(i)|_{\fe_f}) = \frac{\ell p(p\!-\!\ell)}{2}$.

(1) Setting $\hat{Z}(i):= Z(i)/c_0Z(i)$, we let $\pi : Z(i) \lra \hat{Z}(i)$ be the canonical projection. Since $\hat{Z}(i)$ is a projective $U_0(kf)$-module of dimension $p$, we have $\dim_k \ker f^j_{\hat{Z}(i)}\!=\!j$. Recall that 
there is a non-split exact sequence
\[ (0) \lra L(p\!-\!2\!-\!i) \lra \hat{Z}(i) \lra L(i) \lra (0)\]
of $U_0(\fsl(2))$-modules. Since $L(p\!-\!2\!-\!i)|_{U_0(kf)} \cong [p\!-\!1\!-\!i]$ is the cyclic module of dimension $p\!-\!1\!-\!i$, our assumption $j\!\le\!p\!-\!i\!-\!1$ yields $\ker f^j_{\hat{Z}(i)} \subseteq L(p\!-\!2\!-\!i)\! =\!
\Rad(\hat{Z}(i))$. As a result, $\ker f^j_{Z(i)} \subseteq \Rad(Z(i))$.

Let $m \in \ker(\alpha f\!+\!\beta c_0)^j_{Z(i)}$, where $(\alpha,\beta) \in k^2\!\smallsetminus\!\{0\}$. If $\alpha\!\ne\! 0$, then $\pi(m) \in \ker f^j_{\hat{Z}(i)} \subseteq \Rad(\hat{Z}(i))$, and $m \in \Rad(Z(i))$. Alternatively, 
the $U_0(kc_0)$-projectivity of $Z(i)$ yields $m \in \ker (c_0)^j_{Z(i)} \subseteq \im (c_0)_{Z(i)}\!= \!\ker \pi$. This implies that $\fK^j(Z(i)|_{\fe_f})\subseteq \Rad(Z(i))$ and \cite[(4.1.4)]{Fa17} in conjunction with ($\ast$) gives
\[ \deg^j(\Rad(Z(i))|_{\fe_f}) = \deg^j(Z(i)|_{\fe_f})\!-\!j\dim_kL(i) = \frac{jp(p\!-\!j)}{2}\!-\!j(i\!+\!1) = j(\frac{p(p\!-\!j)}{2}\!-\!i\!-\!1),\]
as asserted.

(2) In view of Lemma \ref{Ex3} and Lemma \ref{Ex1}, we obtain
\begin{eqnarray*}
\deg^j((Z'(i\!+\!2)/\Soc(Z'(i\!+\!2)))|_{\fe_f}) & = &\deg^j(\Ht(P(i))|_{\fe_f})\!-\!\deg^j(\Rad(Z(i))|_{\fe_f})\\
                                                                   &=  & j(p(p\!-\!j)\!-\!i\!-\!1)\!-\! j(\frac{p(p\!-\!j)}{2}\!-\!i\!-\!1) =  j\frac{p(p\!-\!j)}{2},
\end{eqnarray*}
as asserted.

(3) Since $\Ht(P(i))^{(\omega)} \cong \Ht(P(i))$, Lemma \ref{Ex3} in conjunction with the Theorem of Krull-Remak-Schmidt yields $\Rad(Z(i))^{(\omega)} \cong
Z'(i\!+\!2)/\Soc(Z'(i\!+\!2))$. Observing Lemma \ref{MO2}, we thus obtain
\[ \deg^j(\Rad(Z(i))|_{\fe_e}) = \deg^j(\Rad(Z(i))^{(\omega)}|_{\fe_f}) = \deg^j((Z'(i\!+\!2)/\Soc(Z'(i\!+\!2)))|_{\fe_f}) = j\frac{p(p\!-\!j)}{2},\]
as desired. \end{proof}

\bigskip

\section{Categories of Modules of constant $j$-rank}
Let $(\fg,[p])$ be a restricted Lie algebra. In this section, we apply our results to study full subcategories of the category $\modd U_0(\fg)$ of (finite-dimensional) $U_0(\fg)$-modules, whose objects $M$ satisfy various 
conditions on the operators $x^j_M : M \lra M \ \ (x \in V(\fg)\!\smallsetminus\!\{0\})$.

Given $j \in \{1,\ldots,p\!-\!1\}$, we let $\CR^j(\fg)$ be the category of $U_0(\fg)$-modules of constant $j$-rank. The defining property for the objects $M$ of the full subcategory
$\EIP^j(\fg) \subseteq \CR^j(\fg)$ of the modules with the \textit{equal $j$-images property} is given by
\[ \im x_M^j = \im y^j_M \ \ \ \ \ \ \ \ \ \forall \ x,y \in V(\fg)\!\smallsetminus\!\{0\}.\]
In the context of elementary abelian Lie algebras, these modules naturally generalize the equal $1$-images modules discussed in \cite{CFS}. Being closed under images of morphisms and direct sums,
$\EIP^j(\fg)$ is closed under sums and direct summands. Following \cite{CFS}, we refer to the objects of $\EIP^1(\fg)$ as modules having the {\it equal images property}.

Let $x \in V(\fg)\!\smallsetminus\!\{0\}$. Then $U_0(kx)\cong k[T]/(T^p)$ is a truncated polynomial ring, so that the modules $[i]:=U_0(kx)/U_0(kx)x^i$ for $i \in \{1,\ldots, p\}$ form a complete set
of representatives for the isoclasses of indecomposable $U_0(kx)$-modules. For a $U_0(\fg)$-module $M$ there thus are $a_i(x) \in \NN_0$ such that
\[ M|_{kx} \cong \bigoplus_{i=1}^pa_i(x)[i].\]
The right-hand side is the {\it local Jordan type $\Jt(M,x)$ of $M$ at $x$}. Following \cite{CFP08}, we say that $M$ has {\it constant Jordan type} $\Jt(M)$, provided $\Jt(M,x)=\Jt(M)$ for all $x \in
V(\fg)\!\smallsetminus\!\{0\}$. Basic Linear Algebra tells us that the category $\CJT(\fg)$ of modules of constant Jordan type coincides with $\bigcap_{j=1}^{p-1}\CR^j(\fg)$.

\bigskip

\subsection{Equal $j$-images modules for elementary abelian Lie algebras}
We record the following generalization of \cite[(2.5)]{CFS}:

\bigskip

\begin{Lem} \label{EJIM1} Let $M$ be a $U_0(\fe_r)$-module, $j \in \{1,\ldots, p\!-\!1\}$. Then the following statements are equivalent:
\begin{enumerate}
\item $M$ has the equal $j$-images property.
\item $\im x_M^j\!=\!\Rad^j(M)$ for all $x \in \fe_r\!\smallsetminus\!\{0\}$. \end{enumerate} \end{Lem}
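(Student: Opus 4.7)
The plan is to show (2)$\Rightarrow$(1) trivially and then to use a polarization argument for (1)$\Rightarrow$(2). The implication (2)$\Rightarrow$(1) is immediate, since if $\im x^j_M = \Rad^j(M)$ for every $x \in \fe_r\!\smallsetminus\!\{0\}$, then in particular all these images coincide.

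For (1)$\Rightarrow$(2), fix $x_0 \in \fe_r\!\smallsetminus\!\{0\}$ and set $I := \im (x_0)^j_M$. Since $x_0 \in \Rad(U_0(\fe_r))$, we have $I \subseteq \Rad^j(M)$. The essential point is the reverse inclusion. Observe that $U_0(\fe_r)$ is a truncated polynomial ring on any basis of $\fe_r$, so $\Rad^j(M)$ is spanned by elements of the form $y_1 y_2 \cdots y_j \dact m$ with $y_1,\ldots,y_j \in \fe_r$ and $m \in M$, and it suffices to show every such element lies in $I$.

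To this end I would consider, for a fixed choice of $y_1,\ldots,y_j \in \fe_r$ and $m \in M$, the polynomial map
\[ F : k^j \lra M \ \ ; \ \ (t_1,\ldots,t_j) \mapsto \Bigl(\sum_{i=1}^j t_i y_i\Bigr)^{\!j}\!\dact m. \]
Since $\fe_r$ is abelian, the multinomial expansion is valid in $U_0(\fe_r)$, and the coefficient of the monomial $t_1 t_2 \cdots t_j$ in $F$ is $j!\,(y_1\cdots y_j)\dact m$. For every $(t_1,\ldots,t_j) \in k^j$, the element $z:=\sum t_i y_i$ lies in $\fe_r$; if $z\ne 0$ then $\im z^j_M = I$ by the equal $j$-images property, while if $z=0$ the value $F(t_1,\ldots,t_j)=0$ lies in $I$ trivially. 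Hence $F$ takes values in the subspace $I \subseteq M$. Because $k$ is infinite, all coefficients of the polynomial $F$ must then lie in $I$ (a standard Vandermonde/interpolation argument applied coordinate-wise on $M/I$). In particular $j!\,(y_1\cdots y_j)\dact m \in I$, and since $1\!\le\!j\!\le\!p\!-\!1$ implies $j!$ is invertible in $k$, we conclude $(y_1\cdots y_j)\dact m \in I$.

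This shows $\Rad^j(M) \subseteq I$, completing the proof. The only mildly delicate step is the polarization coefficient extraction, but it is a routine consequence of $k$ being infinite together with $j!\ne 0$ in $k$; no further obstacle is anticipated.
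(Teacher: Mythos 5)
Your proof is correct and is essentially the same argument as the paper's: both reduce $(1)\Rightarrow(2)$ to the fact that $\Rad^j(M)=J^jM$ is controlled by $j$-th powers of elements of $\fe_r$, which in turn rests on a polarization identity valid because $j!$ is invertible for $1\le j\le p-1$. The only difference is that you carry out the polarization directly at the module level, whereas the paper instead quotes the corresponding algebra-level statement $J^j=(\{x^j : x\in\fe_r\})$ from Benson's book, whose proof is the same polarization argument.
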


\begin{proof} (1) $\Rightarrow$ (2) Let $J\! :=\! \Rad(U_0(\fe_r))$ be the Jacobson radical of $U_0(\fe_r)$. By assumption, there exists a subspace $V \subseteq M$ such that $\im x_M^j\! =\! V$ for every
$x \in \fe_r\!\smallsetminus\!\{0\}$. Thus, $V \subseteq \Rad^j(M)$.

Thanks to \cite[(1.17.1)]{Be}, we have $J^j\! =\! (\{ x^j \ ; \ x \in \fe_r\})$. Given $u \in J^j$, we thus write $u\! =\! \sum_{i=1}^n x_i^ja_i$, where $x_i \in \fe_r$ and
$a_i \in U_0(\fe_r)$. This yields
\[ u.m = \sum_{i=1}^n (x_i)_M^j(a_i.m) \in \sum_{i=1}^n \im (x_i)^j_M \subseteq V,\]
whence
\[ V \subseteq \Rad^j(M) = J^jM \subseteq V,\]
as desired.

(2) $\Rightarrow$ (1) This is clear. \end{proof}

\bigskip

\begin{Prop} \label{EJIM2} Suppose that $M \in \EIP^j(\fe_r)$ has the equal $j$-images property for some $j \in \{1, \ldots, p\!-\!1\}$. Then the following statements hold:
\begin{enumerate}
\item $\Rad^{j-1}(M)$ has the equal images property.
\item $M$ has the equal $\ell$-images property for all $\ell\!\in \{j,j\!+\!1,\ldots,p\!-\!1\}$.
\item $M$ has Loewy length $\ell\ell(M)\!\le\!p$.
\item Suppose that $r\!\ge\!2$. There exist $c_{j+1},\ldots, c_{\ell\ell(M)} \in \NN$ and functions $a_i : \fe_r\!\smallsetminus\!\{0\} \lra \NN_0$ such that
\[ \Jt(M,x) = \bigoplus_{i=1}^ja_i(x)[i] \oplus \bigoplus_{i=j+1}^{\ell\ell(M)} c_i[i]\]
for every $x \in \fe_r\!\smallsetminus\!\{0\}$. \end{enumerate} \end{Prop}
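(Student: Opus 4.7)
The plan is to dispatch (1) directly from Lemma \ref{EJIM1}, prove (2) by induction on $\ell \geq j$ via a polynomial identity exploiting the abelianness of $\fe_r$, and then deduce (3) and (4) formally from (2).

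For (1), Lemma \ref{EJIM1} translates the hypothesis into $\im x^j_M = \Rad^j(M)$ for every $x \in \fe_r \setminus \{0\}$. I then verify the equal images property for $N := \Rad^{j-1}(M)$ by checking $x \cdot N = \Rad(N) = \Rad^j(M)$. The inclusion $x \cdot N \subseteq \Rad^j(M)$ is automatic since $x$ lies in the Jacobson radical, while $\Rad^j(M) = x^j M = x \cdot (x^{j-1}M) \subseteq x \cdot N$ gives the reverse.

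For (2), I induct on $\ell \in \{j, \ldots, p-1\}$, the base case being the hypothesis expressed via Lemma \ref{EJIM1}. For the inductive step, assume $\im z^\ell_M = \Rad^\ell(M) = x^\ell M$ for every nonzero $z$. Since $\fe_r$ is abelian, for nonzero $x, y$ and $t \in k$ I expand $(x + ty)^\ell = \sum_{i=0}^\ell \binom{\ell}{i} t^i y^i x^{\ell-i}$, and the inductive hypothesis gives $(x+ty)^\ell m \in x^\ell M$ whenever $x + ty \neq 0$, hence for all but at most one value of $t$. As $k$ is infinite and $x^\ell M$ is a subspace, the polynomial $t \mapsto (x+ty)^\ell m$ must have every coefficient in $x^\ell M$; the coefficient at $i = 1$ reads $\ell \cdot y x^{\ell-1} m \in x^\ell M$. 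Since $\ell \not\equiv 0 \pmod p$, this gives $y x^{\ell-1} M \subseteq x^\ell M$; multiplying by $x$ yields $y x^\ell M \subseteq x^{\ell+1} M$, and then $y^{\ell+1} M = y \cdot y^\ell M = y \cdot x^\ell M \subseteq x^{\ell+1} M$. Symmetry gives equal $(\ell+1)$-images, and (provided $\ell+1 \leq p-1$) Lemma \ref{EJIM1} upgrades this to $\im z^{\ell+1}_M = \Rad^{\ell+1}(M)$, closing the induction.

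Part (3) falls out by applying (2) at $\ell = p-1$: $\Rad^{p-1}(M) = y^{p-1} M$ for every nonzero $y$, so $y \cdot \Rad^{p-1}(M) = y^p M = 0$ since $\fe_r$ is $p$-trivial; hence $\Rad^p(M) = J \cdot \Rad^{p-1}(M) = 0$ and $\ell\ell(M) \leq p$. For (4), (2) combined with Lemma \ref{EJIM1} yields $\Rad^\ell(M) = x^\ell M$ for every $\ell \geq j$ and nonzero $x$, so the integer $d_\ell := \dim \Rad^\ell(M)$ is independent of $x$ for $\ell \geq j$ (and vanishes for $\ell \geq p$ by (3)). Comparing with the local Jordan formula $d_\ell = \sum_{i > \ell} a_i(x)(i - \ell)$ for $M|_{kx}$ and taking the second difference gives $a_{\ell+1}(x) = d_\ell - 2 d_{\ell+1} + d_{\ell+2}$, independent of $x$ whenever $\ell \geq j$; this forces $a_i(x) = c_i$ for $i \geq j+1$. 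The main obstacle is the inductive step in (2), where both the polynomial expansion and the constraint $\ell \leq p-1$ (needed for the binomial coefficient and for Benson's identification of $J^\ell$) are essential.
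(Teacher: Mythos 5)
Your proofs of (1) and (3) coincide with the paper's. For (2), your argument is genuinely different: the paper invokes the result from Carlson--Friedlander--Suslin that radicals of equal images modules again have the equal images property and then applies Lemma \ref{EJIM1} to $\Rad^s(M)$ for $s\ge j$, whereas you establish the inductive step directly via the binomial expansion of $(x+ty)^\ell$ and a Zariski-density argument over the infinite field $k$, using $\ell\not\equiv 0\pmod p$ to isolate the coefficient $yx^{\ell-1}m$. Both are correct; yours is more self-contained (it never leaves the framework of the present paper), while the paper's is shorter because it offloads the work to a cited structural fact about equal images modules.

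In (4), however, there is a genuine gap. The proposition asserts $c_{j+1},\ldots,c_{\ell\ell(M)}\in\NN$, i.e.\ that \emph{every} Jordan block size in $\{j+1,\ldots,\ell\ell(M)\}$ actually occurs (the paper distinguishes $\NN$ from $\NN_0$). Your second-difference computation $a_{\ell+1}(x)=d_\ell-2d_{\ell+1}+d_{\ell+2}$ correctly shows that $a_i(x)$ is \emph{independent of $x$} for $i\ge j+1$, but it does not show that these constants are nonzero. The paper obtains positivity for free from \cite[(5.2)]{CFS}, which states that an equal images module $U$ of Loewy length $\ell'$ over $\fe_r$ ($r\ge 2$) has Jordan type $\bigoplus_{i=1}^{\ell'}b_i[i]$ with \emph{all} $b_i\ge 1$; applied to $U=\Rad^{j-1}(M)$, this gives $c_i=b_{i-j+1}\ge 1$. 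This positivity is not a formal consequence of the dimension counts you use: one genuinely needs algebraic closedness (for instance, for $r=2$ and Loewy length two, if $b_1=0$ then both $x$ and $y$ would induce bijections $M/\Rad(M)\to\Rad(M)$ with $\alpha x+\beta y$ bijective for every nonzero $(\alpha,\beta)$, which contradicts the existence of eigenvalues for $yx^{-1}$ over an algebraically closed field). So your argument needs to be supplemented either by the CFS citation or by an additional argument of this kind at each radical layer.
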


\begin{proof} (1) Let $U\!:=\! \Rad^{j-1}(M)$. Given $x \in \fe_r\!\smallsetminus\!\{0\}$, Lemma \ref{EJIM1} implies
\[ \Rad(U) = \Rad^j(M) = \im x_M^j  = x_M(\im x_M^{j-1}) \subseteq x_M(U) = \im x_U \subseteq \Rad(U),\]
so that $U$ has the equal images property.

(2) Let $\ell\!\ge\!1$. It follows from (1) and \cite[(2.8)]{CFS} that $\Rad^s(M)$ enjoys the equal images property for all $s\!\ge\!j$, so that Lemma \ref{EJIM1} implies
\[ \Rad^{j+\ell}(M) = \Rad^\ell(\Rad^j(M))=x^\ell_M(\Rad^j(M)) = x^\ell_M(x^j_M(M))=\im x_M^{j+\ell}\]
for every $x \in \fe_r\!\smallsetminus\!\{0\}$. Hence $M$ has the equal $(j\!+\!\ell)$-images property for all $\ell\! \ge\!1$.

(3) In view of (2), the module $M$ has the equal $(p\!-\!1)$-image property. Given $x \in \fe_r\!\smallsetminus\!\{0\}$, we obtain
\[ x_M(\Rad^{p-1}(M)) = x_M(x_M^{p-1}(M))=(0),\]
so that $\Rad^p(M)=\Rad(U_0(\fe_r))\Rad^{p-1}(M)=(0)$. Consequently, $\ell\ell(M) \le p$.

(4) Let $U:=\Rad^{j-1}(M)$. If $U\!=\!(0)$, then $\ell\ell(M)\!\le\!j\!-\!1$ and there is nothing to be shown. Alternatively, (1) implies that  $U$ is an equal images module of Loewy length $\ell':=\ell\ell(U)=\ell\ell(M)\!-\!j\!+\!1$. 
Thus, \cite[(5.2)]{CFS} provides $b_1, \ldots, b_{\ell'} \in \NN$ such that $\Jt(U,x) = \bigoplus_{i=1}^{\ell'}b_i[i]$ for every $x \in \fe_r\!\smallsetminus\!\{0\}$.

Let $x \in \fe_r\!\smallsetminus\!\{0\}$ and write $\Jt(M,x)\! =\! \bigoplus_{i=1}^pa_i(x)[i]$. Since $\Rad(U)\! =\!\Rad^j(M)\!=\!\im x^j_M$, it follows that
\[ \Jt(\Rad(U),x) = \bigoplus_{i=j+1}^p a_i(x)[i\!-\!j] \ \ \ \text{as well as} \ \  \ \Jt(\Rad(U),x) = \bigoplus_{i=2}^{\ell'} b_i[i\!-\!1].\]
We thus obtain
\[ a_i(x) = b_{i-j+1}\]
for $i \in \{j\!+\!1, \ldots, \ell\ell(M)\}$, as desired.  \end{proof}

\bigskip

\begin{Remark} In the situation above, we have
\[ b_1 = \dim_kU\!-\!2\rk^1(U)\!+\!\rk^2(U) = \dim_kU\!-\!2\rk^j(M)\!+\!\rk^{j+1}(M) = a_j(x)\!+\!\dim_kU\!-\!\dim_k x^{j-1}_M\]
for every $x \in \fe_r\!\smallsetminus\!\{0\}$. Thus, if $M$ has constant $(j\!-\!1)$-rank, then $a_j(x)$ does not depend on the choice of $x$. This always happens for $j\!=\!1$, where we have
$U\!=\!M\!=\!\im x^{j-1}_M$ and hence $a_1(x)\!=\!b_1\!\ne\!0$. \end{Remark}

\bigskip
\noindent
General theory ensures that the morphism
\[ \msim^j_M : \PP(\fe_r) \lra \Gr_{\rk^j(M)}(M)\]
associated to a $U_0(\fe_r)$-module $M$ of constant $j$-rank is either constant or finite. The following result provides more detailed information concerning the size of the fibers.

\bigskip

\begin{Lem} \label{EJIM3} Let $M$ be a $U_0(\fe_2)$-module of constant $j$-rank. If the morphism $\msim_M^j$ affords a fiber with at least $j\!+\!1$ elements,
then $\msim^j_M$ is constant and $M \in \EIP^j(\fe_2)$.\end{Lem}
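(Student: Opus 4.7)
The plan is to exploit the fact that $\fe_2$ is abelian to expand $(ax+by)^j_M$ as a polynomial in $(a,b)$ with matrix coefficients, and then invert a Vandermonde-type matrix built from the $j+1$ fiber points.

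First, fix a basis $\{x,y\}$ of $\fe_2$ so that $\PP(\fe_2)\cong \PP^1$, and set
\[ B_i := (x^{j-i}y^i)_M \in \End_k(M) \ \ \ \ (0 \le i \le j). \]
Since $[x,y]=0$, the binomial theorem gives, for every $(a,b)\in k^2$,
\[ (ax+by)^j_M \;=\; \sum_{i=0}^{j}\binom{j}{i}\,a^{j-i}b^i\,B_i.\]
Note that $\binom{j}{i}\neq 0$ in $k$ for $0\le i\le j\le p-1$, so none of these scalars vanish.

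Now let $[a_0\!:\!b_0],\ldots,[a_j\!:\!b_j]$ be $j\!+\!1$ distinct points of $\PP(\fe_2)$ lying in the same fiber of $\msim^j_M$, so that $\im(a_\ell x+b_\ell y)^j_M = V$ for some fixed $V\in\Gr_{\rk^j(M)}(M)$ and all $\ell$. Consider the $(j\!+\!1)\times(j\!+\!1)$ matrix
\[ \Lambda := \bigl(\binom{j}{i}a_\ell^{j-i}b_\ell^i\bigr)_{0\le \ell,i\le j}.\]
Pulling out the scalars $\binom{j}{i}$ from the columns and (after reindexing) the $a_\ell^{j}$ from the rows of those $\ell$ with $a_\ell\neq 0$, the determinant of $\Lambda$ becomes, up to a nonzero scalar, the Vandermonde determinant in the distinct affine coordinates $b_\ell/a_\ell$ (the at-most-one point with $a_\ell=0$ is handled by a cofactor expansion). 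Hence $\Lambda$ is invertible.

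Using $\Lambda^{-1}$, each $B_i$ can be written as a $k$-linear combination of the operators $(a_\ell x+b_\ell y)^j_M$, so $\im B_i\subseteq V$ for every $i\in\{0,\ldots,j\}$. The binomial expansion then yields $\im(ax+by)^j_M\subseteq V$ for every $(a,b)\in k^2\smallsetminus\{0\}$. The constant $j$-rank assumption forces $\dim\im(ax+by)^j_M = \dim V$, so equality holds everywhere. Thus $\msim^j_M$ is constant with value $V$, which is exactly the equal $j$-images property, placing $M$ in $\EIP^j(\fe_2)$.

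The only delicate step is the invertibility of $\Lambda$, and even that is essentially a classical Vandermonde computation on $\PP^1$; the binomial coefficients cause no trouble because $j<p$. The abelianness of $\fe_2$ (ensuring the binomial expansion of $(ax+by)^j$ is valid) and the constant $j$-rank hypothesis (upgrading containment of images to equality) are the two indispensable inputs.
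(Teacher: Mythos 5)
Your proof is correct and takes essentially the same route as the paper: expand $(ax+by)^j_M$ via the binomial theorem (valid since $\fe_2$ is abelian and $j<p$), observe that the $\binom{j}{i}$ are nonzero, and invert a Vandermonde-type matrix built from the $j+1$ fiber points to conclude $\im(x^{j-i}y^i)_M\subseteq V$ for all $i$, after which constant $j$-rank upgrades the containment to equality. The only cosmetic difference is bookkeeping: the paper normalizes the fiber points to $[x]$ and $[\alpha_i x + y]$ ($1\le i\le j$, $\alpha_1=0$), subtracts off the $\alpha_i^j x^j$ term, and so works with a genuine $j\times j$ affine Vandermonde in $(\alpha_1,\ldots,\alpha_j)$, whereas you keep all $j+1$ points symmetric and handle the possible ``point at infinity'' by a cofactor expansion; both reductions are elementary and yield the same nonvanishing determinant.
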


\begin{proof} By assumption, there is $[x] \in \PP(\fe_2)$ such that $|(\msim_M^j)^{-1}(\msim^j_M([x]))|\!\ge\! j\!+\!1$. Hence there is $[y]\ne [x] \in \PP(\fe_2)$ such that $V\!:=\! \msim^j_M([x])\! =\!
\msim^j_M([y])$. Thus, $\fe_2\!=\!kx\!\oplus\!ky$, and there are $\alpha_i,\beta_i \in k\!\smallsetminus\!\{0\}$ for $2\!\le\! i\!\le\! j$ such that $\msim^j_M([\alpha_ix\!+\!\beta_iy])\!=\!V$ and $|\{[\alpha_ix\!+\!\beta_iy] \ ; \ 2\!\le\!i\!
\le\!j\}\cup\!\{[x],[y]\}|\!=\!j\!+\!1$. Without loss of generality, we may assume that $\beta_i\!=\!1$ for all $i \in \{2,\ldots,j\}$. Setting $\alpha_1\!:=\!0$ and $a_{i\ell}\!:=\! \binom{j}{\ell} \alpha_i^\ell$ for $0\!\le\!\ell\!\le\!j$ and 
$1\!\le \! i\! \le\! j$, we have, observing $\alpha_1^0\!=\!1$,
\[ (\alpha_ix\!+\!y)^j= \sum_{\ell=0}^j a_{i\ell}x^\ell y^{j-\ell} \ \ \ \ \ \ \ \ \text{for} \ 1\!\le\! i\!\le\! j.\]
Setting $z_i:=(\alpha_ix\!+\!y)^j\!-\!\alpha_i^jx^j$, we have $\im (z_i)_M \subseteq V$ as well as
\[ z_i = \sum_{\ell=0}^{j-1} a_{i\ell} x^\ell y^{j-\ell} \ \ \ \ \ \ \ \ \text{for} \ 1\!\le\! i\!\le\! j.\]
Let ${\rm Vd}(\alpha_1,\ldots,\alpha_j)$ be the Vandermonde matrix of $(\alpha_1,\ldots,\alpha_j) \in k^j$. Since
\[ \det((a_{i\ell})) = [\prod_{\ell=0}^{j-1}\binom{j}{\ell}] \det((\alpha_i^\ell)) = [ \prod_{\ell=0}^{j-1}\binom{j}{\ell}] \det({\rm Vd}(\alpha_1,\ldots,\alpha_j)) \ne 0,\]
it follows that $x^\ell y^{j-\ell} \in \sum_{i=1}^j kz_i$ for $0\!\le\! \ell\! \le\! j\!-\!1$. Consequently, $\im (x^\ell y^{j-\ell})_M \subseteq V$ for $0\!\le\! \ell\! \le\! j\!-\!1$.

For $(\gamma,\delta) \in k^2\!\smallsetminus\!\{(0,0)\}$ we therefore obtain
\[ \im(\gamma x\!+\!\delta y)^j_M \subseteq \sum_{\ell =0}^j \im(x^\ell y^{j-\ell})_M \subseteq V.\]
Since $M$ has constant $j$-rank, we have equality. As a result, the morphism $\msim^j_M$ is constant, so that $M \in \EIP^j(\fe_2)$.  \end{proof}

\bigskip

\begin{Remark} When combined with Proposition \ref{EJIM2}, the foregoing result implies that the fibers of the morphism $\msim^j_M$ associated to $M \in \CR^j(\fe_2)$ of Loewy
length $\ge p\!+\!1$ have at most $j$ elements. \end{Remark}

\bigskip

\subsection{Equal $j$-images modules for non-abelian $p$-trivial Lie algebras}
A restricted Lie algebra $(\fg,[p])$ is referred to as being \textit{$p$-trivial}, provided $[p]\!=\!0$. Engel's theorem tells us that every $p$-trivial Lie algebra is necessarily nilpotent. For our purposes,
the $3$-dimensional Heisenberg algebra $\fh_0:= kx\!\oplus\!ky\!\oplus\!kz$ with trivial $p$-map is the most important example.

\bigskip

\begin{Lem}\label{EITriv1} Let $(\fg,[p])$ be a non-abelian $p$-trivial restricted Lie algebra. Then there is an embedding $\fh_0 \hookrightarrow \fg$ of restricted Lie algebras. \end{Lem}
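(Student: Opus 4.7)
The plan is to exhibit three linearly independent elements $a, b, c \in \fg$ with $[a,b] = c$ and $c$ central, so that the subspace $ka \oplus kb \oplus kc$ is a subalgebra isomorphic to $\fh_0$ as restricted Lie algebras (the $p$-map restricts trivially since $[p]_\fg = 0$).

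First I would observe that $(\fg,[p])$ is nilpotent: since $x^{[p]} = 0$ for every $x \in \fg$, we have $(\ad x)^p = \ad(x^{[p]}) = 0$, so the Engel-Jacobson theorem (cf.\ \cite[(I.3.1)]{SF}) applies. Consider the descending central series $\fg = \fg^1 \supseteq \fg^2 \supseteq \cdots$ and let $n \ge 2$ be maximal with $\fg^n \ne (0)$; such $n$ exists as $\fg$ is non-abelian and nilpotent. By definition $\fg^{n+1} = [\fg, \fg^n] = (0)$, so $\fg^n \subseteq C(\fg)$. Choose a non-zero element of $\fg^n$; expanding it as a sum of brackets from $\fg \times \fg^{n-1}$ and picking a non-vanishing summand produces $a \in \fg$ and $b \in \fg^{n-1} \subseteq \fg$ with $c := [a,b] \in \fg^n \setminus \{0\}$. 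Since $c$ is central, $[a,c] = 0 = [b,c]$.

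Next I would verify linear independence of $\{a,b,c\}$. Because $[a,b] = c \ne 0$, the elements $a$ and $b$ are linearly independent. Suppose $c = \alpha a + \beta b$; then $0 = [a,c] = \beta[a,b] = \beta c$ forces $\beta = 0$, and $0 = [b,c] = -\alpha c$ forces $\alpha = 0$, contradicting $c \ne 0$. Hence $c \notin ka + kb$ and the three elements are linearly independent.

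Consequently $\fh := ka \oplus kb \oplus kc$ is a subspace of $\fg$ closed under the bracket, whose structure constants coincide with those of $\fh_0$. Since $\fg$ is $p$-trivial, the restriction of the $p$-map to $\fh$ vanishes, matching the $p$-map of $\fh_0$. The linear map $x \mapsto a$, $y \mapsto b$, $z \mapsto c$ is therefore an embedding $\fh_0 \hookrightarrow \fg$ of restricted Lie algebras. There is no substantive obstacle here; the only point requiring care is the linear independence check, for which the centrality of $c$ is precisely what is needed.
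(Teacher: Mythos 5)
Your proof is correct and follows essentially the same route as the paper's: pass to the maximal $n\ge 2$ with $\fg^n\ne(0)$ in the descending central series so that $\fg^n\subseteq C(\fg)$, pick $a\in\fg$, $b\in\fg^{n-1}$ with $c:=[a,b]\ne 0$, and observe that $ka\oplus kb\oplus kc$ is a $3$-dimensional $p$-subalgebra isomorphic to $\fh_0$. The only addition is that you spell out the linear-independence check, which the paper asserts without comment.
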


\begin{proof} Let $(\fg^n)_{n\ge 1}$ be the descending central series of $\fg$. Since $\fg$ is nilpotent and not abelian, there is a natural number $n\! \ge\! 2$ which is maximal subject to $\fg^n \ne
(0)$. Thus, $\fg^n \subseteq C(\fg)$, and we can find $x \in \fg$ and $y \in \fg^{n-1}$ such that $[x,y]=z \in C(\fg)\!\smallsetminus\!\{0\}$. Consequently, $kx\!\oplus\!ky\!\oplus\!kz$ is a
$3$-dimensional $p$-subalgebra of $\fg$ which is isomorphic to the $p$-trivial Heisenberg algebra $\fh_0$.  \end{proof}

\bigskip
\noindent
Let $j \in \{1,\ldots,p\!-\!1\}$. Lower semicontinuity of ranks implies that every $U_0(\fh_0)$-module $M$ gives rise to a dense conical open subset
\[ \cU^j_M := \{ a \in \fh_0 \ ; \ \rk(a^j_M)=\rk^j(M)\}\]
of $\fh_0$.

\bigskip

\begin{Lem} \label{EITriv2} Let $M \in \modd U_0(\fh_0)$. If there exists a conical dense open subset $\cO^j_M \subseteq \fh_0$ such that
\begin{enumerate}
\item[(a)] $z \in \cO^j_M$, and
\item[(b)] $\im a^j_M = \im z^j_M$ for all $a \in \cO^j_M$, \end{enumerate}
then $\rk^j(M)=0$. \end{Lem}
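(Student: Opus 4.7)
The plan is to set $V := \im z^j_M$ and show $V=0$; since $\cO^j_M$ is open and dense, it meets the (also open) set where the generic $j$-rank is attained, so an element $a \in \cO^j_M$ with $\rk(a^j_M)=\rk^j(M)$ exists, and hypothesis (b) together with $V=0$ will give $\rk^j(M)=\dim V=0$.

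The first key step is to observe that $V$ is a $U_0(\fh_0)$-submodule of $M$. For each $a \in \cO^j_M$, hypothesis (b) yields
\[ a_M(V) = a_M(\im a^j_M) = \im a^{j+1}_M \subseteq \im a^j_M = V,\]
so $V$ is stable under $a_M$. Since the condition ``$a_M(V) \subseteq V$'' is linear in $a \in \fh_0$ and $\cO^j_M$ spans $\fh_0$ (being open and dense in the irreducible variety $\fh_0$), it follows that $b_M(V) \subseteq V$ for every $b \in \fh_0$, so $V$ is an $\fh_0$-submodule, hence a $U_0(\fh_0)$-submodule of $M$.

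The second step is a nilpotency argument on $V$. Since $(\fh_0,[p])$ is $p$-trivial, every $a \in \fh_0$ satisfies $a^p = a^{[p]} = 0$ in $U_0(\fh_0)$, so $a_M$ is nilpotent and hence so is its restriction $a_M|_V$. On the other hand, for $a \in \cO^j_M$ we have $a^j_M(V) \supseteq a^j_M(\im a^j_M) = \im a^{2j}_M$; more directly, applying (b) to such an $a$ gives $\im(a^j_M|_V) \supseteq a^j_M(\im z^j_M) \supseteq \ldots$, and a cleaner route is simply to note that since $\im a^j_M = V$ and $V \subseteq M$ is finite-dimensional, the map $a^j_M : M \to V$ restricted to $V$ is surjective (any element of $V$ equals $a^j_M(m)$ for some $m$, but this $m$ need not lie in $V$, so a small adjustment is needed). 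The cleanest form is to iterate: the decreasing chain $V \supseteq a^j_M(V) \supseteq a^{2j}_M(V) \supseteq \cdots$ stabilizes at some $W \subseteq V$ with $a^j_M(W)=W$, whence $W \subseteq \bigcap_{n\ge 1} a^n_M(M) = 0$ by nilpotency of $a_M$; but also $a^j_M(V) = \im a^{j+1}_M \cdot \ldots$ must equal $V$, since $\im a^{2j}_M \subseteq \im a^j_M = V$ and hypothesis (b), applied to $a$ again, forces $\im a^{2j}_M = \im a^{j}_M$ only if we know $a$ and $a$ are in the locus — which they are. So $a^j_M(V)=V$, giving $V=W=0$.

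The main obstacle is the step showing $a^j_M$ restricts to a surjection on $V$ (equivalently, that iteration stabilizes at $V$ itself, not at a proper subspace). The fix is to note that $\im a^{2j}_M$ is the image of $a^j_M$ restricted to $\im a^j_M = V$, and $\im a^{2j}_M \subseteq \im a^j_M = V$ with equality of $V$-dimensional spaces would require the chain not to drop; but $\dim \im a^{2j}_M \le \dim\im a^{j+1}_M \le \dim \im a^j_M = \dim V$, and equality throughout follows from the fact that $V$ is $a$-stable so $a^j_M|_V$ has the same image as $a^j_M|_{\im a^j_M}$, which is $\im a^{2j}_M$; density/openness of $\cO^j_M$ (or, alternatively, semicontinuity) lets one choose $a$ so that $\rk(a^{2j}_M) = \rk(a^j_M)$. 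Having ensured $a^j_M(V)=V$, the nilpotence of $a_M|_V$ forces $V=0$, completing the proof.
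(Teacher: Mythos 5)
Your argument has a fatal gap, and moreover the overall strategy cannot work because it never uses the non-commutativity of $\fh_0$.

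The crucial step you need is $a^j_M(V)=V$ for some $a\in\cO^j_M$, i.e.\ $\im a^{2j}_M=\im a^j_M$, after which nilpotence of $a_M$ would indeed force $V=0$. But this equality is impossible to arrange unless $V=0$ already: since $\fh_0$ is $p$-trivial, every $a_M$ is nilpotent, so if $\rk(a^j_M)>0$ then $a^j_M|_V:V\to V$ is a nilpotent endomorphism of the nonzero space $V$ (it is a restriction of $a_M$, and any power of $a_M$ is eventually zero), hence cannot be injective. Thus $\rk(a^{2j}_M)<\rk(a^j_M)$ \emph{strictly} whenever $\rk(a^j_M)>0$, for \emph{every} $a$. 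Your appeal to ``density/openness of $\cO^j_M$ or semicontinuity'' to choose an $a$ with $\rk(a^{2j}_M)=\rk(a^j_M)$ therefore picks the conclusion $\rk(a^j_M)=0$ out of thin air — which is precisely what was to be proven.

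More fundamentally, your argument makes no use of the relation $[x,y]=z\ne 0$, so it would equally apply to the elementary abelian Lie algebra $\fe_3$ with the same underlying vector space and trivial bracket. But for $\fe_3$ the conclusion is false: the regular module $U_0(\fe_3)$ has the equal $j$-images property (so $\cO^j_M=\fe_3\smallsetminus\{0\}$ works), yet $\rk^j(U_0(\fe_3))\ne 0$. This shows the Heisenberg structure is essential. The paper's proof uses it directly: after establishing by induction the reduction $(\dagger)$ to the case $j=1$, one picks $a,b\in\cO^1_M$ with $[a,b]=z$ and computes
\[ z.M \subseteq ab.M + ba.M = az.M + bz.M \subseteq z^2.M, \]
where the middle equality uses $\im b_M=\im z_M=\im a_M$ and the last step uses centrality of $z$; nilpotence of $z$ then forces $z.M=0$. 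This commutator identity is precisely the ingredient missing from your argument, and without it no amount of submodule/stabilization bookkeeping will close the gap.
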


\begin{proof} We refer to modules satisfying (a) and (b) as having the generic equal $j$-images property. Given such a module  $M \in \modd U_0(\fh_0)$, we observe that
$\cO^j_M\cap\cU^j_M\ne \emptyset$, so that
\[ \rk^j(M)=\rk(z^j_M).\]
We proceed by verifying the following statement:

\medskip

($\dagger$) \ \textit{Let $j \in \{2,\ldots,p\!-\!1\}$. The submodule $z.M$ has the generic equal $(j\!-\!1)$-images property and generic $(j\!-\!1)$-rank $\rk^{j-1}(z.M)=\rk^j(M)$.}

\smallskip
\noindent
Since the subset $\fh_0\!\smallsetminus\!\{0\}$ is open and conical, we may assume without loss of generality that $\cO^j_M \subseteq \fh_0\!\smallsetminus\!\{0\}$.

Let $a \in \cO^j_M\!\smallsetminus\!kz$. If $[a,b]=0$ for all $b \in \cO^j_M$, then $\cO^j_M \subseteq C_{\fh_0}(a)$, the centralizer of $a$ in $\fh_0$. As $\cO^j_M$ lies dense in $\fh_0$, it follows
that $C_{\fh_0}(a)=\fh_0$. Hence $a \in C(\fh_0)=kz$, a contradiction. Consequently, there are $b' \in \cO^j_M$ and $\lambda \in k^\times$ such that $[a,b']=\lambda z$. As $\cO^j_M$ is
conical, $b\!:=\!\lambda^{-1}b' \in \cO^j_M$ and $[a,b]\!=\!z$.

Let $V\!:=\!\im a^j_M$. In view of property (b), $V\!=\!\im z^j_M$ is a $U_0(\fh_0)$-submodule of $M$. The Cartan-Weyl identity \cite[(I.1.3)]{SF} yields $ba^j\! =\! a^jb\!-\!ja^{j-1}z$, so that
\[ a^{j-1}z.M \subseteq V = z^j.M.\]
Thus, setting $N\!:=\! z.M$, we obtain
\[ \im a^{j-1}_N \subseteq \im z^{j-1}_N \ \ \ \ \ \ \ \ \ \forall \ a \in \cO^j_M\!\smallsetminus\!kz.\]
Since the latter inclusion also holds for $a \in kz\!\smallsetminus\!\{0\}$, our assumption $\cO^j_M \subseteq \fh_0\!\smallsetminus\!\{0\}$ entails
\[ (\ast) \ \ \ \ \ \ \ \im a^{j-1}_N \subseteq \im z^{j-1}_N \ \ \ \ \ \ \forall \ a \in \cO^j_M.\]
Recall that $\cU^{j-1}_N\! :=\! \{a \in \fh_0 \ ; \ \rk(a^{j-1}_N)\!=\!\rk^{j-1}(N)\}$ is a conical dense open subset $\fh_0$. Hence $\cO^{j-1}_N\!:=\! \cO^j_M\cap\cU^{j-1}_N$ enjoys the same properties. For $a \in
\cO^{j-1}_N$, inclusion ($\ast$) yields
\[ \rk^{j-1}(N) = \rk(a^{j-1}_N) \le \rk(z^{j-1}_N).\]
Consequently, we have equality, so that $z \in \cU^{j-1}_N$. Thus, $z \in \cO^{j-1}_N$ and ($\ast$) now implies
\[ \im a^{j-1}_N=\im z^{j-1}_N \ \ \ \ \ \ \forall \ a \in \cO^{j-1}_N,\]
proving that the $U_0(\fh_0)$-module $N$ has the generic equal $(j\!-\!1)$-images property.

By virtue of our observation above, we also have $\rk^j(M)\! =\! \rk(z^j_M)\!=\!\rk(z^{j-1}_N)\!=\!\rk^{j-1}(N)$, as asserted. \hfill $\diamond$

\medskip
\noindent
We first consider the case, where $j\!=\!1$. As before, there are elements $a,b \in \cO^1_M$ such that $z\!=\![a,b]$. We therefore obtain
\[ z.M \subseteq ab.M\!+\!ba.M = az.M\!+\!bz.M \subseteq za.M\!+\!zb.M \subseteq z^2.M.\]
Since $z$ is nilpotent, we get $z.M\!=\!(0)$. Thus, $\rk^1(M)\!=\!\rk(z_M)\!=\!0$.

Now let $j\!>\!1$. Repeated application of ($\dagger$) implies that $N\!:=\!z^{j-1}M$ has the generic equal $1$-images property, while $\rk^j(M)\!=\!\rk^1(N)\!=\!0$.  \end{proof}

\bigskip
\noindent
Let $\cC \subseteq \modd U_0(\fg)$ be a full subcategory that is closed under direct summands and images of isomorphisms. We denote by $k\langle X,Y\rangle$ the free $k$-algebra with non-commuting
variables $X,Y$. Following \cite[\S 2]{Si}, we say that $\cC$ is {\it wild}, provided there is a functor $F : \modd k\langle X,Y\rangle \lra \cC$ that preserves indecomposables and reflects isomorphisms.
In that case, the problem of classifying all indecomposable objects in $\cC$ is at least as complicated as finding a canonical form of two non-commuting matrices. This latter problem is deemed hopeless.
The algebra $U_0(\fg)$ has {\it wild representation type} if $\modd U_0(\fg)$ is wild.

\bigskip

\begin{Remark} The foregoing Lemma implies in particular that $\EIP^1(\fh_0)$ is just the category of trivial $U_0(\fh_0)$-modules, while $V(\fh_0)$ being $3$-dimensional entails
that the algebra $U_0(\fh_0)$ has wild representation type, cf.\ \cite[(4.1)]{Fa07}. By contrast, Benson \cite[(5.6.12),(5.5)]{Be} has shown that the full subcategory of $\EIP^1(\fe_3)$ whose objects
$M$ have Loewy length $\le 2$ is wild. \end{Remark}

\bigskip
\noindent
The following result, which generalizes \cite[(3.1.2)]{Fa17}, implies that, for many non-abelian restricted Lie algebras, the category $\EIP^j(\fg)$ is in fact the module category of a factor
algebra of $U_0(\fg)$, see Section \ref{S:cat} below. In particular, $\EIP^j(\fg)$ is closed under taking subobjects.

\bigskip

\begin{Prop} \label{EITriv3} Let $(\fg,[p])$ be a restricted Lie algebra, $M \in \EIP^j(\fg)$ be a module with the equal $j$-images property. If $\fg$ contains a non-abelian $p$-trivial
subalgebra, then $\rk^j(M)=0$. \end{Prop}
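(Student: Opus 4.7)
My plan is to reduce the problem to the Heisenberg case handled by Lemma~\ref{EITriv2}, by restricting $M$ to a suitable subalgebra and exploiting the fact that the equal $j$-images property for $\fg$ restricts to a very strong version of the generic equal $j$-images property on any $p$-trivial subalgebra.

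First, I would invoke Lemma~\ref{EITriv1} to obtain an embedding $\iota : \fh_0 \hookrightarrow \fg$ of restricted Lie algebras, where $\fh_0 = kx\oplus ky\oplus kz$ is the $p$-trivial Heisenberg algebra with $z = [x,y]$ central. Since $\iota$ is a homomorphism of restricted Lie algebras and the $p$-map of $\fh_0$ is trivial, every $a \in \iota(\fh_0)$ satisfies $a^{[p]}=0$, i.e., $\iota(\fh_0) \subseteq V(\fg)$. Consequently, for every $a \in \fh_0 \smallsetminus \{0\}$, the element $\iota(a)$ belongs to $V(\fg)\smallsetminus\{0\}$, and the equal $j$-images property of $M \in \EIP^j(\fg)$ forces
\[ \im \iota(a)^j_M = \im \iota(z)^j_M \qquad \forall \ a \in \fh_0\smallsetminus\{0\}. \]

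Next, view $M$ as a $U_0(\fh_0)$-module via $\iota$. Setting $\cO^j_M := \fh_0 \smallsetminus\{0\}$, we obtain a conical dense open subset of $\fh_0$ containing $z$, and the displayed identity above is precisely condition~(b) of Lemma~\ref{EITriv2}. Lemma~\ref{EITriv2} then yields $\rk^j(M|_{\fh_0}) = 0$, so in particular $\iota(z)^j_M = 0$, hence $\im \iota(z)^j_M = 0$. Applying the equal $j$-images property of $M$ once more, we conclude $\im x^j_M = 0$ for every $x \in V(\fg)\smallsetminus\{0\}$, whence $\rk^j(M) = 0$.

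The main work is already done in Lemmas~\ref{EITriv1} and \ref{EITriv2}, so there is no serious obstacle at this stage; the only point requiring a little care is checking that the hypotheses of Lemma~\ref{EITriv2} really do hold after restriction, which amounts to observing that the global EIP is much stronger than the generic one on $\fh_0$, and that $\iota$ being a $p$-homomorphism places $\iota(\fh_0)$ inside $V(\fg)$ so that the EIP may be applied.
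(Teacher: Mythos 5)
Your proposal is correct and follows essentially the same route as the paper: obtain $\fh_0 \hookrightarrow \fg$ from Lemma~\ref{EITriv1}, observe that (since $\fh_0$ is $p$-trivial, so $\fh_0 \subseteq V(\fg)$) the global equal $j$-images property of $M$ makes $M|_{\fh_0}$ satisfy the generic equal $j$-images property with $\cO^j_M = \fh_0\smallsetminus\{0\}$, then apply Lemma~\ref{EITriv2}. The only cosmetic difference is your final step: you feed the vanishing of $\im z^j_M$ back through the EIP, whereas the paper passes directly via $\rk^j(M) = \rk^j(M|_{\fh_0})$, which holds because $M$ has constant $j$-rank and $\fh_0\smallsetminus\{0\} \subseteq V(\fg)\smallsetminus\{0\}$; both conclusions are immediate.
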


\begin{proof} Lemma \ref{EITriv1} shows that $\fh_0 \subseteq \fg$. By assumption, the restriction $M|_{\fh_0}$ has the generic equal $j$-images property. Hence Lemma \ref{EITriv2} yields
\[ \rk^j(M) = \rk^j(M|_{\fh_0})=0,\]
as desired. \end{proof}

\bigskip

\subsection{The restriction functor $\res : \modd U_0(\fg) \lra \modd U_0(\fe)$}
As $p\!\ge\!3$, work by Bissinger \cite[(4.2.3)]{Bi} readily implies that the category $\EIP^1(\fe_2)$ is wild. In view of Proposition \ref{EJIM2}(2), this also holds for $\EIP^j(\fe_2)$ for $j\!\ge\!2$.
The succeeding result shows that for large classes of Lie algebras these modules will rarely be restrictions of modules of constant $j$-rank.

\bigskip

\begin{Prop} \label{Res1} Let $(\fg,[p])$ be a restricted Lie algebra such that
\begin{enumerate}
\item[(a)] $\fg$ is algebraic, or $p\!\ge\!5$ and $\dim V(C(\fg))\ne 1$, and
\item[(b)] $\fg$ possesses a non-abelian $p$-trivial subalgebra. \end{enumerate}
Let $M \in \CR^j(\fg)$ be a module of constant $j$-rank. If there exists $\fe_0 \in  \EE(2,\fg)$ such that $M|_{\fe_0} \in \EIP^j(\fe_0)$, then $\rk^\ell(M)=0$ for $\ell \in \{j,\ldots,p\!-\!1\}$. \end{Prop}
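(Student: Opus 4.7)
The plan is to combine the constancy of $\msdeg^j_M$ on $\EE(2,\fg)$ given by Theorem \ref{CBC2} with the rigidity of equal-images modules over the Heisenberg algebra provided by Proposition \ref{EITriv3}. First, I would observe that the hypothesis $M|_{\fe_0} \in \EIP^j(\fe_0)$ makes the morphism $\mspl_M\circ\msim^j_{M|_{\fe_0}}$ constant, so $\msdeg^j_M(\fe_0)=0$. Under hypothesis (a), Theorem \ref{CBC2} then forces $\msdeg^j_M$ to vanish identically on $\EE(2,\fg)$. Since $M\in\CR^j(\fg)$ restricts to a module of constant $j$-rank on every $\fe\in\EE(2,\fg)$, the vanishing of the degree forces $\msim^j_{M|_\fe}$ to be constant, that is, $M|_\fe \in \EIP^j(\fe)$, for every such $\fe$.

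Next I would exploit hypothesis (b) via Lemma \ref{EITriv1} to obtain a restricted embedding of the $3$-dimensional $p$-trivial Heisenberg algebra $\fh_0=kx\!\oplus\!ky\!\oplus\!kz$ (with $z$ central) into $\fg$. The computation in the opening example of Section \ref{S:Co} shows that every element of $\EE(2,\fh_0)$ contains $kz$, and $\EE(2,\fh_0)\cong\PP^1$. Applying Proposition \ref{EJIM2}(2) to each $\fe\in\EE(2,\fh_0)\subseteq\EE(2,\fg)$ promotes the equal $j$-images property to the equal $\ell$-images property for every $\ell\in\{j,\ldots,p\!-\!1\}$, giving $\im a^\ell_M=\im z^\ell_M$ for all $a\in\fe\!\smallsetminus\!\{0\}$. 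Because every nonzero element of $\fh_0$ belongs to some $\fe\in\EE(2,\fh_0)$ containing $z$, this globalises to $M|_{\fh_0}\in\EIP^\ell(\fh_0)$. Proposition \ref{EITriv3}, applied to $\fh_0$ itself (which is trivially non-abelian $p$-trivial and contains itself as such a subalgebra), then forces $\rk^\ell(M|_{\fh_0})=0$, hence $z^\ell_M=0$.

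To finish, since $z\in V(\fg)\!\smallsetminus\!\{0\}$ and $M\in\CR^j(\fg)$, constancy of the $j$-rank on $\PP(V(\fg))$ forces $\rk^j(M)=\rk(z^j_M)=0$; consequently $x^j_M=0$ for every $x\in V(\fg)$, and since $(x_M)^j=0$ implies $(x_M)^\ell=0$ for $\ell\ge j$, we obtain $\rk^\ell(M)=0$ for every $\ell\in\{j,\ldots,p\!-\!1\}$. The main obstacle is the first step: converting the local hypothesis $M|_{\fe_0}\in\EIP^j(\fe_0)$ into the global statement $M|_\fe\in\EIP^j(\fe)$ for every $\fe\in\EE(2,\fg)$. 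This requires both the identification $\msdeg^j_M(\fe)=0$ if and only if $M|_\fe\in\EIP^j(\fe)$ (which uses $M|_\fe\in\CR^j(\fe)$ so that $\msim^j_{M|_\fe}$ is defined on all of $\PP(\fe)$) and the global constancy of $\msdeg^j_M$ furnished by Theorem \ref{CBC2}, which in turn rests on the connectedness result Corollary \ref{CRL2} established in Section \ref{S:Co}; the rest of the argument is then a clean reduction to the already proved rigidity statement \ref{EITriv3}.
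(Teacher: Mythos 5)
Your proof is correct and follows the same overall skeleton as the paper's: use Theorem \ref{CBC2} to conclude that $\msdeg^j_M$ is identically zero on $\EE(2,\fg)$, and then invoke Proposition \ref{EITriv3} on a $p$-trivial Heisenberg subalgebra $\fh_0$ to force $\rk^j(M)=0$ (from which the vanishing of $\rk^\ell(M)$ for $\ell\ge j$ is immediate, since $x^j_M=0$ forces $x^\ell_M=0$). The one place where you diverge is the ``globalization'' step, turning ``$M|_\fe\in\EIP^j(\fe)$ for all $\fe\in\EE(2,\fh_0)$'' into ``$M|_{\fh_0}\in\EIP^j(\fh_0)$.'' The paper takes an arbitrary non-abelian $p$-trivial subalgebra $\fu$, notes $\msrk_p(\fu)\ge 2$, and cites a degree-restriction result from \cite{Fa17} to get $\deg^j(M|_\fu)=0$ and hence $M|_\fu\in\EIP^j(\fu)$ directly. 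You instead reduce to $\fh_0$ itself and use the special structural fact (from the opening example of Section \ref{S:Co}) that every $\fe\in\EE(2,\fh_0)$ contains the center $kz$, so that every nonzero $a\in\fh_0$ is accompanied by $z$ inside some $\fe$ and therefore $\im a^j_M=\im z^j_M$; this is a slightly more elementary and self-contained argument that avoids the external citation, at the cost of being specific to the Heisenberg algebra. Two small remarks: the appeal to Proposition \ref{EJIM2}(2) is a harmless redundancy --- once you know $M|_{\fh_0}\in\EIP^j(\fh_0)$ and hence $\rk^j(M)=0$ via EITriv3 and constancy of $j$-rank, the statement for $\ell>j$ follows with no further ``promotion'' of equal-images properties; and it is worth saying explicitly that $\deg^j(M|_\fe)=0$ reads off to ``$\msim^j_{M|_\fe}$ is constant'' because a reduced defining system of degree $0$ consists of nonzero constants, which you do implicitly.
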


\begin{proof} In view of (a), Theorem \ref{CBC2} ensures that the $j$-degree function $\msdeg^j_M$ is constant. Let $\fu \subseteq \fg$ be a non-abelian, $p$-trivial subalgebra. Then
$\dim_k\fu\! \ge \!3$ and $V(C(\fu))\!\ne\!\{0\}$, so that $\msrk_p(\fu)\!\ge\! 2$. If $\fe \in \EE(2,\fu)$, then $\deg^j(M|_\fe)\!=\!\deg^j(M|_{\fe_0})\!=\!0$. Thanks to \cite[(4.1.2)]{Fa17}, we have $\deg^j(M|_\fu)\!=\!0$,
whence $M|_\fu \in \EIP^j(\fu)$. Proposition \ref{EITriv3} thus yields $0\!=\!\rk^j(M|_\fu)\!=\!\rk^j(M)$, so that $\rk^\ell(M)\!=\!0$ for $\ell \in \{j,\ldots,p\!-\!1\}$. \end{proof}

\bigskip

\subsection{The categories $\modd^jU_0(\fg)$}\label{S:cat}
Proposition \ref{EITriv3} motivates the study of certain subcategories of $\EIP^j(\fg)$. Given $j \in \{1,\ldots,p\!-\!1\}$, we let $\modd^jU_0(\fg)$ be the full subcategory of $\modd U_0(\fg)$, whose objects
satisfy $x^j_M=0$ for all $x \in V(\fg)$. Thus, $\modd^jU_0(\fg) \cong \modd U_0(\fg)/I^j(\fg)$, where $I^j(\fg) := \sum_{x \in V(\fg)} U_0(\fg)x^jU_0(\fg)$ is the ideal generated by the $j$-powers of the
elements of $V(\fg)$. We put
\[U_0^j(\fg):= U_0(\fg)/I^j(\fg),\]
so that $\modd^jU_0(\fg)=\modd U^j_0(\fg)$. Observe that
\[ \modd^1U_0(\fg) \subseteq \modd^2U_0(\fg) \subseteq \cdots \subseteq \modd^{p-1}U_0(\fg) \subseteq \modd U_0(\fg)\]
provides a filtration of $\modd U_0(\fg)$.

Given a $U_0(\fg)$-module $M$, we denote by $\add(M)$ the full subcategory of $\modd U_0(\fg)$, whose objects are direct sums of direct summands of $M$.

\bigskip

\begin{Remarks} The category $\modd^jU_0(\fg)$ exhibits the following properties:
\begin{enumerate}
\item $\modd^jU_0(\fg)$ is closed under taking submodules, images, and duals.
\item Every $M \in \modd^jU_0(\fg)$ is {\it projective-free}, that is, $M$ contains no nonzero $U_0(\fg)$-projective submodules.
\item If $i,j \in \{1,\ldots,p\!-\!1\}$, then $M\!\otimes_k\!N \in \modd^{i+j-1}U_0(\fg)$ for all $M \in \modd^iU_0(\fg)$ and $N \in \modd^jU_0(\fg)$. (Here we set $\modd^jU_0(\fg)=\modd U_0(\fg)$ for
$j\!\ge\!p$.)
\item If $\fg\!=\!\Lie(G)$ is algebraic, then $I^j(\fg)$ is $G$-stable, and $M^{(g)} \in \modd^jU_0(\fg)$ for all $M \in \modd^jU_0(\fg)$ and $g \in G$ . \end{enumerate} \end{Remarks}

\bigskip

\begin{Examples} Let $j \in \{1,\ldots, p\!-\!1\}$.
\begin{enumerate}
\item If $\fg\! =\! \fe_r$ is elementary abelian, then \cite[(1.17.1)]{Be} yields $I^j(\fg)\!=\!\Rad^j(U_0(\fg))$, so that $\modd^jU_0(\fe_r)$ is the category of modules of Loewy length $\le\! j$. If $r\!\ge\!3$ and $j\!\ge\!2$, 
or $r\!=\!2$ and $j\!\ge\!3$, this category is known to be wild, cf.\ \cite[(I.10.10)]{Er}.
\item We consider $\fg\!:=\!\fsl(2)$ together with its standard basis $\{e,f,h\}$. It is well-known (cf.\ \cite{Se}) that the radical $\Rad(U_0(\fsl(2)))$ is generated by $\{e^{p-1}(h\!+\!1),
(h\!+\!1)f^{p-1}\}$. Accordingly, $\Rad(U_0(\fsl(2)))$ $ \subseteq I^j(\fsl(2))$, so that $\modd^j U_0(\fsl(2))$ is semisimple. Let $L(i)$ be the simple $U_0(\fsl(2))$-module of dimension $i\!+\!1$ ($i \in
\{0,\ldots,p\!-\!1\}$) and with highest weight $i \in \FF_p$. It follows that $\modd^jU_0(\fsl(2))\!=\!\add(\bigoplus_{i=0}^{j-1}L(i))$. \end{enumerate} \end{Examples}

\bigskip
\noindent
Throughout this section, we will be considering a restricted Lie algebra $(\fg,[p])$ with triangular decomposition
\[ \fg = \fg^-\!\oplus\!\fg_0\!\oplus\!\fg^+.\]
By definition, $\fg_0$ and $\fg^\pm$ are $p$-subalgebras such that
\begin{enumerate}
\item[(i)] $\fg_0$ is a torus, and
\item[(ii)] $\fg^\pm$ is unipotent and such that $\fg^\pm = \langle V(\fg^\pm) \rangle$, and
\item[(iii)] $[\fg_0,\fg^\pm] \subseteq \fg^\pm$. \end{enumerate}
Reductive Lie algebras and Lie algebras of Cartan type are known to afford such decompositions.

Let $X(\fg)$ be the set of algebra homomorphism $U_0(\fg) \lra k$, the so-called {\it character group} of $\fg$. Each character $\lambda \in X(\fg)$ defines a one-dimensional $U_0(\fg)$-module
$k_\lambda$. If $M$ is a $U_0(\fg)$-module, then its {\it annihilator} $\ann_\fg(M):=\{x \in \fg \ ; \ x_M = 0\}$ is a $p$-ideal of $\fg$.

\bigskip

\begin{Lem} \label{Triang1} The following statements hold:
\begin{enumerate}
\item If $S \in \modd^jU_0(\fg)$ is simple, then $\dim_kS \le \min\{j^{\dim_k\fg^-}, j^{\dim_k\fg^+}\}$.
\item We have $\modd^1U_0(\fg) = \add(\bigoplus_{\lambda \in X(\fg)}k_\lambda)$. \end{enumerate} \end{Lem}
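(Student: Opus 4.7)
The plan for (1) is to produce a cyclic vector in $S$ via an Engel-type argument and then bound the annihilator using a PBW filtration. Since $\fg^+$ is unipotent, every $x\in\fg^+$ satisfies $x^{[p]^n}=0$ for some $n$, hence acts nilpotently on $S$; the Engel-Jacobson theorem \cite[(I.3.1)]{SF} then yields $S^{\fg^+}\neq 0$. Because $[\fg_0,\fg^+]\subseteq\fg^+$, the subspace $S^{\fg^+}$ is $\fg_0$-stable, and $\fg_0$ being a torus guarantees it contains a weight vector $v_\lambda$. Using $\fg^+.v_\lambda=0$, $\fg_0.v_\lambda\subseteq kv_\lambda$, and the PBW decomposition $U_0(\fg)=U_0(\fg^-)U_0(\fg_0)U_0(\fg^+)$, the simplicity of $S$ forces $S=U_0(\fg^-).v_\lambda$, so $S$ is a cyclic $U_0(\fg^-)$-module.

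For the dimension estimate I plan to exploit the hypothesis $\fg^-=\langle V(\fg^-)\rangle$, which by the notational convention of the paper means that $V(\fg^-)$ linearly spans $\fg^-$: I will choose a basis $x_1,\ldots,x_r\in V(\fg^-)$ of $\fg^-$, so that $x_i^p=x_i^{[p]}=0$ in $U_0(\fg^-)$ and, since $x_i\in V(\fg)$ and $S\in\modd^j U_0(\fg)$, the element $x_i^j$ annihilates $v_\lambda$. Setting $L:=\sum_{i=1}^r U_0(\fg^-)x_i^j$, we have $L\subseteq \ann_{U_0(\fg^-)}(v_\lambda)$, whence $\dim_k S\le \dim_k U_0(\fg^-)/L$. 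The key step is the filtered/graded comparison: with respect to the PBW degree filtration the associated graded algebra is
\[
\gr U_0(\fg^-)\;\cong\;k[x_1,\ldots,x_r]/(x_1^p,\ldots,x_r^p),
\]
a commutative truncated polynomial ring, and the symbols of the $x_i^j$ generate the two-sided ideal $(x_1^j,\ldots,x_r^j)$ there. Consequently $\dim_k U_0(\fg^-)/L=\dim_k\gr U_0(\fg^-)/\gr L\le j^r=j^{\dim_k\fg^-}$. The symmetric bound $\dim_k S\le j^{\dim_k\fg^+}$ follows by repeating the argument with a $\fg^-$-fixed weight vector and a basis of $\fg^+$ lying inside $V(\fg^+)$.

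For (2), the same spanning hypothesis immediately forces $\fg^\pm.M=0$ for every $M\in\modd^1 U_0(\fg)$, so $M$ becomes a $U_0(\fg_0)$-module. Since $\fg_0$ is a torus, $U_0(\fg_0)$ is semisimple and $M$ decomposes as $M=\bigoplus_\mu M_\mu$ into $\fg_0$-weight spaces. For each occurring weight $\mu$ I extend by zero on $\fg^\pm$ to a linear form on $\fg$; the compatibility $\mu([y^+,y^-])=0$ for $y^\pm\in\fg^\pm$ is automatic, because $[y^+,y^-].M_\mu=(y^+y^--y^-y^+).M_\mu=0$ forces the $\fg_0$-component of $[y^+,y^-]$ to act as zero on $M_\mu$. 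Thus $\mu$ extends to a character $\lambda\in X(\fg)$ and $M_\mu$ is a direct sum of copies of $k_\lambda$, giving $M\in\add(\bigoplus_{\lambda\in X(\fg)}k_\lambda)$. The principal obstacle is the filtered/graded comparison in (1), and it succeeds precisely because the spanning hypothesis lets us use $x_i^j$ rather than the much weaker $x_i^{p^{n_i}}$ that mere unipotence would supply.
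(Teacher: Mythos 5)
Your proof is correct and follows the same overall strategy as the paper's: produce a one-dimensional $\fb^+$-submodule to get a cyclic vector $v_\lambda$ with $S=U_0(\fg^-)v_\lambda$, then bound $\dim_k S$ using a basis of $\fg^-$ inside $V(\fg^-)$. The two places where you deviate are both tactical rather than structural. For the cyclic vector, the paper simply cites trigonalizability of $\fb^\pm=\fg_0\ltimes\fg^\pm$, whereas you make this explicit via Engel--Jacobson for $\fg^+$ followed by a $\fg_0$-weight vector in $S^{\fg^+}$; the two arguments coincide. For the dimension bound, the paper observes directly that any PBW monomial $x^a$ with some exponent $a_i\geq j$ already lies in $I^j(\fg^-)$, so that $S$ is spanned by the $j^{\dim_k\fg^-}$ monomials $x^av_\lambda$ with all $a_i\leq j-1$; you instead pass to the associated graded of the PBW filtration and use that $\gr L$ contains the ideal $(\bar x_1^{\,j},\ldots,\bar x_r^{\,j})$ in the truncated polynomial ring $\gr U_0(\fg^-)\cong k[\bar x_1,\ldots,\bar x_r]/(\bar x_1^p,\ldots,\bar x_r^p)$. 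Both are valid; the paper's multi-index counting is more elementary and avoids the $\gr$-bookkeeping, while your version isolates the underlying commutative-algebra inequality more cleanly. For (2), the paper is slightly slicker: once $\fg^\pm\subseteq\ann_\fg(M)$, the $p$-ideal $\ann_\fg(M)$ absorbs $[\fg,\fg^\pm]$ as well, so $\fg/\ann_\fg(M)$ is a surjective image of the torus $\fg_0$ and hence a torus, whence $M$ is semisimple and made of $k_\lambda$'s. Your explicit weight-space decomposition and verification that the extension of $\mu$ by zero kills $[\fg^+,\fg^-]$ arrives at the same conclusion but redoes by hand what is automatic from the ideal structure; it is nonetheless correct.
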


\begin{proof} (1) We put $\fb^\pm := \fg_0\!\ltimes\!\fg^{\pm}$, so that (i) and (ii) imply that $\fb^\pm$ is a trigonalizable $p$-subalgebra of $\fg$. Hence there are vectors $v^+, v^- \in S$ such that
$S = U_0(\fg^-)v^+$ and $S = U_0(\fg^+)v^-$.

Note that $I^j(\fg^-)\subseteq I^j(\fg)$, so that $I^j(\fg^-)v^+=(0)$. In view of (ii), there exists a basis $\{x_1, \ldots, x_n\}$ of $\fg^-$ such that $\{x_1,\ldots, x_n\} \subseteq V(\fg^-)$.
We put $\tau_j:= (j\!-\!1,\ldots,j\!-\!1) \in \NN^n_0$ and use the standard multi-index conventions for computations in $U_0(\fg)$. If $a \in \NN^n_0$ is such that $a \not\le \tau_j$, then $x^a \in I^j(\fg^-)$,
whence
\[ S = \sum_{a \le \tau_j} kx^av^+.\]
This shows that $\dim_kS \le j^n = j^{\dim_k\fg^-}$. By the same token, we have $\dim_kS \le j^{\dim_k\fg^+}$.

(2) In view of (1), every simple object of $\modd^1U_0(\fg)$ is one-dimensional, so that the simple modules of $\modd^1U_0(\fg)$ are the $k_\lambda$ with $\lambda \in X(\fg)$.
If $M \in \modd^1U_0(\fg)$, then $V(\fg) \subseteq \ann_\fg(M)$, and condition (ii) yields $\fg^+\!\oplus\!\fg^- \subseteq \ann_\fg(M)$. Consequently, the canonical map $\fg_0 \lra \fg/\ann_\fg(M)$
is surjective, so that (i) implies that $\fg/\ann_\fg(M)$ is a torus. As a result, $M$ is semisimple, whence $M \in \add(\bigoplus_{\lambda \in X(\fg)}k_\lambda)$. \end{proof}

\bigskip

\begin{Example} Let $\fg:= \fsl(2)\!\oplus\!\fsl(2)$. Then $U_0(\fg) \cong U_0(\fsl(2))\!\otimes_k\!U_0(\fsl(2))$ and, setting $J:=\Rad(U_0(\fsl(2)))$, we have $\Rad(U_0(\fg))=J\!\otimes_k\!U_0(\fsl(2))\!+\!
U_0(\fsl(2))\!\otimes_k\!J$. Now let $M \in \modd^jU_0(\fg)$. Since the restriction $M|_\fh$ of $M$ to a direct summand $\fh\cong \fsl(2)$ of $\fg$ is semisimple, we conclude that $(J\!\otimes_k\!k1).M=
(0)= (k1\!\otimes_k\!J).M$. Consequently, $\Rad(U_0(\fg)).M=(0)$, so that $M$ is semisimple. As a result, $M \cong \bigoplus_{a+b\le j-1}n_{(a,b)}L(a)\!\otimes_k\!L(b)$. In particular, the
$j^2$-dimensional simple $U_0(\fg)$-module $L(j\!-\!1)\!\otimes_k\!L(j\!-\!1)$ does not belong to $\modd^jU_0(\fg)$ if $j\!\ge\!2$. Hence the converse of Lemma \ref{Triang1}(1) does not hold.
\end{Example}

\bigskip
\noindent
For an arbitrary natural number $n$, we denote by $W(n)\!:=\!\Der_k(k[X_1,\ldots,X_n]/(X_1^p,\ldots,X_n^p))$ the {\it Jacobson-Witt algebra} of dimension $np^n$. Note that there are natural embeddings $W(n') 
\hookrightarrow W(n)$, whenever $n'\!\le\!n$. 

There are four families of restricted simple Cartan type Lie algebras. Beside the family $(W(n))_{n\ge 1}$ of Jacobson-Witt algebras defined above, there are the {\it special algebras} $(S(n))_{n \ge 3}$, the 
{\it Hamiltonian algebras} $(H(2r))_{r\ge1}$, and the {\it contact algebras} $(K(2r\!+\!1))_{r \ge1}$. We refer the reader to \cite[(IV)]{SF} for more details on Lie algebras of Cartan type.

We consider the Witt algebra $W(1)\!=\!\bigoplus_{i=-1}^{p-2}ke_i\! =\!\Der_k(k[X]/(X^p))$, where $e_i(x^\ell)=\!\ell x^{i+\ell}$ for $x:\!=\!X\!+\!(X^p) \in k[X]/(X^p)$. This $\ZZ$-graded restricted Lie algebra has a
standard triangular decomposition such that $W(1)^-= W(1)_{-1}:=ke_{-1}$, $W(1)_0=ke_0$ and $W(1)^+:=\sum_{i=1}^{p-2}ke_i$ (see \cite[(IV)]{SF} for more details).

The simple $U_0(W(1))$-modules are well-known and were first determined by Chang \cite{Chang}. We denote by $L(\lambda)$ the simple $U_0(W(1))$-module such that $e_0$ acts on $L(\lambda)^{W(1)^+}$ via 
$\lambda \in \{0,\ldots,p\!-\!1\}$. We have $L(0)\cong k$ and $L(p\!-\!1) \cong k[x]/k1$, while $\dim_kL(\lambda)\!=\!p$ for $\lambda \in \{1,\ldots,p\!-\!2\}$. Consequently, Lemma \ref{Triang1} implies that $L(0)\!=\!k$ is 
the only simple module belonging to $\modd^jU_0(W(1))$ for $j\!\le\!p\!-\!2$. By the same token, $L(0)$ and $L(p\!-\!1)$ are the only simple objects of $\modd^{p-1}U_0(W(1))$.

\bigskip

\begin{Lem}\label{Triang2}The following statements hold:
\begin{enumerate}
\item We have $\modd^jU_0(W(1)))\! =\! \add(k)$ for $j \in \{1,\ldots, p\!-\!2\}$.
\item We have $\modd^{p-1}U_0(W(1))\!=\!\add(k\!\oplus\!L(p\!-\!1))$.
\item Suppose that $p\!\ge\!5$. If $M \in \modd^{p-1}U_0(W(1))$ has constant $j$-rank for some $j \in \{1,\ldots,p\!-\!2\}$, then $M \in \add(k)$.\end{enumerate} \end{Lem}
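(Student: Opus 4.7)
By Lemma \ref{Triang2}(2), the module $M$ decomposes as $M \cong ak \oplus bS(p-1)$ with $a,b \in \NN_0$, so the task is to show $b = 0$. The plan is to exhibit two elements of $V(W(1))$ whose $j$-th powers act with different ranks on $S(p-1)$: since both of them act as zero on the trivial summand $ak$, equating their ranks on $M$ will pin $b$ down.

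The natural candidates are $e_{-1}$ and $e_{p-2}$, both of which lie in $V(W(1))$ because $e_i^{[p]} = 0$ for every $i \ne 0$. Using the explicit description $S(p-1) \cong k[x]/k1$ with $e_i(x^\ell) = \ell x^{i+\ell}$, an inspection on the basis $x, x^2, \ldots, x^{p-1}$ shows that $(e_{-1})_{S(p-1)}$ is a single nilpotent Jordan block of size $p-1$ (it kills $x$, and sends $x^\ell$ to $\ell x^{\ell-1}$ for $\ell \ge 2$), while $(e_{p-2})_{S(p-1)}$ sends $x$ to $x^{p-1}$ and annihilates every $x^\ell$ with $\ell \ge 2$. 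Consequently $\rk((e_{-1})_{S(p-1)}^j) = p-1-j$ throughout $1 \le j \le p-2$, whereas $\rk((e_{p-2})_{S(p-1)}^j)$ equals $1$ for $j=1$ and $0$ for $j \ge 2$.

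Since $x_k = 0$ for every $x \in V(W(1))$, the decomposition of $M$ yields $\rk((e_{-1})_M^j) = b(p-1-j)$, while $\rk((e_{p-2})_M^j)$ equals $b$ when $j=1$ and $0$ when $j \ge 2$. Imposing the constant $j$-rank hypothesis gives $b(p-3) = 0$ in the case $j=1$ and $b(p-1-j) = 0$ in the case $2 \le j \le p-2$; because $p \ge 5$ and $j \le p-2$, both of the integers $p-3$ and $p-1-j$ are strictly positive, so $b=0$ in every instance and $M \in \add(k)$. No serious obstacle is anticipated: the only delicate point is the elementary verification of the two Jordan types on $S(p-1)$, which is exactly what separates $S(p-1)$ from the trivial module at each admissible value of $j$.
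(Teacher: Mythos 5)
Your proof is correct and follows essentially the same route as the paper: both arguments hinge on comparing the ranks of the $j$-th powers of $e_{-1}$ and $e_{p-2}$ on $S(p-1)$, which shows $S(p-1) \notin \CR^j(W(1))$ for $1 \le j \le p-2$ when $p \ge 5$. The only cosmetic difference is packaging—the paper invokes the fact that $\CR^j$ is closed under direct summands (citing \cite[(4.13)]{CFP13}), whereas you equate ranks directly on the explicit decomposition $M \cong ak\oplus bS(p-1)$—but the underlying computation is identical.
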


\begin{proof} If $p\!=\!3$, then $W(1)\cong \fsl(2)$, and assertions (1) and (2) follow from the examples above. Hence we assume that $p\!\ge\!5$.

(1) Let $j\! \le\! p\!-\!2$. In view of the above, the trivial module $k$ is the only simple $U_0^j(W(1))$-module, while \cite[(2.1)]{Ho} yields $\Ext^1_{U_0(W(1))}(k,k) \subseteq
(W(1)/[W(1),W(1)])^\ast = (0)$. As a result, $\modd^jU_0(\fg) = \add(k)$.

(2) As observed above, $k$ and $L(p\!-\!1)$ are the only simple objects in $\modd^{p-1}U_0(W(1))$. For the relevant $\Ext^1$-groups we have
\[ \Ext^1_{U_0(W(1))}(L(p\!-\!1),L(p\!-\!1)) =(0),\]
as well as
\[\dim_k\Ext^1_{U_0(W(1))}(k,L(p\!-\!1))=2= \dim_k\Ext^1_{U_0(W(1)}(L(p\!-\!1),k),\]
see \cite[(3.5)]{BNW} or \cite[(3.5),(3.6)]{Ri} for more details. Recall that $\Ext^1_{U_0(W(1)}(k,L(p\!-\!1))$ is given by equivalence classes of extensions
\[ (0) \lra L(p\!-\!1) \lra M \lra k \lra (0).\]
By general theory, the middle term $M\! =\!M_{\varphi}\!=\!L(p\!-\!1)\!\oplus\!k$ corresponds to a derivation $\varphi : U_0(W(1)) \lra L(p\!-\!1)$ of the augmented algebra $U_0(W(1))$.
Note that $\varphi$ is completely determined by its restriction $W(1)\lra L(p\!-\!1)$, which is also a derivation. The action of $W(1)$ on $M_\varphi$ is given by
\[ a.(v,\alpha) = (a.v\!+\!\alpha\varphi(a),0)  \ \ \ \ \ \ \ \ \forall \ a \in W(1), v \in L(p\!-\!1), \alpha \in k.\]
Thus, $M_\varphi \in \modd^{p-1}U_0(W(1))$ if and only if
\[ a^{p-2}.\varphi(a) = 0 \ \ \ \ \ \ \text{for all} \ a \in V(W(1)).\]
Suppose that $M_\varphi$ is a non-split extension of $k$ by $L(p\!-\!1)$. Then $\varphi: W(1) \lra L(p\!-\!1)$ is not an inner derivation, and \cite[(1.2)]{Fa88} implies that we may
assume that $\varphi(e_i) \in L(p\!-\!1)_i \ \ (i \in \{-1,\ldots,p\!-\!2\})$, where $L(p\!-\!1)_i$ is the weight space of $L(p\!-\!1)$ with weight $i \in \FF_p$ relative to the standard torus
$ke_0$ of $W(1)$. We let $\Der_k(W(1),L(p\!-\!1))_0$ be the space of these derivations. Recall that $L(p\!-\!1) \cong k[x]/k1 = \bigoplus_{i=1}^{p-1}k\bar{x}^i$. Since the Lie algebra $W(1)$
is generated by $\{e_{-1},e_2\}$, the map
\[ \Der_k(W(1),L(p\!-\!1))_0 \lra k\bar{x}^{p-1}\!\oplus\!k\bar{x}^2 \ \ ; \ \ \varphi \mapsto (\varphi(e_{-1}),\varphi(e_2))\]
is injective.

Suppose that $M_\varphi \in \modd^{p-1}U_0(W(1))$. Then $\varphi(e_{-1}) = \alpha \bar{x}^{p-1}$, while
\[ 0 = e^{p-2}_{-1}.\varphi(e_{-1}) = \alpha e^{p-2}_{-1}\bar{x}^{p-1}=(p\!-\!1)! \alpha \bar{x}\]
forces $\alpha\!=\!0$. Thus, $\varphi(e_{-1})\!=\!0$.

Recall that $ke_{-1} \oplus ke_0 \oplus ke_1$ is a $p$-subalgebra of $W(1)$ that is isomorphic to $\fsl(2)$. Since the category $\modd^{p-1}U_0(\fsl(2))$ is semisimple, the exact sequence
\[ (0) \lra L(p\!-\!1)|_{\fsl(2)} \lra (M_\varphi)|_{\fsl(2)} \lra k \lra (0)\]
splits, so that $\varphi|_{\fsl(2)}$ is an inner derivation. As $L(p\!-\!1)_0\!=\!(0)$, it follows that $\varphi|_{\fsl(2)}\! =\! 0$. In particular, $\varphi(e_1)\!=\!0$. We thus obtain
\[ 0 = \varphi(e_1) = \frac{1}{3}\varphi([e_{-1},e_2]) = \frac{1}{3} e_{-1}.\varphi(e_2).\]
Consequently,
\[ \varphi(e_2) \in L(p\!-\!1)_2\cap L(p\!-\!1)^{ke_{-1}} = L(p\!-\!1)_2\cap L(p\!-\!1)_1 = (0),\]
so that $\varphi=0$. As a result, $\Ext^1_{U^{p-1}_0(W(1))}(k,L(p\!-\!1))=(0)$, while duality implies the vanishing of $\Ext^1_{U^{p-1}_0(W(1))}(L(p\!-\!1),k)$. It follows that
$\modd^{p-1}U_0(W(1))\!=\!\add(k\!\oplus\!L(p\!-\!1))$.

(3) Since $\rk((e^j_{p-2})_{L(p\!-\!1)})\!=\!\delta_{j,1}$ and $\rk((e^j_{-1})_{L(p\!-\!1)})\!=\!p\!-\!1\!-\!j$, it follows that $L(p\!-\!1)$ does not have constant $j$-rank for $j \in \{1,\ldots,p\!-\!2\}$
whenever $p\!\ge\!5$. As the category of modules of constant $j$-rank is closed under taking direct summands (cf.\ \cite[(4.13)]{CFP13}), part (2) implies the assertion. \end{proof}

\bigskip
\noindent
Let $B_2\!:=\! k[X_1,X_2]/(X_1^p,X_2^p)$ and put $x_i\!:=\!X_i\!+\!(X_i^p)$. The Jacobson-Witt algebra $W(2)\!=\!\Der_kB_2$ of derivations of $B_2$ is simple and of dimension $2p^2$.
We let $\cP(2)$ be the {\it Poisson algebra}, whose underlying vector space is $B_2$. There is a homomorphism $D_H : \cP(2) \lra W(2)$ of restricted Lie algebras such that
$\ker D_H\! =\! k.1\! =\! C(\cP(2))$, cf.\ \cite[\S 4]{BFS}. Then $\hat{\cP}(2)\!:=\!\bigoplus_{0\le i+j\le 2p-3} kx_1^ix_2^j\!=\![\cP(2),\cP(2)]$ is a $p$-subalgebra of $\cP(2)$ containing $1$
and $H(2)\!:=\! D_H(\hat{\cP}(2))$ is a simple restricted Lie algebra of hamiltonian type, cf.\ \cite[Ex.4, Ex.5,p.169]{SF}.

\bigskip

\begin{Lem} \label{Triang3} Suppose that $p\!\ge\!5$. Then the following statements hold:
\begin{enumerate}
\item We have $\modd^jU_0(H(2))\!=\!\add(k)$ for $j \in \{1,\ldots,p\!-\!1\}$.
\item We have $\modd^jU_0(\hat{\cP}(2))\!=\!\add(k)$ for $j \in \{1,\ldots,p\!-\!1\}$. \end{enumerate} \end{Lem}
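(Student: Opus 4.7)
The plan is to show, for each $\fg \in \{H(2), \hat{\cP}(2)\}$ and each $j \in \{1,\ldots,p-1\}$, that every simple object of $\modd^j U_0(\fg)$ is isomorphic to $k$; combined with the vanishing of $\Ext^1_{U_0(\fg)}(k,k) = (\fg/[\fg,\fg])^{\ast} = 0$, this yields $\modd^j U_0(\fg) = \add(k)$. Both algebras are perfect: $H(2)$ because it is simple, and $\hat{\cP}(2)$ because $1 = \{x_1,x_2\}$ while, for $(a,b) \neq (0,0)$ with $a+b \leq 2p-3$, the monomial $x_1^a x_2^b$ lies in $[\hat{\cP}(2),\hat{\cP}(2)]$ via the Poisson identity $(a+1)\, x_1^a x_2^b = \{x_1^{a+1} x_2^b, x_2\}$ (using the analogous formula with $x_1$ in place of $x_2$ when $a = p-1$).

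The main tool is an embedding of $W(1)$. Inside $\hat{\cP}(2)$, the subspace $\fh := \bigoplus_{i=0}^{p-1} k \cdot x_1^i x_2$ is closed under Poisson bracket via $\{x_1^a x_2, x_1^b x_2\} = (a-b)\, x_1^{a+b-1} x_2$, which are precisely the defining relations of $W(1)$. Under the natural restricted structure this yields $\fh \cong W(1)$ as restricted Lie algebras, and $D_H(\fh) \subseteq H(2)$ is likewise a copy of $W(1)$. For $j \in \{1,\ldots,p-2\}$ and a simple $M \in \modd^j U_0(\fg)$, Lemma~\ref{Triang2}(1) gives $M|_{\fh} \in \add(k)$, so the $p$-ideal $\ann_\fg(M)$ contains $\fh$. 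The only $p$-ideals of $H(2)$ are $0$ and $H(2)$ by simplicity; using the short exact sequence $0 \to k\cdot 1 \to \hat{\cP}(2) \to H(2) \to 0$ together with the perfectness of $\hat{\cP}(2)$, the only $p$-ideals of $\hat{\cP}(2)$ are $0$, $k\cdot 1$ and $\hat{\cP}(2)$, and $\fh \not\subseteq k\cdot 1$. In either case $\ann_\fg(M) = \fg$, so $M \cong k$.

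For $j = p-1$ the case $\fg = \hat{\cP}(2)$ reduces to $\fg = H(2)$ via a Weyl-algebra-style argument. The central element $1$ is toral, so $M$ decomposes as $M = \bigoplus_\lambda M_\lambda$ into $\hat{\cP}(2)$-submodules on each of which $1$ acts by the scalar $\lambda \in \FF_p$. On $M_\lambda$ the relation $[x_1,x_2]=1$ becomes $XY-YX = \lambda\,\id$ with $X = (x_1)_{M_\lambda}$, $Y = (x_2)_{M_\lambda}$, both satisfying $X^{p-1} = Y^{p-1} = 0$ since $x_1, x_2 \in V(\hat{\cP}(2))$ and $M \in \modd^{p-1} U_0(\hat{\cP}(2))$. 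The identity $[X, Y^n] = n\lambda Y^{n-1}$ combined with $Y^{p-1} = 0$ yields by descending induction that $Y^n = 0$ for every $n \geq 0$, contradicting $M_\lambda \neq 0$ unless $\lambda = 0$. Hence $1$ annihilates $M$, so $M$ is a $U_0(H(2))$-module in $\modd^{p-1} U_0(H(2))$, reducing the $\hat{\cP}(2)$-claim to the $H(2)$-claim.

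The crux and main obstacle is therefore $\fg = H(2)$ at $j = p-1$. Here Lemma~\ref{Triang2}(2) yields only $M|_{\fh} \in \add(k \oplus S(p-1))$, and one must rule out any $S(p-1)$-summand. My plan is to introduce a second copy $\fh' := D_H(\bigoplus_{i=0}^{p-1} k \cdot x_1 x_2^{i+1}) \cong W(1)$ which shares the toral element $h_0 = D_H(x_1 x_2)$ with $\fh$ but whose negative-root vector is $\partial_2$ rather than $\partial_1 \in \fh$. A simultaneous $h_0$-weight decomposition of $M$ shows that the $S(p-1)$-multiplicities in $M|_\fh$ and $M|_{\fh'}$ agree. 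Exploiting the $\fsl(2) = H(2)_0$-action (which intertwines the two copies of $W(1)$) together with the vanishing of $x^{p-1}_M$ on the bracket elements $[D_H(x_1^i x_2), D_H(x_1 x_2^j)] = -(ij-1)\, D_H(x_1^i x_2^j) \in V(H(2))$ (for $i,j \geq 2$ with $ij \not\equiv 1 \pmod p$) should force the $S(p-1)$-multiplicity to vanish. The technical heart is this explicit compatibility check—essentially the statement that every non-trivial simple $U_0(H(2))$-module admits some $x \in V(H(2))$ on which $x^{p-1}$ acts non-trivially—whose verification requires a delicate analysis of the action of ``regular nilpotent'' elements of $H(2)$ on prospective $S(p-1)$-summands.
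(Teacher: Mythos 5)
Your proof of the cases $j\le p-2$ and your reduction of $\hat{\cP}(2)$ to $H(2)$ at $j=p-1$ are fine and essentially parallel to what the paper does (the paper treats $kx_1\oplus kx_2\oplus k1$ as a Heisenberg $p$-subalgebra of $\hat{\cP}(2)$ and argues via a baby Verma module rather than your commutator descent, but the content is the same). The genuine gap is exactly where you flag it: the case $\fg=H(2)$, $j=p-1$. You acknowledge that Lemma~\ref{Triang2}(2) only forces $M|_{\fh}\in\add(k\oplus S(p-1))$, and you must rule out an $S(p-1)$-summand. But what you offer here is a plan, not a proof: the claim that the $S(p-1)$-multiplicities of $M|_{\fh}$ and $M|_{\fh'}$ agree cannot be read off from a common $h_0$-weight decomposition alone, since two different $W(1)$-module structures on the same weight-graded space can decompose differently; and the assertion that the $\fsl(2)$-action together with the vanishing of $(x^{p-1})_M$ on certain brackets ``should force'' the $S(p-1)$-multiplicity to vanish is speculative and is left unverified. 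As it stands, a reader cannot certify that this chain of deductions closes.

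The paper resolves precisely this point by invoking a stronger external input rather than an ad hoc computation: by Herpel--Stewart \cite[(2.6)]{HS2} there is an embedding $W(1)\hookrightarrow H(2)$ such that \emph{every} non-trivial simple $U_0(H(2))$-module $S$ has $S(1)$ as a composition factor of $S|_{W(1)}$. Since $\modd^{p-1}U_0(W(1))$ is closed under subquotients and $S(1)\notin\add(k\oplus S(p-1))$ for $p\ge 5$, this kills all non-trivial simples at once, including at $j=p-1$. To complete your argument you would either need to reproduce a result of comparable strength (that every non-trivial simple restricted $H(2)$-module restricts to some $W(1)$ with a composition factor not of the form $k$ or $S(p-1)$), or carry out the delicate multiplicity computation you sketch; neither is done. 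I recommend replacing the $H(2)$, $j=p-1$ step with the citation-based argument, after which the rest of your write-up goes through.
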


\begin{proof} (1) Let $S \in \modd^{p-1}U_0(H(2))$ be a non-trivial simple $U_0(H(2))$-module. Thanks to \cite[(2.6)]{HS2}, there is an embedding $W(1) \hookrightarrow H(2)$ such that $L(1)$ is a composition
factor of $S|_{W(1)}$. As $\modd^{p-1}U_0(W(1))$ is closed under submodules and images, it follows that $L(1) \in \modd^{p-1}U_0(W(1))$, which contradicts Lemma \ref{Triang2}(2). In view of 
$\Ext^1_{U_0(H(2))}(k,k) \hookrightarrow (H(2)/[H(2),H(2)])^\ast \!=\!(0)$, we conclude that $\modd^{p-1}U_0(H(2))\!=\!\add(k)$. Consequently, $\modd^jU_0(H(2))\!=\!\add(k)$ for every $j \in \{1,\ldots,p\!-\!1\}$.

(2) Note that $\fh:=kx_1\!\oplus\!kx_2\!\oplus\!k1 \subseteq \hat{\cP}(2)$ is a $p$-subalgebra, which is isomorphic to the Heisenberg algebra with toral center. Let $S \in \modd^{p-1}U_0(\fh)$ be a simple
module. The restriction $S|_\fa$ with respect to the abelian $p$-subalgebra $\fa\!:=\! kx_2\!\oplus\!k1$ possesses a one-dimensional submodule $k_\lambda$, so that there exists a surjection
\[ U_0(\fh)\!\otimes_{U_0(\fa)}\!k_\lambda \lra S.\]
The central element $1$ acts on $S$ via a scalar $\alpha \in k$, while $1=[x_1,x_2]$ implies $(\dim_kS) \alpha\! =\!\tr(1)\!=\!0 \in k$. If $\alpha\! \ne\! 0$, then $p\!\mid\! \dim_kS$, so that
\[ U_0(\fh)\!\otimes_{U_0(\fa)}\!k_\lambda \cong S.\]
This readily yields $(x_1)_S^{p-1}\!\ne\!0$, a contradiction. As a result, $1$ acts trivially on $S$.

Now let $M \in \modd^{p-1}U_0(\fh)$. Then every composition factor $S$ of $M$ belongs to $\modd^{p-1}U_0(\fh)$, so that $1$ acts nilpotently on $M$. As $k1$ is a torus, we conclude
that $1$ annihilates $M$.

Recall that there exists an exact sequence
\[ (0) \lra k1 \lra \hat{\cP}(2) \lra H(2) \lra (0)\]
of restricted Lie algebras and let $S \in \modd^{p-1}U_0(\hat{\cP}(2))$ be simple. By applying the observations above to $S|_\fh \in \modd^{p-1}U_0(\fh)$, we obtain $k1.S\!=\!(0)$. Hence $S \in \modd^{p-1}
U_0(H(2))\!=\!\add(k)$ is simple. It follows that $S\!=\!k$ is the trivial $U_0(\hat{\cP}(2))$-module. Since $k1 \subseteq [\hat{\cP}(2),\hat{\cP}(2)]$, it follows that $\hat{\cP}(2)$ is perfect, so that
$\Ext^1_{U_0(\hat{\cP}(2))}(k,k)\hookrightarrow (\hat{\cP}(2)/[\hat{\cP}(2),\hat{\cP}(2)])^\ast\!=\!(0)$. As before, we obtain $\modd^jU_0(\hat{\cP}(2))\!=\!\add(k)$ for $j \in \{1,\ldots,p\!-\!1\}$. \end{proof}

\bigskip
\noindent
Given a restricted Lie algebra $(\fg,[p])$, we let $\mu(\fg)$ be the maximal dimension of all tori $\ft \subseteq \fg$. The minimal dimension of all Cartan subalgebras of $\fg$ is referred to as the
{\it rank} $\rk(\fg)$. If $\fg$ possesses a self-centralizing torus, then $\mu(\fg)$ coincides with the rank $\rk(\fg)$ of $\fg$, cf.\ \cite[(3.5),(3.6)]{Fa04}. 

\bigskip

\begin{Lem} \label{Triang4} Suppose that $p\!\ge\!5$. Let $\fg$ be a restricted simple Lie algebra of Cartan type.
\begin{enumerate}
\item If $\fg$ is not of contact type, then there exists an embedding
\[ W(\mu(\fg)) \hookrightarrow \fg\]
of restricted Lie algebras.
\item If $\fg$ is of contact type, then there exists an embedding
\[ W(\mu(\fg)\!-\!1) \hookrightarrow \fg\]
of restricted Lie algebras. \end{enumerate} \end{Lem}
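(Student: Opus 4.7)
The plan is to argue case-by-case using the classification of restricted simple Lie algebras of Cartan type over a field of characteristic $p\!\ge\!5$: up to isomorphism $\fg$ is one of $W(n)$, $S(n)^{(1)}$, $H(2m)^{(2)}$, or $K(2m\!+\!1)^{(1)}$, the contact case being the last family. In each case the standard diagonal torus $\bigoplus k\,x_i\partial_i$ of the ambient $W(N)$ (with $N$ equal to $n$, $n$, $2m$, $2m\!+\!1$ respectively) restricts to a maximal torus of $\fg$, which gives $\mu(\fg)=n,\,n\!-\!1,\,m,\,m\!+\!1$. So (1) and (2) reduce to producing embeddings $W(n)\hookrightarrow W(n)$, $W(n\!-\!1)\hookrightarrow S(n)^{(1)}$, $W(m)\hookrightarrow H(2m)^{(2)}$ and $W(m)\hookrightarrow K(2m\!+\!1)^{(1)}$.

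The first case is immediate. For $S(n)^{(1)}$, my plan is to send a derivation $D=\sum_{i=1}^{n-1}f_i\partial_i\in W(n\!-\!1)$ (acting on the subring $k[x_1,\ldots,x_{n-1}]\subseteq B_n$) to $\widetilde D:=D-\mathrm{div}(D)\,x_n\partial_n\in W(n)$. The Leibniz-type identity $\mathrm{div}[D,E]=D(\mathrm{div}\,E)-E(\mathrm{div}\,D)$ then shows that $D\mapsto\widetilde D$ is a Lie homomorphism, while by construction $\mathrm{div}(\widetilde D)=0$, so the image lies in $S(n)^{(1)}$. For $H(2m)^{(2)}$ I would use the Poisson description $H(2m)^{(2)}=D_H(\hat\cP(2m))$ available in the Hamiltonian setting and send $D=\sum_{i=1}^m g_i\partial_i\in W(m)$ to $D_H(f_D)$ with $f_D:=-\sum_{i=1}^m g_i\,x_{m+i}$; a direct Poisson-bracket calculation on these ``linear-in-momentum'' Hamiltonians yields $\{f_D,f_E\}=f_{[D,E]}$, so the map is a Lie homomorphism, and the degree bounds forcing $g_i\in k[x_1,\ldots,x_m]$ to have each $x_j$-exponent at most $p\!-\!1$ keep $f_D$ inside $\hat\cP(2m)$.

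For the contact case $\fg=K(2m\!+\!1)^{(1)}$ my plan is to factor through the Hamiltonian one. The subspace of $K(2m\!+\!1)^{(1)}$ spanned by the contact derivations $K(f)$ with $f$ independent of $x_{2m+1}$ should form a subalgebra isomorphic to $H(2m)^{(2)}$, because on such $f$ the generalized Jacobi bracket of $K$ collapses to the Poisson bracket of $H$; composing the resulting inclusion $H(2m)^{(2)}\hookrightarrow K(2m\!+\!1)^{(1)}$ with the embedding $W(m)\hookrightarrow H(2m)^{(2)}$ of the previous paragraph produces the required map.

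The main technical obstacle is twofold. First, each of the three nontrivial constructions must be shown to be a homomorphism of \emph{restricted} Lie algebras, i.e.\ to intertwine the relevant $[p]$-maps; for the $S$-case this is a direct $p$-power calculation exploiting the diagonal nature of the correction term $\mathrm{div}(D)\,x_n\partial_n$, while for the $H$- and $K$-cases it amounts to compatibility of the $p$-structure on $W(2m)$, respectively $W(2m\!+\!1)$, with the Poisson, respectively Jacobi, bracket on $\hat\cP(2m)$ and its contact analogue. Second, one must check that the images actually sit inside the derived subalgebras $S(n)^{(1)}$, $H(2m)^{(2)}$, $K(2m\!+\!1)^{(1)}$ rather than in the ambient Cartan-type algebras, which reduces to an explicit degree count on the Hamiltonians involved, in the same spirit as the bound $0\!\le\!i\!+\!j\!\le\!2p\!-\!3$ used in the discussion of $\hat\cP(2)$. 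I expect the restrictedness verification in the contact case to be the most delicate step.
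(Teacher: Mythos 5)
Your case analysis omits the restricted Melikian algebra, which the paper's proof of part~(1) explicitly handles as part of the ``Cartan type'' classification: for $p=5$ there is, in addition to the four families $W,S,H,K$, the restricted Melikian algebra $\cM$, with $\mu(\cM)=2$, and one must still produce an embedding $W(2)\hookrightarrow\cM$ (the paper cites Skryabin~\cite{Sk} for this). Without that case, item~(1) is unproved for $p=5$. A smaller point: in each of your four cases you only exhibit \emph{a} torus of the stated dimension inside $\fg$; that this dimension is actually $\mu(\fg)$ (the maximal toral rank) requires an argument, and the paper instead cites Demushkin's computations for $\mu(S(n))=n\!-\!1$, $\mu(H(2r))=r$, $\mu(K(2r\!+\!1))=r\!+\!1$.

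Apart from the Melikian gap, your route is genuinely different from the paper's: the paper obtains the three nontrivial embeddings wholesale from \cite[(4.1)--(4.3)]{BFS} and \cite[(7.5.15)]{Str}, whereas you propose explicit formulas, which is more self-contained. The formulas for the $S$ and $H$ cases look correct (and are, I believe, essentially the constructions behind the cited references), but your contact step contains a bookkeeping error. The span of the contact derivations $K(f)$ with $f$ independent of $x_{2m+1}$ is a copy of (a large piece of) the Poisson algebra $\hat{\cP}(2m)$, \emph{not} of $H(2m)^{(2)}$: the contact bracket does collapse to the Poisson bracket on such $f$, but the constant $1$ maps to a nonzero element of $K(2m\!+\!1)$, while $H(2m)^{(2)}\cong\hat{\cP}(2m)/k\!\cdot\!1$, so there is no induced map from $H(2m)^{(2)}$. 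The fix is cheap and is exactly what the paper does: compose $D\mapsto f_D\in\hat{\cP}(2m)$ directly with the inclusion $\hat{\cP}(2m)\hookrightarrow K(2m\!+\!1)$, bypassing $H(2m)^{(2)}$ altogether. You correctly flag restrictedness of your maps as the remaining technical point to verify; once the Melikian case is added and the contact detour is corrected, the plan is sound.
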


\begin{proof} (1) The statement is clear for $\fg\!=\!W(n)$, where $n\!=\!\mu(\fg)$. If $\fg\!=\!S(n)$ is a special Lie algebra, then $\mu(\fg)\!=\!n\!-\!1$ (cf.\ \cite{De1}) and \cite[(4.1)]{BFS}
provides an embedding $W(n\!-\!1) \hookrightarrow \fg$. Let $\fg\!=\!H(2r)$ be an algebra of Hamiltonian type. In view of \cite{De2}, we have $\mu(H(2r))\!=\!r$ and by \cite[(4.3)]{BFS} there
is an embedding $W(r) \hookrightarrow H(2r)$.

It remains to consider the Melikian algebra $\cM$ for $p\!=\!5$. Then $\mu(\cM)\!=\!2$ and \cite[\S 1]{Sk} provides an embedding $W(2) \hookrightarrow \fg$.

(2) Let $\fg\! =\! K(2r\!+\!1)$, so that \cite{De2} yields $\mu(\fg)=r\!+\!1$. Owing to \cite[(4.2)]{BFS} and \cite[(7.5.15)]{Str}, there are embeddings $W(r) \hookrightarrow \cP(2r)$ and
$\cP(2r) \hookrightarrow K(2r\!+\!1)$. Here $\cP(2r)$ denotes the Poisson algebra in $2r$ variables (see for instance \cite[\S4]{BFS}). There results an embedding
$W(\mu(\fg)\!-\!1) \hookrightarrow \fg$. \end{proof}

\bigskip

\begin{Prop} \label{Triang5}Let $(\fg,[p])$ be a restricted simple Lie algebra. Then the following statements hold:
\begin{enumerate}
\item Suppose that $\fg$ is of Cartan type.
\begin{enumerate}
\item We have $\modd^jU_0(\fg)\!=\!\add(k)$ for $j \in \{1,\ldots,p\!-\!2\}$.
\item If $\fg \not \cong W(1)$, then $\modd^{p-1}U_0(\fg)=\add(k)$. \end{enumerate}
\item Suppose that $p\!\ge\!5$ and let  $\fg$ be classical.
\begin{enumerate}
\item If $\rk(\fg)\!\ge\!p\!-\!1$, then $\modd^jU_0(\fg)=\add(k)$ for $j \in \{1,\ldots,p\!-\!2\}$.
\item If $\rk(\fg)\!\ge\!p^2\!-\!2$, then $\modd^{p-1}U_0(\fg)=\add(k)$. \end{enumerate} \end{enumerate} \end{Prop}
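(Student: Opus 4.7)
The common plan for all four cases is to prove $\modd^j U_0(\fg) \subseteq \add(k)$; the reverse inclusion is clear. Since $\modd U_0^j(\fg)$ is closed under composition factors, it suffices to show that $k$ is the only simple object, because simplicity of $\fg$ yields $\fg=[\fg,\fg]$ and hence $\Ext^1_{U_0(\fg)}(k,k) \hookrightarrow (\fg/[\fg,\fg])^\ast = (0)$, a vanishing that descends to the quotient algebra $U_0^j(\fg)$.

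For part (1), the strategy is to exhibit inside $\fg$ a non-zero $p$-subalgebra $\fh$ for which the equality $\modd^j U_0(\fh)=\add(k)$ is already known. Given a simple $M \in \modd^j U_0(\fg)$, the restriction $M|_\fh$ is then trivial, so $\fh \subseteq \ann_\fg(M)$; being a non-zero $p$-ideal of the simple restricted Lie algebra $\fg$, this annihilator must equal $\fg$, forcing $M \cong k$. For (1)(a) I take $\fh=W(1)$: Lemma \ref{Triang4}(1) provides $W(1) \subseteq W(\mu(\fg)) \hookrightarrow \fg$ in all non-contact Cartan types, while in the contact case $\fg=K(2r+1)$ the direct chain $W(1) \subseteq \hat{\cP}(2) \subseteq \cP(2r) \subseteq K(2r+1)$ of $p$-subalgebras does the job; then Lemma \ref{Triang2}(1) yields what is required for $j \le p-2$. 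For (1)(b), with $\fg \not\cong W(1)$, I take $\fh \in \{H(2),\hat{\cP}(2)\}$: non-contact types have $\mu(\fg) \ge 2$, so $H(2)=D_H(\hat{\cP}(2)) \subseteq W(2) \hookrightarrow \fg$, while in contact type the embedding $\hat{\cP}(2) \subseteq \cP(2r) \subseteq K(2r+1)$ is direct; Lemma \ref{Triang3} then provides the required $\modd^{p-1} U_0(\fh)=\add(k)$.

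For part (2), I restrict a putative non-trivial simple $L(\lambda) \in \modd^j U_0(\fg)$ to root subalgebras $\fsl(2)_\alpha$ and to principal $\fsl_2$-triples sitting inside $\fg$. Each such restriction lies in $\modd^j U_0(\fsl(2)) = \add(\bigoplus_{i=0}^{j-1}L(i))$, from which one reads off that $(\lambda,\alpha^\vee) \le j-1$ for every $\alpha \in \Phi^+$, and, more importantly, that a principal nilpotent $e \in \fg^+ \cap V(\fg)$ satisfies $e^j=0$ on $L(\lambda)$. The latter constraint is the decisive one: the nilpotency index of a principal nilpotent acting on a non-trivial restricted simple module of classical type grows with the Coxeter number, and hence linearly in $\rk(\fg)$. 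Under $\rk(\fg) \ge p-1$ in (2)(a), this index already exceeds $p-2$ on every non-trivial restricted simple, ruling them all out of $\modd^{p-2}U_0(\fg)$. The stronger hypothesis $\rk(\fg) \ge p^2-2$ in (2)(b) is required at the borderline $j=p-1$, where a handful of additional small simples survive the principal-nilpotent test and must be excluded separately by combining the dimension bound $\dim L(\lambda) \le (p-1)^{\dim \fg^-}$ of Lemma \ref{Triang1}(1) with the Steinberg tensor product theorem to reduce to fundamental modules.

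The main obstacle is the clean, uniform quantitative argument in (2)(b): proving type-by-type for $A_n, B_n, C_n, D_n$ that the principal nilpotent acts with nilpotency index at least $p$ on every non-trivial restricted simple module once $\rk(\fg) \ge p^2-2$. I expect this to require Premet's description of restricted simple modules of classical Lie algebras, the Jacobson-Morozov theory of principal $\fsl_2$-triples, and an explicit decomposition of small fundamental modules as modules over the principal $\fsl_2$ to extract the correct nilpotency data.
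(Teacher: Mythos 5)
Your treatment of part (1) runs along essentially the same lines as the paper: you exhibit a $p$-subalgebra $\fh$ (namely $W(1)$ for (1)(a), and $H(2)$ or $\hat{\cP}(2)$ for (1)(b)) with $\modd^j U_0(\fh)=\add(k)$, restrict, and conclude via simplicity of $\fg$ and the $\Ext^1$-vanishing. There is a minor oversight: "non-contact types have $\mu(\fg)\ge 2$" fails precisely for $\fg=H(2)$ (where $\mu(H(2))=1$), so that case has to be flagged separately -- the paper handles it with "either $\mu(\fg)\ge 2$ or $\fg=H(2)$" -- but the conclusion is of course still immediate there since $H(2)\subseteq H(2)$.

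For part (2), however, your proposed route has a genuine gap and, worse, rests on a misconception. You want to restrict a putative nontrivial simple $M\in \modd^j U_0(\fg)$ to a principal nilpotent $e\in \fg^+\cap V(\fg)$ and derive a contradiction from $e^j_M=0$. But when $\rk(\fg)\ge p-1$ -- which is exactly the regime in the hypothesis -- a principal nilpotent of $\fg$ is \emph{not} $p$-nilpotent: already in $\fsl(n)$ with $n>p$ the regular nilpotent $e$ (a single Jordan block of size $n$) has $e^{[p]}=e^p\ne 0$. So $e\notin V(\fg)$, the ideal $I^j(\fg)=\sum_{x\in V(\fg)}U_0(\fg)x^jU_0(\fg)$ imposes no condition via $e$, and the "decisive constraint" you rely on simply is not there. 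Beyond that, you explicitly acknowledge that the quantitative estimate needed for (2)(b) is an unresolved "main obstacle" -- so even setting aside the $V(\fg)$ issue, the proof is incomplete. The paper's argument avoids all of this: it never analyzes nilpotency indices at all. For (2)(a) it uses the existence of a self-dual $p$-dimensional simple $W(1)$-module to get $W(1)\hookrightarrow \fso(p)$, and the module $S(p-1)$ to get $W(1)\hookrightarrow\fsl(p-1)$; the hypothesis $\rk(\fg)\ge p-1$ is used purely to ensure $\fsl(p-1)\hookrightarrow\fg$, whence $W(1)\hookrightarrow\fg$ and Lemma \ref{Triang2} finishes. For (2)(b), the adjoint representation yields $H(2)\hookrightarrow \fsl(p^2-2)\hookrightarrow\fg$ under $\rk(\fg)\ge p^2-2$, and Lemma \ref{Triang3} finishes. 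The rank bounds are just embedding thresholds, not growth conditions on nilpotency indices of principal nilpotents.
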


\begin{proof} Let $M \in \modd^jU_0(\fg)$.

(1) (a) Suppose that $\fg$ is simple of Cartan type. Then $\mu(\fg)\!\ge\! 1$ and Lemma \ref{Triang4} provides an embedding $W(1) \hookrightarrow \fg$ of restricted Lie algebras. Since the restriction
$M|_{W(1)}$ belongs to $\modd^jU_0(W(1))$, Lemma \ref{Triang2} implies $W(1) \subseteq \ann_\fg(M)$, so that the simplicity of $\fg$ forces $\ann_\fg(M)=\fg$. As a result, $M \in \add(k)$.

(b) If $\fg \not \cong W(1),K(3)$, then either $\mu(\fg) \ge 2$ or $\fg=H(2)$. In the former case, Lemma \ref{Triang4} shows that $W(2)$ is contained in $\fg$, so that there is an embedding $H(2) \hookrightarrow \fg$. As this also holds in the latter case, the foregoing arguments in conjunction with Lemma \ref{Triang3} now yield the assertion.

It remains to consider the case, where $\fg\cong K(3)$. We recall that there is an embedding $\hat{\cP}(2) \hookrightarrow K(3)$ of restricted Lie algebras. Lemma \ref{Triang3} ensures that
$\modd^{p-1}U_0(\hat{\cP}(2)) = \add(k)$, so that the simplicity of $K(3)$ yields the desired conclusion.

(2) (a) Chang's classification \cite{Chang} shows that $U_0(W(1))$ possesses exactly $p\!-\!2$ simple modules of dimension $p$. Hence one of these modules, $M$ say, has to be self-dual, which
ensures that it affords a non-degenerate $W(1)$-invariant bilinear form. Schur's Lemma implies that this form is either symmetric or symplectic and $\dim_kM$ being odd rules out the latter
alternative. Hence the representation afforded by $M$ provides an embedding $W(1) \hookrightarrow \fso(p)$. This fact was first observed by Herpel and Stewart, cf.\ \cite[(11.9)]{HS}.

Suppose first that $p\!\ge\!7$. Since $\fg$ is classical simple of rank $\rk(\fg)\!\ge\!p\!-\!1\!\ge\!6$, the Dynkin diagram of $\fg$ contains a copy of $A_{p-2}$. Hence there is an embedding $\fsl(p\!-\!1)
\hookrightarrow \fg$ of restricted Lie algebras ($\fsl(p\!-\!1)$ being the derived algebra of the Levi subalgebra corresponding to the subsystem generated by $A_{p-2}$). Hence the representation
$\varrho : W(1) \hookrightarrow \fgl(p\!-\!1)$ afforded by the $(p\!-\!1)$-dimensional simple $U_0(W(1))$-module $L(p\!-\!1)$ factors through $\fsl(p\!-\!1)$, so that there is an embedding $W(1)
\hookrightarrow \fg$. The arguments of (1) now yield the result.

If $p\!=\!5$, the foregoing argument works unless $\fg$ is of type $F_4$. In that case, $\fg$ contains the Lie algebra $\fso(5)$ of type $B_2=C_2$. The observations above provide an embedding $W(1)
\hookrightarrow \fg$, and we may argue as before.

(b) Suppose that $\rk(\fg)\!\ge\!p^2\!-\!2$, so that there is an embedding $\fsl(p^2\!-\!2) \hookrightarrow \fg$ of restricted Lie algebras. The adjoint representation of $H(2)$ provides an embedding
$H(2) \hookrightarrow \fsl(p^2\!-\!2)$. Hence the preceding arguments in conjunction with
Lemma \ref{Triang3} yield the asserted result. \end{proof}

\bigskip

\begin{Remark} In view of \cite[Theorem]{Hol} the adjoint representation of $H(2)$ defines the simple, non-trivial $U_0(H(2))$-module of minimal dimension. \end{Remark}

\bigskip
\noindent
Let $G$ be a connected reductive group with Lie algebra $\fg$. Since the derived group $(G,G)$ is semisimple, there are almost simple normal subgroups $G_1,\ldots, G_m$ of $(G,G)$ such that
$(G,G)=G_1\cdot G_2\cdots G_m$, cf.\ \cite[(9.4.1)]{Sp}. We define
\[ \msrkm(\fg):=\min\{\msrk(G_i) \ ; \ 1\!\le\!i\!\le\!m\}.\]

\bigskip

\begin{Prop} \label{Triang6}Suppose that $p\!\ge\!5$. Let $\fg$ be a reductive Lie algebra.
\begin{enumerate}
\item If $\msrkm(\fg)\ge p\!-\!1$, then we have $\modd^jU_0(\fg)= \add(\bigoplus_{\lambda \in X(\fg)}k_\lambda)$ for $j \in \{1,\ldots,p\!-\!2\}$.
\item If $\msrkm(\fg)\ge p^2\!-\!2$, then we have $\modd^{p-1}U_0(\fg)= \add(\bigoplus_{\lambda \in X(\fg)}k_\lambda)$. \end{enumerate}  \end{Prop}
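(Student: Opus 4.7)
I will reduce both parts to Proposition~\ref{Triang5}(2) applied to each almost simple factor of the derived group. Writing $(G,G) = G_1 \cdots G_m$ with the $G_i$ commuting almost simple normal subgroups (as in the paragraph preceding the statement), and setting $\fg_i := \Lie(G_i)$, the commutation $[G_i,G_j] = 1$ for $i \neq j$ yields $[\fg_i,\fg_j] = 0$. Combined with $\fg = \Lie(Z(G)^\circ) + \fg_1 + \cdots + \fg_m$, this gives $[\fg,\fg] = \sum_{i=1}^m [\fg_i,\fg_i]$ and identifies $\fg/[\fg,\fg]$ with the Lie algebra of the torus $G/(G,G)$, hence itself a restricted torus.

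Given $M \in \modd^j U_0(\fg)$ (with $j \in \{1,\ldots,p-2\}$ in case~(1) or $j = p-1$ in case~(2)), each $\fg_i$ is almost classical semisimple, and Lemma~\ref{ACL2} identifies $[\fg_i,\fg_i]$ as a classical simple $p$-subalgebra of $\fg$. Restriction preserves membership in $\modd^j U_0(\cdot)$, so $M|_{[\fg_i,\fg_i]} \in \modd^j U_0([\fg_i,\fg_i])$; identifying $\rk([\fg_i,\fg_i])$ with $\rk(G_i) \ge \msrkm(\fg)$ lets Proposition~\ref{Triang5}(2)(a) apply in case~(1) and (2)(b) in case~(2), forcing $M|_{[\fg_i,\fg_i]} \in \add(k)$, i.e.\ $[\fg_i,\fg_i] \subseteq \ann_\fg(M)$. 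Summing over $i$ gives $[\fg,\fg] \subseteq \ann_\fg(M)$, so $M$ descends to a module over the commutative semisimple algebra $U_0(\fg/[\fg,\fg])$, whose simple modules are precisely the one-dimensional $k_\lambda$ for $\lambda \in X(\fg/[\fg,\fg]) \subseteq X(\fg)$. Hence $M \in \add(\bigoplus_{\lambda \in X(\fg)} k_\lambda)$, as required.

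The main obstacle will be the rank bookkeeping in the sporadic adjoint-type-$A_{p-1}$ case (e.g.\ $G_i = \PGL_p$), where $[\fg_i,\fg_i]$ has rank $p-2$ instead of $p-1 = \rk(G_i)$, so Proposition~\ref{Triang5}(2)(a) does not apply verbatim. In this edge case I would slightly adapt the argument of that proposition by embedding $W(1)$ into $[\fg_i,\fg_i]$ via a $p$-dimensional simple $U_0(W(1))$-module: such a module defines a representation $W(1) \hookrightarrow \fsl_p$ (perfectness of $W(1)$ forces traces to vanish), and simplicity of $W(1)$ forces the further composition into the adjoint quotient to remain injective. Lemma~\ref{Triang2} then gives $W(1) \subseteq \ann_\fg(M)$, and since $[\fg_i,\fg_i]$ is simple and $\ann_\fg(M) \cap [\fg_i,\fg_i]$ is a non-zero ideal of it, the annihilation conclusion $[\fg_i,\fg_i] \subseteq \ann_\fg(M)$ follows. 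All remaining steps—the commuting-factor decomposition, the restriction stability of $\modd^j U_0(\cdot)$, and the semisimplicity of $U_0$ of a torus—are formal.
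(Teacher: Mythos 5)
There is a genuine gap, concentrated in how you treat the factors $G_i$ of type $A_{np-1}$ with $p\mid np$. Your plan runs each $[\fg_i,\fg_i]$ through Proposition \ref{Triang5}(2) by claiming that $\fg_i$ is almost classical semisimple (so that Lemma \ref{ACL2} makes $[\fg_i,\fg_i]$ classical simple of the right rank), and you flag only the edge case $G_i=\PGL_p$. But the premise already fails for $G_i=\SL(np)$: then $\fg_i=\fsl(np)$ has $\Sw(\fsl(np))\supseteq kI\neq 0$, so by Theorem \ref{ACL1} it is \emph{not} almost classical, Lemma \ref{ACL2} does not apply, and indeed $[\fg_i,\fg_i]=\fsl(np)$ has a one-dimensional center and is not classical simple. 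On the adjoint side, $\rk(\fpsl(np))=np-2$ drops below $\rk(G_i)=np-1$ for every $n$, not just $n=1$, and for the intermediate isogeny types one has further bookkeeping. Your proposed patch (an irreducible $p$-dimensional representation of $W(1)$ landing in $\fpsl_p$) works for $\PGL_p$ but does not address $\SL(np)$, $\PGL(np)$ with $n>1$, or the intermediate forms, nor the non-simplicity of $[\fg_i,\fg_i]$. The paper avoids the case split entirely: from the simply connected covering $\pi:\SL(np)\to G_i$ and an embedding $\iota:\fsl(np-1)\hookrightarrow\fsl(np)$ missing $\Lie(\ker\pi)\subseteq kI$, one gets $\fsl(np-1)\hookrightarrow\fg_i$, hence $W(1)\hookrightarrow\fsl(p-1)\hookrightarrow\fg_i$ (or $H(2)\hookrightarrow\fsl(p^2-2)\hookrightarrow\fg_i$), and then Lemma \ref{Triang2} (resp.\ \ref{Triang3}) together with the fact that the only proper nonzero ideals of $\fg_i$ are $C(\fg_i)\subseteq[\fg_i,\fg_i]$ forces $[\fg_i,\fg_i]\subseteq\ann_\fg(M)$ in all forms of type $A_{np-1}$.

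Two further points in your write-up need repair. First, the identity $\fg=\Lie(Z(G)^\circ)+\fg_1+\cdots+\fg_m$ is false: for $G=\GL_p$, $\Lie(Z(G)^\circ)=kI\subseteq\fsl_p$ already, so the sum is $\fsl_p\subsetneq\fgl_p$ (the multiplication map $Z(G)^\circ\times G_1\cdots\times G_m\to G$ need not be separable). Your target identity $[\fg,\fg]=\sum_i[\fg_i,\fg_i]$ is nevertheless correct, but must be justified differently (e.g.\ root-space analysis using $[\fg'_\alpha,\fg'_{-\alpha}]\ni h_\alpha$ with $\alpha(h_\alpha)=2\neq 0$). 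Second, the concluding step rests on $\fg/[\fg,\fg]$ being a torus, which requires knowing $[\fg,\fg]$ is a $p$-ideal and $\ft/(\ft\cap[\fg,\fg])$ is toral; this is true but unproved in your sketch. The paper instead produces the ideal $\fn$ from Humphreys (10.1) and shows $N=G'$, which gives $\bigoplus_\alpha\fg'_\alpha\subseteq\ann_{\fg'}(M)$ directly, so that $\fg/\ann_\fg(M)$ is a quotient of the torus $\ft$ and the semisimplicity is automatic from $\ann_\fg(M)$ being a $p$-ideal, without appealing to any structural claim about $[\fg,\fg]$.
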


\begin{proof} We write $\fg\!:=\!\Lie(G)$, with $G$ being connected and reductive. Then $(G,G)\!=\!G_1\cdots G_m$ is a product of normal connected almost simple subgroups and we put $\fg_i\!:=\! \Lie(G_i)$. Owing to 
\cite[\S13]{Hu67}, we have $\rk(G_i)\!=\!\mu(\fg_i)$, while $G$ being reductive implies that $\fg$ has a self-centralizing torus \cite[(11.9)]{Hu67}, so that $\mu(\fg_i)\!=\!\rk(\fg_i)$, cf.\ \cite[(3.5),(3.6)]{Fa04}. 

Thanks to \cite[(5.4)]{Hu67}, the restricted Lie algebra $\fg_i$ is classical simple unless $G_i$ is of type $A_{np-1}$. In that case, $C(\fg_i)$ is a torus, and $C(\fg_i) \subseteq [\fg_i,\fg_i]$ are the only possible proper
ideals of $\fg_i$.

Suppose that the root system of $G_i$ has type $A_{np-1}$. Then there is a simply connected covering $\SL(np) \stackrel{\pi}{\lra} G_i$, whose scheme-theoretic finite kernel $\cK$ is contained in the multiplicative group 
$\mu_{(np)}$ of $np$-th roots of unity. There results an exact sequence
\[ (0) \lra \Lie(\cK) \lra \fsl(np) \stackrel{\msd(\pi)}{\lra} \fg_i,\]
of restricted Lie algebras, where $\Lie(\cK)$ is a torus of dimension $\le\! 1$. Consequently, the composite $\msd(\pi) \circ \iota$ of the differential $\msd(\pi)$ with an embedding $\iota : \fsl(np\!-\!1) \hookrightarrow \fsl(np)$ 
provides an injection $\fsl(np\!-\!1) \hookrightarrow \fg_i$. It follows that there are embeddings $W(1) \hookrightarrow \fsl(p\!-\!1) \hookrightarrow \fg_i$, while the condition $\rk(\fg_i)\!\ge\! p^2\!-\!2$ implies that $H(2) 
\hookrightarrow \fsl(p^2\!-\!2)\hookrightarrow \fg_i$.

(1),(2) Let $\msrkm(\fg)\!\ge\!p\!-\!1$. Suppose that $M \in \modd^j U_0(\fg)$, where $j\!\le\!p\!-\!2$ in case $\msrkm(\fg)\!\le\!p^2\!-\!3$. Then $M|_{\fg_i} \in \modd^jU_0(\fg_i)$ for $i \in \{1,\ldots, m\}$. If $G_i$ has type 
$A_{np-1}$, then the foregoing observations in conjunction Lemma \ref{Triang2} and Lemma \ref{Triang3} yield $[\fg_i,\fg_i] \subseteq \ann_\fg(M)$. Alternatively, Proposition \ref{Triang5} implies $\fg_i \subseteq \ann_\fg(M)$, 
so that $[\fg_i,\fg_i] \subseteq \ann_{\fg}(M)$ in all cases.

Let $G'\!:=\!(G,G)$, and consider $\fg'\!:=\!\Lie(G')$, so that $\fg_i \subseteq \fg'$. If $T' \subseteq G'$ is a maximal torus, then \cite[(13.3)]{Hu67} ensures that $\ft'\!:=\!\Lie(T')$ is a maximal torus of $\fg'$.
We consider the root space decomposition
\[ \fg' = \ft'\!\oplus\!\bigoplus_{\alpha \in R}\! \fg'_\alpha\]
of $\fg'$ relative to $T'$ and recall that $\dim_k\fg'_\alpha\!=\!1$ for all $\alpha \in R$ (see \cite[(26.2)]{Hu81}). As $p\alpha \not \in R$ for every $\alpha \in R$ (cf.\ \cite[(9.1.7)]{Sp}), it follows that
$\fg'_\alpha \subseteq V(\fg')$.

As $\ann_{\fg'}(M)$ is an ideal of $\fg'$, \cite[(10.1)]{Hu67} provides a connected, closed, normal subgroup $N \unlhd G'$ and a subtorus $\fs' \subseteq \ft'$ such that the ideal $\fn:= \Lie(N)$ satisfies
\[ [\fn,\fn] \subseteq \ann_{\fg'}(M) \subseteq \fn\!+\!\fs' ,\]
while $[\fs',\fg'_\alpha] =(0)$ whenever $\fg'_\alpha \not \subseteq \ann_{\fg'}(M)$.  In view of \cite[(27.5)]{Hu81}, there is a subset $J_N \subseteq \{1,\ldots,m\}$ such that
\[ N = \prod_{i \in J_N}G_i \ \ \text{and} \ \ (G_i,N)=\{1\} \ \ \ \ \ \ \ \forall \ i \in \{1,\ldots,m\}\!\smallsetminus\!J_N.\]
Suppose that $i \in  \{1,\ldots,m\}\!\smallsetminus\!J_N$. Then we have $[\fg_i,\fn]\! =\! (0)$. Let $\alpha \in R$ be a root  such that $\fg'_\alpha \subseteq [\fg_i,\fg_i]$. In view of $[\fg_i,\fg_i] \subseteq
\ann_{\fg'}(M)$, it follows that $\fg'_\alpha \subseteq [\fg_i,\fg_i]\cap \fn$, so that $[\fg_i,\fg'_{\alpha}] = (0)$. Thus, $\fg'_\alpha \subseteq C(\fg_i)\cap V(\fg')=\{0\}$, a contradiction. Since $[\fg_i,\fg_i]$
is $T'$-stable, we conclude that $J_N=\{1,\ldots,m\}$, so that $N=G'$ and $\fn=\fg'$. As a result, $\ann_{\fg'}(M)$ contains $\bigoplus_{\alpha \in R}\fg'_\alpha$.

Since $G/(G,G)$ is a torus, it follows that all root subgroups of $G$ are contained in $G'$. Hence all root spaces of $\fg$ relative to some maximal torus $T\subseteq G$ are contained in $\fg'$, so that
$\sum_{\alpha \in X(T)\smallsetminus\{0\}} \fg_\alpha \subseteq \ann_\fg(M)$. As a result, $M$ is a module for the torus $\fg/\ann_\fg(M)$, whence $M \in \add(\bigoplus_{\lambda \in X(\fg)}k_\lambda)$.
\end{proof}

\bigskip

\subsection{Categories of modules of constant rank}
Let $r\!\ge\!2$. According to \cite[(5.6.12)]{Be} and \cite[(4.2.3)]{Bi}, the category $\EIP^1(\fe_r)\cap\modd^3U_0(\fe_r)$ has wild representation type. In view of Proposition \ref{EJIM2}(2), this entails
the wildness of the category of $\CJT(\fe_r)\cap \modd^3U_0(\fe_r)$, thereby strengthening earlier work \cite{BL} concerning modules of constant Jordan type for elementary abelian groups. In \cite{BL}
the authors conjecture that the subcategory of modules of constant Jordan type of a finite group is of wild representation type whenever the ambient module category enjoys this property.

The following results illustrate that for certain types of restricted Lie algebras of wild representation type, the aforementioned subcategories are in fact semisimple.

\bigskip

\begin{Prop} \label{Jt1} Suppose that $p\!\ge\!5$. Let $(\fg,[p])$ be a restricted Lie algebra such that
\begin{enumerate}
\item[(a)] there is an embedding $W(1) \hookrightarrow \fg$, and
\item[(b)] the factor algebra $\fg/\fv(\fg)$ of $\fg$ by the $p$-ideal $\fv(\fg)$ generated by $V(\fg)$ is a torus.  \end{enumerate}
Then we have $\CR^1(\fg)\cap\EIP^{p-1}(\fg)=\add(\bigoplus_{\lambda \in X(\fg)}k_\lambda)$. \end{Prop}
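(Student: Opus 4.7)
The plan is to reduce to the subalgebra $W(1)$ in order to show that every $M \in \CR^1(\fg) \cap \EIP^{p-1}(\fg)$ is annihilated by $\fv(\fg)$, and hence factors through the torus $\fg/\fv(\fg)$, giving $M \in \add(\bigoplus_{\lambda \in X(\fg)} k_\lambda)$. The reverse containment is immediate, since any $x \in V(\fg)$ acts on $k_\lambda$ by a scalar $\lambda(x)$ satisfying $\lambda(x)^p = \lambda(x^{[p]}) = 0$, so $x$ annihilates $k_\lambda$ and the $\CR^1$ and $\EIP^{p-1}$ conditions are trivially satisfied.

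First I would restrict along $W(1) \hookrightarrow \fg$. Because $V(W(1)) \setminus \{0\} \subseteq V(\fg) \setminus \{0\}$, the restriction $M|_{W(1)}$ inherits both constant $1$-rank and the equal $(p-1)$-images property. To apply Proposition~\ref{EITriv3} to $W(1)$ I need a non-abelian $p$-trivial subalgebra of $W(1)$: for $p \ge 5$, the positive-graded part $W(1)^+ := \sum_{i=1}^{p-2} ke_i$ qualifies, being non-abelian since $[e_1, e_2] = e_3 \ne 0$, and $p$-trivial because each of its elements acts on $k[X]/(X^p)$ as a derivation of the form $X^2 Q(X)\,d/dX$, which strictly raises the degree of every nonzero homogeneous element and therefore vanishes in its $p$-th iterate. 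Consequently Proposition~\ref{EITriv3} gives $\rk^{p-1}(M|_{W(1)}) = 0$, so $M|_{W(1)} \in \modd^{p-1} U_0(W(1))$.

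Combined with constant $1$-rank and $p \ge 5$, Lemma~\ref{Triang2}(3) then forces $M|_{W(1)} \in \add(k)$, whence $W(1) \subseteq \ann_\fg(M)$. In particular $(e_{-1})_M = 0$; since $e_{-1} \in V(\fg) \setminus \{0\}$ and $M$ has constant $1$-rank on $V(\fg) \setminus \{0\}$, this forces $\rk^1(M) = 0$ and so $V(\fg) \subseteq \ann_\fg(M)$. Because $\ann_\fg(M)$ is a $p$-ideal of $\fg$, it contains the $p$-ideal $\fv(\fg)$ generated by $V(\fg)$, and therefore $M$ descends to a module for the torus $\fg/\fv(\fg)$, yielding the claimed decomposition. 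The only substantive point is the verification that $W(1)^+$ is a non-abelian $p$-trivial subalgebra of dimension at least three, which is precisely where the hypothesis $p \ge 5$ enters (for $p = 3$ we would only have $W(1)^+ = ke_1$); the remaining steps are direct appeals to Proposition~\ref{EITriv3}, Lemma~\ref{Triang2}(3), and the representation theory of tori.
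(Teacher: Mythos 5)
Your proof is correct and follows the paper's argument essentially verbatim: both restrict along $W(1)\hookrightarrow\fg$, invoke Proposition~\ref{EITriv3} via the non-abelian $p$-trivial subalgebra $\bigoplus_{i=1}^{p-2}ke_i \subseteq W(1)$ to kill the $(p\!-\!1)$-rank, apply Lemma~\ref{Triang2}(3) using the constant-$1$-rank hypothesis to conclude $M|_{W(1)}\in\add(k)$, and then deduce $V(\fg)\subseteq\ann_\fg(M)$ from constancy of the $1$-rank, so that $\fv(\fg)\subseteq\ann_\fg(M)$ and $M$ factors through the torus $\fg/\fv(\fg)$. The only cosmetic differences are that you apply Proposition~\ref{EITriv3} to $W(1)$ rather than to $\fg$ directly (equivalent, since that result already passes to a Heisenberg subalgebra), and that you spell out the reverse containment and the $p$-triviality of $W(1)^+$, both of which the paper leaves implicit.
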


\begin{proof} As $p\!\ge\!5$, the space $\bigoplus_{i=1}^{p-2} ke_i$ is a non-abelian $p$-trivial subalgebra of $W(1)$. Let $M \in \CR^1(\fg)\cap\EIP^{p-1}(\fg)$. In view of (a), Proposition \ref{EITriv3}
yields $\rk^{p-1}(M)\!=\!0$. It now follows from Lemma \ref{Triang2}(3) that $M|_{W(1)}\in \add(k)$. As a result, $\rk^1(M)\!=\!\rk^1(M|_{W(1)})\!=\!0$, so that $\fv(\fg) \subseteq \ann_\fg(M)$. Our assertion now
follows from condition (b). \end{proof}

\bigskip

\begin{Remarks} (1) \ Simple restricted Lie algebras of Cartan type and reductive Lie algebras with $\msrkm(\fg)\ge p\!-\!1$ satisfy conditions (a) and (b) of Proposition \ref{Jt1}.

(2) \ Suppose that $W(1) \hookrightarrow \fg$. If $p\!\ge \!5$, then $\dim V(\fg) \ge \dim V(W(1))\!\ge\!3$, so that $\modd U_0(\fg)$ is wild, cf.\ \cite{Fa07}. \end{Remarks}

\bigskip
\noindent
Our final result demonstrates that already the "local" validity of the equal $(p\!-\!1)$-images property may significantly restrict the structure of modules belonging to $\CR^j(\fg)\cap\CR^{p-1}(\fg)$ or even
$\CR^{p-1}(\fg)$. Given $\fe \in \EE(2,\fg)$, we let $\CR^{p-1}(\fg)_\fe$ be the full subcategory of $\CR^{p-1}(\fg)$ whose objects satisfy $M|_{\fe} \in \EIP^{p-1}(\fe)$. Since $\EIP^1(\fe_r) \subseteq
\CR^{p-1}(\fe_r)_\fe$ for all $\fe \in \EE(2,\fe_r)$, the aforementioned work \cite{Be,Bi} guarantees that the category $\CR^{p-1}(\fe_r)_\fe$ is wild whenever $r\!\ge\!2$.

\bigskip

\begin{Prop} \label{Jt2} Suppose that $p\!\ge\!5$ and let $\fg$ be a reductive Lie algebra.
\begin{enumerate}
\item If $\msrkm(\fg)\!\ge\!p\!-\!1$, then $\CR^j(\fg)\cap\CR^{p-1}(\fg)_\fe=\add(\bigoplus_{\lambda \in X(\fg)}k_\lambda)$ for every $j \in \{1,\ldots,p\!-\!2\}$ and $\fe \in \EE(2,\fg)$.
\item If $\msrkm(\fg)\!\ge\!p^2\!-\!2$, then $\CR^{p-1}(\fg)_\fe=\add(\bigoplus_{\lambda \in X(\fg)}k_\lambda)$ for every $\fe \in \EE(2,\fg)$. \end{enumerate} \end{Prop}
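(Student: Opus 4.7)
The plan is to reduce both parts to Proposition~\ref{Triang6} via Proposition~\ref{Res1}, which converts the local equal-$(p\!-\!1)$-images hypothesis at $\fe$ into the global vanishing $x_M^{p-1}=0$ on $V(\fg)$. Since a reductive $\fg$ is algebraic, condition~(a) of Proposition~\ref{Res1} holds; the bound $\msrkm(\fg)\!\ge\!p\!-\!1\!\ge\!4$ forces some almost simple factor $\fg_i$ of $\Lie((G,G))$ to have rank $\ge 2$, and two simple roots $\alpha,\beta$ of $\fg_i$ whose sum is again a root produce a $p$-trivial Heisenberg subalgebra $ke_\alpha\!\oplus\!ke_\beta\!\oplus\!ke_{\alpha+\beta}\subseteq\fg$, verifying~(b). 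Applying Proposition~\ref{Res1} with $\ell=p\!-\!1$ to $M\in\CR^{p-1}(\fg)_\fe$ yields $M\in\modd^{p-1}U_0(\fg)$.

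For part~(2), the stronger bound $\msrkm(\fg)\!\ge\!p^2\!-\!2$ is precisely the hypothesis of Proposition~\ref{Triang6}(2); applying that proposition to $M$ immediately delivers $M\in\add(\bigoplus_{\lambda\in X(\fg)}k_\lambda)$.

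For part~(1), Proposition~\ref{Triang6}(1) cannot be invoked directly, as it requires $M\in\modd^jU_0(\fg)$ with $j\!\le\!p\!-\!2$, whereas we so far have only $M\in\modd^{p-1}U_0(\fg)$; the extra hypothesis $M\in\CR^j(\fg)$ is exactly what closes this gap. Writing $(G,G)=G_1\cdots G_m$ with $\fg_i=\Lie(G_i)$, the rank condition $\msrk(G_i)\!\ge\!p\!-\!1$ together with the embedding constructions in the proofs of Propositions~\ref{Triang5}(2)(a) and~\ref{Triang6} furnishes an embedding $W(1)\hookrightarrow\fg_i$ for each $i$. The restriction $M|_{W(1)}$ then lies in $\modd^{p-1}U_0(W(1))\cap\CR^j(W(1))$, and Lemma~\ref{Triang2}(3)---designed precisely for this situation---forces $M|_{W(1)}\in\add(k)$, so $W(1)\subseteq\ann_\fg(M)$. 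Since $W(1)$ is not toral while $C(\fg_i)$ is an at most one-dimensional torus, the $p$-ideal $\ann_\fg(M)\cap\fg_i$ of $\fg_i$ cannot be contained in $C(\fg_i)$, forcing $[\fg_i,\fg_i]\subseteq\ann_\fg(M)$. The concluding ideal-theoretic paragraph in the proof of Proposition~\ref{Triang6}, invoking \cite[(10.1)]{Hu67} and \cite[(27.5)]{Hu81}, then applies verbatim to push all root spaces of $\fg$ relative to a maximal torus into $\ann_\fg(M)$, so that $M$ factors through a torus quotient of $\fg$ and belongs to $\add(\bigoplus_{\lambda\in X(\fg)}k_\lambda)$.

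The main technical obstacle is precisely the mismatch between the given constraint on $(p\!-\!1)$-rank at $\fe$ and the cleaner hypothesis on $j$-rank required by Proposition~\ref{Triang6}(1); Lemma~\ref{Triang2}(3) is built exactly to bridge it, and once applied to a copy of $W(1)$ inside each simple factor, the remainder of the argument is a routine amplification of the reductive-Lie-algebra machinery already developed in this section.
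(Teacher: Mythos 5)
Your proposal is correct and tracks the paper's proof closely: both routes open with Proposition~\ref{Res1} to place $M$ in $\modd^{p-1}U_0(\fg)$ (the paper leaves the verification of hypothesis~(b) of that proposition implicit, whereas you supply it explicitly via root spaces; either way it holds, since the embedded $W(1)\hookrightarrow\fg$ already contains the non-abelian $p$-trivial subalgebra $\bigoplus_{i=1}^{p-2}ke_i$), then use the $W(1)\hookrightarrow\fg_i$ embeddings built in Proposition~\ref{Triang6}, and Lemma~\ref{Triang2}(3), and finally invoke Proposition~\ref{Triang6}.

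The one place you diverge is in finishing part~(1). The paper's route is a shortcut: once $M|_{W(1)}\in\add(k)$, the assumed constancy of $j$-rank over $\fg$ forces $\rk^j(M)=\rk^j(M|_{W(1)})=0$, so $M\in\modd^jU_0(\fg)$ and Proposition~\ref{Triang6}(1) is applied as a black box. You instead read off $W(1)\subseteq\ann_\fg(M)$, use the ideal structure of $\fg_i$ to get $[\fg_i,\fg_i]\subseteq\ann_\fg(M)$, and re-run the root-space argument from the proof of Proposition~\ref{Triang6}. Both arguments are valid; yours inlines a piece of Proposition~\ref{Triang6} that the paper can simply cite after observing that $M$ actually lands in $\modd^jU_0(\fg)$. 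Your treatment of part~(2) is the same as (and slightly more direct than) the paper's ``analogously''.
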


\begin{proof} (1) Let $j \in \{1,\ldots,p\!-\!2\}$ and $\fe \in \EE(2,\fg)$. Proposition \ref{Res1} implies that $\CR^j(\fg)\cap\CR^{p-1}(\fg)_\fe \subseteq \modd^{p-1}U_0(\fg)$. Since $\msrkm(\fg)\!\ge\!p\!-\!1$, The arguments of 
Propositions (\ref{Triang5}) and (\ref{Triang6}) ensure the existence of an embedding $W(1) \hookrightarrow \fg$, and Lemma \ref{Triang2} thus yields $M|_{W(1)}\in \add(k)$ for every $M \in \CR^j(\fg)\cap
\CR^{p-1}(\fg)_\fe$. Consequently, $\CR^j(\fg)\cap\CR^{p-1}(\fg)_\fe \subseteq \modd^jU_0(\fg)$, and our result follows from Proposition \ref{Triang6}(1).

(2) This follows analogously, using Lemma \ref{Triang3} and Proposition \ref{Triang6}(2). \end{proof}

\bigskip

\begin{Remark} Let $M \in \CJT(\fg)$ be a module of constant Jordan type $\Jt(M)\!=\!\bigoplus_{i=1}^pa_i(M)[i]$. Then $M$ belongs to $\CJT(\fg)\cap\modd^jU_0(\fg)$ if and only if $a_\ell(M)\!=\!0$
for $j\!+\!1\!\le\ell\!\le\! p$. Proposition \ref{Jt2} shows in particular that the category $\CJT(\fg)\cap\modd^{p-1}U_0(\fg)$ may be semisimple. In contrast to the case of elementary abelian Lie algebras,
the representation type of $\CJT(\fg)$ may therefore depend only on the behaviour of those objects $M \in \CJT(\fg)$ such that $a_p(M)\!\ne\!0$. \end{Remark}

\bigskip

\bigskip

\end{document}